      \string\usetikzlibrary{decorations.markings} to use arrows with markings}{}}{}%
\newcommand{\mapsfrom}{\mathrel{\reflectbox{\ensuremath{\mapsto}}}}
\newcommand{\id}{\mathrm{id}}
\newcommand{\im}{\mathrm{im}}
\newcommand{\Gal}{\mathrm{Gal}}
\newcommand{\Hom}{\mathrm{Hom}}
\newcommand{\m}{\mathfrak{m}}
\newcommand{\p}{\mathfrak{p}}
\newcommand{\Z}{\mathds Z}
\newcommand{\N}{\mathds N}
\newcommand{\Q}{\mathds Q}
\newcommand{\bC}{\mathds C}
\newcommand{\Tor}{\mathrm{Tor}}
\renewcommand{\O}{\mathcal{O}}
\newcommand{\GG}{\mathbb{G}}
\newcommand{\et}{\textit{\'et}}
\newcommand{\sh}{\mathrm{sh}}
\newcommand{\Nis}{\mathrm{Nis}}
\renewcommand{\top}{\mathrm{top}}
\newcommand{\A}{{\mathbb A}}
\newcommand{\ZZ}{\mathbb{Z}}
\renewcommand{\P}{{\mathbb P}}
\newcommand{\Sh}{\mathit{Sh}}
\newcommand{\fX}{{\mathfrak X}}
\newcommand{\cO}{{\mathscr O}}
\newcommand{\cS}{{\mathscr S}}
\newcommand{\cU}{{\mathscr U}}
\newcommand{\cV}{{\mathscr V}}
\newcommand{\cW}{{\mathscr W}}
\newcommand{\cX}{{\mathscr X}}
\newcommand{\cY}{{\mathscr Y}}
\newcommand{\cZ}{{\mathscr Z}}
\newcommand{\liso}{\mathrel{\hbox{$\longrightarrow$} \kern-2.4ex\lower-1ex\hbox{$\scriptstyle\sim$}\kern1.7ex}}
\newcommand{\set}{\textit{s\'et}}
\newcommand{\zerounderset}[3][\mathord]{%
  #1{\vtop{
    \let\\\cr
    \baselineskip\z@skip\lineskip.25ex
    \ialign{\hidewidth$##$\hidewidth\crcr
      \omit$#3$\cr
      #2\crcr
    }%
  }}%
}
\newtheoremstyle{alexthm}% name
  {}%      Space above
  {}%      Space below
  {\sl }%         Body font
  {}%         Indent amount (empty = no indent, \parindent = para indent)
  {\bf}% Thm head font
  {.}%        Punctuation after thm head
  {.5em}%     Space after thm head: " " = normal interword space;
\theoremstyle{alexthm}
\newtheorem{theorem}{Theorem}[section]
\newtheorem*{theorem*}{Theorem}
\newtheorem{corollary}[theorem]{Corollary}
\newtheorem{proposition}[theorem]{Proposition}
\newtheorem{lemma}[theorem]{Lemma}
\newtheorem*{lemma*}{Lemma}
\newtheoremstyle{alexdef}% name
  {}%      Space above
  {}%      Space below
  {\rm }%         Body font
  {}%         Indent amount (empty = no indent, \parindent = para indent)
  {\bf}% Thm head font
  {.}%        Punctuation after thm head
  {.5em}%     Space after thm head: " " = normal interword space;
\theoremstyle{alexdef}
\newtheorem*{example*}{Example}
\newtheorem{remark}[theorem]{Remark}
\newtheorem{definition}[theorem]{Definition}
\DeclareMathOperator{\Spec}{\mathrm{Spec}}
\DeclareMathOperator{\Spa}{\mathrm{Spa}}
\DeclareMathOperator{\RZ}{\mathrm{RZ}}
\DeclareMathOperator{\supp}{\mathrm{supp}}
\DeclareMathOperator{\ch}{char}
\DeclareMathOperator*{\colim}{colim}
\DeclareMathOperator{\mor}{\mathrm{mor}}
\DeclareMathOperator{\ob}{\mathrm{ob}}
\DeclareMathOperator{\trdeg}{\textit{trdeg}}
\definecolor{darklimegreen}{RGB}{31,142,8}
\begin{document}

\hfuzz=4pt
\title{The adic tame site}
\author{Katharina H\"{u}bner}
\email{khuebner@mathi.uni-heidelberg.de}
\date{\today}
\address{Einstein Institute of Mathematics, Hebrew University, Jerusalem}
\thanks{This research is partly supported by ERC Consolidator Grant 770922 - BirNonArchGeom}

\begin{abstract}
 For every adic space~$\cX$ we construct a site~$\cX_t$, the tame site of~$\cX$.
 For a scheme~$X$ over a base scheme~$S$ we obtain a tame site by associating with $X/S$ an adic space $\Spa(X,S)$ and considering the tame site $\Spa(X,S)_t$.
 We examine the connection of the cohomology of the tame site with \'etale cohomology and compare its fundamental group with the conventional tame fundamental group.
 Finally, assuming resolution of singularities, for a regular scheme~$X$ over a base scheme~$S$ of characteristic $p > 0$ we prove a cohomological purity theorem for the constant sheaf $\Z/p\Z$ on $\Spa(X,S)_t$.
 As a corollary we obtain homotopy invariance for the tame cohomology groups of $\Spa(X,S)$.
\end{abstract}

\maketitle
\tableofcontents

\section{Introduction}

% For a variety~$V$ over the complex numbers we can use sheaf cohomology for the complex-analytic topology to encode the geometric properties of~$V$.
% An algebraic analogue is given by the \'etale topology on~$V$.
% In fact, the \'etale and complex analytic cohomology groups of~$V$ with coefficients are isomorphic.
% Moreover, the \'etale topology is not only defined for varieties over the complex numbers but for all schemes, in particular for schemes of positive and mixed characteristic.
% The \'etale cohomology groups with torsion coefficients away from the characteristic behave pretty much in the same way as the complex analytic cohomology groups:

\'Etale cohomology of a scheme with torsion coefficients away from the residue characteristics yields a well behaved cohomology theory.
For instance, there is a smooth base change theorem, a cohomological purity theorem, and the cohomology groups are $\mathbb{A}^1$-homotopy invariant.
This breaks down, however, if we take the coefficients of the cohomology groups to be $p$-torsion, where~$p$ is a residue characteristic of the scheme in question.
The problem can be seen already when looking at the cohomology group $H^1_{\et}(\mathbb{A}^1_k,\Z/p\Z)$ for some algebraically closed field~$k$.
If the characteristic of~$k$ is not~$p$, this cohomology group vanishes.
But if the characteristic of~$k$ is~$p$, $H^1_{\et}(\mathbb{A}^1_k,\Z/p\Z)$ is infinite due to wild ramification at infinity.

In order to address these problems we introduce a tame site of a scheme~$X$ over some base scheme~$S$ which does not allow this wild ramification at the boundary.
The rough idea is to consider only \'etale morphisms $Y \to X$ which are tamely ramified (in an appropriate sense) along the boundary $\bar{X}-X$ of a compactification~$\bar{X}$ of~$X$ over~$S$.
The concept of tameness is a valuation-theoretic one.
This makes it more natural to work in the language of adic spaces rather than in the language of schemes.
For an \'etale morphism of adic spaces it is straightforward to define tameness:
An \'etale morphism $\varphi: \cY \to \cX$ is tame at a point $y \in \cY$ with $\varphi(y) = x$ if the valuation on $k(y)$ corresponding to~$y$ is tamely ramified in the finite separable field extension $k(y)|k(x)$.
Note that these valuations neither need to be discrete nor of rank one.
In this context tameness of $k(y)|k(x)$ is defined by requiring that the extension of the strict henselizations $k(y)^{\sh}|k(x)^{\sh}$ be of degree prime to the residue characteristic of the corresponding valuation rings.
Defining coverings to be the surjective tame morphisms, we obtain the tame site~$\cX_t$ for every adic space~$\cX$.
In addition, we define the strongly \'etale site~$\cX_{\set}$ by replacing \lq\lq tame\rq\rq with \lq\lq unramified\rq\rq.

This construction also provides a tame site for a scheme~$X$ over a base scheme~$S$ by associating with $X \to S$ the adic space $\Spa(X,S)$ (see \cite{Tem11}) and considering the tame site $\Spa(X,S)_t$.
Note that $\Spa(X,S)$ is not an analytic adic space:
If $X = \Spec A$ and $S = \Spec R$ are affine, we have $\Spa(X,S) = \Spa(A,A^+)$, where~$A^+$ is the integral closure of the image of~$R$ in~$A$ and~$A$ is equipped with the \emph{discrete} topology.
The adic space $\Spa(X,S)$ should not be thought of an analytification of $X/S$ but rather as a means of encoding the essential information on $X \to S$ in the language of adic spaces.
We call adic spaces which are locally of this type discretely ringed.

Of course, tameness is not a new concept in algebraic geometry.
Several approaches have been made to define the notion of a tame covering space of a scheme over a base scheme.
These are summarized and compared in \cite{KeSch10}.
Having a notion of tameness for covering spaces we can define the corresponding tame fundamental group.
In \cref{tamefundamentalgroup} we show that the fundamental group of the tame site coincides with the curve-tame fundamental group constructed in \cite{Wie08}, see also \cite{KeSch10}.

Also in other respects the tame site  behaves the way it should:
For an \'etale torsion sheaf with torsion away from the characteristic, the tame cohomology groups coincide with the \'etale cohomology groups.
If $X \to S$ is proper, the tame cohomology groups of $\Spa(X,S)$ coincide with the \'etale cohomology groups for all \'etale sheaves (see \cref{comparisonwithetalecohomology}).

Having established these rather straightforward comparison results, we move on to prove our first big theorem concerning the tame site, namely absolute cohomological purity for constant sheaves in characteristic $p > 0$ (see \cref{purity2}):
Let~$S$ be an excellent quasi-compact, quasi-separated scheme of characteristic $p > 0$ and $X$ a regular scheme which is separated and essentially of finite type over~$S$.
Assume that resolution of singularities holds over~$S$.
Under these assumptions~$X$ admits a regular compactification $\bar{X} \to S$ and we have
\[
  H^i(\Spa(X,S)_t,\Z/p\Z) \cong H^i(\bar{X}_{\et},\Z/p\Z).
\]
As a by-product we obtain that $H^i(\bar{X}_{\et},\Z/p\Z)$ is independent of the choice of compactification.
Purity immediately implies that under the hypothesis of resolution of singularities the tame cohomology groups $H^i(\Spa(X,S)_t,\Z/p\Z)$ are homotopy invariant for regular schemes~$X$ of finite type over~$S$ (see \cref{homotopyinvariance}).

In order to prove the purity theorem we examine the Artin-Schreier sequence
\[
 0 \to \Z/p\Z \longrightarrow \GG_a^+ \longrightarrow \GG_a^+ \to 0,
\]
on $\Spa(X,S)_t$, where~$\GG_a^+$ is the sheaf defined by $\GG_a^+(Z) = \cO_Z^+(Z)$.
It reduces us to the study of the cohomology of $\GG_a^+$.
The core of the argument is to establish in the course of Sections~\ref{cohomologydiscrete} to \ref{tamecohomology} the following chain of isomorphisms
\begin{equation} \label{isomorphism_chain}
 H^i(S,\cO_S) \cong H^i(\Spa(X,S),\GG_a^+) \cong H^i(\Spa(X,S)_{\set},\GG_a^+) \cong H^i(\Spa(X,S)_t,\GG_a^+).
\end{equation}
In \cref{cohomologydiscrete} we prove the left hand isomorphism.
This is where we use resolution of singularities to construct a basis of the topology of $\Spa(X,S)$ consisting of open subspaces of the form $\Spa(U,Y)$, where~$Y$ is regular and $U \subseteq Y$ an open subscheme.
Another important ingredient is the vanishing of the higher direct images of the structure sheaf under a projective birational morphism of regular schemes proved in \cite{ChRu15}.

In \cref{stronglyetalecohomology} we show the middle isomorphism in (\ref{isomorphism_chain}).
In preparation to this we examine in \cref{PrueferHuber} Pr\"ufer Huber pairs, i.e. Huber pairs $(A,A^+)$ such that $A^+ \to A$ is a Pr\"ufer extension.
Pr\"ufer Huber pairs are important in the study of the cohomology groups of $\GG_a^+$ because $\GG_a^+$ is acyclic on the adic spectra of Pr\"ufer Huber pairs.

The final step is the comparison of the strongly \'etale with the tame cohomology of $\GG_a^+$, i.e., the right hand isomorphism in (\ref{isomorphism_chain}).
More precisely, we show in \cref{tamecohomology} that for any noetherian, discretely ringed or analytic adic space~$\cX$ we have natural isomorphisms
\[
 H^i(\cX_{\set},\GG_a^+) \overset{\sim}{\longrightarrow} H^i(\cX_t,\GG_a^+)
 \]
for all $i \geq 0$. \\
\\
{\bf Acknowledgments}
 First of all I am grateful to Alexander Schmidt, whose idea it was to tackle the construction of a tame site.
 He provided me with many insights concerning the properties a tame site should satisfy and was a persistent critic of my ideas.
 I would like to thank Giulia Battiston and Johannes Schmidt for helpful preliminary discussions about the definition of the tame site.
 My thanks also go to Johannes Ansch\"utz who directed my attention to adic spaces.
 Finally I want to thank the referee for his/her helpful comments that led to many improvements.

\section{Background on adic spaces} \label{section_adic_spaces}

To fix notation let us briefly recall from \cite{Hu93} and \cite{Hu94} some notions concerning adic spaces.
A \emph{Huber ring} ($f$-adic ring in Huber's terminology) is a topological ring~$A$ such that there exists an open subring $A_0$ carrying the $I$-adic topology for a finitely generated ideal $I \subseteq A_0$.
The ring~$A_0$ is called a \emph{ring of definition} of~$A$ and the ideal~$I$ an \emph{ideal of definition}.
An example of a Huber ring is $\Q_p$ with ring of definition~$\Z_p$ and ideal of definition $p\Z_p$.

An element~$a$ of a Huber ring~$A$ is \emph{power-bounded} if the set $\{a^n \mid n \in \N\}$ is bounded, i.e. for any neighborhood $U \subset A$ of~$0$ there is a neighborhood~$V$ of~$0$ such that
\[
 V\cdot\{a^n \mid n \in \N\} \subseteq U.
\]
An element~$a$ of~$A$ is called \emph{topologically nilpotent} if the sequence $a^n$ converges to~$0$.
Every topologically nilpotent element is power-bounded.
We denote the set of power bounded elements of~$A$ by $A^{\circ}$ and the set of topologically nilpotent elements by $A^{\circ\circ}$.

A \emph{ring of integral elements} of~$A$ is an open, integrally closed subring~$A^+$ of~$A$ that is contained in~$A^{\circ}$.
The rings of integral elements are precisely the integrally closed subrings~$A^+$ of~$A$ such that
\[
 A^{\circ\circ} \subseteq A^+ \subseteq A^{\circ}.
\]
A \emph{Huber pair} (affinoid ring in Huber's terminology) is a pair $(A,A^+)$ consisting of a Huber ring~$A$ and a ring of integral elements $A^+ \subseteq A$.

Given a Huber pair $(A,A^+)$ we define its \emph{adic spectrum}
\[
 \cX = \Spa(A,A^+) = \{\text{continuous valuations $v:A \to \Gamma \cup \{0\}$} \mid v(a) \le 1~\forall\,a \in A^+\}.
\]
Notice that we write valuations multiplicatively.
Furthermore, for an element $x \in \cX$ we write $f \mapsto |f(x)|$ for the valuation corresponding to~$x$.

For $f_1,\ldots,f_n,g \in A$ such that the ideal of~$A$ generated by $f_1,\ldots,f_n$ is open, we define the \emph{rational subset} $R\big(\frac{f_1,\ldots,f_n}{g}\big)$ of~$\cX$ by
\[
 R\big(\frac{f_1,\ldots,f_n}{g}\big) = \{x \in \cX \mid |f_i(x)| \le |g(x)| \ne 0~\forall\,i=1,\ldots,n\}.
\]
It is the adic spectrum of the Huber pair
\[
 (A(\frac{f_1,\ldots,f_n}{g}),A(\frac{f_1,\ldots,f_n}{g})^+),
\]
 where $A ( \frac{f_1,\ldots,f_n}{g})$ is the localization~$A_g$ of~$A$ endowed with the topology defined by the ring of definition $A_0[\frac{f_1}{g},\ldots,\frac{f_n}{g}]$
 and the ideal of definition $IA_0[\frac{f_1}{g},\ldots,\frac{f_n}{g}]$ and $A(\frac{f_1,\ldots,f_n}{g})^+$ is the integral closure of $A^+[\frac{f_1}{g},\ldots,\frac{f_n}{g}]$ in $A(\frac{f_1,\ldots,f_n}{g})$.
We endow~$\cX$ with the topology generated by the rational subsets as above.

On the topological space~$\cX$ we can define a presheaf~$\cO_{\cX}$ of complete topological rings (complete always comprises Hausdorff) such that for any rational subset $R\big(\frac{f_1,\ldots,f_n}{g}\big)$ of~$\cX$ we have
\[
 \cO_{\cX}(R\big(\frac{f_1,\ldots,f_n}{g}\big)) = A \langle \frac{f_1,\ldots,f_n}{g} \rangle,
\]
the latter ring being the completion of $A(\frac{f_1,\ldots,f_n}{g})$.
In particular,
\[
 \cO_{\cX}(\cX) = \hat{A}.
\]
Furthermore, there is a subpresheaf~$\cO_{\cX}^+$ of~$\cO_{\cX}$ with
\[
 \cO_{\cX}^+(R\big(\frac{f_1,\ldots,f_n}{g}\big)) = A \langle \frac{f_1,\ldots,f_n}{g} \rangle^+.
\]
We say that a Huber pair $(A,A^+)$ is \emph{sheafy} if the corresponding presheaf~$\cO_{\cX}$ on $\cX = \Spa(A,A^+)$ is a sheaf.
In this case we speak of the \emph{structure sheaf}~$\cO_{\cX}$.
If~$\cO_{\cX}$ is a sheaf,~$\cO_{\cX}^+$ is a sheaf, as well.
The Huber pair $(A,A^+)$ is known to be sheafy in the following cases:
\begin{enumerate}
 \item $\hat{A}$ has a noetherian ring of definition over which~$\hat{A}$ is finitely generated.
 \item $A$ is a strongly noetherian Tate ring.
 \item The topology of~$A$ is discrete.
\end{enumerate}
The main problem is caused by completion being not exact in general.
In cases~(1) and~(2) the sheaf property is a non-trivial result that has been shown in \cite{Hu94}, Theorem~2.2.
Case~(1) relies on the observation that for a noetherian adic ring~$A$, every homomorphism of finite~$A$-modules is strict and case~(2) reduces to Tate's acyclicity theorem.
In case~(3), however, due to the topology being discrete, there are no issues with completion.
In fact, the structure sheaf~$\cO_{\cX}$ can be identified with the pullback of the structure sheaf~$\cO_{\Spec A}$ on $\Spec A$ along the natural morphism
\[
\supp: \Spa(A,A^+) \longrightarrow \Spec A
\]
that maps a valuation to its support.
This is a continuous morphism as the preimage of a fundamental open $D(g) = \{\p \in \Spec A \mid g \notin \p\}$ of $\Spec A$ is the rational subset $R\big(\frac{g}{g}\big)$ of $\Spa(A,A^+)$.
It is also open as it maps a rational subset $R\big(\frac{f_1,\ldots,f_n}{g}\big)$ to $\Spec A_g$.
Therefore, it is easy to compute the presheaf pullback $\supp^{-1}(\cO_{\Spec A})$ and check that the sheaf condition is satisfied using that $\cO_{\Spec A}$ is a sheaf.

Throughout this article we will only consider Huber pairs satisfying one of the above conditions.
Mostly we will be concerned with Huber pairs of type~(3), where we may assume the topology of~$A$ to be discrete.

An adic space is a triple $(\cX,\cO_{\cX},(v_x)_{x \in \cX})$, where
\begin{itemize}
 \item $\cX$ is a topological space,
 \item $\cO_{\cX}$ is a sheaf of complete topological rings whose stalks are local rings,
 \item for every $x \in \cX$, $v_x$ is an isomorphism class of valuations on~$\cO_{\cX,x}$ whose support is the maximal ideal of~$\cO_{\cX,x}$,
\end{itemize}
which is locally isomorphic to $\Spa(A,A^+)$ for a sheafy Huber pair $(A,A^+)$.

Unfortunately, closed subsets of adic spaces do not carry the structure of an adic space in general.
Therefore, following \cite{Hu96}, \S 1.10, we define \emph{prepseudo-adic spaces} to be pairs $\cX = (\underline{\cX},|\cX|)$, where~$\underline{\cX}$ is an adic space and~$|\cX|$ is a subset of (the underlying topological space of)~$\underline{\cX}$.
The subset~$|\cX|$ of~$\underline{\cX}$ is called \emph{convex} if for any chain of specializations
\[
 x_1 \rightsquigarrow x_2 \rightsquigarrow x_3
\]
in~$\underline{\cX}$ such that $x_1$, $x_3 \in |\cX|$, it follows that $x_2 \in |\cX|$.
Moreover, $|\cX|$ is \emph{pro-constructible} if it is closed in the constructible topology of~$\underline{\cX}$ and \emph{locally pro-constructible} if it is pro-constructible in an open subset of~$\underline{\cX}$.
A prepseudo-adic space $\cX$ is called \emph{pseudo-adic space} if~$|\cX|$ is convex and locally pro-constructible.
In particular, any closed subset~$\cZ$ of an adic space~$\cY$ defines a pseudo-adic space.
If~$\cY$ is an adic space and~$\cZ$ is a subset of~$\cY$, we often use the same letter~$\cZ$ to denote the prepseudo-adic space $(\cY,\cZ)$.

For the present work it would not be essential to work with the more general pseudo-adic spaces instead of just adic spaces.
The only pseudo-adic spaces appearing naturally in the proof of cohomological purity are of the form $(\Spa(k,k^+),\{s\})$, where~$k$ is a field, $k^+$ a valuation ring of~$k$ and~$s$ the closed point of $\Spa(k,k^+)$.
We could deal with these objects without introducing the notion of a pseudo-adic space.
However, in subsequent work we plan to treat constructible sheaves and base change theorems.
We expect that even if we are only interested in adic spaces in the end, we will have to deal with pseudo-adic spaces.
For this reason many results in the first half of the present article are formulated for pseudo-adic spaces for future reference.

In this article we will be especially interested in adic spaces that are locally of the form $\Spa(A,A^+)$ where~$A$ carries the discrete topology.
We call this type \emph{discretely ringed} adic spaces.
An important construction that produces discretely ringed adic spaces is described in \cite{Tem11}, \S~3.1.
Starting with a morphism of schemes $X \to S$, Temkin constructs an adic space $\Spa(X,S)$.
The points of $\Spa(X,S)$ are triples $(x,R,\phi)$, where~$x$ is a point of~$X$,~$R$ is a valuation ring of~$k(x)$ and $\phi : \Spec R \to S$ is a morphism compatible with $\Spec k(x) \to S$.
In case $S$ is separated,~$\phi$ is uniquely determined (if it exists) by $(x,R)$.
The topology of $\Spa(X,S)$ is generated by the subsets $\Spa(X',S')$ of $\Spa(X,S)$ coming from commutative diagrams
\[
 \begin{tikzcd}
  X'	\ar[r]	\ar[d]	& X	\ar[d]	\\
  S'	\ar[r]			& S
 \end{tikzcd}
\]
with $X' \to X$ an open immersion and $S' \to S$ separated and of finite type.
This construction is compatible with Huber's definition of the adic spectrum given in \cite{Hu93}:
If $X = \Spec A$ and $S = \Spec A^+$ are affine and the homomorphism $A^+ \to A$ is injective with integrally closed image, $\Spa(X,S)$ coincides with Huber's $\Spa(A,A^+)$
 (where~$A$ is equipped with the discrete topology).

Pulling back the structure sheaf of~$X$ via the support morphism
\[
 \supp : \cX := \Spa(X,S) \to X, \quad (x,R,\phi) \mapsto x
\]
we obtain a sheaf of rings~$\mathcal{O}_{\cX}$ on $\cX = \Spa(X,S)$ making $\cX$ a locally ringed space with
\[
 \cO_{\cX,(x,R,\phi)} = \cO_{X,x}.
\]
For each point $z = (x,R,\phi)$ denote by $v_z$ the equivalence class of valuations on $k(x)$ corresponding to~$R$.
We obtain a discretely ringed adic space $(\cX,\O_{\cX},(v_z \mid z \in \cX))$.
Checking functoriality yields:

\begin{lemma}
 The above assignment defines a functor
 \begin{align*}
  \Spa : \{\textit{morphisms of schemes}\}	& \longrightarrow	 \{\textit{discretely ringed adic spaces}\}	\\
  (X \to S)					& \mapsto		 (\cX=\Spa(X,S),\O_{\cX},(v_z \mid z \in \cX)).
 \end{align*}
 mapping morphisms of affine schemes to affinoid adic spaces.
\end{lemma}

Where no confusion can arise we write $\Spa(X,S)$ for the adic space
\[
 (\cX=\Spa(X,S),\cO_{\cX},(v_z \mid z \in \cX)).
\]
An important property is the following observation.

\begin{lemma} \label{properisomorphismonSpa}
 Let $X \to S'$ be a morphism of schemes and $S' \to S$ a proper morphism of schemes.
 Then
 \[
  \Spa(X,S') \cong \Spa(X,S).
 \]
\end{lemma}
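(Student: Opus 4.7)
The plan is to show that the natural map $\Phi \colon \Spa(X,S') \to \Spa(X,S)$ induced by postcomposition with $S' \to S$ is an isomorphism of adic spaces. I would verify this in three steps: bijectivity on underlying sets, matching of valuations and structure sheaves, and matching of topologies. In all three the valuative criterion of properness for $S' \to S$ does the work.

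For bijectivity on points, I use Temkin's description of points of $\Spa(X,S)$ as triples $(x,R,\phi)$ with $x \in X$, $R$ a valuation ring of $k(x)$, and $\phi \colon \Spec R \to S$ extending the composition $\Spec k(x) \to X \to S$. For such a triple, the composition $\Spec k(x) \to X \to S'$ together with $\phi$ fits into the commutative square
\[
 \begin{tikzcd}
  \Spec k(x) \ar[r] \ar[d] & S' \ar[d] \\
  \Spec R \ar[r,"\phi"'] & S
 \end{tikzcd}
\]
and the valuative criterion of properness for $S' \to S$ supplies a unique diagonal $\phi' \colon \Spec R \to S'$. The assignment $(x,R,\phi) \mapsto (x,R,\phi')$ is a two-sided inverse to $\Phi$ on underlying sets. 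Since $\Phi$ commutes with the two support morphisms $\Spa(X,?) \to X$ and the valuation $v_z$ depends only on $(x,R)$, the map $\Phi$ also respects valuations and structure sheaves, the latter both being defined as the pullback of $\cO_X$ along the support.

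For the topology, recall that a basis is given by the subsets $\Spa(X', S'')$ attached to diagrams with $X' \to X$ an affine open immersion and $S'' \to S$ (resp.\ $S'$) affine of finite type. I check two things. First, for any basic open $\Spa(X', T) \subseteq \Spa(X,S')$, the image $\Phi(\Spa(X', T))$ coincides with $\Spa(X', T) \subseteq \Spa(X,S)$, where $T \to S$ denotes the composite $T \to S' \to S$ (which is again affine of finite type since $S' \to S$ is proper): the inclusion $\Phi(\Spa(X',T)) \subseteq \Spa(X',T)$ is immediate, and the reverse inclusion uses uniqueness of the valuative lift, since whenever $\phi \colon \Spec R \to S$ factors through $T$, the unique lift $\phi' \colon \Spec R \to S'$ automatically factors through $T$ as well. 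Hence $\Phi$ is open. Second, given a basic open $\Spa(X', S'') \subseteq \Spa(X,S)$, its preimage under $\Phi$ equals $\bigcup_\alpha \Spa(X', T_\alpha)$, where $(T_\alpha)$ is any affine open cover of $S'' \times_S S'$ by opens of finite type over $S'$; such a cover exists since $S'' \times_S S' \to S''$ is proper, hence qcqs. This shows $\Phi$ is continuous.

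The main obstacle is the topological comparison; both halves of it rely on properness in an essential way, through uniqueness of the valuative lift for openness and through base change compatibility (together with quasi-compactness) for continuity. Once these are in place the rest is a matter of unwinding Temkin's definitions.
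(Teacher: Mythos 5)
Your proposal is correct, but it takes a more hands-on route than the paper. The paper's proof is two lines: it cites the fact (part of Temkin's $\Spa$-construction) that when $S' \to S$ is separated and of finite type, the natural map $\Spa(X,S') \to \Spa(X,S)$ is already an open immersion; then properness of $S' \to S$ gives surjectivity via the existence half of the valuative criterion, and an open immersion which is surjective is an isomorphism. You instead verify the homeomorphism directly: you prove bijectivity (using both existence and uniqueness in the valuative criterion), observe that the structure sheaves and valuations match because they are both pulled back along the support map, and then check openness and continuity by matching up basic opens explicitly. The two small verifications you need there are sound: for openness, a basic open $\Spa(X',T) \subseteq \Spa(X,S')$ maps onto the basic open $\Spa(X',T) \subseteq \Spa(X,S)$ (the composite $T \to S$ is still of finite type and $T$ is still affine, and uniqueness of the valuative lift gives the reverse inclusion); for continuity, the preimage of $\Spa(X',S'')$ is covered by the $\Spa(X',T_\alpha)$ for an affine open cover $(T_\alpha)$ of $S'' \times_S S'$ -- this works because $\Spec R$ is local, so it lands inside a single $T_\alpha$, and because properness of $S'' \times_S S' \to S''$ makes the fiber product quasi-compact. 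The trade-off is clear: the paper's route is much shorter but depends on a previously established open-immersion lemma for $\Spa$, whereas yours is self-contained at the cost of redoing the point-set topology. Both ultimately rest on the valuative criterion in exactly the same way.
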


\begin{proof}
 As $S' \to S$ is of finite type and separated, the natural morphism $\Spa(X,S') \to \Spa(X,S)$ is an open immersion.
 In order to check surjectivity, take a point $(x,R,\phi) \in \Spa(X,S)$.
 The morphism $\phi: \Spec R \to S$ lifts (uniquely) to a morphism $\phi': \Spec R \to S'$ by the valuative criterion for properness.
 Hence, $(x,R,\phi')$ is a preimage in $\Spa(X.S')$ of $(x,R,\phi)$.
\end{proof}

We have two natural morphisms of locally ringed spaces attached to a morphism of schemes $\pi: X \to S$ that will appear throughout the article.
The first one is the support morphism
\[
 \supp: (\cX=\Spa(X,S),\cO_{\cX}) \to (X,\cO_X)
\]
whose underlying morphism of topological spaces is the one mentioned earlier, that sends $(x,R,\phi)$ to~$x$.
On the level of structure sheaves it is tautological because~$\cO_{\cX}$ is the pullback of~$\cO_X$ by definition.
The second morphism is the center morphism
\[
 (c,c^+): (\cX = \Spa(X,S),\cO^+_{\cX}) \to (S,\cO_S).
\]
The morphism $c$ sends $(x,R,\phi)$ to the image of the closed point of $\Spec R$ under the map $\phi: \Spec R \to S$.
It is continuous as the preimage of an open subset $S' \subseteq S$ is the open subset $\Spa(X \times_S S',S')$ of $\Spa(X,S)$.
In order to define the corresponding homomorphism of sheaves $c^+: \cO_S \to c_*\cO_{\cX}^+$, we first note that $c_*\cO_{\cX}$ is naturally identified with $\pi_*\cO_X$ as $\cO_{\cX} = \supp^{-1}\cO_X$.
Hence, the homomorphism $\cO_S \to \pi_* \cO_X$ induces a functorial homomorphism
\[
 \cO_S \to c_*\cO_{\cX}.
\]

\begin{lemma}
 The homomorphism $\cO_S \to c_*\cO_{\cX}$ factors through $c_*\cO_{\cX}^+$.
\end{lemma}

\begin{proof}
 It is equivalent to show that the adjoint homomorphism $c^{-1}\cO_S \to \cO_{\cX}$ factors through~$\cO_{\cX}^+$.
 It suffices to check this for affinoid opens $\Spa(A,A^+)$ of~$\cX$ and the presheaf pullback $c^p\cO_S$.

 The sections $c^p\cO_S(\Spa(A,A^+))$ are given as the colimit of $\cO_S(S')$ over all commutative diagrams
 \begin{equation} \label{presheafpullback}
  \begin{tikzcd}
   \Spa(A,A^+)		\ar[r]		\ar[d,open]	& S'	\ar[d,open]	\\
   \cX = \Spa(X,S)	\ar[r,"c"]				& S
  \end{tikzcd}
 \end{equation}
 with~$S'$ an open subscheme of~$S$:
 \[
  c^p\cO_S(\Spa(A,A^+)) = \colim_{S'} \cO_S(S').
 \]
 The homomorphism $c^p\cO_S(\Spa(A,A^+)) \to \cO_{\cX}(\Spa(A,A^+))$ is the colimit of the homomorphisms
 \[
  \begin{tikzcd}
   \cO_S(S')	\ar[r]	& \cO_{\cX}(\Spa(X \times_S S',S'))	\ar[r]	\ar[d,dash,shift left=.5]	\ar[d,dash,shift right=.5]	& \cO_{\cX}(\Spa(A,A^+))	\ar[d,dash,shift left=.5]	\ar[d,dash,shift right=.5]	\\
			& \cO_X(X \times_S S')		\ar[r]									& A.
  \end{tikzcd}
 \]
 We want to show that $\cO_S(S') \to A$ factors through
 \[
  A^+ = \{ a \in A \mid |a(x)| \leq 1~\forall x \in \Spa(A,A^+)\}.
 \]
 Let~$x \in \Spa(A,A^+)$.
 By the commutativity of diagram~(\ref{presheafpullback}), the valuation of~$A$ corresponding to~$x$ has center on~$S'$, which is equivalent to saying that~$|b(x)| \le 1$ for all $b \in \cO_S(S')$.
 This implies the claim.
\end{proof}

The resulting map
\[
 \cO_S \to c_*\cO_{\cX}^+
\]
is the homomorphism~$c^+$ we wanted to define.
It will play a crucial role when computing the cohomology of~$\cO_{\cX}^+$ in \cref{cohomologydiscrete}.
Moreover, it is handy for explaining the connection of $\Spa(X,S)$ with compactifications of~$X$ over~$S$.
For the rest of the section assume that $\pi: X \to S$ is a separated morphism of qcqs schemes.
Recall from \cite{Tem11}, \S~2.1 that an $X$-modification of~$S$ is a factorization
\[
 X \overset{\pi_i}{\longrightarrow} S_i \overset{g_i}{\longrightarrow} S
\]
of $\pi: X \to S$ into a schematically dominant morphism~$\pi_i$ and a proper morphism~$g_i$.
If $\pi$ is an open immersion with dense image, these are just the usual modifications of~$S$ outside~$X$.
By \cref{properisomorphismonSpa} we can identify all the spaces $\Spa(X,S_i)$ with $\cX:=\Spa(X,S)$.
The $X$-modifications of~$S$ form a cofiltered inverse system compatible with the center maps
\[
 (c_i,c_i^+) : (\cX,\cO_{\cX}^+) \longrightarrow (S_i,\cO_{S_i}).
\]
The limit of the $(S_i,\cO_{S_i})$ exists in the category of locally ringed spaces and is called the \emph{relative Riemann-Zariski space} $\fX = \RZ_X(S)$ of $\pi: X \to S$ with its sheaf of regular functions~$\cO_{\fX}$ (see \cite{Tem11}, Remark~2.1.1).
In a similar manner as for classical Riemann-Zariski spaces, the relative Riemann-Zariski space $\fX$ also has a valuation theoretic description.
With the above definitions we can phrase it in the following way.
There exists an embedding $\RZ_X(S) \to \Spa(X,S)$ of locally ringed spaces such that the composition
\[
 (\fX =\RZ_X(S),\cO_{\fX}) \to (\cX = \Spa(X,S),\cO^+_{\cX}) \to (\fX,\cO_{\fX})
\]
is the identity (where the latter morphism is the limit of the center maps).
The image of the first morphism consists of all points $(x,R,\phi)$ that do not admit a non-trivial horizontal specialization (i.e. a specialization that has the same center in every~$S_i$).
We call these points \emph{Riemann-Zariski points}.
The above stated assertions are not at all trivial and in fact are the main results of \cite{Tem11}.
Morally one should think of $\RZ_X(S)$ as a deformation retract of~$\Spa(X,S)$.

If $\pi:X \to S$ is of finite type, $\RZ_X(S)$ is the limit over all compactifications of~$X$ over~$S$.
Having moreover an interpretation in terms of valuations, it seems well suited for the definition of a tame site.
The main reason why we chose to work with $\Spa(X,S)$ instead of $\RZ_X(S)$ (even though it has many more points) is that $\Spa(X,S)$ has better functorial properties.
For a commutative square
\[
 \begin{tikzcd}
  Y	\ar[r]	\ar[d]	& X	\ar[d]	\\
  T	\ar[r]			& S
 \end{tikzcd}
\]
of schemes, the definition of the associated morphism $\Spa(Y,T) \to \Spa(X,S)$ is very natural.
However, it does not take Riemann-Zariski points to Riemann-Zariski points, in general.
In order to obtain a morphism $\RZ_Y(T) \to \RZ_X(S)$, we have to consider the composition
\[
 \RZ_Y(T) \to \Spa(Y,T) \to \Spa(X,S) \to \RZ_X(S),
\]
which is quite a complicated construction.

%
% For any Huber pair $(A,A^+)$ there is a morphism of locally ringed spaces
% \[
%  \Supp : \Spa(A,A^+) \to \Spec A
% \]
% mapping a valuation~$v$ in $\Spa(A,A^+)$ to its support in $\Spec A$.
% If~$A$ is complete and either discrete or has a noetherian ring of definition, the support morphism is surjective.

\section{The strongly \'etale and the tame site} \label{section_set_t}

Recall from \cite{Hu96}, Definition~1.6.5 i) that a morphism of adic spaces $\cY \to \cX$ is \'etale if it is locally of finite presentation and if, for any Huber ring $(A,A^+)$, any ideal~$I$ of~$A$ with $I^2 = \{0\}$, and any morphism $\Spa(A,A^+) \to \cX$, the mapping
\[
 \Hom_{\cX}(\Spa(A,A^+),\cY) \to \Hom_{\cX}(\Spa(A,A^+)/I,\cY)
\]
is bijective.
In order to obtain a ring theoretic description we make the following definition.

\begin{definition}
 A homomorphism of Huber pairs $(A,A^+) \to (B,B^+)$ is \emph{\'etale} if it is algebraically of finite type and $A \to B$ is \'etale (in the classical ring theoretic sense).
\end{definition}

If the topology of~$A$ is discrete, this boils down to $B/A$ being \'etale and $B^+$ being the integral closure in~$B$ of a finite type $A^+$-subalgebra of~$B$.
Locally, an \'etale homomorphism of adic spaces is isomorphic to a morphism $\Spa(B,B^+) \to \Spa(A,A^+)$, coming from an \'etale morphism $(A,A^+) \to (B,B^+)$ of Huber pairs (see \cite{Hu96}, Corollary~1.7.3~iii)).

We want to consider \'etale morphisms with an additional valuation theoretic constraint on the residue field extensions.
In order to define them, let us recall a bit of ramification theory for extensions of valued fields.
Remember that we are dealing with general valuations, not just discrete valuations of rank one.
We consider a finite extension of valued fields $L|K$ and fix an algebraic closure~$\bar{L}$ of~$L$ and a prolongation~$\bar{v}$ of the valuation of~$L$ to~$\bar{L}$.
We can compute the strict henselizations $L^{\sh}$ and $K^{\sh}$ with respect to~$\bar{v}$ as subfields of $\bar{L}$.

The extension $L|K$ is called \emph{unramified} if $L^{\sh} = K^{\sh}$.
It is \emph{tame} if the degree of $L^{\sh}|K^{\sh}$ is prime to the residue characteristic.
If $L|K$ is not unramified, we say that it is \emph{ramified} and if it is not tame, it is \emph{wild}.
These notions are independent of the choice of~$\bar{L}$ and~$\bar{v}$.

Unramified extensions $L|K$ of valued fields are closely related to \'etale ring homomorphisms as the following lemma shows.
Because we use it in the proof, we want to remind the reader of the concept of \emph{Pr\"ufer domains}.
They are integral rings such that all of their localizations at some prime ideal are valuation rings.
For us the most crucial property of a Pr\"ufer domain~$A$ is that any $A$-algebra~$B$ that sits between~$A$ and its quotient field~$K$, $A \subseteq B \subseteq K$, is an intersection of localizations~$A_{\p}$ at a prime ideal $\p$ of~$A$.
Moreover, if the overring is a valuation ring, it coincides with the localization of~$A$ at a unique prime ideal~$\p$ (see \cite{Gil92}, Theorem~26.1).

\begin{lemma} \label{unramified_etale}
 Let $L|K$ be a finite extension of valued fields and denote their valuation rings by $L^+$ and~$K^+$, respectively.
 Then $L|K$ is unramified if and only if $L^+$ is the localization of an \'etale $K^+$-algebra.
\end{lemma}

\begin{proof}
 If $L^+$ is the localization of an \'etale $K^+$-algebra, it is clear from the definition that $L|K$ is unramified.
 Suppose that $L|K$ is unramified.
 Then $L^+ \subseteq L^{+\sh} = K^{+\sh}$.
 Since $L|K$ is finite, there is an \'etale $K^+$-subalgebra~$A$ of $L^+$ whose quotient field is~$L$.
 It contains the integral closure of~$K^+$ in~$L$, which is a Pr\"ufer domain.
 Hence, $A$ is a Pr\"ufer domain itself and $L^+$ is a localization of~$A$ (see \cite{Gil92}, Theorem~26.1).
 \end{proof}

In case $L|K$ is Galois, the notions of unramified and tame extensions can also be defined via the action of the Galois group $G_{L|K}$.
Recall that the decomposition group of~$G_{L|K}$ with respect to the valuation $v_L$ of~$L$ is defined as follows
\[
 D_{L|K} = \{g \in G_{L|K} \mid v_L \circ g = v_L\}.
\]
It can also be interpreted as the Galois group of the extension of the henselizations $L^h|K^h$.
By the definition of the decomposition group, we obtain an induced action of~$D_{L|K}$ on the residue field $\kappa_L$ of the valuation ring~$L^+$ corresponding to~$v_L$.
Inside the decomposition group there is a normal subgroup, the inertia group
\[
 I_{L|K} = \{g \in D_{L|K} \mid gx = x~\forall x \in \kappa_L\}.
\]
If the characteristic~$p$ of~$\kappa_L$ is positive, the inertia group has a unique $p$-Sylow subgroup, the wild inertia group~$R_{L|K}$, also known as ramification group.
By \cite{Ray70}, Chapitre~X, Th\'eor\`eme~1, $L|K$ is unramified if and only if the inertia group~$I_{L|K}$ is trivial.
It is tame if and only if the wild inertia group~$R_{L|K}$ is trivial (see \cite{EP2005} Theorem~5.3.3)

\begin{definition}
  A morphism of prepseudo-adic spaces $f: \cY \to \cX$ is called \emph{strongly \'etale} (resp. \emph{tame}) at a point $y \in |\cY|$ if~$f$ is \'etale at~$y$ and the extension on residue fields $k(y)|k(f(y))$ is tamely ramified with respect to the valuation~$|\cdot(y)|$.
  The morphism~$f$ is called strongly \'etale (resp. tame) if~$f$ is so at every point of~$\cY$.
  A homomorphism of Huber pairs $(A,A^+) \to (B,B^+)$ is \emph{strongly \'etale} (resp. \emph{tame}) if it is \'etale and $\Spa(B,B^+) \to \Spa(A,A^+)$ is strongly \'etale (resp. tame).
\end{definition}

Recall that an affinoid field is a Huber pair $(k,k^+)$ such that~$k$ is a field, $k^+$ a valuation ring of~$k$, and the topology of~$k$ is either discrete or induced by the valuation corresponding to~$k^+$.
If the topology of~$k$ is non-discrete, $k^+$ has a prime ideal~$\p$ of height one and the topology is also generated by the valuation corresponding to~$k^+_{\p}$ (see \cite{Hu96}, Definition~1.1.4).
In particular, $k$ is a non-Archimedean field.
Note that when we say that $(k,k^+)$ is complete, we are referring to completeness with respect to the given topology of~$k$.
In particular, every affinoid field with discrete topology is complete.

\begin{lemma} \label{comparenotionsofetale}
 Let $(k,k^+)$ be a complete affinoid field.
 An \'etale morphism $\Spa(A,A^+) \to \Spa(k,k^+)$ is strongly \'etale if and only if $k^+ \to A^+$ is \'etale.
\end{lemma}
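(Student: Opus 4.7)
The plan is to decompose along the étale structure of $A$, pin down $A^+$ as the unique valuation overring of $k^+$ in each factor using the Henselian property that follows from completeness, and then translate the valuation-theoretic statement of strong étaleness into a prime-by-prime condition on the ring extension $k^+ \to A^+$.

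First, I would invoke that an étale morphism over an affinoid field forces $A$ to be a finite étale $k$-algebra, hence a product $\prod_i L_i$ of finite separable field extensions of $k$. Both sides of the desired equivalence are compatible with this product decomposition, so I may assume $A = L$ is a single finite separable extension. Completeness of $(k,k^+)$ then implies that $k^+$ is Henselian, so the valuation on $k$ associated to $k^+$ admits a unique extension to $L$, whose valuation ring is the integral closure $\widetilde{L^+}$ of $k^+$ in $L$. I would then argue that $A^+ = \widetilde{L^+}$: the inclusion $A^+ \supseteq \widetilde{L^+}$ comes from $A^+$ being integrally closed and containing $k^+$; the reverse inclusion comes from $A^+$ being bounded, since any element of $L$ outside the valuation ring $\widetilde{L^+}$ is unbounded (its reciprocal lies in the maximal ideal).

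With $A^+$ now identified as a valuation ring of $L$, the points of $\Spa(L,A^+)$ biject with the primes $\mathfrak{p}'$ of $A^+$, and those of $\Spa(k,k^+)$ with the primes $\mathfrak{p} = \mathfrak{p}' \cap k^+$ of $k^+$; under the morphism on points, the associated valuation rings are $A^+_{\mathfrak{p}'}$ and $k^+_{\mathfrak{p}}$. Strong étaleness at $\mathfrak{p}'$ therefore reads precisely as unramifiedness of the valuation extension $A^+_{\mathfrak{p}'}/k^+_{\mathfrak{p}}$, i.e.\ trivial ramification index together with separable residue-field extension. Conversely, if $k^+ \to A^+$ is étale as a ring map, then each localization $k^+_{\mathfrak{p}} \to A^+_{\mathfrak{p}'}$ is an étale local extension of valuation rings, which is exactly the valuation-theoretic notion of unramified. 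Hence the backward direction falls out immediately.

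The direction that requires the most care is the forward one, where I must assemble pointwise unramifiedness into a global étale ring map. Flatness of $A^+$ over $k^+$ is automatic (torsion-free modules over a valuation ring are flat), and pointwise unramifiedness provides formal unramifiedness; the delicate point is finite presentation of $A^+$ over $k^+$. For this I would use Hensel's lemma to lift a separable generator of the residue-field extension $\overline{A^+}/\overline{k^+}$ to an element $\alpha \in A^+$ satisfying a monic separable polynomial, and then argue $A^+ = k^+[\alpha]$ by comparing both as local integral extensions of $k^+$ inside $L$. This step is where the Henselian (hence completeness) hypothesis enters most substantively; the rest of the argument is largely bookkeeping around the initial reductions.
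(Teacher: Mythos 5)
The proposal breaks at its central reduction. You claim that completeness of $(k,k^+)$ implies $k^+$ is Henselian, and conclude that the valuation extends uniquely to $L$, so that $A^+$ is a single valuation ring. This is false. In the discretely topologized case (the one that actually arises from $\Spa(X,S)$ for schemes, and which this lemma must cover), \emph{any} field $k$ with \emph{any} valuation ring $k^+$ gives a complete affinoid field, and $k^+$ need not be Henselian: take $k = \mathbb{Q}$, $k^+ = \mathbb{Z}_{(p)}$. Even in the analytic rank-one case, completeness only makes the rank-one ring $k^\circ$ Henselian; a higher-rank $k^+ \subsetneq k^\circ$ is Henselian if and only if the induced valuation on the residue field $k^\circ/k^{\circ\circ}$ is Henselian, which can certainly fail. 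Correspondingly, your supporting argument that ``any element of $L$ outside the valuation ring $\widetilde{L^+}$ is unbounded'' is vacuous in the discrete topology (every subset is bounded) and, in the analytic case, only pins $A^+$ inside $L^\circ$, not inside the smaller ring $\widetilde{L^+}$.

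Once this is corrected, the integral closure $k^+_A$ of $k^+$ in $L$ is a \emph{semi-local} Prüfer domain rather than a single valuation ring, and $A^+$ is not equal to it but is a localization of it, so $\Spec A^+ \to \Spec k^+_A$ is merely an open immersion. Strong étaleness must be checked at each closed point of $\Spa(A,A^+)$ separately, i.e.\ at each maximal ideal of $A^+$, and the translation to ``$k^+ \to A^+$ étale'' is not a localization-by-localization bookkeeping exercise: one has to relate unramifiedness of the valuation extension to étaleness of $\Spec k^+_A \to \Spec k^+$ near the chosen maximal ideal. The paper does this by passing to a Galois closure $K|k$, identifying the decomposition-theoretic inertia group $I_{v'}$ of a lifted valuation with the ring-theoretic inertia group $I_{\m'}$ of the corresponding maximal ideal, and invoking Raynaud's criterion (Théorème X.1 of \cite{Ray70}) which characterizes étaleness of $\Spec k^+_A \to \Spec k^+$ at $\m$ by the condition $I_{\m'} \subseteq \Gal(K|A)$. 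Your proposed Hensel-lifting argument for finite presentation in the forward direction presupposes the single-valuation-ring picture and does not transfer to the semi-local setting, so that step would also need to be replaced by something along these lines.
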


\begin{proof}
 By \cite{Hu96}, Cor.~1.7.3~iii) the ring homomorphism $k \to A$ is \'etale and~$A^+$ is the integral closure of an open subring~$A'$ of~$A$ which is of finite type over~$k^+$.
 (Note that since~$k$ is a field, every \'etale homomorphism $k \to B$ is finite \'etale.
 Hence,~$B$ is automatically complete).
 Therefore, we may assume that $A$ is a field and $k \to A$ is a finite separable field extension.
 
 Then~$A^+$ is a Pr\"ufer domain in~$A$.
 In this case the center map
 \[
  c: \Spa(A,A^+) \to \Spec A^+
 \]
 is an isomorphism.
 Indeed, an inverse is given by mapping $\p \in \Spec A^+$ to $(\eta,A^+_{\p},\phi)$, where~$\eta$ is the unique point of $\Spec A$ and $\phi: \Spec A^+_{\p} \to \Spec A^+$ is the localization morphism.
 
 If $k^+ \to A^+$ is \'etale, it follows from \cref{unramified_etale} that every localization of~$A^+$ at some prime is unramified over the corresponding localization of~$k^+$.
 Since~$c$ is an isomorphism, this means that $\Spa(A,A^+) \to \Spa(k,k^+)$ is strongly \'etale.
 
 Now assume that $\Spa(A,A^+) \to \Spa(k,k^+)$ is strongly \'etale.
 Let $\p$ be a prime ideal of~$A^+$ (corresponding via~$c$ to the point of $\Spa(A,A^+)$ given by the valuation ring $A^+_{\p}$).
 By \cref{unramified_etale}, there is an \'etale $k^+$-algebra~$B$ (contained in~$A$) such that $A^+_{\p} = B_{\p}$.
 Modifying~$B$ we may assume that $A^+ \subseteq B$ and $\Spec B \to \Spec A^+$ is an open immersion.
 This shows that $\Spec A^+ \to \Spec k$ is \'etale at~$\p$.
\end{proof}

Let~$\cX$ be a prepseudo-adic space.
We define the following sites over~$\cX$ called the \emph{strongly \'etale} site~$\cX_{\set}$ and the \emph{tame site}~$\cX_t$:
\begin{itemize}
 \item	The underlying categories of~$\cX_{\set}$ and~$\cX_t$ are the categories of strongly \'etale and tame morphisms $f: \cY \to \cX$, respectively.
 \item	Coverings are families $\{f_i:\cY_i \to \cY\}_{i\in I}$ of strongly \'etale, respectively tame, morphisms such that	
	\[
	 |\cY| = \bigcup_{i \in I} f_i (|\cY_i|).
	\]
\end{itemize}

In order to show that this definition makes sense, we have to convince ourselves that tameness and strong \'etaleness are stable under compositions and base change.
But this follows by combining the corresponding stability  results of \'etaleness (\cite{Hu96}, Proposition~1.6.7) and extensions of valued fields (\cite{EP2005}, \S 5).
In addition, it follows from the same references that a morphism between two objects of~$\cX_{\set}$ (resp. $\cX_t$) is automatically strongly \'etale (resp. tame).

For a morphism of schemes $X \to S$ the \emph{tame site} of $X \to S$ is defined to be the tame site of $\Spa(X,S)$.
Let us explain in what sense this models \'etale morphisms of schemes that are tamely ramified at the boundary.
Assume for simplicity that $X \to S$ is a morphism of finite type of noetherian schemes.
The tame site of $\Spa(X,S)$ is generated by objects of the form $\Spa(Y,T) \to \Spa(X,S)$ induced by commutative squares of schemes
\[
 \begin{tikzcd}
  Y	\ar[r,"f"]	\ar[d]	& X	\ar[d]	\\
  T	\ar[r,"g"]			& S,
 \end{tikzcd}
\]
where $f$ is \'etale and~$g$ is of finite type such that for every $y \in Y$ mapping to $x \in X$ and every valuation $v$ of $k(y)$ with center on~$T$, the field extension $k(y)|k(x)$ is tamely ramified with respect to~$v$.
We may further assume that $Y \to T$ is separated.
Then there exists a compactification $\bar{Y}$ of~$Y$ over~$T$ (see \cite{Con07}).
By the valuative criterion for properness, the natural morphism $\Spa(Y,\bar{Y}) \to \Spa(Y,T)$ is an isomorphism.
We may therefore take $Y \to T$ to be an open immersion.
We can view~$T$ as a sort of partial compactification of~$Y$ over~$S$.
The tameness condition heuristically says that $f$ should be tamely ramified at points of~$T$.
More precisely, tameness at a point $t \in T$ is encoded in the tameness of $\Spa(f,g)$ at all points $(y,R,\phi)$ of $\Spa(Y,T)$ with $c(y,R,\phi) = t$ ($c$ is the center morphism from above).

\section{Openness of the tame locus}

Our aim is to show that the strongly \'etale and the tame locus of an \'etale morphism of adic spaces is open.
The argument is similar to the one for Riemann-Zariski spaces given in \cite{Tem17}.
First we prove that strongly \'etale morphisms are locally of a standardized form just as \'etale morphisms of schemes are locally standard \'etale.
The proof of this statement follows the arguments given in \cite[Tag 00UE]{stacks-project}.

Before we start let us introduce some notation.
For a ring~$A$ with a valuation~$v$ we define the following valuation~$v_1$ on the polynomial ring $A[T]$.
For a polynomial $f(T) = f_nT^n + \ldots + f_0$ we set
\[
 v_1(f(T)) = \max_{i=0,\ldots,n} v(f_i).
\]
It is easy to check that~$v_1$ is indeed a valuation.
The index~$1$ stands for radius~$1$.
There is a more general construction for arbitrary radius but we do not need it here.

\begin{proposition} \label{standardetale}
 Let $\varphi:Y \to X$ be an \'{e}tale morphism of schemes, $y \in Y$ and~$w$ a valuation of $k(y)$.
 Set $x = \varphi(y)$ and $v = w|_{k(x)}$.
 Suppose that~$w$ is unramified in the finite separable field extension $k(y)|k(x)$.
 Then there exists an affine open neighborhood $\Spec A$ of~$x$ and $f,g \in A[T]$ with $f = T^n + f_{n-1} T^{n-1} + \ldots + f_0$ monic and $f'$ a unit in
 $$
 B = \left( A[T]/(f) \right)_g
 $$
  such that $\Spec B$ is isomorphic over~$A$ to an open neighborhood of~$y$
  and $v_1(f(T)) \le 1$ and $w(g) = 1$ (viewing~$g$ as an element of~$B$ and~$w$ as a valuation of~$B$).
\end{proposition}

\begin{proof}
 We may assume that $X = \Spec A$ and $Y = \Spec B$ are affine.
 Denote by $\mathfrak{p} \subseteq A$ and $\mathfrak{q} \subseteq B$ the prime ideals corresponding to~$x$ and~$y$.

 There exists an \'{e}tale ring homomorphism $A_0 \to B_0$ with~$A_0$ of finite type over~$\mathbb{Z}$ and a ring homomorphism $A_0 \to A$ such that $B = A \otimes_{A_0} B_0$.
 Denote the image of $y$ in $\Spec B_0$ by $y_0$ and the restriction of~$w$ to $k(y_0)$ by~$w_0$.
 Then it suffices to prove the lemma for $\Spec B_0 \to \Spec A_0$ and~$(y_0,w_0)$ instead of~$\varphi$ and~$(y,w)$.
 Hence, we may assume that~$A$ is noetherian.

 By Zariski's main theorem there is a finite ring homomorphism $A \to B'$, an $A$-algebra map $\beta: B' \to B$, and an element $b' \in B'$ with $\beta(b') \notin \mathfrak{q}$ such that $B'_{b'} \to B_{\beta(b')}$ is an isomorphism.
 Thus we may assume that $A \to B$ is finite and \'{e}tale at~$\mathfrak{q}$.

 By~\cref{unramified_etale} the valuation ring $\cO_w \subseteq k(y)$ associated with~$w$ is a local ring of an \'etale $\cO_v$-algebra, where~$\cO_v \subseteq k(x)$ is the valuation ring associated with~$v$.
 Hence, there are polynomials $\bar{f},\bar{g} \in \mathcal{O}_v[T]$ with~$\bar{f}$ monic and and
 \begin{equation} \label{f'unit}
 \bar{f}' \in \left( \mathcal{O}_v[T]/(\bar{f}) \right)_{\bar{g}}^{\times}
 \end{equation}
  such that~$\mathcal{O}_w$ is isomorphic over~$\mathcal{O}_v$ to a local ring of $\left( \mathcal{O}_v[T]/(\bar{f}) \right)_{\bar{g}}$.
 Then $v_1(\bar{f}(T)) \le 1$, $v_1(\bar{g}(T)) \le 1$, $w(\bar{g}) = 1$, and the image~$\beta \in \mathcal{O}_w$ of~$T$ generates the field extension $k(\mathfrak{q})|k(\mathfrak{p})$.

 Write
 \begin{equation} \label{decompositionofp}
 B \otimes_A k(\mathfrak{p}) = \prod_{i=1}^n B_i
 \end{equation}
 with local, Artinian rings~$B_i$ such that~$\mathfrak{q}$ corresponds to the maximal ideal of~$B_1$, i.e. $B_1 = B_{\mathfrak{q}}/\mathfrak{p}B_{\mathfrak{q}} = k(\mathfrak{q})$.
 Denote by $\mathfrak{q}_2,\ldots,\mathfrak{q}_n$ the prime ideals of~$B$ corresponding to the maximal ideals of $B_2,\ldots,B_n$, respectively.
 Consider the element
 $$
 \bar{b}  = (\beta,0,\ldots,0) \in \prod_{i=1}^n B_i = B \otimes_A k(\mathfrak{p}).
 $$
 There is~$\lambda \in A$ whose residue class $\bar{\lambda} \in k(\mathfrak{p})$ is non-zero such that $\bar{\lambda}\bar{b}$ lies in the image of~$B$.
 After replacing~$A$ by~$A_{\lambda}$, we may assume that~$\lambda \in A^{\times}$.
 We can thus lift~$\bar{b}$ to an element $b \in B$.

 Let~$I$ be the kernel of the $A$-algebra homomorphism $A[T] \to B$ mapping~$T$ to~$b$.
 Set $B' = A[T]/I$ and denote by~$\mathfrak{q}'$ the preimage of~$\mathfrak{q}$ in~$B'$.
 Then in the same way as in \cite[Tag 00UE]{stacks-project} we obtain~$B'_{\mathfrak{q}'} \cong B_{\mathfrak{q}}$.
 Therefore, we may replace~$B$ by~$B'$ and henceforth assume that
 $$
 B = A[T]/I.
 $$

 The image~$\bar{I}$ of~$I$ in~$k(\mathfrak{p})[T]$ is a principal ideal generated by a monic polynomial~$\bar{h}$.
 According to the decomposition~(\ref{decompositionofp}) we obtain a decomposition of~$\bar{h}$ into monic irreducible factors:
 $$
 \bar{h} = \bar{h}_1 \cdot \bar{h}_2^{e_2} \cdot \ldots \cdot \bar{h}_n^{e_n}.
 $$
 In particular, $\bar{h}_1 = \bar{f}$, which is a separable polynomial.

 Possibly replacing~$A$ by~$A_{\lambda}$ for $\lambda \in A$ as before we can lift~$\bar{h}$ to a monic polynomial $f \in I$.
 Similarly, by (\ref{f'unit}), we can lift some power of $\bar{g} \in k(\mathfrak{p})[T]$ to a polynomial $g \in A[T]$ of the form $g = a_1 f + a_2 f'$ for some $a_1, a_2 \in A[T]$.
 We obtain a surjection
 $$
 \varphi:A[T]/(f) \to B = A[T]/I
 $$
 mapping $g$ to an element~$b$ of~$B \backslash \mathfrak{q}$ with $w(b) = 1$.

 Since $A \to B$ is \'{e}tale at~$\mathfrak{q}$, there is~$b' \in B \backslash \mathfrak{q}$ such that $A \to B_{bb'}$ is \'{e}tale.
 We can find $a' \in A$ such that $v(a') = w(b')$ as $w|v$ is unramified.
 Upon replacing~$A$ by~$A_{a'}$ we may assume that $a' \in A^{\times}$.
 Then $w(bb'/a) = 1$.
 Choose a preimage~$g'$ under~$\varphi$ of $bb'/a'$.
 Then~$\varphi$ induces a  surjection
 $$
 \varphi_{g'} :\left( A[T]/(f) \right)_{g'} \longrightarrow B_{\varphi(g')} = B_{bb'/a'},
 $$
 Since both rings are \'etale over~$A$, $\varphi_{g'}$ is moreover \'etale, hence a localization.
 Modifying~$g'$ further in the same way as above we achieve that~$\varphi_{g'}$ is an isomorphism.
%  By~\cite{MR0158892} we find compactifications~$\bar{X}$ and~$\bar{Y}$ of~$X$ and~$Y$ over~$\mathbb{Z}$ and a morphism $\bar{\varphi}:\bar{Y} \to \bar{X}$ extending~$\varphi$.
%  Since every valuation of~$\mathbb{Z}$ is centered on~$\Spec \mathbb{Z}$ and~$\bar{X}$ and~$\bar{Y}$ are proper over~$\mathbb{Z}$,~$v$ has a center~$c_v \in \bar{X}$ and~$w$ has a center~$c_w \in \bar{Y}$.
\end{proof}

\begin{corollary} \label{adicstandardetale}
 Let $\varphi:\cY \to \cX$ be an \'etale morphism of adic spaces and $y \in \cY$ a point where~$\varphi$ is strongly \'etale.
 Then there exist an affinoid open neighborhood $\Spa(A,A^+)$ of $x := \varphi(y)$, an affinoid open neighborhood~$\cV$ of~$y$, and $f,g \in A[T]$ with $f = T^n + f_{n-1} T^{n-1} + \ldots + f_0$ monic and $f'$ a unit in
 $$
 B = \left( A[T]/(f) \right)_g
 $$
  such that $|f_i(x)| \le 1$, $|g(y)| = 1$ and $\cV$ is $\cX$-isomorphic to $\Spa(B,B^+)$ where~$B^+$ is the integral closure of an open subring of~$B$ which is algebraically of finite type over~$A^+$.
\end{corollary}

\begin{proof}
 We may assume that $\cX = \Spa(R,R^+)$ and $\cY = \Spa(S,S^+)$ are affinoid.
 By \cite{Hu96}, Corollary~1.7.3~iii) \'etale morphisms are locally of algebraically finite type.
 More precisely, for every \'etale morphism $\cZ \to \Spa(R,R^+)$ of affinoid adic spaces there is an \'etale ring map $R \to C$ of finite type and a ring of integral elements $C^+ \subseteq C$ which is the integral closure of a subring of~$C$ of finite type over~$C^+$ such that $\cZ \cong \Spa(S,S^+)$ over $(R,R^+)$.
 Hence, we may assume that $(R,R^+) \to (S,S^+)$ is of algebraically finite type and $R \to S$ is \'etale (in the algebraic sense).
 Denote by $x$ the image point of~$y$ in~$\cX$.
 By \cref{standardetale} there exist an affine open neighborhood $\Spec A$ of~$\supp x \in \Spec R$ and $f,g \in A[T]$ with $f = T^n + f_{n-1} T^{n-1} + \ldots f_0$ monic and $f'$ a unit in
 $$
 B = \left( A[T]/(f) \right)_g
 $$
 such that $\Spec B$ is isomorphic over~$A$ to an open neighborhood of $\supp y$, $|f_i(x)| \le 1$ and $|g(y)| = 1$.

 Set $\cU = \Spa(R,R^+) \times_{\Spec R} \Spec A$.
 This is an open subspace of $\cX = \Spa(R,R^+)$.
 By construction of the fiber product (see \cite{Hu94}, Proposition~3.8),~$\cU$ is glued together from affinoid adic spaces of the form $\Spa(A,A_i^+)$ for $i \in \N$ and where~$A_i^+$ is the integral closure in~$A$ of a finite type $R^+$-subalgebra of~$A$.
 Choose $i \in \N$ such that $x \in \Spa(A,A_i^+)$ and set $A^+ := A_i^+$.
 Similarly, we find an open affinoid neighborhood of~$y$ in $\cV = \Spa(A,A^+) \times_{\Spec A} \Spec B$ of the form $\Spa(B,B^+)$ such that $B^+$ is the integral closure in~$B$ of a finite type $A^+$-subalgebra of~$B$.
 This finishes the proof.
\end{proof}

\begin{corollary} \label{unramified}
 Let $\varphi:\cY \to \cX$ be an \'{e}tale morphism of adic spaces.
 The subset of $\cY$ where $\varphi$ is strongly \'etale is open.
\end{corollary}

\begin{proof}
 Let~$y\in \cY$ be a point where~$\varphi$ is strongly \'etale and set $x = \varphi(y)$.
 By \cref{adicstandardetale} we may assume that $\cX = \Spa(A,A^+)$ and $\cY = \Spa(B,B^+)$ as in the statement of the corollary.
 Then $\varphi$ is strongly \'etale at any point $y' \in \cY$ with $|f_i(\varphi(y'))| \le 1$ and $|g(y')| = 1$.
 Indeed, set $x' = \varphi(y')$ and denote by~$\bar{f}$ and~$\bar{g}$ the residue classes of~$f$ and~$g$ in~$k(x')[T]$.
 We obtain an \'{e}tale ring extension $k(x')^+ \to \left(k(x')^+[T]/(\bar{f})\right)_{\bar{g}}$.
 Since $|g(y')| = 1$, $k(y')^+$ is a localization of $\left(k(x')^+[T]/(\bar{f})\right)_{\bar{g}}$.
 The subset $\{y' \in \cY \mid |f_i(y')| \le 1~\forall i,|g(y')| =1\}$ of $\cY$ is open and thus we are done.
\end{proof}

\begin{corollary} \label{tamelocusopen}
 Let $\varphi:\cY \to \cX$ be an \'{e}tale morphism of adic spaces.
 The subset of $\cY$ where $\varphi$ is tame, is open.
\end{corollary}

\begin{proof}
 We may assume that $\cX = \Spa(A,A^+)$ and $\cY = \Spa(B,B^+)$ are affinoid.
 Let $y \in \cY$ be a point where~$\varphi$ is tame and set $x := \varphi(y)$.
 By a generalization of Abhyankar's lemma (see \cite{GR03}, Corollary~6.2.14) the extension of the strict henselizations $k(y)^{\sh}|k(x)^{\sh}$ takes the following form.
 There are non-zero elements $\bar{a}_1,\ldots,\bar{a}_n \in k(x)^{\sh}$ and integers $m_1,\ldots,m_n$ prime to the residue characteristic of $k(x)^+$ such that
 \[
  k(y)^{\sh} = k(x)^{\sh}[\bar{a}_1^{1/m_1},\ldots,\bar{a}_n^{1/m_n}].
 \]
 We can replace the $\bar{a}_i$'s by any other elements of $k(x)^{\sh}$ as long as their valuation stays the same.
 Therefore, we may assume that $\bar{a}_i \in k(x)$.
 Let~$m$ be the least common multiple of the $m_i$'s.
 Then any lift to $k(x)[\mu_m,\sqrt[m]{\bar{a}_1},\ldots,\sqrt[m]{\bar{a}_n}]$ of the valuation corresponding to~$x$ is unramified in
 $$
 k(x)[\mu_m,\sqrt[m]{\bar{a}_1},\ldots,\sqrt[m]{\bar{a}_n}] \otimes_{k(x)} k(y)~\big|~k(x)[\mu_m\sqrt[m]{\bar{a}_1},\ldots,\sqrt[m]{\bar{a}_n}].
 $$
 We may choose the~$\bar{a}_i$ as images of some~$a_i \in A$.
 Replacing~$\Spa (A,A^+)$ by a rational open neighborhood of~$x$ we may further assume that~$a_i \in A^{\times}$ and that~$m$ is invertible on~$\Spec A^+$.
 The ring homomorphism
 \[
 A \to A' := A[T_0,T_1,\ldots,T_n] \big/ (\Phi_m(T_0),T_1^m - a_1,\ldots,T_n^m - a_n),
 \]
 where $\Phi_m$ denotes the $m$-th cyclotomic polynomial, is finite \'etale.
 Set $\cX' :=\Spa (A',A'^+)$ where~$A'^+$ is the integral closure of~$A^+$ in~$A'$.
 Then $\cX' \to \cX$ is tame.
 Moreover,
 \[
 \cY' := \cY \times_{\cX} \cX' \to \cX'
 \]
 is strongly \'etale at any lift of $x$ to~$\cX'$.
 Fix such a lift $x' \in \cX'$.
 We find a point $y' \in \cY'$ lying over $x'$ as well as $y$ (\cite{Hu96}, Corollary~1.2.3 iii) d)).
 Denote by~$\varphi'$ the morphism $\cY' \to \cX'$ and by~$\psi$ the morphism $\cX' \to \cX$.
 By \cref{unramified} there is an open neighborhood $\cV' \subseteq \cY'$ of $y'$ such that $\cV' \to \cX'$ is strongly \'etale.
 Then $\cV' \to \cX$ is tame.
 Since \'etale morphisms are open (\cite{Hu96}, Proposition~1.7.8), the image~$\cV$ of~$\cV'$ in $\cY$ is an open neighborhood of~$y$ and moreover, $\cV \to \cX$ is tame.
\end{proof}

\section{Limits of adic spaces} \label{limits}

In \cite{Hu96}, \S~2.4 Huber defines the notion of a projective limit of adic spaces:
Let~$\mathcal{A}$ be the category of quasi-compact, quasi-separated pseudo-adic spaces with adic morphisms.
We consider a functor~$p$ from a cofiltered category~$I$ to~$\mathcal{A}$ and write~$\cX_i$ for $p(i)$.
Let $c: I \to \mathcal{A}$ be the constant functor to some object~$\cX$ of~$\mathcal{A}$ and
\[
 \varphi : c \to p,	\quad i \mapsto (\varphi_i : \cX \to \cX_i)
\]
a morphism of functors.
We say that~$\cX$ is a projective limit of the~$\cX_i$ and write
\[
 \varphi : \cX \sim \lim_i \cX_i
\]
if the following conditions are satisfied:
\begin{enumerate}
 \item Denote by $\lim_i |\cX_i|$ the projective limit in the category of topological spaces.
       Then the natural mapping
       \[
        \psi: |\cX| \to \lim_i |\cX_i|
       \]
       induced by~$\varphi$ is a homeomorphism.
 \item For every $x \in |\cX|$, there is an affinoid open neighborhood~$\cU$ of~$x$ such that the subring
       \[
        \bigcup_{(i,\cV)} \im(\underline{\varphi}_i^* : \cO_{\underline{\cX}_i}(\cV) \to \cO_{\underline{\cX}}(\cU))
       \]
       of $\cO_{\underline{\cX}}(\cU)$ is dense where the union is over all pairs $(i,\cV)$ with $i \in I$ and~$\cV$ an open subset of~$\underline{\cX}_i$ with $\underline{\varphi}_i(\cU) \subseteq \cV$.
\end{enumerate}

In this situation we have the following proposition (\cite{Hu96}, Proposition~2.4.4):

\begin{proposition} \label{limitetaletopos}
 Let
 \[
  \tilde{\varphi} : \tilde{\cX}_{\et} \times I \to (\tilde{\cX}_{i,\et})_{i \in I}
 \]
 be the morphism of topoi fibered over~$I$ which is induced by the $\tilde{\varphi}_i : \tilde{\cX}_{\et} \to \tilde{\cX}_{i,\et}$.
 Assume that $\varphi : \cX \sim \lim_i \cX_i$.
 Then $(\tilde{\cX}_{\et},\tilde{\varphi})$ is a projective limit of the fibered topos $(\tilde{\cX}_{i,\et})_{i \in I}$.
\end{proposition}

In order to prove this proposition Huber proceeds as follows:
For each $i \in I$ denote by $\cX_{i,\et,f.p.}$ the restricted \'etale site,
 i.e. the site consisting of those objects in $\cX_{i,\et}$ whose structure morphisms are quasi-compact and quasi-separated (\cite{Hu96}, (2.3.12)).
The topos associated with the projective limit site $\zerounderset{\rightarrow}{\cX}$ of the fibered site $(\cX_{i,\et,f.p.})_{i \in I}$
 is isomorphic to the projective limit of the fibered topos $(\tilde{\cX}_{i,\et})_{i \in I}$.
Moreover, $\tilde{\cX}_{\et}$ is isomorphic to the topos associated with the site $\cX_{\et,g}$ which is defined as follows (\cite{Hu96}, Remark~2.3.4~ii)):
The objects are the \'etale morphisms to~$\cX$ and the morphisms $\cY \to \cZ$ are the equivalence classes of $\cX$-morphisms $\cY' \to \cZ$
 where~$\cY'$ is an open subspace of~$\cY$ with~$|\cY'| = |\cY|$ and two morphisms are equivalent if they coincide on an open subspace~$\cV$ of~$\cY$ with $|\cV| = |\cY|$.
There is a natural morphism of sites
\[
 \lambda : \cX_{\et,g} \to \zerounderset{\rightarrow}{\cX}
\]
for which Huber proves that the conditions in the following proposition (\cite{Hu96}, Corollary~A.5) are satisfied:

\begin{proposition} \label{toposisomorphic}
 Let $f: C \to C'$ be a morphism of sites.
 The induced morphism of topoi $\tilde{f}: \tilde{C} \to \tilde{C}'$ is an equivalence if~$f$ satisfies the following conditions.
 \begin{enumerate}[\rm (a)]
  \item In~$C'$ there exist finite projective limits and $f^{-1}$ commutes with these.
  \item Every~$X \in \ob(C)$ has a covering $(X_i \to X)_{i \in I}$ in~$C$ such that every~$X_i \in \ob(C)$ lies in the image of the functor~$f^{-1}$.
  \item A family $(X_i \to X)_{i \in I}$ of morphisms in~$C'$ is a covering in~$C'$ if $(f^{-1}(X_i) \to f^{-1}(X))_{i \in I}$ is a covering in~$C$.
  \item For every $X \in \ob(C)$, $Y \in \ob(C')$ and $(\varphi : X \to f^{-1}(Y)) \in \mor(C)$, there exist a covering $(\psi_i : X_i \to X)$ of~$X$ in~$C$,
         and, for every $i \in I$ a~$Y_i \in \ob(C')$, a $(\tau_i : Y_i \to Y) \in \mor(C')$ and a $(\varphi_i : X_i \to f^{-1}(Y_i)) \in \mor(C)$
         such that, for every $i \in I$ the diagram in~$C$
         \[
          \begin{tikzcd}
           X_i	\ar[r,"\varphi_i"]	\ar[d,"\psi_i"']	& f^{-1}(Y_i)	\ar[d,"f^{-1}(\tau_i)"]	\\
           X	\ar[r,"\varphi"']				& f^{-1}(Y)
          \end{tikzcd}
         \]
         commutes and $\varphi_i : X_i \to f^{-1}(Y_i)$ is an epimorphism and a covering of $f^{-1}(Y_i)$ in~$C$.
 \end{enumerate}
\end{proposition}

We are now going to prove an analogue of \cref{limitetaletopos} for the tame and the strongly \'etale topos:

\begin{proposition}
 In the situation of \cref{limitetaletopos} the topos $(\tilde{\cX}_{\set},\tilde{\varphi})$ is a projective limit of the fibered topos $(\tilde{\cX}_{i,\set})_{i \in I}$
  and $(\tilde{\cX}_t,\tilde{\varphi})$ is a projective limit of the fibered topos $(\tilde{\cX}_{i,t})_{i \in I}$.
\end{proposition}

\begin{proof}
 We check that the strongly \'etale and tame analogues~$\lambda_{\set}$ and~$\lambda_t$ of $\lambda$ satisfy the conditions of \cref{toposisomorphic}:

 (a) is true because $\cX_{\set}$ and~$\cX_t$ have fiber products and a terminal object.

 (b) Let $\cZ \to \cX$ be strongly \'etale.
     In particular, it is \'etale.
     In the proof of \cref{limitetaletopos} Huber constructs an open covering $\cZ = \bigcup_{j \in J} \cZ_j$
      such that~$\cZ_j$ is $\cX$-isomorphic to an open subspace of $\cY_i \times_{\cX_i} \cX$ for some $i \in I$ (depending on~$j$) and $\psi_i: \cY_i \to \cX_i$ in $\cX_{i,\et,f.p.}$
      with $|\cZ_j| = |\cY_i \times_{\cX_i} \cX|$.
     We have to find $k \to i$ in~$I$ such that
     \[
      \psi_k : \cY_k := \cY_i \times_{\cX_i} \cX_k \to \cX_k
     \]
     is strongly \'etale.
     By \cref{unramified} for every~$k \to i$ the set of points in $|\cY_k|$ where $\psi_k$ is \emph{not} strongly \'etale is closed, hence compact in the constructible topology
      (note that~$|\cY_k|$ is locally pro-constructible by the definition of a pseudo-adic space and quasi-compact as $|\cX_k|$ is quasi-compact and $\cY_k \to \cX_k$ is qcqs).
     Therefore, its image~$D_k$ in~$|\cX_k|$ is compact in the constructible topology of~$|\cX_k|$.
     We write~$D^c_k$ for the set~$D_k$ equipped with the constructible topology.
     For $a:k \to k'$ denote by
     $$
     u_a: \underline{\cX}_k \to \underline{\cX}_{k'}
     $$
     the transition map and by
     $$
     u_k : \underline{\cX} \to \underline{\cX}_k
     $$
     the natural projection.
     Then~$u_a$ and~$u_k$ are continuous for the constructible topology by~\cite{Hu93}, Proposition~3.8~(iv).
     Since the property of being strongly \'etale is stable under base change,
     $$
     u_a(D_k) \subseteq D_{k'}.
     $$
     Furthermore, the assumption that $\cZ \to \cX$ is strongly \'etale implies that
     $$
     \lim_{k \to i} D^c_k = \bigcap_{k \to i} u_k^{-1}(D^c_k) = \varnothing.
     $$
     Since the projective limit of nonempty compact spaces is nonempty, there is $k \to i$ such that $D^c_k = \varnothing$.
     In other words $\cY_k \to \cX_k$ is strongly \'etale.
     The proof for the tame topology is the same except for using \cref{tamelocusopen} instead of \cref{unramified}.

 (c) is obvious by the corresponding statement for the \'etale site and the proof for~(d) is the same as for the \'etale site.
\end{proof}

\begin{corollary} \label{cohomologyoflimit}
 In the situation of \cref{limitetaletopos} assume that~$i_0 \in I$ is a final object.
 Let~$\mathcal{F}_0$ be a sheaf of abelian groups on $\cX_{i_0,\set}$.
 For $i \in I$ denote by~$\mathcal{F}_i$ its pullback to~$\cX_{i,\set}$ and by~$\mathcal{F}$ its pullback to~$\cX_{\set}$.
 Then the natural map
 $$
 \colim_{i \in I} H^p(\cX_{i,\set},\mathcal{F}_i) \longrightarrow H^p(\cX_{\set},\mathcal{F})
 $$
 is an isomorphism for all $p \geq 0$.
 Moreover, the analogous statement holds for the tame site.
\end{corollary}

\begin{corollary}
 Let~$\cS$ be an adic space and $\tau \in \{\et,t,\set\}$.
 In the situation of \cref{limitetaletopos} assume that~$\cX_i$ are adic spaces over~$\cS$ with compatible quasi-compact quasi-separated structure morphisms $g_i:\cX_i \to \cS$.
 We write $g:\cX \to \cS$ for the resulting morphism.
 For every $i \in I$ let~$\mathcal{F}_i$ be an abelian sheaf on~$(\cX_i)_{\tau}$ and for all $\alpha:i\to j$ let $\varphi_{\alpha} :\alpha^*\mathcal{F}_j \to \mathcal{F}_i$ be compatible transition morphisms.
 Denote by~$\mathcal{F}$ the sheaf $\colim_I \varphi_i^*\mathcal{F}_i$.
 Then for all $p \ge 0$
 \[
  R^pg_*\mathcal{F} \cong \colim_I R^pg_{i,*}\mathcal{F}_i.
 \]
\end{corollary}

\section{Localizations}

\subsection{Local adic spaces}

\begin{definition}
 \item A Huber pair $(A,A^+)$ is \emph{local} if $A$ and~$A^+$ are local, $A^+$ is the preimage of a valuation ring~$k_A^+$ of the residue field~$k_A$ of~$A$, and the maximal ideal~$\m^+$ of~$A^+$ is open and bounded.
 \item A homomorphism of local Huber pairs $(A,A^+) \to (B,B^+)$ is \emph{local} if $A \to B$ and $A^+ \to B^+$ are local ring homomorphisms.
\end{definition}

For a local Huber pair $(A,A^+)$ the maximal ideal~$\m_A$ of~$A$ is contained in~$A^+$ and is indeed a prime ideal of~$A^+$.
Moreover, $A^+/\m_A$ is the valuation ring~$k_A^+$ mentioned in the definition and $A^+_{\m_A} = A$.

\begin{remark}
 The concept of a local Huber pair is closely related to the theory of $I$-valuative rings introduced in \cite{FuKa18}, Chapter~0, \S~8.7.
 A ring~$B$ is $I$-valuative for an ideal $I \subseteq B$ if the $I$-adic topology on~$B$ can be defined by a finitely generated ideal of definition and in addition every finitely generated ideal containing a power of~$I$ is invertible.
 Without loss of generality one can always replace~$I$ by a finitely generated ideal of definition.
 If~$B$ is local and $I$-valuative for a finitely generated ideal~$I$, this automatically implies that~$I$ is principal.
 
 Recall from \cite{Hu96}, Definition~1.1.4, that a valuation ring~$\cO$ with quotient field~$K$ is microbial if it has a prime ideal of height~$1$.
 Equivalently, $K$ equipped with the valuation topology has a topologically nilpotent unit $u$.
 In this case~$\cO$ is $u$-valuative.
 Let us say that a local Huber pair $(A,A^+)$ is \emph{microbial} if the valuation ring $k^+ = A^+/\m_A$ is microbial.
 
 For a microbial Huber pair $(A,A^+)$ and $a \in A^+$ such that its residue class in $k$ is topologically nilpotent, the ring~$A^+$ is $(a)$-valuative.
 Conversely, suppose that we start with an $I$-valuative local ring~$B$ with~$I$ finitely generated, hence principal, $I = (b)$.
 Then $\p = \bigcap_n I^n$ is a prime ideal of~$B$ and $(B_{\p},B)$ is a microbial local Huber pair.
 These constructions are inverse to each other up to replacing~$I$ by some other finitely generated ideal that defines the same topology (compare \cite{FuKa18}, Chapter~0, Theorem~8.7.8).
\end{remark}

\begin{lemma} \label{ideals_local_Huber_pair}
 Let $(A,A^+)$ be a local Huber pair.
 For every ideal~$\mathfrak{a}$ of~$A^+$ we either have $\mathfrak{a} \subseteq \m_A$ or $\m_A \subseteq \mathfrak{a}$.
 In case $\m_A \subsetneq \mathfrak{a}$ and~$\mathfrak{a}$ is finitely generated, it is even principal.
\end{lemma}

\begin{proof}
 Suppose $\mathfrak{a} \nsubseteq \m_A$.
 So there is $a \in \mathfrak{a}$ with $a \notin \m_A$.
 In particular, $a$ is a unit in~$A$.
 Let~$m \in \m_A$.
 Then $m':= a^{-1}m \in \m_A$ as~$\m_A$ is an ideal in~$A$.
 This implies $m = am' \in \mathfrak{a}$.
 
 For the second assertion let $x \in \Spa(A,A^+)$ be the point corresponding to the valuation ring~$k_A^+$ of~$k_A$.
 From a finite set of generators of~$\mathfrak{a}$ pick one, let us call it $a$, with maximal valuation $|a(x)|$.
 Since $\m_A \subsetneq \mathfrak{a}$, $a$ is not contained in $\m_A$, i.e., $|a(x)| > 0$ and~$a$ is a unit in~$A$.
 For another element~$b$ of~$\mathfrak{a}$, we have
 \[
  |a^{-1}b(x)| \le |a^{-1}a(x)| = 1.
 \]
 Hence, $a':=a^{-1}b$ is an element of~$A^+$ and $b = a'a \in aA^+$.
\end{proof}

\begin{lemma} \label{ring_of_definition}
 For a local Huber pair $(A,A^+)$, $A^+$ is a ring of definition.
\end{lemma}

\begin{proof}
 Let~$A_0$ be a ring of definition and $I \subseteq A_0$ an ideal of definition.
 Then $I \subseteq \m^+$ as~$\m^+$ is open.
 By the boundedness of~$\m^+$ and as $IA^+ \subseteq \m^+$, we can find $n \in \N$ with $(IA^+)^n \subseteq I$.
 In total we have
 \[
  (IA^+)^n \subseteq I \subseteq IA^+,
 \]
 i.e., the topology of~$A^+$ is $IA^+$-adic.
\end{proof}

\begin{lemma} \label{completion_local}
 Let $(A,A^+)$ be a local Huber pair.
 Then also its completion $(\hat{A},\hat{A}^+)$ is local.
\end{lemma}

\begin{proof}
 Suppose first that the maximal ideal~$\m_A$ of~$A$ is not open.
 Let~$I \subseteq A^+$ be an ideal of definition ($A^+$ is a ring of definition by \cref{ring_of_definition}).
 Then \cref{ideals_local_Huber_pair} implies $\m_A \subsetneq I^n$ for all $n \in \N$.
 The completion of $(A,A^+)$ thus factors through $(k_A,k_A^+)$, where the topology coincides with the valuation topology.
 The completion of a valuation ring is again a valuation ring, whence the assertion in this case.
 
 If~$\m_A$ is open, $I \subseteq \m_A$.
 In this case~$A$ carries the $IA$-adic topology because $IA \subseteq \m_A \subseteq A^+$ and thus
 \[
  (IA)^2 \subseteq IA^+ \subseteq IA.
 \]
 The completion~$\hat{A}$ of~$A$ is again local and~$\hat{A}^+$ contains the maximal ideal~$\m_{\hat{A}}$ of~$\hat{A}$ since this is true at every finite level $A^+/(IA)^n \subseteq A/IA$.
 Moreover,
 \[
  \hat{A}^+/\m_{\hat{A}} = A^+/\m_A = k_A^+
 \]
 is a valuation ring.
 Finally, $\m_{\hat{A}}$ is open and bounded by construction.
 \end{proof}

\begin{lemma} \label{local_strongly_etale}
 Let $(A,A^+)$ be a local Huber pair and $(A,A^+) \to (B,B^+)$ \'etale such that $B = B^+ \otimes_{A^+} A$.
 If $\Spa(B,B^+) \to \Spa(A,A^+)$ is strongly \'etale, then $A^+ \to B^+$ is \'etale.
\end{lemma}

\begin{proof}
 By assumption $A \to B$ is \'etale and by \cref{comparenotionsofetale} also $A^+/\m \to B^+/\m B^+$ is \'etale.
 In particular, both morphisms are flat and of finite presentation and thus \cite{Tem11}, Lemma~2.3.1 implies that $A^+ \to B^+$ is flat and of finite presentation
  (the flatness is a consequence of the flattening result by Raynaud and Gruson \cite{RG71}, Theorem~5.2.2).
 Let us show that $A^+ \to B^+$ is unramified, i.e. that $\Omega^1_{B^+/A^+} = 0$.
 Since $A^+/\m \to B^+/\m B^+$ is unramified, $\Omega^1_{B^+/A^+} \otimes_{A^+} A^+/\m = 0$.
 It remains to show that $\m \Omega^1_{B^+/A^+} = 0$.
 But the isomorphism $\m \cong \m \otimes_{A^+} A$ induces an isomorphism
 \[
  \m \Omega^1_{B^+/A^+} \cong \m(\Omega_{B^+/A^+} \otimes_{A^+} A)
 \]
 and $\Omega^1_{B^+/A^+} \otimes_{A^+} A = 0$ as $A \to B$ is unramified.
\end{proof}

\begin{remark}
 In case $(A,A^+)$ is microbial, the assertion of \cref{local_strongly_etale} also follows from \cite{FuKa18}, Chapter~0, Proposition~8.7.12.
\end{remark}

\begin{lemma} \label{localaffinoid}
 An adic space~$\cX$ is the spectrum of a local Huber pair if and only if~$\cX$ has a unique closed point~$x$ and any other point specializes to~$x$.
\end{lemma}

\begin{proof}
 Suppose that every point of~$\cX$ specializes to~$x$.
 Then every affinoid open neighborhood of~$x$ must contain all points of~$\cX$.
 Hence $\cX = \Spa(A,A^+)$ for some Huber pair $(A,A^+)$, which we can moreover assume to be complete.
 Let~$\m \subseteq A$ denote the support of~$x$.
 Suppose there is a maximal ideal $\m' \subseteq A$ different from~$\m$.
 By \cite{Hu94}, Lemma~1.4 there is a point $y \in \Spa(A,A^+)$ whose support is~$\m'$.
 But~$y$ does not specialize to~$x$, hence~$A$ is local with maximal ideal~$\m$.

 Let~$a$ be an element of~$A$ which is not contained in~$A^+$.
 We want to show that~$a$ is a unit in~$A$ and $1/a \in A^+$.
 Then we are done by \cite{KneZha}, Theorem~2.5.
 Let~$A_a^+$ denote the integral closure of $A^+[1/a]$ in~$A_a$.
 Then
 \[
 R(\frac{1}{a}) = \Spa(A_a,A_a^+)
 \]
 is a rational subset of~$\cX$.
 Since $a \notin A^+$, there is $y \in \cX$ with $|a(y)| > 1$.
 But~$y$ specializes to~$x$ and thus $|a(x)| > 1$.
 This implies $x \in \Spa(A_a,A_a^+)$, so $\Spa(A_a,A_a^+) = \cX$.
 Moreover, $a \notin \m = \supp x$, i.e., $a \in A^{\times}$.
 For any point $z \in X$ we have $|(1/a)(z)| \le 1$.
 But
 \[
  A^+ = \{b \in A | |b(z)| \le 1~\forall z \in \cX\},
 \]
 whence $1/a \in A^+$.
\end{proof}

In view of the lemma we say that a pseudo-adic space~$\cX$ is \emph{local} if~$\underline{\cX}$ is the adic spectrum of a local Huber pair and the closed point of~$\underline{\cX}$ is contained in~$|\cX|$.

\subsection{Henselian adic spaces}

\begin{definition}
 A Huber pair $(A,A^+)$ is \emph{henselian} if it is local and~$A^+$ is henselian.
\end{definition}

In this subsection, we collect some properties of henselian Huber pairs.

\begin{proposition} \label{local_henselian}
 A local Huber pair $(A,A^+)$ is henselian if and only if both~$A$ and $k_A^+ = A^+/\m_A$ are henselian.
\end{proposition}

\begin{proof}
 Without loss of generality we may assume that the topology of $A$ is discrete.
 Suppose~$A^+$ is henselian.
 Being a quotient of~$A^+$, it is clear that $k_A^+$ is henselian.
 In order to show that~$A$ is henselian, consider an \'etale homomorphism $\varphi: A \to B$ together with a maximal ideal $\m_B$ of~$B$ over~$\m_A$ with trivial residue field extension.
 We need to show that~$\varphi$ has a section.
 Upon localizing~$B$ we may assume that~$\m_B$ is the only prime ideal of~$B$ lying over~$\m_A$.
 
 Let~$B^+$ be the preimage of~$k_A^+$ under the projection
 \[
  B \to B/\m_B \cong A/\m_A = k_A.
 \]
 Then $(B,B^+)$ is a Huber pair and the valuation ring $k_A^+ \hookrightarrow B/\m_B$ determines a closed point $y \in \Spa(B,B^+)$ lying over the closed point $x$ of $\Spa(A,A^+)$.
 Consider the multiplicative subset $S = A^+ \setminus \m_A$ of~$A^+$.
 Since $(A,A^+)$ is local, we know that $A = S^{-1}A^+$ and from the definition of~$B^+$ we conclude that $B = S^{-1}B^+$.
 
 Let us show that~$B^+$ is a finitely generated $A^+$-algebra.
 Let $b_1,\ldots,b_n$ be generators of~$B$ as an $A$-algebra.
 We claim that $B^+ = A^+[b_1,\ldots,b_n]$.
 Since $B = S^{-1}B^+$, we may choose the $b_i$ to be contained in~$B^+$.
 Every element of~$B^+$ can be written as a sum of an element of~$A^+$ and an element of~$\m_B$.
 But $\m_B = \m_A B$ as $A \to B$ is \'etale and~$\m_B$ is the only prime ideal over~$\m_A$.
 Therefore, we are reduced to showing that every element of the form $mb$ with $m \in \m_A$ and $b \in B$ is contained in $A^+[b_1,\ldots,b_n]$.
 By assumption $b = P(b_1,\ldots,b_n)$ for some polynomial $P \in A[T_1,\ldots,T_n]$.
 Then $mb = mP(b_1,\ldots,b_n)$ and the coefficients of $mP$ are contained in $\m_A \subseteq A^+$, which is what we needed.

 Next we check that for every point~$y'$ of $\Spa(B,B^+)$, the characteristic soubgroup $c\Gamma_{x'} \subseteq \Gamma_{x'}$ of the image point $x' \in \Spa(A,A^+)$ generates $c\Gamma_{y'}$ as a convex subgroup of $\Gamma_{y'}$.
 Let $b \in B$ with $|b(y')| > 1$.
 We write $b = b^+/s$ with $b^+ \in B^+$ and $s \in S$.
 Then
 \[
  1 < |b(y')| = |(1/s)(y')| \cdot |b^+(y')| \le |(1/s)(y')|= |(1/s)(x')| \in c\Gamma_{x'},
 \]
 which proves the claim.
 
 Let us now show that $\Spa(B,B^+) \to \Spa(A,A^+)$ is strongly \'etale.
 We already know it is \'etale and by \cref{unramified} it is enough to show it is strongly \'etale at the closed points.
 Let $y' \in \Spa(B,B^+)$ be a closed point mapping to $x' \in \Spa(A,A^+)$.
 Then $c\Gamma_{y'} = \Gamma_{y'}$.
 By the last paragraph, $c\Gamma_{x'} = \Gamma_{x'}$, i.e., there are no horizontal specializations.
 Since $(A,A^+)$ is local, this means that either~$x'$ is a trivial valuation or the support of~$x'$ equals~$\m_A$.
 In the first case the strongly \'etale condition at~$y'$ is automatically satisfied.
 In the second case it follows as the residue field extension $k(\m_B)|k(\m_A)$ is trivial.
 
 We can thus apply \cref{local_strongly_etale} to conclude that $\varphi^+: A^+ \to B^+$ is \'etale (note that $B = S^{-1}B^+ = B^+ \otimes_{A^+} A$).
 Let~$\m^+_B$ be the kernel of the surjection
 \[
  B^+ \to B^+/\m_B \cong k_A^+ \to A^+/\m_A^+.
 \]
 It is a maximal ideal of~$B^+$ lying over~$\m_A^+$ whose residue field extension is trivial.
 Since~$A^+$ is henselian, $\varphi^+$ has a section $\sigma^+ : B^+ \to A^+$.
 But then $\sigma = \sigma^+ \otimes_{B^+} B$ is a section to~$\varphi$.
 
 Now assume that~$A$ and~$k_A^+$ are henselian.
 Let $\varphi^+: A^+ \to B^+$ be an \'etale homomorphism together with a maximal ideal~$\m_B^+$ of~$B^+$ over~$\m_A^+$ with trivial residue extension.
 The base change of~$\varphi^+$ to $k_A^+$ has a section~$\sigma_k$ as~$k_A^+$ is henselian.
 It maps the generic point of $\Spec k_A^+$ to some point $x \in \Spec B^+$ with trivial residue field extension.
 Let
 \[
  \varphi: A \to B = B^+ \otimes_{A^+} A
 \]
 be the base change of~$\varphi^+$.
 Then $\Spec B \subseteq \Spec B^+$ and $x \in \Spec B$ corresponds to a maximal ideal~$\m_B$ of~$B$ lying over~$\m_A$ with trivial residue extension.
 Since~$A$ is henselian, $\varphi$ admits a section $\sigma: B \to A$ such that $\sigma^{-1}(\m_A) = \m_B$.
 Let us check that~$\sigma$ maps~$B^+$ to~$A^+$.
 Consider the following cocartesian diagram with surjective rows.
 \[
  \begin{tikzcd}
   B	\ar[r]							& B \otimes_{A^+} k_{A^+} = B/\m_A B		\ar[r]						& B/\m_B \cong k_A	\\
   B^+	\ar[r]	\ar[u,hookrightarrow]	& B^+ \otimes_{A^+} k_{A^+} = B^+/\m_A B	\ar[r,"\sigma_k"]	\ar[u,hookrightarrow]	& k_A^+	\ar[u,hookrightarrow].
  \end{tikzcd}
 \]
 We learn from this diagram that the kernel of the lower horizontal map is $\m_B \cap B^+$.
 Moreover every element of~$B^+$ can be written in the form $m + a^+$ with $m \in \m_B$ and $a^+ \in A^+$.
 In particular, this shows $\sigma(B^+) \subseteq A^+$.
 The resulting homomorphism $\sigma^+ : B^+ \to A^+$ is a section to~$\varphi^+$.
\end{proof}

\begin{remark}
 \begin{enumerate}
  \item The first half of the proof of \cref{local_henselian} can be simplified considerably by resorting to the results of \cref{section_Laurent}.
		We start with an \'etale homomorphism $A \to B$ and $\m_B \in \Spec B$ over~$\m_A$ with trivial residue field extension.
		Defining~$B^+$ to be the integral closure of~$A^+$ in~$B$ we obtain an \'etale morphism $\Spa(B,B^+) \to \Spa(A,A^+)$ and a point $y \in \Spa(B,B^+)$ with support~$\m_B$ lying over the closed point $x$ of $\Spa(A,A^+)$ with trivial residue field extension.
		By \cref{unramified} and \cref{basisofneighborhoods}, there is an affinoid open neighborhood $\Spa(C,C^+) \subseteq \Spa(B,B^+)$ such that $(A,A^+) \to (C,C^+)$ is Cartesian (i.e., $C \cong C^+ \otimes_{A^+} A$) and strongly \'etale.
		Then we proceed as in the proof of the proposition concluding that $A^+ \to C^+$ is \'etale and constructing a section.
  \item In case the valuation ring $k^+ = A^+/\m_A$ has finite rank~$n$, the results of \cref{local_henselian} can also be deduced from \cite{FuKa18}, Chapter~0, Proposition~8.7.13.
		Assume that~$A$ and~$k^+$ are henselian.
		For $i=0,\ldots,n$ let~$\p_i$ be the prime ideal of~$A^+$ which is the preimage of the prime ideal of~$k^+$ of height~$i$.
		We start by choosing an element $a \in \p_1$ that is not contained in $\p_0 = \m_A$.
		Then~$A^+$ is $a$-valuative (see \cite{FuKa18}, Chapter~0, Theorem~8.7.8) and Proposition~8.7.13 in loc.\,cit. implies that $A^+_{\p_1}$ is henselian.
		If we already know that $A^+_{\p_{i-1}}$ is henselian, we can deduce by the same argument and using an element $a \in \p_i \setminus \p_{i-1}$ that $A^+_{\p_i}$ is henselian.
		So by induction we obtain that $A^+$ is henselian.
		The converse direction is similar.
 \end{enumerate}

\end{remark}

\begin{lemma} \label{max_id_integral}
 Let $A$ be a henselian local ring and $A \to B$ an integral ring homomorphism of local rings.
 Then the maximal ideal~$\m_B$ of~$B$ is integral over~$\m_A$, i.e., for every element $b \in \m_B$, there is a monic polynomial $P \in A[T]$ whose non-leading coefficients are in~$\m_A$ such that $P(b) = 0$.
\end{lemma}

\begin{proof}
 Let $b \in \m_B$ and
 \[
  P(T) = T^n + a_{n-1}T^{n-1} + \ldots + a_0
 \]
 a monic polynomial in $A[T]$ of minimal degree such that $P(b) = 0$.
 Assume that one of the coefficients $a_i$ is not contained in~$\m_A$ and let~$j$ be minimal with $a_j \notin \m_A$.
 Because $P(b) = 0$ and $b \in \m_B$, we have $a_0 \in \m_B \cap A = \m_A$, whence $j > 0$.
 Over the residue field~$k_A$ of~$A$ we thus obtain a decomposition of the reduction of~$P$ into two coprime factors:
 \[
  \bar{P}(T) = T^j(T^{n-j} + \bar{a}_{n-1}T^{n-j-1} + \ldots + \bar{a_j}).
 \]
 By Hensel's lemma, it lifts to a decomposition of~$P$ into monic polynomials:
 \[
  P(T) = P_1(T)P_2(T).
 \]
 The constant term of~$P_2$ is congruent to~$a_j$ modulo~$\m_A$, hence a unit in~$A$.
 Since $b \in \m_B$, this implies that $P_2(b)$ is a unit in~$B$.
 Combining this information with $P(b) = 0$, we conclude that $P_1(b) = 0$, in contradiction to the minimality of the degree of~$P$.
\end{proof}

\begin{lemma} \label{factorization_finite}
 Let $R \to S$ be a finite ring homomorphism and assume that $\Spec R$ and $\Spec S$ are connected.
 Then $R \to S$ factors as
 \[
  R \to T \to S
 \]
 with $\Spec T$ connected, $R \to T$ finite and faithfully flat, and $T \to S$ surjective. 
\end{lemma}

\begin{proof}
 Let $s_1,\ldots,s_n$ be generators of~$S$ as an $R$-algebra.
 For each $i = 1,\ldots,n$ we choose a monic polynomial $P_i \in R[X]$ with image $\bar{P}_i \in S[X]$ such that $\bar{P}_i(s_i) = 0$.
 Then we obtain a factorization
 \[
  R \to T':=R[X_1,\ldots,X_n]/(P_1(X_1),\ldots,P_n(X_n)) \to S.
 \]
 The left hand map is finite and flat and the right hand map is surjective.
 Since $\Spec S$ is connected, the image of $\Spec S \to \Spec T'$ lies entirely in one connected component $X$ of $\Spec T'$.
 
 Let us recall the scheme structure of the connected component~$X$ (compare \cite{Lazard67}, \S~4).
 Let $\p$ be a prime ideal of~$T'$ such that the corresponding point of $\Spec T'$ is contained in~$X$.
 The set~$M$ of idempotents of~$T'$ that are not contained in~$\p$ is partially ordered by divisibility, $e \le e'$ if and only if $e | e'$.
 Then
 \[
  X = \Spec \left( \colim_{e \in M} T'_e \right) = \Spec \left( \colim_{e \in M} T'/(1-e)T' \right),
 \]
 where we have used the canonical identification $T'_e = T'/(1-e)T'$.
 On the one hand we have $T:= \colim_{e \in M} T'_e = M^{-1}T'$, so $T' \to T$ is a localization.
 On the other hand $\colim_{e \in M} T'/(1-e)T' = T'/I$, where~$I$ is the ideal generated by all idempotents $1-e$ with $e \in M$.
 This shows that $T' \to T$ is surjective and exhibits~$X$ as a closed subscheme of $\Spec T'$.
 
 Let us now show that the factorization $R \to T \to S$ has the required properties.
 Clearly, the ring homomorphism $T \to S$ is again surjective.
 Moreover, the composition $R \to T' \to T$ is finite and flat since, as we have noted above, $T' \to T$ is a surjective localization.
 By the connectedness of $\Spec R$, we conclude that $R \to T$ is even faithfully flat.
\end{proof}

\begin{lemma} \label{finite_henselian}
 Let $(A,A^+)$ be a henselian Huber pair and $(A,A^+) \to (B,B^+)$ a homomorphism with $A \to B$ integral and $\Spec B$ connected.
 Then also $(B,B^+)$ is henselian.
\end{lemma}

\begin{proof}
 Since~$A$ is henselian, the same holds for~$B$. 
 In order to complete the proof, it suffices to show that~$B^+$ is the preimage in~$B$ of a henselian valuation ring of $k_B = B/\m_B$.
 As $\m_A \subseteq A^+$, \cref{max_id_integral} implies that~$\m_B \subseteq B^+$.
 
 First we want to show that $k_B^+ := B^+/\m_B$ is integrally closed.
 Suppose $\bar{b} \in k_B$ is a zero of a monic polynomial $\bar{P}(T) \in k_B^+[T]$.
 We can lift~$\bar{b}$ to $b \in B$ and~$\bar{P}$ to a monic polynomial $P \in B^+[T]$.
 Then $P(b) \in \m_B \subseteq B^+$, so~$b$ is a zero of the monic polynomial $P(T) - P(b)$ in $B^+[T]$.
 But~$B^+$ is integrally closed in~$B$.
 Therefore $b \in B^+$ and $\bar{b} \in k_B^+$.
 
 The integral closure~$\cO$ of the henselian valuation ring~$k_A^+ = A^+/\m_A$ in $k_B$ is again a henselian valuation ring.
 By the last paragraph, it is contained in $k_B^+$.
 Therefore, $k_B^+$ is a localization of~$\cO$.
 In particular, it is a henselian valuation ring, as well (see \cite{EP2005}, Corollary~4.1.4).
\end{proof}

Remember that a homomorphism of Huber pairs $(A,A^+) \to (B,B^+)$ with $(A,A^+)$ complete is \emph{finite} if $A \to B$ is finite and $A^+ \to B^+$ is integral.
Then automatically $(B,B^+)$ is complete as well and the above lemma implies that if $(A,A^+)$ is henselian, then also $(B,B^+)$.

\begin{proposition} \label{characterization_henselian}
 For a local pseudo-adic space~$\cX$ the following are equivalent:
 \begin{enumerate}[(i)]
  \item $\underline{\cX}$ is the spectrum of a complete, henselian Huber pair.
  \item For every finite morphism $\cY \to \cX$ with~$\cY$ connected, $\cY$ is local.
 \end{enumerate}
\end{proposition}

\begin{proof}
 The implication from~(i) to~(ii) directly follows from \cref{finite_henselian}.
 Assume that~(ii) holds and write $\underline{\cX} = \Spa(A,A^+)$ for a complete Huber pair $(A,A^+)$.
 We have to show that~$A^+$ is henselian.
 Let $A^+ \to B^+$ be a finite ring homomorphism and assume that $\Spec B^+$ is connected.
 \cref{factorization_finite} gives us a factorization
 \[
  A^+ \to C_0^+ \to B^+,
 \]
 where $\Spec C_0^+$ is connected, $A^+ \to C_0^+$ is finite and faithfully flat, and $C_0^+ \to B^+$ is surjective.
 If we manage to show that~$C_0^+$ is local, we know that~$B^+$ is local, as well.
 
 Set $C = C_0^+ \otimes_{A^+} A$.
 Being the base change of the injective homomorphism $A^+ \to A$ along the flat homomorphism $A^+ \to C_0^+$, $C_0^+ \to C$ is injective.
 Let~$C^+$ be the integral closure of~$C_0^+$ in~$C$.
 Then $(A,A^+) \to (C,C^+)$ is a finite homomorphism of complete Huber pairs and $\Spa(C,C^+)$ is connected.
 By hypothesis, $(C,C^+)$ is local.
 
 Consider the homomorphism $C_0^+ \to C^+$.
 It is integral and injective.
 Hence, $\Spec C^+ \to \Spec C_0^+$ is surjective.
 We further know that~$C^+$ is local.
 But then also~$C_0^+$ has to be local because the preimage of every closed point of $\Spec C_0^+$ in $\Spec C^+$ is nonempty and consists of closed points of $\Spec C^+$.
\end{proof}

In order to obtain a geometric description of henselian Huber pairs, we introduce the \emph{Nisnevich site} of a pseudo-adic space~$\cX$.
The underlying category is the category of strongly \'etale morphisms $\cY \to \cX$.
Coverings are strongly \'etale coverings $(\cU_i \to \cU)_{i \in I}$ such that for every $u \in |\cU|$ there is $i \in I$ and $u_i \in |\cU_i|$ such that the residue field extension of $k(u_i)^+|k(u)^+$ is trivial ($k(u_i)|k(u)$ does not need to be trivial).
We denote the Nisnevich site of~$\cX$ by $\cX_{\Nis}$.

\begin{lemma} \label{characterization_Nis}
 For a pseudo-adic space~$\cX$, the following conditions are equivalent:
 \begin{enumerate} [\rm (i)]
  \item There is an $x \in |\cX|$ such that for every strongly \'etale morphism of pseudo-adic spaces $f: \cY \to \cX$ and every $y \in |\cY|$ with $f(y) = x$ and trivial residue field extension of $k(y)^+|k(x)^+$, there is an open neighborhood~$\cU$ of~$y$ such that~$f$ induces an isomorphism $\cU \to \cX$.
  \item $\cX$ is local and every Nisnevich covering of~$\cX$ splits.
  \item $\underline{\cX}$ is the spectrum of a henselian Huber pair and~$|\cX|$ contains the closed point of~$\underline{\cX}$.
 \end{enumerate}
\end{lemma}

\begin{proof}
 If~(i) is true,~$x$ is the unique closed point of~$\underline{\cX}$ as otherwise we get a contradiction by taking for~$f$ an open immersion which is not an isomorphism.
 Hence,~$\cX$ is local by \cref{localaffinoid}.
 Moreover, it is clear by condition~(i) that every covering of~$\cX$ splits.
 This shows that~(i) implies~(ii).

 Assuming~(ii), $\underline{\cX} = \Spa(A,A^+)$ for a local Huber pair $(A,A^+)$.
 By \cref{completion_local}, we may assume that $(A,A^+)$ is complete.
 Let us show that~$A^+$ is henselian.
 Let $A^+ \to B^+$ be \'etale such that $\Spec B^+$ is connected and contains a maximal ideal~$\m_B^+$ mapping to $\m_A^+ \in \Spec A^+$ with trivial residue field extension.
 Set $B = B^+ \otimes_{A^+} A$.
 Then~$B^+$ is integrally closed in~$B$ as this property is stable under smooth base change.
 Furthermore,
 \[
  (A,A^+) \to (B,B^+)
 \]
 is a strongly \'etale morphism of Huber pairs by \cref{comparenotionsofetale}.
 Since $A^+ \to B^+$ is flat, there is a prime ideal~$\m'_B$ of~$B^+$ specializing to~$\m_B^+$ and mapping to $\m_A \in \Spec A^+$.
 By definition $B = (A^+\setminus \m_A)^{-1}B^+$ and $\m_B := (A^+ \setminus \m_A)^{-1}\m'_B$ is an ideal of~$B$.
 As $\m_B \cap A = \m_A$ and $A \to B$ is \'etale, $\m_B$ is a maximal ideal of~$B$.
 Choose a valuation of~$B$ with support~$\m_B$ and center~$\m_B^+$.
 It is automatically continuous by the characterization given in \cite{Hu93}, Theorem~3.1.
 The corresponding point $y \in Spa(B,B^+)$ maps to the closed point~$x$ of $\Spa(A,A^+)$ and the residue field extension $k(y)^+|k(x)^+$ coincides with the residue field extension $k(\m_B^+)|k(\m_A^+)$, i.e., it is trivial.
 
 Let $\cU \subseteq \Spa(A,A^+)$ be the complement of the closed point.
 Then
 \[
  \Spa(B,B^+) \amalg \cU \to \Spa(A,A^+)
 \]
 is a Nisnevich covering.
 By assumption it splits.
 The image of the splitting lies in $\Spa(B,B^+)$ as~$\cU$ does not contain the closed point of $\Spa(A,A^+)$.
 Since we assumed $\Spec B^+$ (and hence $\Spa(B,B^+)$) to be connected, $(B,B^+) = (A,A^+)$.
 In particular, $B^+ = A^+$, so~$A^+$ is henselian.
 
 Assume that (iii) holds.
 We write $\underline{\cX} = \Spa(A,A^+)$ for a complete, henselian Huber pair $(A,A^+)$ and denote the closed point of~$\underline{X}$ by~$x$.
 Let $f: \cY \to \cX$ be a strongly \'etale morphism and $y \in |\cY|$ with $f(y) = x$ such the residue field extension of $k(y)^+|k(x)^+$ is trivial.
 Replacing $\cY$ by an open neighborhood of~$y$ we may assume that $\underline{\cY}$ is affinoid and connected.
 By \cite{Hu96}, Corollary~1.7.3~iii), there is a Huber pair $(B,B^+)$ of algebraically finite type over $(A,A^+)$ such that $A \to B$ is \'etale and $\underline{\cY} \cong \Spa(B,B^+)$.
 By Zariski's main theorem, there is a finite $A$-algebra~$C$ and an $A$-homomorphism $C \to B$ such that $\Spec B \to \Spec C$ is an open immersion.
 Let~$C^+$ be the integral closure of~$A^+$ in~$C$.
 We obtain a diagram
 \[
  \begin{tikzcd}
   \Spa(B,B^+)  \ar[rr,open]    \ar[dr] &               & \Spa(C,C^+)   \ar[dl] \\
                                        & \Spa(A,A^+)
  \end{tikzcd}
 \]
 By \cref{characterization_henselian}, $\Spa(C,C^+)$ is local with closed point~$z$.
 Since $\Spa(C,C^+) \to \Spa(A,A^+)$ is finite, $z$ is the only point mapping to the closed point $x \in \cX$.
 Therefore, $y = z$ and $(B,B^+) = (C,C^+)$. 
 
 Since $\Spa(B,B^+) \to \Spa(A,A^+)$ is strongly \'etale, $k_B^+ = B^+/\m_B$ is unramified over $k_A^+$.
 By \cref{comparenotionsofetale}, $k_B^+$ is thus \'etale over~$k_A^+$.
 Moreover, the residue field extension of $k_B^+|k_A^+$ is trivial.
 But $k_A^+$ is henselian, whence $k_B^+ = k_A^+$.
 It follows that $k_B = k_A$ and since $A$ is henselian and $A \to B$ is finite \'etale, we obtain $B = A$.
 Finally $B^+ = A^+$ holds because $B^+$ is the integral closure of~$A^+$ in~$B$.
\end{proof}

\begin{definition}
We call an adic space \emph{henselian} if it satisfies the equivalent conditions of \cref{characterization_Nis}.
A \emph{Nisnevich point} is a henselian pseudo-adic space~$\xi$ such that~$\underline{\xi}$ is the spectrum of an affinoid field and $|\xi| = \{s\}$ where~$s$ is the closed point of~$\underline{\xi}$.
\end{definition}

\subsection{Strongly and tamely henselian adic spaces}

We now move on to the strongly \'etale and tame topologies.

\begin{definition}
 Let $(A,A^+)$ be a Huber pair.
 \begin{enumerate}[\rm (i)]
  \item $(A,A^+)$ is \emph{strongly henselian} if it is local and~$A^+$ is strictly henselian.
  \item $(A,A^+)$ is \emph{tamely henselian} if it is strongly henselian and the value group of the associated valuation~$v$ is a $\Z[\frac{1}{p}]$-module, where~$p$ denotes the residue characteristic of~$A^+$.
 \end{enumerate}
\end{definition}

We have an analog of \cref{characterization_Nis}:

\begin{lemma} \label{characterizationlocal}
 For a pseudo-adic space~$\cX$, the following conditions are equivalent:
 \begin{enumerate}[\rm (i)]
  \item There is $x \in |\cX|$ such that for every strongly \'etale (tame) morphism of pseudo-adic spaces $f: \cY \to \cX$ and every $y \in |\cY|$ with $f(y) = x$
         there is an open neighborhood~$\cU$ of~$y$ such that~$f$ induces an isomorphism $\cU \to \cX$.
  \item $\cX$ is local and every strongly \'etale (tame) covering of~$\cX$ splits.
  \item $\underline{\cX}$ is the spectrum of a strongly (tamely) henselian Huber pair and~$|\cX|$ contains the closed point of $\underline{\cX}$.
 \end{enumerate}
\end{lemma}

\begin{proof}
 The implication from (i) to (ii) is proved in the same way as in \cref{characterization_Nis}.

 Assuming~(ii), we can write $\underline{\cX} = \Spa(A,A^+)$ for a complete, henselian Huber pair $(A,A^+)$ by \cref{characterization_Nis}.
 Let us show that~$A^+$ is strictly henselian.
 Let $A^+ \to B^+$ be finite \'etale and set $B = B^+ \otimes_{A^+} A$.
 Then~$B^+$ is integrally closed in~$B$ as this property is stable under smooth base change.
 Furthermore,
 \[
  \varphi: (A,A^+) \to (B,B^+)
 \]
 is a finite strongly \'etale morphism of Huber pairs by \cref{comparenotionsofetale}.
 By assumption, $\varphi$ splits.
 This implies~(iii) in the strongly \'etale case.

 In the tame case it remains to show that the value group~$\Gamma$ of the valuation~$|\cdot|$ corresponding to the closed point of~$\cX$ is divisible by all integers prime to the residue characteristic of~$A^+$.
 Take $\gamma \in \Gamma$ and an integer~$m$ prime to the residue characteristic of~$A^+$.
 We have to find $\gamma' \in \Gamma$ with $m\gamma' = \gamma$.
 We may assume that $\gamma \le 1$.
 Otherwise we replace $\gamma$ by its inverse.
 Take $a \in A$ with $|a| = \gamma$.
 Then $a \in A^{\times} \cap A^+$.
 Set
 \[
  B^+ = A^+[T]/(T^m-a)  \qquad  \text{and}  \qquad  B = B^+ \otimes_{A^+} A = A[T]/(T^m-a).
 \]
 We obtain a finite tame homomorphism $\varphi:(A,A^+) \to (B,B^+)$.
 By assumption, $\varphi$ splits.
 Let $\sigma: (B,B^+) \to (A,A^+)$ be a splitting.
 Then $\sigma(T)$ is an element of~$A$ with valuation equal to~$\gamma'$.

 In order to show that~(iii) implies~(i) assume that $\underline{\cX}$ equals the spectrum of a strongly (tamely) henselian Huber pair $(A,A^+)$ and that the closed point $x$ of $\underline{\cX}$ is contained in $|\cX|$.
 Let $f: \cY \to \cX$ be a strongly \'etale (tame) morphism and $y \in |\cY|$ with $f(y) = x$.
 Replacing $\cY$ by an open neighborhood of~$y$ we may assume that $\underline{\cY}$ is affinoid and connected.
 By the same arguments as in \cref{characterization_Nis}, we obtain that $\underline{\cY} = \Spa(B,B^+)$ for $(B,B^+)$ finite over $(A,A^+)$.
 
 In the strongly \'etale case, $k_B^+ = B^+/\m_B$ is unramified over $k_A^+$.
 By \cref{comparenotionsofetale}, $k_B^+$ is thus \'etale over~$k_A^+$.
 But $k_A^+$ is strictly henselian, whence $k_B^+ = k_A^+$.
 In the tame case $k_B|k_A$ is a tame extension of strictly henselian valued fields.
 Denote by $\Gamma_A$ and~$\Gamma_B$ the value groups of the valuations corresponding to~$k_A^+$ and~$k_B^+$, respectively.
 By \cite{GR03}, Corollary~6.2.14, $k_B|k_A$ is galois of degree prime to the residue characteristic~$p$ of~$k_A^+$ and
 \[
  \Gamma_B/\Gamma_A \cong \Hom_{\ZZ}(\Gal(k_B|k_A),\mu(k_A)).
 \]
 But $\Gamma_A$ is divisible by every integer prime to~$p$.
 Therefore, $k_B^+ = k_A^+$ also in this case.
 
 It follows that $k_B = k_A$ and since $A$ is henselian and $A \to B$ is finite \'etale, we obtain $B = A$.
 Finally $B^+ = A^+$ holds because $B^+$ is the integral closure of~$A^+$ in~$B$.
\end{proof}

\begin{definition}
 A prepseudo-adic space~$\cX$ is called \emph{strongly (tamely) local} or \emph{strongly (tamely) henselian} if~$\cX$ satisfies the equivalent conditions of \cref{characterizationlocal}.
 A \emph{strongly \'etale (tame) point} (in the category of prepseudo-adic spaces) is a strongly (tamely) local pseudo-adic space~$\xi$ such that~$\underline{\xi}$ is the spectrum of an affinoid field and $|\xi| = \{s\}$ where~$s$ is the closed point of~$\underline{\xi}$.
\end{definition}

In~\cite{Hu96}, Proposition~2.3.10 Huber proves the following:

\begin{proposition} \label{equivalencetopoihenselian}
 Let~$\cX$ be an adic space and~$x$ a point of~$\cX$.
 Let~$K$ be the henselization of $k(x)$ with respect to the valuation ring $k(x)^+$.
 Then the \'etale topos $(\cX,\{x\})^{\sim}_{\et}$ of the pseudo-adic space $(\cX,\{x\})$ is naturally equivalent to the \'etale topos $(\Spec K)^{\sim}_{\et}$.
\end{proposition}

Restricting to the Nisnevich, strongly \'etale, and tame site, respectively, we obtain:

\begin{corollary}
 In the situation of \cref{equivalencetopoihenselian} let~$K^+$ be an extension of $k(x)^+$ to~$K$.
 Let $K_{nr}$ and $K_t$ be the maximal extensions of~$K$ where~$K^+$ is unramified and tamely ramified, respectively.
 Set $G_{nr} = \Gal(K_{nr}|K)$ and $G_t = \Gal(K_t|K)$.
 Then the strongly \'etale topos $(\cX,\{x\})^{\sim}_{\set}$ of $(\cX,\{x\})$ is naturally equivalent to the topos $(\Spec K^+)_{\et}^{\sim}$, which in turn is equivalent to the topos of $G_{nr}$-sets,
  and the tame topos $(\cX,\{x\})^{\sim}_t$ is naturally equivalent to the $G_t$-sets.
 The Nisnevich topos of $(\cX,\{x\})$ is trivial.
\end{corollary}

\begin{corollary}
 For every strongly \'etale point~$\cS$ the global section functor
 \[
  \Gamma(\cS,-) : \tilde{\cS}_{\set} \to \mathit{sets}
 \]
 is an equivalence of categories.
 Analogously for tame and Nisnevich points.
\end{corollary}

\begin{definition}
 For a strongly \'etale point $u: \xi \to \cX$ of a prepseudo-adic space~$\cX$ and a sheaf~$\mathcal{F}$ on $\tilde{\cX}_{\et}$ we define the \emph{stalk} of~$\mathcal{F}$ at~$\xi$:
 \[
  \mathcal{F}_{\xi} := \Gamma(\xi,u^*\mathcal{F}).
 \]
 and for tame and Nisnevich points and sheaves accordingly.
\end{definition}
For a strongly \'etale or tame point $u: \xi \to \cX$ of a prepseudo-adic space~$\cX$ we consider the category~$C_{\xi}$ of pairs $(\cV,v)$
 where~$\cV$ is an object of the strongly \'etale or tame site, respectively, and $v : \xi \to \cV$ is a morphism over~$\cX$.
For a Nisnevich point $u: \xi \to \cX$ the category~$C_{\xi}$ consists of pairs $(\cV,v)$ with $\cV \to \cX$ strongly \'etale and $v: \xi \to \cV$ a morphism over~$\cX$ satisfying the following condition:
Let $x \in \cX$ and $y \in \cV$ be the respective images of~$\xi$.
Then the residue field extension of $k(y)^+|k(x)^+$ is trivial.
Similarly, for a point $\xi \in \cX$, we define the category~$C_{\xi}$ of open neighborhoods of~$\xi$.

The same argument as for the \'etale site (see \cite{Hu96}, Lemma~2.5.4) shows:

\begin{lemma} \label{stalkofpresheaf}
 In all cases the category~$C_{\xi}$ is cofiltered.
 For every presheaf~$\mathcal{P}$ on $\cX$, $\cX_{\Nis}$, $\cX_{\set}$ or $\cX_t$, respectively, there is a functorial isomorphism
 \[
  (a\mathcal{P})_{\xi} \cong \colim_{(\cV,v) \in C_{\xi}} \mathcal{P}(\cV),
 \]
 where~$a\mathcal{P}$ denotes the sheaf associated with~$\mathcal{P}$.
\end{lemma}

Over every point $x \in |\cX|$ we can choose a geometric point
\[
 \bar{x} := (\Spa(\bar{k}(x),\bar{k}(x)^+),\{s\})
\]
such that $\bar{k}(x)$ is a separable closure of~$k(x)$ (see~\cite{Hu96}, (2.5.2)).
Restricting to the henselization, the maximal unramified, and the maximal tamely ramified extension, respectively, yields a Nisnevich, a strongly \'etale, and a tame point
\begin{IEEEeqnarray*}{C}
 x_{\Nis} = (\Spa(k_h(x),k_h(x)^+),\{s_h\})	\qquad x_{\set} = (\Spa(k_{nr}(x),k_{nr}(x)^+),\{s_{\set}\}), \\
 x_t = (\Spa(k_t(x),k_t(x)^+),\{s_t\}),
\end{IEEEeqnarray*}
where $k_{nr}(x)$ and $k_t(x)$ are the maximal unramified and maximal tamely ramified subextensions of $\bar{k}(x)|k(x)$ and $k_h(x)$ is the henselization of~$k(x)$.
From \cref{stalkofpresheaf} we conclude that there are enough points:

\begin{corollary}
 The families of functors
 \begin{IEEEeqnarray*}{C}
  (\tilde{\cX}_{\Nis} \to \mathit{sets}, \mathcal{F} \mapsto \mathcal{F}_{x_{\Nis}})_{x \in |\cX|},	\quad
  (\tilde{\cX}_{\set} \to \mathit{sets}, \mathcal{F} \mapsto \mathcal{F}_{x_{\set}})_{x \in |\cX|}	\\
  \text{and}	\quad	(\tilde{\cX}_t \to \mathit{sets}, \mathcal{F} \mapsto \mathcal{F}_{x_t})_{x \in |\cX|}
 \end{IEEEeqnarray*}
 are conservative.
\end{corollary}

\begin{proof}
 Let~$\mathcal{F}$ be a sheaf on~$\cX_{\set}$ and assume that $\mathcal{F}_{x_{\set}} = 0$ for all $x \in |\cX|$.
 Take a strongly \'etale morphism $f:\cU \to \cX$ and an element $a \in \mathcal{F}(\cU)$.
 By \cref{stalkofpresheaf} we find for each $u \in |\cU|$ a strongly \'etale neighborhood $\cU(u) \to \cX$ of~$f(u)_{\set}$ factoring through $(\cU,u)$ such that $a|_{\cU(u)} = 0$.
 The $\cU(u) \to \cU$ comprise a covering of~$\cU$, whence $a = 0$.
 For the other topologies the proof is the same.
\end{proof}

The last part of this section is dedicated to localizations of adic spaces in the various topologies we are studying.
The constructions and the proofs of the resulting properties are analogous to the case of the strict localization treated in \cite{Hu96}, \S~2.5.
We thus allow ourselves to omit the proofs.

Let~$\xi \to \cX$ either be a set theoretic point, a Nisnevich point, a strongly \'etale, or a tame point of~$\cX$.
In each of the four cases we consider the respective category~$C_{\xi}$ defined above and define the \emph{localization}~$X_{\xi}$ as follows.
We set
\begin{align*}
 \cO_{\cX,\xi}		&:= \colim_{(\cV,v) \in C_{\xi}} \cO_{\underline{\cV}}(\underline{\cV}), \\
 \cO^+_{\cX,\xi} 	&:= \colim_{(\cV,v) \in C_{\xi}} \cO^+_{\underline{\cV}}(\underline{\cV}).
\end{align*}
and equip these rings with the following topology:
Let $(\cV,v)$ be an object of~$C_{\xi}$ with~$\cV$ affinoid.
Choose an ideal of definition~$I$ of a ring of definition of $\cO_{\underline{\cV}}(\underline{\cV})$ and take
\[
 \{I^n \cdot \cO^+_{\cX,\xi} \mid n \in \N \}
\]
to be a fundamental system of neighborhoods of zero.
As in \cite{Hu96}, (2.5.9) this topology is independent of the choice of $(\cV,v)$ and~$I$
 and $(\cO_{\cX,\xi},\cO_{\cX,\xi}^+)$ is a sheafy Huber pair.
Put
\[
 \underline{\cX}_{\xi} := \Spa(\cO_{\cX,\xi},\cO_{\cX,\xi}^+)
\]
and
\[
 |\cX_{\xi}| := \bigcap_{(\cV,v) \in C_{\xi}} \underline{\varphi}^{-1}_{(\cV,v)}(|\cV|),
\]
where~$\underline{\varphi}_{(\cV,v)}$ is the natural morphism $\underline{\cX}_{\xi} \to \underline{\cV}$.
We obtain a local (respectively henselian, respectively strongly henselian, respectively tamely henselian) prepseudo-adic space
\[
 \cX_{\xi} := (\underline{\cX}_{\xi},|\cX_{\xi}|).
\]
We call~$\cX_{\xi}$ the \emph{localization} (respectively \emph{henselization}, respectively \emph{strong henselization}, respectively \emph{tame henselization}) of~$\cX$ at~$\xi$.
Let $D_{\xi}$ be the full (cofinal) subcategory of $C_{\xi}$ consisting of those pairs $(\cV,v)$ in $C_{\xi}$ with affinoid~$\underline{\cV}$ and quasi-compact~$|\cV|$.
Then $\cX_{\xi}$ is a projective limit of the spaces~$\cV$ for $(\cV,v) \in D_{\xi}$ in the sense of \cite{Hu96}, (2.4.2).
In particular, the results of \cref{limits} apply.

We want to give a more explicit, ring theoretic description of the localization described above.
Let $(A,A^+)$ be a Huber pair and $I$ an ideal of definition of a ring of definition of~$A$.
For a point $x \in \Spa(A,A^+)$  we define the localization $(A_x,A_x^+)$ by defining~$A_x$ to be the localization of~$A$ at the support of~$x$ and~$A_x^+$ to be the preimage in~$A_x$ of the valuation ring $k(x)^+ \subseteq k(x)$.
We equip $(A_x,A_x^+)$ with the topology such that $I^nA_x^+$ for $n \in \N$ form a basis of neighborhoods of zero.
Then $(A_x,A_x^+)$ is a local Huber pair.

Given a Nisnevich point
\[
 \xi = (\Spa(k,k^+),\{s\}) \to \cX
\]
with image $x \in \cX$, we define the henselization $(A_{\xi}^h,A_{\xi}^{h+})$ at~$\xi$:
Let~$A_x^h$ be the henselization of~$A_x$.
Denoting the maximal ideals of~$A_x$ and~$A_x^h$ by~$\m_x$ and~$\m_x^h$, respectively, we have a natural isomorphism of residue fields
\[
 k_{A_x} = A_x/\m_x \cong A_x^h/\m_x^h = k_{A_x^h}.
\]
Via this identification, the valuation ring $k^+_{A_x} = A_x^+/\m_x$ corresponds to a valuation ring $k^+_{A_x^h}$ with quotient field $k_{A_x^h}$.
The morphism $\xi \to \cX$ induces a morphism of Huber pairs
\[
 (k_{A_x^h},k^+_{A_x^h}) \to (k,k^+).
\]
As~$k^+$ is henselian, we can consider the henselization of~$k^+_{A_x^h}$ as a subring $k^+_{A_{\xi}^h} \subseteq k^+$.
By \cite{Nag53}, Theorem~8, $k^+_{A_{\xi}^h}$ is again a valuation ring and we denote its quotient field by $k_{A_{\xi}^h}$.
Then $k_{A_{\xi}^h}|k_{A_x^h}$ is a separable extension and as~$A_x^h$ is henselian, there is a unique ind-\'etale, integral extension $A_x^h \to A_{\xi}^h$ whose residue field extension is $k_{A_{\xi}^h}|k_{A_x^h}$.
Writing~$A_{\xi}^{h+}$ for the preimage of $k^+_{A_{\xi}^h}$ in~$A_{\xi}^h$, we obtain the henselization $(A_{\xi}^h,A_{\xi}^{h+})$ of $(A,A^+)$ at~$\xi$ (with topology defined in the same way as for $(A_x,A_x^+)$).

Note that by construction, $k^+_{A_{\xi}^h}$ and~$A_{\xi}^h$ are henselian.
Therefore, $A_{\xi}^{h+}$ is henselian by \cref{local_henselian}.
Moreover, $A^+_x \to A_{\xi}^{h+}$ is ind-\'etale.
We conclude that~$A_{\xi}^{h+}$ is the henselization of~$A^+_x$ at its maximal ideal and $A_{\xi}^h = A_{\xi}^{h+} \otimes_{A_x^+} A_x$.
We could have taken this as a definition of the henselization of $(A,A^+)$.
However, it would have been difficult to see directly that $(A_{\xi}^h,A_{\xi}^{h+})$ is local without resorting to the theory of Pr\"ufer extensions that we only explain in \cref{PrueferHuber}.

Next, if~$\xi$ is even a strongly \'etale point, we can define the strong henselization of $(A,A^+)$ at~$\xi$.
Let $A_{\xi}^{\sh+}$ be the strict henselization of $A_{\xi}^{h+}$ and set
\[
 A_{\xi}^{\sh} = A_{\xi}^{\sh +} \otimes_{A^+} A = A_{\xi}^{\sh +} \otimes_{A_{\xi}^{h+}} A_{\xi}^h.
\]
Then $A_{\xi}^h \to A_{\xi}^{\sh}$ is an integral, ind-\'etale extension of local rings and~$A_{\xi}^{\sh +}$ is integrally closed in $A_{\xi}^{\sh}$.
Using \cref{finite_henselian}, we conclude that $(A_{\xi}^{\sh},A_{\xi}^{\sh +})$ is strongly henselian.
Equipped with the topology defined by the ideal~$I$ as before, $(A_{\xi}^{\sh},A_{\xi}^{\sh +})$ is the strong henselization of $(A,A^+)$ at~$\xi$.

Finally, assume that~$\xi$ is also a tame point.
We define the tame henselization of $(A,A^+)$ at~$\xi = (\Spa(k,k^+),\{s\})$ as follows.
Let $k_{\xi}^t$ be the maximal tamely ramified subextension of $k|k_{\xi}^{\sh}$ and denote by~$k_{\xi}^{t+}$ the corresponding valuation ring of $k_{\xi}^t$.
Let $A_{\xi}^{\sh} \to A_{\xi}^t$ be the unique ind-\'etale extension with residue field extension $k_{\xi}^{\sh} \to k_{\xi}^t$.
Denote by $A_{\xi}^{t+}$ the preimage of $k_{\xi}^{t+}$ in $A_{\xi}^t$.
Then $(A_{\xi}^t,A_{\xi}^{t+})$ is the tame henselization of~$(A,A^+)$ at~$\xi$ (with topology as before).

\begin{proposition}
 Let~$\cX$ be a prepseudo-adic space, $\xi \to \cX$ a set theoretic point (respectively Nisnevich, strongly \'etale, or tame point) of~$\cX$ with image $x \in |\cX|$.
 \begin{enumerate}[\rm (i)]
  \item Assume~$x$ is analytic.
        Consider the natural morphisms
        \begin{IEEEeqnarray*}{ll}
         p: \Spa(k(x),k(x)^+) \to \underline{\cX},					\quad &p_{\Nis} : \Spa(k_h(x),k_h(x)^+) \to \underline{\cX}, \\
         p_{\set} : \Spa(k_{nr}(x),k_{nr}(x)^+) \to \underline{\cX},	\quad &p_t : \Spa(k_t(x),k_t(x)^+) \to \underline{\cX}.
        \end{IEEEeqnarray*}
        Then
        \begin{IEEEeqnarray*}{lcl}
         \cX_{\xi} \cong (\Spa(k(x),k(x)^+),p^{-1}(|\cX|))	\quad &\text{or}	\quad & \cX_{\xi} \cong (\Spa(k_h(x),k_h(x)^+),p_h^{-1}(|\cX|)), \\
         \cX_{\xi} \cong (\Spa(k_{nr}(x),k_{nr}(x)^+),p_{\set}^{-1}(|\cX|))	\quad &\text{or}	\quad & \cX_{\xi} \cong (\Spa(k_t(x),k_t(x)^+),p_t^{-1}(|\cX|)),
        \end{IEEEeqnarray*}
        according to whether~$\xi$ is a set theoretic, a Nisnevich, a strongly \'etale, or a tame point of~$\cX$.
  \item Assume that~$x$ is non-analytic.
        Take an affinoid open neighborhood $\cU = \Spa(A,A^+)$ of~$x$.
        Let $(B,B^+)$ be the localization (respectively henselization, strong henselization, or tame henselization) of $(A,A^+)$ with respect to~$\xi$.
        Let~$p$ be the natural morphism $\Spa(B,B^+) \to \underline{\cX}$.
        Then
        \[
         \cX_{\xi} \cong (\Spa(B,B^+),p^{-1}(|\cX|)).
        \]
 \end{enumerate}
\end{proposition}

\begin{proof}
The argument is the same as in the proof of the corresponding statement for the \'etale site (\cite{Hu96}, Proposition~2.5.13).
\end{proof}

\section{Topological invariance}

Let $\tau \in \{\set,t,\et\}$ be one of the topologies.
In this section we prove some assertions concerning the topological invariance of the $\tau$-cohomology.
They are in analogy with the respective results concerning the \'etale topology.

\begin{proposition} \label{topological_invariance}
 Let $\cX \to \cY$ be a morphism of adic spaces which induces an isomorphism on the underlying reduced adic spaces.
 Then
 \[
  \cU \mapsto \cU \times_{\cY} \cX
 \]
 defines an isomorphism of sites $\cX_{\tau} \to \cY_{\tau}$.
 In particular, the topoi $\Sh(\cX_{\tau})$ and~$\Sh(\cY_{\tau})$ are equivalent.
\end{proposition}

\begin{proof}
 Without loss of generality we may assume that~$\cX$ and~$\cY$ are affinoid.
 Moreover, it suffices to prove that $\cU \mapsto \cU \times_{\cY} \cX$ defines an equivalence of the subcategories of affinoid spaces in $\cX_{\tau}$ and~$\cY_{\tau}$, respectively.
 The general statement follows by gluing.
 Write $\cX = \Spa(A,A^+)$ and $\cY = \Spa(B,B^+)$.
 By \cite{Hu96}, Corollary~1.7.3, the affinoid adic spaces that are \'etale over~$\cX$ are precisely the open subspaces of adic spaces of the form $\Spa(R,R^+)$ with~$R$ \'etale over~$A$ and~$R^+$ the integral closure of~$A^+$ in~$R$ and analogously for~$\cY$.
 By \cite{EGAIV.4}, 18.1.2, the assignment $S \mapsto S \otimes_B A$ defines an equivalence of the categories of \'etale $B$-algebras and \'etale $A$-algebras.
 Moreover, for~$S$ \'etale over~$B$ and~$S^+$ the integral closure of~$B^+$ in~$S$, the categories of open subspaces of $\Spa(S,S^+)$ and $\Spa((S,S^+) \otimes_{(B,B^+)} (A,A^+))$ are equivalent as the underlying topological spaces of $\Spa(S,S^+)$ and $\Spa((S,S^+) \otimes_{(B,B^+)} (A,A^+))$ are naturally homeomorphic.
 We conclude that~$\cX_{\et}$ and~$\cY_{\et}$ are equivalent.
 In order to see that this is also true for the tame and strongly \'etale sites it suffices to note that the properties of being tame or strongly \'etale only depend on the underlying reduced subspaces.
\end{proof}

The following two results are analogs of the excision theorems in \'etale cohomology.
They concern the $\tau$-cohomology of an adic space with support on a Zariski closed subspace.
Cohomology groups with support are defined, more generally, for a closed subspace~$\cZ$ of a pseudo-adic space~$\cX$.
Writing~$\cU$ for the complement of~$\cZ$, the cohomology groups with support $H^i_{\cZ}(\cX_{\tau},\mathcal{F})$ of a sheaf~$\mathcal{F}$ on $\cX_{\tau}$ are the cohomology groups of the derived functor of
\[
 \mathcal{F} \mapsto \ker(\mathcal{F}(\cX) \to \mathcal{F}(\cU)).
\]

\begin{lemma} \label{excision}
 Consider the following commutative diagram of adic spaces
 \[
  \begin{tikzcd}
   \cZ'_{\textit{red}}	\ar[r,closed]	\ar[d,"\sim"]	& \cX'	\ar[d,"\pi"]	\\
   \cZ_{\textit{red}}	\ar[r,closed]					& \cX	
  \end{tikzcd}
 \]
 where $\cZ' \to \cX'$ and $\cZ \to \cX$ are closed immersions, $\pi$ is a morphism in $\cX_{\tau}$, and $\cZ'_{\textit{red}} \to \cZ_{\textit{red}}$ is an isomorphism.
 Then for any sheaf $\mathcal{F}$ on~$\cX_{\tau}$ and any $i \ge 0$ we have
 \[
  H^i_{\cZ}(\cX_{\tau},\mathcal{F}) \overset{\sim}{\longrightarrow} H^i_{\cZ'}(\cX'_{\tau},\mathcal{F}|_{\cX'}).
 \]
\end{lemma}

\begin{proof}
 The proof is the same as for the \'etale topology on schemes (see \cite{Fu15}, Proposition~5.6.12).
\end{proof}

\begin{proposition} \label{excision_point}
 Let~$\cX$ be an adic space and $x \in \cX$ a Zariski-closed point (i.e., $x=\Spa(k(x),k(x))$ and $\Spa(k(x),k(x)) \to \cX$ is a closed immersion).
 Then for any sheaf~$\mathcal{F}$ on $\cX_{\tau}$ and any $i \ge 0$ we have
 \[
  H^i_x(\cX_{\tau},\mathcal{F}) = H^i_x((\cX_x^h)_{\tau},\mathcal{F}),
 \]
 where~$\cX_x^h$ denotes the henselization of~$\cX$ at~$x$.
\end{proposition}

\begin{proof}
 As $\tau$-cohomology commutes with limits by \cref{cohomologyoflimit}, we have
 \[
  H^i_x((\cX_x^h)_{\tau},\mathcal{F}) = \colim_{(\cY,y) \to (\cX,x)} H^i_y(\cY_{\tau},\mathcal{F}),
 \]
 where the colimit runs over all pointed \'etale morphisms $(\cY,y) \to (\cX,x)$ such that $k(y) = k(x)$.
 We can as well restrict to pointed morphisms $(\cY,y) \to (\cX,x)$ in $\cX_{\tau}$ as every \'etale morphism $(\cY,y) \to (\cX,x)$ as above is strongly \'etale, hence tame, at~$y$ and the strongly \'etale locus is open (\cref{unramified}).
 For a pointed morphism $(\cY,y) \to (\cX,x)$ in $\cX_{\tau}$ with $k(y) = k(x)$ we know by \cref{excision} that
 \[
  H^i_y(\cY_{\tau},\mathcal{F}) = H^i_x(\cX_{\tau},\mathcal{F}).\qedhere
 \]
\end{proof}

\section{Comparison with \'etale cohomology} \label{comparisonwithetalecohomology}

\begin{lemma} \label{henselian}
 Let $(A,A^+)$ be a henselian Huber pair.
 Denote by~$k$ the residue field of~$A$ and by~$\kappa$ the residue field of~$A^+$.
 Choose a separable closure~$\bar{k}$ of~$k$ and denote by~$\bar{v}$ the continuation of the valuation of~$k$ corresponding to the closed point of $\Spa(A,A^+)$.
 This defines a geometric point $\xi \to \Spa(A,A^+)$ which we can also view as tame and strongly \'etale point.
 Write~$k^t$ for the maximal subextension of~$\bar{k}|k$ where~$\bar{v}$ is tamely ramified.
 Then for any abelian sheaf~$\mathcal{F}$ on $\Spa(A,A^+)_{\et}$ and any $i \ge 0$
 \[
  H^i(\Spa(A,A^+)_{\et},\mathcal{F}) = H^i(G_k,\mathcal{F}_{\xi}),
 \]
 for any sheaf~$\mathcal{F}$ on $\Spa(A,A^+)_{\set}$ and any $i \ge 0$
 \[
  H^i(\Spa(A,A^+)_{\set},\mathcal{F}) = H^i(G_{\kappa},\mathcal{F}_{\xi}),
 \]
 and for any sheaf~$\mathcal{F}$ on $\Spa(A,A^+)_t$ and any $i \ge 0$
 \[
  H^i(\Spa(A,A^+)_t,\mathcal{F}) = H^i(\Gal(k^t|k),\mathcal{F}_{\xi}).
 \]
\end{lemma}

\begin{proof}
 This follows using the Hochschild-Serre spectral sequence for $G_k$, $G_{\kappa}$ (which can be identified with the Galois group of the maximal unramified subextension of $\bar{k}|k$) and $\Gal(k^t|k)$, respectively.
\end{proof}

For a prepseudo-adic space~$\cX$ we write $\ch^+(\cX)$ for the set of characteristics of the residue fields of $\cO_{\cX,x}^+$ for $x \in |\cX|$.

\begin{proposition} \label{comparetameetale}
 Let~$\cX$ be a prepseudo-adic space and~$\mathcal{F}$ a torsion sheaf on $\cX_{\et}$ with torsion prime to $\ch^+(\cX)$.
 Then the morphism of sites $\varphi:\cX_{\et} \to \cX_t$ induces isomorphisms
 \[
  H^i(\cX_t,\varphi_*\mathcal{F}) \overset{\sim}{\longrightarrow} H^i(\cX_{\et},\mathcal{F})
 \]
 for all $i \geq 0$.
\end{proposition}

\begin{proof}
 We have to show that for any tamely henselian $(A,A^+)$ and any torsion sheaf~$\mathcal{G}$ on $(A,A^+)_{\et}$ with torsion prime to the residue characteristic~$p$ of~$A^+$, the cohomology groups
 \[
  H^i(\Spa(A,A^+)_{\et},\mathcal{G})
 \]
 vanish for all $i \ge 1$.
 By \cref{henselian} these cohomology groups equal
 \[
  H^i(G_k,\mathcal{G}_{\xi}),
 \]
 where~$k$ and~$\xi$ are defined as in \cref{henselian}.
 But $G_k$ is a pro-$p$-group (see \cite{EP2005}, Theorem~5.3.3) and $\mathcal{G}_{\xi}$ is a torsion $G_k$-module with torsion prime to~$p$.
 Therefore, the above cohomology groups vanish.
\end{proof}

\begin{lemma} \label{compareetalecohomology}
 Let $X \to S$ be a morphism of schemes and~$\mathcal{F}$ a torsion sheaf on~$X_{\et}$.
 Then the morphism of sites
 \[
  \psi:\Spa(X,S)_{\et} \to X_{\et}
 \]
 induces isomorphisms
 \[
  H^i(X_{\et},\psi^*\mathcal{F}) \overset{\sim}{\longrightarrow} H^i(\Spa(X,S)_{\et},\mathcal{F}).
 \]
 for all $i \ge 0$.
\end{lemma}

\begin{proof}
 If~$X$ and~$S$ are affine, the result is a special case of \cite{Hu96}, Theorem~3.3.3.
 Let us now assume that~$S$ is affine and~$X$ is arbitrary.
 By virtue of the Leray spectral sequence associated with~$\psi$, it suffices to show
 \[
  \psi_*\psi^*\mathcal{F} \overset{\sim}{\to} \mathcal{F} \qquad \text{and} \qquad R^i\psi_*(\psi^* \mathcal{F}) = 0~\text{for $i > 0$}.
 \]
 These assertions are local on~$X$.
 Hence, we are reduced to the affine case.

 The next step is to only require~$S$ to be separated.
 We choose an open covering $\mathcal{U}$ of~$S$ by affine schemes~$S_i$.
 It induces an open covering~$\mathcal{V}$ of $\Spa(X,S)$ by the open subspaces
 \[
  \Spa(X \times_S S_i,S_i) \subseteq \Spa(X,S).
 \]
 We obtain a morphism of \v{C}ech-to-derived spectral sequences
 \[
  \begin{tikzcd}
   \check{H}^i(\mathcal{U},\mathcal{H}^j(\mathcal{F}))		\ar[r,Rightarrow]	\ar[d]	& H^{i+j}(X,\mathcal{F})		\ar[d]	\\
   \check{H}^i(\mathcal{V},\mathcal{H}^j(\psi^*\mathcal{F}))	\ar[r,Rightarrow]		& H^{i+j}(\Spa(X,S),\psi^*\mathcal{F}).
  \end{tikzcd}
 \]
 The separatedness assumption assures finite intersections of the~$S_i$ to be affine.
 Therefore, we can use the previous case to conclude that all vertical morphisms on the left are isomorphisms.
 Hence, the right vertical morphism is an isomorphism.
 The general case follows from the case where~$S$ is separated by the same argument using a covering of~$S$ by separated open subschemes.
\end{proof}

Combining \cref{compareetalecohomology} with \cref{comparetameetale} we obtain:

\begin{corollary}
 Let $X \to S$ be a morphism of schemes and~$\mathcal{F}$ a torsion sheaf on~$X_{\et}$ with torsion prime to the residue characteristics of~$S$.
 Then the morphisms of sites
 \[
  \Spa(X,S)_t \overset{\varphi}{\longleftarrow} \Spa(X,S)_{\et} \overset{\psi}{\longrightarrow} X_{\et}
 \]
 induce isomorphisms
 \[
  H^i(\Spa(X,S)_t,\varphi_*\psi^*\mathcal{F}) \cong H^i(X_{\et},\mathcal{F})
 \]
 for all $i \ge 0$.
\end{corollary}

We prove the following comparison of tame and strongly \'etale cohomology.

\begin{proposition} \label{comparison_strongly_etale_tame}
 Let~$\cX$ be an adic space with $\ch^+(\cX) = p > 0$.
 Then for any $p$-torsion sheaf~$\mathcal{F}$ on~$\cX_t$ the natural morphism of sites
 \[
  \varphi: \cX_t \to \cX_{\set}
 \]
 induces isomorphisms
 \[
  H^i(\cX_{\set},\varphi_*\mathcal{F}) \overset{\sim}{\longrightarrow} H^i(\cX_t,\mathcal{F})
 \]
 for every integer~$i$.
\end{proposition}

\begin{proof}
 We have to show that the stalks of the higher direct images $R^i\varphi_*\mathcal{F}$ vanish for $i > 0$.
 Let~$\bar{x}$ be a strongly \'etale point of~$\cX$.
 The strong henselization $\cX_{\bar{x}}^{\set}$ is of the form $\Spa(A,A^+)$ with $(A,A^+)$ local and~$A^+$ strictly henselian.
 For the stalk of $R^i\varphi_*\mathcal{F}$ at~$\bar{x}$ we get by \cref{cohomologyoflimit} and \cref{henselian}
 \[
  R^i\varphi_*\mathcal{F}_{\bar{x}} = H^i(\Spa(A,A^+)_t,\mathcal{F}) = H^i(\Gal(k^t|k),\mathcal{F}_{\bar{x}}),
 \]
 where $k^t|k$ is the maximal tamely ramified extension of the residue field of~$A$ with respect to the valuation corresponding to~$\bar{x}$.
 But by assumption~$\mathcal{F}$ is a $p$-torsion sheaf and as $A^+$ is strictly henselian, $\Gal(k^t|k)$ has trivial $p$-Sylow subgroups.
 Therefore, the above cohomology group vanishes by \cite{NSW}, Proposition~1.6.2.
\end{proof}

\cref{comparison_strongly_etale_tame} tells us that for $p$-torsion sheaves tame and strongly \'etale cohomology coincide.
Moreover, by \cref{comparetameetale}, for torsion sheaves with torsion invertible on~$\cX$, tame cohomology coincides with \'etale cohomology.
In that sense the tame topology is a bridge between \'etale and strongly \'etale topology.

\begin{lemma} \label{centerisomorphism}
 Let~$X$ be a scheme and $\tau \in \{t,\set,\et\}$ one of the topologies.
 Then the center map $c : \Spa(X,X) \to X$ induces for every sheaf~$\mathcal{F}$ on $\Spa(X,X)_{\tau}$ isomorphisms
 \[
  H^i(X_{\et},c_*\mathcal{F}) \overset{\sim}{\longrightarrow} H^i(\Spa(X,X)_{\tau},\mathcal{F})
 \]
 for all $i \ge 0$.
\end{lemma}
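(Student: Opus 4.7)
The plan is to use the Leray spectral sequence
\[
 E_2^{p,q} = H^p_\et(X, R^q c_* \mathcal{F}) \;\Longrightarrow\; H^{p+q}_\tau(\Spa(X,X), \mathcal{F})
\]
and reduce the lemma to showing the vanishing $R^q c_* \mathcal{F} = 0$ for $q > 0$.

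Stalks of $R^q c_* \mathcal{F}$ at a geometric \'etale point $\bar{x} \to X$ lying over $x \in X$ should be computed via the strict henselization $\mathcal{O}^{sh} := \mathcal{O}^{sh}_{X,\bar{x}}$. The cofiltered system of affine \'etale neighborhoods $U \to X$ of $\bar{x}$ has limit $\Spec \mathcal{O}^{sh}$; applying the functor $\Spa$ then gives a cofiltered system of discretely ringed adic spaces $\Spa(U,U) \to \Spa(X,X)$ with limit $\Spa(\mathcal{O}^{sh},\mathcal{O}^{sh})$ in the sense of \cref{limits}. For $\tau \in \{\set,t\}$ one restricts to the cofinal subsystem on which $\Spa(U,U) \to \Spa(X,X)$ is strongly \'etale (respectively tame) at the preimage of $\bar{x}$, which is possible thanks to \cref{unramified} and \cref{tamelocusopen}. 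Identifying each $\Spa(U,U)$ with the site-theoretic preimage $c^{-1}(U)$ in $\Spa(X,X)_\tau$, \cref{cohomologyoflimit} produces
\[
 (R^q c_* \mathcal{F})_{\bar{x}} \;\cong\; \colim_U H^q_\tau(\Spa(U,U), \mathcal{F}) \;\cong\; H^q_\tau(\Spa(\mathcal{O}^{sh},\mathcal{O}^{sh}), \mathcal{F}).
\]

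To evaluate this last group, observe that the Huber pair $(\mathcal{O}^{sh},\mathcal{O}^{sh})$, carrying the discrete topology with $A^+ = A$, is local: the valuation supported at $\mathfrak{m}_{\mathcal{O}^{sh}}$ is forced to be trivial on the residue field, so its valuation subring pulled back to $\mathcal{O}^{sh}$ is the whole ring. Since $\mathcal{O}^{sh}$ is strictly henselian, $(\mathcal{O}^{sh},\mathcal{O}^{sh})$ is strongly henselian, and its trivial value group is automatically a $\Z[\tfrac{1}{p}]$-module, making it tamely henselian as well. \cref{henselian} then identifies
\[
 H^q_\tau(\Spa(\mathcal{O}^{sh},\mathcal{O}^{sh}),\mathcal{F}) \;\cong\; H^q(G,\mathcal{F}_\xi),
\]
with $G$ equal to $G_k$, $G_{k^+}$, or $\Gal(k^t|k)$ according as $\tau \in \{\et,\set,t\}$. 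Because the residue field of $\mathcal{O}^{sh}$ is separably closed, each of these profinite groups is trivial and the cohomology vanishes for $q > 0$. Hence $R^q c_* \mathcal{F} = 0$ for $q > 0$, and the Leray spectral sequence degenerates to the desired isomorphism.

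The principal technical obstacle I anticipate is the site-theoretic identification of $c^{-1}(U)$ with $\Spa(U,U)$ as an object of $\Spa(X,X)_\tau$ for an \'etale $U \to X$, together with the verification that the resulting cofiltered diagram meets Huber's density hypothesis for the projective limit $\Spa(\mathcal{O}^{sh},\mathcal{O}^{sh})$. For $\tau \in \{\set,t\}$ this requires restricting to a cofinal subfamily of \'etale neighborhoods along which $\Spa(U,U) \to \Spa(X,X)$ is actually strongly \'etale or tame at the relevant fibre, relying on \cref{unramified} and \cref{tamelocusopen}; once that bookkeeping is in place, \cref{cohomologyoflimit} and \cref{henselian} combine to close the argument.
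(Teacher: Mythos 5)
Your argument is correct and follows essentially the same route as the paper's one-paragraph proof, just unpacking the terse ``we may assume $X$ is strictly henselian'' into the explicit Leray spectral sequence plus stalk computation over \'etale neighborhoods, then invoking the limit comparison and the henselian cohomology lemma. One small superfluity: you do not need to pass to a cofinal subsystem on which $\Spa(U,U) \to \Spa(X,X)$ is strongly \'etale or tame at the fibre over $\bar x$. In order for $c_*\mathcal{F}$ to even be defined for $\tau \in \{\set,t\}$, the assignment $U \mapsto \Spa(U,U)$ must already land in $\Spa(X,X)_\tau$; and indeed $\Spa(U,U) \to \Spa(X,X)$ is strongly \'etale (hence also tame) for \emph{every} \'etale $U \to X$, which is precisely the observation that makes $c$ a morphism of sites in the first place. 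So the appeal to \cref{unramified} and \cref{tamelocusopen} is not needed here, and the cofiltered system you feed into \cref{cohomologyoflimit} is simply the full system of affine \'etale neighborhoods of $\bar x$.
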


\begin{proof}
 It is easy to check that~$c$ induces a morphism of cites $\Spa(X,X)_{\tau} \to X_{\et}$ by mapping an \'etale morphism $Y \to X$ to the strongly \'etale (and thus \'etale and tame) morphism $\Spa(Y,Y) \to \Spa(X,X)$.
 We need to check that the higher direct images of~$\mathcal{F}$ vanish.
 In order to do so we may assume that~$X$ is strictly henselian.
 But then $\Spa(X,X)$ is strictly local (so in particular tamely and strongly local) and thus its cohomology groups vanish in degree greater than zero.
\end{proof}

Combining \cref{centerisomorphism} with \cref{properisomorphismonSpa} ($\Spa(X,X) \overset{\sim}{\to} \Spa(X,S)$ for $X \to S$ proper) we obtain the following.

\begin{proposition}
 Let $X \to S$ be a proper morphism of schemes and let~$\tau \in \{t,\set,\et\}$ be one of the topologies.
 Then the center map $c : \Spa(X,S) = \Spa(X,X) \to X$ induces for every sheaf~$\mathcal{F}$ on $\Spa(X,S)_{\tau}$ isomorphisms
 \[
  H^i(X_{\et},c_*\mathcal{F}) \overset{\sim}{\longrightarrow} H^i(\Spa(X,S)_{\tau},\mathcal{F})
 \]
 for all $i \ge 0$.
\end{proposition}

\section{Comparison with the tame fundamental group} \label{tamefundamentalgroup}

Let~$X$ be a regular scheme of finite type over some base scheme~$S$.
Suppose there is a regular compactification~$\bar{X}$ of~$X$ over~$S$ such that the complement of~$X$ in~$\bar{X}$ is the support of a strict normal crossing divisor~$D$.
Then, following \cite{SGA1}, Exp.~VIII, \S~2, we can study finite \'etale covers of~$X$ which are tamely ramified along~$D$.
This results in the definition of the tame fundamental group $\pi_1^t(X/S,\bar{x})$ for some geometric point~$\bar{x}$ of~$X$.

Under less favorable regularity assumptions, there are several approaches to define the tame fundamental group.
We only state the two of these which we use in this section.
Fix an integral, pure-dimensional, separated, and excellent base scheme~$S$.
In \cite{Wie08} Wiesend introduces the notion of curve-tameness.
It has been slightly extended by Kerz and Schmidt in \cite{KeSch10} to the following definition:
A curve over~$S$ is a scheme of finite type $C$ over~$S$ which is integral and such that
\[
 \dim_S C := \trdeg(k(C)|k(T)) + \dim_{\text{Krull}} T = 1,
\]
where~$T$ denotes the closure of the image of~$C$ in~$S$.
Any curve~$C$ has a canonical compactification~$\bar{C}$ over~$S$ which is regular at the points in $\bar{C}-C$.
Hence, we can define tameness over~$C$ as in \cite{SGA1}:
A finite \'etale cover $C' \to C$ by a connected, hence integral, curve~$C'$ is tame at a point $c \in \bar{C} - C$ if the corresponding valuation of the function field of~$C$ is tamely ramified in the extension of function fields $k(C')|k(C)$.
For a general finite \'etale cover $C' \to C$ we require tameness for each connected component of~$C'$.
Given a scheme~$X$ of finite type over~$S$, a finite \'etale cover $Y \to X$ is \emph{curve-tame} if the base-change to any curve $C \to X$ is tamely ramified outside $C \times_X Y$.

Let us recall next the notion of valuation-tameness considered in \cite{KeSch10}.
A finite \'etale cover $Y \to X$ of connected, normal schemes of finite type over~$S$ is \emph{valuation-tame} if every valuation of the function field $k(X)$ with center on~$S$ is tamely ramified in the finite, separable field extension $k(Y)|k(X)$.

This section is concerned with comparing the fundamental group of the tame site with the curve-tame and the valuation tame fundamental group.
In order to do so we need to relate tame covers with torsors in the tame topos.

\begin{lemma} \label{descent}
 Let $\pi:\cY \to \cX$ be a surjective \'etale morphism of discretely ringed adic spaces.
 Then~$\pi$ satisfies descent for finite morphisms.
\end{lemma}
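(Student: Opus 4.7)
The plan is to reduce to classical faithfully flat descent for finite algebras. Since descent is local on $X$, I may assume $X = \Spa(A,A^+)$ and $Y = \Spa(B,B^+)$ are affinoid with $A$ and $B$ discrete. By \cite{Hu96}, Corollary~1.7.3~iii), I may further arrange that $A \to B$ is \'etale in the usual algebraic sense. Since $\pi$ is surjective on topological spaces, testing on trivial valuations yields surjectivity of $\Spec B \to \Spec A$, so $A \to B$ is faithfully flat.

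A finite morphism $Z \to Y$ then takes the form $\Spa(C,C^+) \to \Spa(B,B^+)$ with $B \to C$ a finite ring map and $C^+$ the integral closure of (the image of) $B^+$ in $C$; the discrete topology imposes no further data. A descent datum for $Z$ relative to $\pi$ unpacks to a $B \otimes_A B$-algebra isomorphism $C \otimes_A B \cong B \otimes_A C$ satisfying the cocycle condition on $B \otimes_A B \otimes_A B$. Classical fpqc descent for finite algebras then produces a finite $A$-algebra $D$ together with a distinguished isomorphism $D \otimes_A B \cong C$ intertwining the descent datum with the canonical one.

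I would then set $D^+$ to be the integral closure of $A^+$ in $D$ and claim that $W := \Spa(D,D^+) \to X$ descends $Z$, and that this construction is inverse to pullback along $\pi$. The only algebraic identity to verify is that $C^+$ agrees with the integral closure of $D^+ \cdot B^+$ in $C$, which follows from the sandwich $B^+ \subseteq D^+ \cdot B^+ \subseteq C^+$ together with the observation that $D^+ \cdot B^+$ is integral over $B^+$ (as $D^+$ is integral over $A^+$). Uniqueness of $W$ follows from the uniqueness in classical descent combined with the uniqueness of the integral closure.

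The main subtlety in this approach is to confirm that faithfully flat descent of finite algebras on the ring-theoretic side really does yield descent for finite morphisms of the associated discretely ringed adic spaces, tracking the rings of integral elements and the valuation-theoretic structure correctly through the construction. This is essentially guaranteed by the rigidity of the discrete topology, which strips away all the non-algebraic data and allows the classical affine picture to take over; the only place extra care is needed is in the identification of $C^+$ above, and this is handled by the sandwich argument.
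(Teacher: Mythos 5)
Your proof is correct and takes essentially the same route as the paper: reduce to the affinoid case, observe that finite morphisms of discretely ringed affinoid adic spaces correspond to finite ring maps (with rings of integral elements given by integral closure), and invoke classical faithfully flat descent for finite algebras. The paper cites SGA1 Exp.\ VIII Th\'eor\`eme~2.1 and leaves the bookkeeping of rings of integral elements implicit; your sandwich argument identifying $C^+$ with the integral closure of $D^+\cdot B^+$ is exactly the verification the paper elides, and it is correct.
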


\begin{proof}
 The same arguments as for schemes reduce us to the case where $\cX = \Spa(A,A^+)$ and $\cY = \Spa(B,B^+)$ are affinoid.
 Then $\Spec B \to \Spec A$ is a surjective \'etale morphism of schemes.
 Moreover, finite morphisms to~$\cX$ and~$\cY$ correspond to finite $A$-algebras and $B$-algebras, respectively.
 Hence, we can apply descent theory for schemes (\cite{SGA1}, Exp. VIII, Th\'eor\`eme~2.1) to obtain the result.
\end{proof}

\begin{corollary} \label{torsorscoverings}
 Let $\tau \in \{\et,t,\set\}$ be one of the topologies on a discretely ringed adic space~$\cX$.
 Let~$\mathcal{F}$ be a torsor in $\Sh(\cX_{\tau})$ for some finite group~$G$.
 Then~$\mathcal{F}$ is represented by a finite Galois morphism $\cY \to \cX$ in~$\cX_{\tau}$ with Galois group~$G$.
\end{corollary}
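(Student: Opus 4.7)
The strategy is to trivialize $\mathcal{F}$ locally in the $\tau$-topology, identify each local trivialization with the tautological finite $G$-cover $\coprod_{g \in G} U \to U$, and then descend along the trivializing covering by means of \cref{descent}. The $G$-action on $\mathcal{F}$ will automatically supply the Galois structure on the descended object.

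By definition of a $G$-torsor, there is a covering $\{U_i \to X\}_{i \in I}$ in $X_\tau$ together with sections $s_i \in \mathcal{F}(U_i)$. Each $s_i$ yields an isomorphism of sheaves $\underline{G}|_{U_i} \overset{\sim}{\longrightarrow} \mathcal{F}|_{U_i}$, $g \mapsto g \cdot s_i$, and the constant sheaf $\underline{G}$ on $U_i$ is represented by the obvious finite Galois cover $\coprod_{g \in G} U_i \to U_i$, which plainly lies in $(U_i)_\tau$. On the overlaps $U_i \times_X U_j$ the two sections $s_i, s_j$ differ by a unique section $g_{ij} \in \underline{G}(U_i \times_X U_j)$, and the family $(g_{ij})$ satisfies the cocycle condition; this packages into $G$-equivariant descent data for the finite covers $\coprod_{g \in G} U_i \to U_i$ along the étale covering $\{U_i \to X\}$.

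Since the descent problem is local on $X$, I would pass to a quasi-compact open, refine to a finite subcovering, and set $U := \coprod_{i} U_i$, a single surjective \'etale discretely ringed adic space over $X$. Then \cref{descent} yields a finite morphism $Y \to X$ whose pullback along $U \to X$ recovers $\coprod_{g \in G} U$, with the descended $G$-action acting simply transitively on geometric fibers (by the torsor hypothesis). Thus $Y \to X$ is a finite Galois cover with group $G$, and represents $\mathcal{F}$ by construction.

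The main remaining point, and in my view the principal obstacle, is to verify that the descended $Y \to X$ actually lies in $X_\tau$ when $\tau \in \{\set, t\}$ rather than merely $\tau = \et$. Here I would argue pointwise: every $y \in Y$ has a preimage $\tilde{y} \in Y \times_X U \cong \coprod_{g \in G} U$, and on each component the projection to $X$ factors as an isomorphism followed by some $U_i \to X$ in $X_\tau$. Because strong \'etaleness and tameness are pointwise, valuation-theoretic conditions (cf. \cref{comparenotionsofetale}) whose loci are open (\cref{unramified}, \cref{tamelocusopen}), the condition holding at $\tilde{y}$ transfers to $y$, so $Y \to X$ belongs to $X_\tau$.
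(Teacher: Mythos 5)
Your proof follows essentially the same route as the paper's: trivialize $\mathcal{F}$ by a covering $U\to X$, identify the restriction with the tautological $G$-cover $\coprod_G U\to U$, and descend along $U\to X$ using \cref{descent}. The only place you go beyond the paper is in arguing that the descended $Y\to X$ actually lies in $X_\tau$ for $\tau\in\{\set,t\}$ — a point the paper leaves implicit — and here your citation of openness of the tame/strongly \'etale locus (\cref{unramified}, \cref{tamelocusopen}) is not quite the operative fact: openness plays no role in transferring the condition from $\tilde y\in Y\times_X U$ to $y\in Y$. What is actually needed is that if $Y\times_X U\to X$ is tame (resp.\ strongly \'etale) at $\tilde y$ and $\tilde y\mapsto y$ under the surjection $Y\times_X U\to Y$, then $Y\to X$ is tame (resp.\ strongly \'etale) at $y$; this holds because $k(y)$ is an intermediate valued field of the extension $k(\tilde y)|k(x)$ and a subextension of a tamely ramified (resp.\ unramified) extension of valued fields is again tamely ramified (resp.\ unramified). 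With that correction the argument is complete and matches the paper's.
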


\begin{proof}
 Let $\cX' \to \cX$ be a covering of~$\cX$ such that $\mathcal{F}|_{\cX'}$ is trivial, hence represented by $\pi':\coprod_G \cX' \to \cX'$.
 By \cref{descent} the morphism~$\pi'$ descends to a finite Galois morphism $\pi:\cY \to \cX$ in~$\cX_{\tau}$ representing~$\mathcal{F}$.
\end{proof}

For a geometric point $\bar{x}$ of a connected, locally noetherian adic space~$\cX$ we want to define the fundamental group of the corresponding pointed site $(\cX_{\tau},\bar{x})$ (for $\tau \in \{\et,t,\set\}$).
To be more precise, we want a pro-finite group $\pi_1(\cX_{\tau},\bar{x})$ that classifies finite torsors, i.e. for every finite group~$G$ the set of isomorphism classes of $G$-torsors in $\Sh(\cX_{\tau})$ should be given by
\[
 \Hom(\pi_1(\cX_{\tau},\bar{x}),G).
\]
In \cite{AM}, \S 9 Artin and Mazur describe the construction of the fundamental pro-group of a locally connected site via the Verdier functor.
By \cite{AM}, Corollary~10.7, it classifies all torsors (not just finite).
Taking the pro-finite completion we obtain a pro-finite group classifying finite torsors.
In order to apply these results in our situation, we need to check that~$\cX_{\tau}$ is locally connected.
But this is true because the connected components of an affinoid noetherian adic space~$\cX$ are in one-to-one correspondence with the idempotents of the noetherian ring~$\cO_{\cX}(\cX)$.
By descent (\cref{torsorscoverings}), the resulting fundamental group $\pi_1(\cX_{\tau},\bar{x})$ not only classifies finite $G$-torsors in $\Sh(\cX_{\tau})$ but also finite Galois $\tau$-covers.

\begin{proposition} \label{etalefundamentalgroup}
 Let $X \to S$ be a morphism of connected, noetherian schemes and~$\bar{x}$ a geometric point of~$X$.
 We can view~$\bar{x}$ as a geometric point of $\Spa(X,S)$ by taking the trivial valuation on the residue field of~$\bar{x}$.
 Then there is a natural isomorphism
 \[
  \pi_1(X_{\et},\bar{x}) \cong \pi_1(\Spa(X,S)_{\et},\bar{x}).
 \]
\end{proposition}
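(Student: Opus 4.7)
The plan is to exhibit an equivalence of Galois categories, compatible with the fiber functors at~$\bar{x}$, between the category $\mathrm{FEt}(X)$ of finite \'etale covers of the scheme~$X$ and the category $\mathrm{FEt}(\Spa(X,S))$ of finite \'etale covers of the discretely ringed adic space $\Spa(X,S)$. Since both fundamental groups classify finite Galois torsors (by the discussion preceding the proposition, using \cref{torsorscoverings}), such an equivalence intertwining fiber functors will automatically yield the desired isomorphism of pro-finite groups.

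The natural candidate is the pullback functor
\[
 \Phi: \mathrm{FEt}(X) \longrightarrow \mathrm{FEt}(\Spa(X,S)), \qquad (Y \to X) \longmapsto (\Spa(Y,S) \to \Spa(X,S)),
\]
induced by the functoriality of Temkin's construction. To see that $\Phi$ lands in $\mathrm{FEt}(\Spa(X,S))$, one works affinoid-locally: by \cite{Hu96}, Corollary~1.7.3~iii), an \'etale morphism of discretely ringed affinoid adic spaces $\Spa(B,B^+) \to \Spa(A,A^+)$ corresponds to an \'etale ring map $A \to B$ together with $B^+$ equal to the integral closure of a finite-type $A^+$-subalgebra of~$B$. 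When $A \to B$ is moreover finite, $B^+$ is forced to be the integral closure of $A^+$ in~$B$, so $\Spa(Y,S) \to \Spa(X,S)$ is finite \'etale in the adic sense.

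For essential surjectivity, given a finite \'etale $Z \to \Spa(X,S)$, I would cover the target by affinoids of the form $\Spa(A,A^+)$ arising from affine opens $\Spec A \subseteq X$ and $\Spec R \subseteq S$ (such charts generate the topology by Temkin's construction), apply Huber's structure theorem on each chart to obtain $Z|_U \cong \Spa(B,B^+)$ with $A \to B$ finite \'etale, and observe that the underlying schemes $\Spec B$ glue to a finite \'etale $Y \to X$ because the ring~$B$ is intrinsic to the finite \'etale $A$-algebra structure and independent of the auxiliary data~$A^+$. Fully faithfulness reduces to the same local correspondence, since morphisms between finite \'etale covers can be checked on an affinoid/affine cover. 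Finally, compatibility with the fiber functors is clear: $\bar{x} = \Spec k$ equipped with the trivial valuation on~$k$ is a geometric point of $\Spa(X,S)$ with residue field~$k$, and the fibers of $Y \to X$ and $\Spa(Y,S) \to \Spa(X,S)$ at~$\bar{x}$ are canonically the same finite set. The main obstacle is essential surjectivity: one has to verify that the local data glue consistently to a global finite \'etale $X$-scheme despite the varying choices of rings of integral elements on different affinoid charts — the key point being that the finite \'etale $A$-algebra~$B$ is determined by $A \to B$ alone, so the gluing on~$X$ does not see the choice of~$A^+$.
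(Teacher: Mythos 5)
The overall strategy is the same as the paper's—set up the pullback functor $\Phi\colon \mathrm{FEt}(X)\to\mathrm{FEt}(\Spa(X,S))$ induced by Temkin's construction, check it intertwines fiber functors, and use the classification of $\pi_1$ by finite Galois covers from \cref{torsorscoverings}—but your treatment of essential surjectivity genuinely departs from the paper's, and that is where the work lies.

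The paper does \emph{not} try to show each local piece is finite directly. Instead it covers $Z$ by affinoid opens $\Spa(B,B^+)$ (not preimages of affinoids of the base), reads off from Huber's Cor.~1.7.3~iii) only that $A\to B$ is \'etale and that $\Spa(B,B^+)$ sits as an open in the finite adic space $\Spa(B,A^+)$; these glue to an \'etale (not a priori finite) morphism of schemes $Y\to X$ with $Z$ open and dense in $\Spa(Y,S)$. It then uses the $G$-torsor structure of $Z$: after base change to a trivializing cover, $\Spa(X_i,S_i)\otimes G\to\Spa(X_i,S_i)$ satisfies the valuative criterion for properness, which forces the open dense inclusion $Z_i\subseteq\Spa(Y\times_X X_i,S_i)$ to be an equality and thereby forces $Y\to X$ to be finite and $Z=\Spa(Y,S)$. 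Your route replaces this whole trivialization-and-properness step by restricting $Z$ to preimages $Z|_U$ of affinoid charts arising from affine opens of $X$ and asserting $Z|_U\cong\Spa(B,B^+)$ with $A\to B$ \emph{finite} \'etale. That is a real simplification if it works, and it also dispenses with the $G$-torsor hypothesis.

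Two points in your sketch need to be justified rather than asserted. First, extracting a \emph{finite \'etale ring map} from $Z|_U$ is not a direct consequence of ``Huber's structure theorem on each chart'': Cor.~1.7.3~iii) gives the $\'etale$ ring structure only after passing to an open cover of the source. What you do have is that $Z|_U$ is a single affinoid $\Spa(B,B^+)$ with $B$ finite over $A$ (by finiteness of $Z\to\Spa(X,S)$) and $B^+$ the integral closure of $A^+$ in $B$; one must then \emph{argue} separately that adic-space \'etaleness of this finite morphism implies $A\to B$ is \'etale (e.g., by checking \'etaleness at each maximal ideal of $B$, using that every maximal ideal of $B$ supports a point of $\Spa(B,B^+)$ and applying Huber's local structure to a rational neighborhood of that point). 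Second, the gluing requires that the finite \'etale $A$-algebra $B$ read off from $Z|_{\Spa(A,A^+)}$ be independent of the choice of $A^+$. This is true for a finite morphism over a discretely ringed base (shrinking $A^+$ only shrinks $B^+$, not $B$), but you state it as an observation rather than proving it. Both points are repairable, so your plan is viable; the paper's trivialization-and-valuative-criterion detour is precisely the price it pays to avoid having to establish these two local facts about finite \'etale morphisms of affinoids.
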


\begin{proof}
 By what we have just discussed, the \'etale fundamental group of $\Spa(X,S)$ classifies finite \'etale covers of $\Spa(X,S)$.
 Similarly, $\pi_1(X_{\et},\bar{x})$ classifies finite \'etale covers of~$X$.
 Every finite \'etale cover $Y \to X$ induces a finite \'etale cover $\Spa(Y,S) \to \Spa(X,S)$.
 For two finite \'etale covers $Y \to X$ and $Y' \to X$ the natural homomorphism
 \[
  \Hom_X(Y,Y')	\longrightarrow \Hom_{\Spa(X,S)}(\Spa(Y,S),\Spa(Y',S))
 \]
 is bijective, an inverse being given by assigning to a morphism $\Spa(Y,S) \to \Spa(Y',S)$ the corresponding morphism of supports $Y \to Y'$.
 It remains to show that every finite \'etale cover of $\Spa(X,S)$ comes from a finite \'etale cover of~$X$.

 Let $\varphi: \cZ \to \Spa(X,S)$ be a finite \'etale cover of adic spaces.
 We need to show that it comes from a finite \'etale cover of~$X$ as above.
 Let $\Spa(B,B^+)$ and $\Spa(A,A^+)$ be affinoid open subspaces of~$\cZ$ and $\Spa(X,S)$, respectively, such that $\varphi(\Spa(B,B^+)) \subseteq \Spa(A,A^+)$.
 By \cite{Hu96}, Corollary~1.7.3, we obtain a factorization
 \[
  \begin{tikzcd}
   \Spa(B,B^+)	\ar[rr,open]	\ar[dr]	&			& \Spa(B,A^+)	\ar[dl]	\\
					& \Spa(A,A^+).
  \end{tikzcd}
 \]
 and $A \to B$ is \'etale.
 Since we are working with discretely ringed adic spaces, this construction glues and we obtain a diagram
 \[
  \begin{tikzcd}
   \cZ	\ar[rr,open]	\ar[dr,"\varphi"']	&   		& \Spa(Y,S)	\ar[dl]	\\
                                            & \Spa(X,S)
  \end{tikzcd}
 \]
 with $Y \to X$ \'etale and~$\cZ$ dense in $\Spa(Y,S)$.

 By assumption there is an \'etale covering $\cW \to \Spa(X,S)$ trivializing $\varphi$.
 Without loss of generality we may assume that~$\cW$ is a disjoint union of adic spaces of the form $\Spa(X_i,S_i)$.
 In particular, $\coprod_i X_i \to X$ is an \'etale covering of~$X$.
 Moreover,
 \[
  \cZ_i := \cZ \times_{\Spa(X,S)} \Spa(X_i,S_i) \cong \Spa(X_i,S_i) \otimes G
 \]
 for some finite group~$G$.
 Base changing the above diagram to $\Spa(X_i,S_i)$ we obtain
 \[
  \begin{tikzcd}
   \Spa(X_i,S_i) \otimes G	\ar[rr,open]	\ar[dr]	&		& \Spa(Y \times_X X_i,S_i)	\ar[dl]	\\
							& \Spa(X_i,S_i)
  \end{tikzcd}
 \]
 and $\Spa(X_i,S_i) \otimes G$ is open and dense in $\Spa(Y \times_X X_i,S_i)$.
 But $\Spa(X_i,S_i) \otimes G \to \Spa(X_i,S_i)$ satisfies the valuative criterion for properness and hence,
 \[
  \Spa(X_i,S_i) \otimes G = \Spa(X_i \otimes G,S_i) = \Spa(Y \times_X X_i,S_i).
 \]
 We conclude that $X_i \otimes G = Y \times_X X_i$.
 This shows that $Y \to X$ is a finite \'etale cover such that $\cZ = \Spa(Y,S)$.
\end{proof}

\begin{proposition}
 Let~$X$ be a connected, regular scheme of finite type over~$S$ and~$\bar{x}$ a geometric point of~$X$.
 Then the valuation-tame fundamental group $\pi_1^{vt}(X/S,\bar{x})$ is canonically isomorphic to the fundamental group $\pi_1(\Spa(X,S)_t,\bar{x})$ of the tame site $\Spa(X,S)_t$.
\end{proposition}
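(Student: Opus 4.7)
The plan is to identify both fundamental groups as classifying the same category of finite Galois covers. By \cref{torsorscoverings} together with the Verdier-type construction recalled above the proposition, $\pi_1^t(\Spa(X,S),\bar{x})$ pro-represents the functor sending a finite group $G$ to isomorphism classes of finite Galois tame covers of $\Spa(X,S)$ with group $G$, whereas $\pi_1^{vt}(X/S,\bar{x})$ pro-represents finite Galois valuation-tame covers of $X$ with group $G$. Hence it suffices to construct a canonical equivalence between these two categories compatible with the base point $\bar{x}$, viewed in $\Spa(X,S)$ via the trivial valuation as in \cref{etalefundamentalgroup}.

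By \cref{etalefundamentalgroup} the functor $Y \mapsto \Spa(Y,S)$ already equates finite \'etale covers of $X$ with finite \'etale covers of $\Spa(X,S)$. What remains is, for a given finite \'etale cover $Y \to X$, to verify that $\Spa(Y,S) \to \Spa(X,S)$ is tame in the adic sense if and only if $Y \to X$ is valuation-tame. The ``only if'' direction is immediate: adic points of $\Spa(X,S)$ supported at the generic point $\eta_X$ of $X$ are exactly the valuations of $k(X)$ with center on $S$, and adic tameness at these points together with their lifts to $\Spa(Y,S)$ is by definition valuation-tameness of $Y \to X$.

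For the ``if'' direction the key step is valuation-theoretic. Fix an adic point $(y,R_w,\phi) \in \Spa(Y,S)$ above $(x,R_v,\phi) \in \Spa(X,S)$; we must show that $w|v$ is tame in $k(y)|k(x)$. Since $Y$ is regular (being \'etale over the regular scheme $X$), a standard composition-of-valuations argument, starting from a valuation of $k(Y)$ dominating $\cO_{Y,y}$ furnished by Chevalley's extension theorem and composed with $R_w$, produces a valuation $\tilde w$ of $k(Y)$ whose restriction $\tilde v$ to $k(X)$ has center on $S$, such that $\tilde w$ (resp.\ $\tilde v$) decomposes as the composition of a ``lower'' valuation coming from $\cO_{Y,y}$ (resp.\ $\cO_{X,x}$) with the ``upper'' residual valuation $w$ (resp.\ $v$). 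Because $Y \to X$ is \'etale at $y$, the local extension $\cO_{X,x} \to \cO_{Y,y}$ is essentially \'etale, so the lower part of the composed extension $\tilde w|\tilde v$ contributes trivial ramification and only the separable residue extension $k(y)|k(x)$. The multiplicativity of ramification indices and residue degrees along a coarsening (see \cite[\S 5]{EP2005}) then yields that tameness of $\tilde w|\tilde v$ is equivalent to tameness of $w|v$. Valuation-tameness of $Y \to X$ applied to $\tilde v$ now gives tameness of $\tilde w|\tilde v$, hence of $w|v$.

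The equivalence is plainly Galois-equivariant and compatible with the chosen base point, yielding the desired canonical isomorphism. The main technical obstacle is the valuation-theoretic argument of the third paragraph: in particular, carefully setting up the composed valuations $\tilde w$ and $\tilde v$ using Chevalley's extension theorem together with the regularity of $X$, and then verifying that tameness genuinely passes between $\tilde w|\tilde v$ and its residual extension $w|v$ because the ``lower'' part of the composition, dictated by the \'etale local extension $\cO_{X,x} \to \cO_{Y,y}$, is unramified with separable residue extension.
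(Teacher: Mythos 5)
Your proof follows the same overall strategy as the paper's: reduce via \cref{etalefundamentalgroup} to showing that a finite \'etale cover $Y \to X$ is valuation-tame over $S$ if and only if $\Spa(Y,S) \to \Spa(X,S)$ is tame, observe that the ``only if'' direction is automatic because points supported at the generic point of $X$ are precisely the valuations entering the definition of valuation-tameness, and for the ``if'' direction decompose the valuation at a general point $(y,R_w,\phi')$ over $(x,R_v,\phi)$ as a composite of a ``lower'' part picking out $\cO_{Y,y}$ inside $k(Y)$ and the ``upper'' part $R_w$, then use the essential \'etaleness of $\cO_{X,x}\to\cO_{Y,y}$ to see the lower part contributes no ramification. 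This is exactly the paper's argument.

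There is, however, a genuine gap in the construction of the lower valuation. Chevalley's extension theorem produces \emph{some} valuation ring of $k(Y)$ dominating $\cO_{Y,y}$, but gives no control whatsoever on its residue field; in general this residue field can be a large, even transcendental, extension of $k(y)$. For your composition with $R_w$ to make sense as stated, and for the multiplicativity argument to run with the asserted residue extension being $k(y)|k(x)$, you need the residue fields of the two lower valuations to be \emph{exactly} $k(x)$ and $k(y)$. This is precisely where regularity enters in an essential way, and it is the step Chevalley's theorem does not provide. The paper instead takes the discrete rank-$d$ valuation of $k(X)$ determined by a regular system of parameters at $x$ (hence with residue field exactly $k(x)$), and notes that because $\cO_{X,x}\to\cO_{Y,y}$ is essentially \'etale this regular system of parameters is also one for $\cO_{Y,y}$, yielding the matching discrete valuation of $k(Y)$ with residue field exactly $k(y)$ and with unramified lower extension. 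Replacing your appeal to Chevalley's theorem by this explicit regular-parameter construction closes the gap; you invoke regularity, so you may well have had this in mind, but as written the cited tool does not deliver the control on residue fields that your argument needs.
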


\begin{proof}
 By \cref{etalefundamentalgroup} we have to show that a finite \'etale cover $Y \to X$ is valuation-tame over~$S$ if and only if $\Spa(Y,S) \to \Spa(X,S)$ is tame.
 If the latter is true, it is clear that the former also holds.
 Suppose that $Y \to X$ is valuation-tame and pick a point $z = (x,R,\phi) \in \Spa(X,S)$.
 Since~$X$ is regular at~$x$, we find a discrete valuation~$v$ (not necessarily of rank one) supported on the generic point $\eta = \Spec k(X)$ and a morphism $\psi: \Spec \cO_v \to X$ mapping the closed point of $\Spec \cO_v$ to~$x$ such that $k(v) = k(x)$.
 (It can be obtained by taking a a regular system of parameters $a_1,\ldots,a_n$ of $\cO_{X,x}$ and composing the valuations~$v_i$ corresponding to the divisor $V(a_1,\ldots,a_i)$ of $V(a_1,\ldots,a_{i-1})$.)
 The concatenation of~$v$ with the valuation corresponding to~$R$ gives a valuation ring~$R'$ of~$k(X)$ and $\phi$ and~$\psi$ determine a morphism $\alpha:\Spec R' \to S$.
 By assumption any point of $\Spa(Y,S)$ lying over $(\eta,R',\alpha)$ is tame over $\Spa(X,S)$.
 This implies that the same is true for any point lying over~$z$.
\end{proof}

Here is a stronger version but with some assumptions on resolutions of singularities:

\begin{proposition}
 Let~$S$ be an integral, excellent and pure-dimensional base scheme and~$X$ a connected scheme of finite type over~$S$ with a geometric point~$\bar{x}$.
 Assume that every finite separable extension of every residue field of~$X$ admits a regular proper model.
 Then the curve-tame fundamental group $\pi_1^{ct}(X/S,\bar{x})$ is canonically isomorphic to $\pi_1(\Spa(X,S)_t,\bar{x})$.
\end{proposition}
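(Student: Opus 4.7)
The plan is to extend the strategy of the preceding proposition. By \cref{etalefundamentalgroup}, finite \'etale covers of $X$ correspond to finite \'etale covers of $\Spa(X,S)$, and the two fundamental groups in the statement classify those finite \'etale covers satisfying the respective tameness conditions. Hence it suffices to prove that a finite \'etale cover $Y \to X$ is curve-tame over $S$ if and only if $\Spa(Y,S) \to \Spa(X,S)$ is tame.

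The implication from tame at the adic level to curve-tame is the easier one. Given an integral curve $C \to X$ with canonical $S$-compactification $\bar C$, every boundary point $c \in \bar C - C$ lies in the regular locus of $\bar C$, hence carries a discrete rank one valuation of $k(C)$; together with the unique morphism $\Spec \cO_{\bar C, c} \to S$ extending $C \to S$ (which exists by properness of $\bar C \to S$), this produces a point $z_c \in \Spa(\bar C, S)$. Applying \cref{properisomorphismonSpa} to $\bar C \to S$ identifies $\Spa(\bar C, S)$ with $\Spa(C, S)$, so $z_c$ may be regarded as a point of $\Spa(C, S)$. Pulling back tameness of $\Spa(Y,S) \to \Spa(X,S)$ along $\Spa(C,S) \to \Spa(X,S)$ (using stability of tameness under base change) and specialising to preimages of $z_c$ in $\Spa(C \times_X Y, S)$ yields tameness of $C \times_X Y \to C$ at the boundary points of its canonical compactification lying above $c$. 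Varying $C$ then gives curve-tameness of $Y \to X$.

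For the reverse direction I would invoke the comparison theorem of Kerz and Schmidt (\cite{KeSch10}), which under the given resolution of singularities hypothesis on residue fields identifies curve-tameness with valuation-tameness; any curve-tame $Y \to X$ is thus valuation-tame over $S$. To promote this to tameness of $\Spa(Y,S) \to \Spa(X,S)$, let $z = (x, R, \phi) \in \Spa(X,S)$ and $z' = (y, R', \phi') \in \Spa(Y,S)$ be a preimage; we must show that $R' | R$ is a tame extension of valuation rings in $k(y) | k(x)$. Following the argument of the previous proposition, the idea is to produce a discrete valuation $v$ of $k(X)$ whose center in $X$ is $x$ and whose residue field is $k(x)$, so that the composition of $v$ with the valuation corresponding to $R$ is a valuation of $k(X)$ whose valuation ring carries a morphism to $S$. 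Valuation-tameness of $Y \to X$ forces this composed valuation to be tame in $k(Y)|k(X)$, and tameness descends to the quotient $R'|R$. The resolution of singularities hypothesis is used precisely here: a regular proper $S$-model of $k(x)$ dominating the closure of $\{x\}$ in $X$ provides the required discrete valuations through its regular codimension-one points.

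The main obstacle is this last construction, namely realising the adic datum $(x, R, \phi)$ as a specialisation of a valuation on the generic function field $k(X)$. In the preceding proposition the regularity of $X$ supplied such a discrete valuation directly via a regular system of parameters at $x$; without that hypothesis one must pass to a regular proper model of (a finite extension of) $k(x)$ guaranteed by the assumption, and then verify that the specialisation behaviour of valuations is compatible with the tameness obtained on $k(Y)|k(X)$. Once this bookkeeping is done, combining it with the Kerz--Schmidt equivalence closes the argument.
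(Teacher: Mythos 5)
The easy direction of your argument (tame at the adic level implies curve-tame) is sound and matches the paper's reasoning closely.

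The reverse direction, however, has a genuine gap that you in fact point to yourself but do not resolve. You want to apply the Kerz--Schmidt comparison (curve-tame $\Leftrightarrow$ valuation-tame) to the given $Y \to X$, but that theorem requires $X$ to be regular, normal and connected — whereas here $X$ is only assumed connected of finite type over~$S$. Your fallback, producing a discrete valuation $v$ of $k(X)$ with center $x$ and residue field $k(x)$, is exactly the step that was available in the previous proposition \emph{because} $X$ was regular there; without regularity at $x$ this construction is not available, and \say{passing to a regular proper model of $k(x)$} does not obviously produce a valuation of $k(X)$ realising the datum $(x,R,\phi)$ as a specialisation in the sense you need.

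The paper avoids the problem by a different reduction that you are missing: given $(x,R,\phi) \in \Spa(X,S)$, first restrict $Y \to X$ to the reduced closed subscheme $Z = \overline{\{x\}}$ of $X$. The restricted cover $Z \times_X Y \to Z$ is still curve-tame, and the point $(x,R,\phi)$ now lives over the \emph{generic} point of $Z$, so the problem of realising it as a specialisation from the function field disappears. One may then shrink $Z$ to a nonempty regular open (possible over an excellent base), and only now invoke Kerz--Schmidt to upgrade curve-tameness to valuation-tameness — which for the generic point directly yields tameness of $\Spa(Y,S) \to \Spa(X,S)$ at $(x,R,\phi)$. Your proposal would be correct once you insert this restriction-to-the-closure step before any appeal to Kerz--Schmidt or to discrete valuations with center $x$.
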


\begin{proof}
 By \cref{etalefundamentalgroup} we have to show that a finite \'etale cover $Y \to X$ is curve-tame over~$S$ if and only if $\Spa(Y,S) \to \Spa(X,S)$ is tame.
 Suppose $\Spa(Y,S) \to \Spa(X,S)$ is tame and let $C \rightarrow X$ be a curve mapping to $X$ with compactification~$\bar{C}$.
 Without loss of generality we may assume that $C \to X$ is a closed immersion.
 Let~$\eta_C$ be the generic point of~$C$ viewed as a point of~$X$.
 A point $c \in \bar{C}-C$ corresponds to a valuation ring $\cO_c \subseteq k(\eta_C)$ and comes naturally with a morphism $\phi_c:\Spec \cO_c \to S$.
 This defines a point $(\eta_C,\cO_c,\phi_c)$ of $\Spa(X,S)$.
 By assumption all points of~$\Spa(Y,S)$ lying over $(\eta_C,\cO_c,\phi_c)$ are tame over $\Spa(X,S)$.
 This translates to $C \times_X Y \to C$ being tamely ramified over~$c$.
 We conclude that $Y \to X$ is curve-tame.

 Suppose now that $Y \to X$ is curve-tame.
 Take a point $(x,R,\phi) \in \Spa(X,S)$.
 Let $Z$ be the closed subset $\overline{\{x\}}$ of~$X$ with the reduced scheme structure.
 In order to show that $\Spa(Y,S) \to \Spa(X,S)$ is tame we may replace $Y \to X$ by its base change to~$Z$.
 Note that $Z \times_X Y \to Z$ is still curve-tame.
 Hence, we may assume that~$X$ is integral with generic point~$x$.
 Furthermore, by the same argument, we may replace~$X$ by a nonempty open subscheme.
 We may thus assume that~$X$ is regular.
 But now under our assumption on resolution of singularities $Y \to X$ is curve tame if and only if it is valuation-tame (see \cite{KeSch10}, Theorem~4.4).
 In particular, every point of $\Spa(Y,S)$ lying over $(x,R,\phi)$ is tame over $\Spa(X,S)$.
\end{proof}

\section{Coherent cohomology for discretely ringed adic spaces} \label{cohomologydiscrete}

All cohomology groups in this section are sheaf cohomology groups on the underlying topological space of the scheme or adic space in question (not on the tame or \'etale site etc.).

Let~$S$ be an excellent noetherian scheme.
We say that \emph{resolution of singularities holds over~$S$} if for any reduced scheme~$X$ of finite type over~$S$ there is a locally projective birational morphism $X' \to X$ such that~$X'$ is regular and $X' \to X$ is an isomorphism over the regular locus of~$X$.

A morphism of schemes $X \to S$ is said to be a \emph{pro-open immersion} if it is a limit of open immersions with affine transition morphisms.
In this case we also say that~$X$ is pro-open in~$S$.
Examples are open subschemes of~$S$ and the localization of~$S$ at some point $s \in S$.
A scheme~$X$ is \emph{essentially of finite type} over~$S$ if there is a scheme~$T$ of finite type over~$S$ and a pro-open immersion $X \to T$ over~$S$.
A \emph{compactification} of a scheme~$X$ essentially of finite type over~$S$ is a proper $S$-scheme~$T$ together with a pro-open immersion $X \to T$ over~$S$.
By \cite{Con07}, if~$S$ is quasi-compact and quasi-separated and $X \to S$ is separated and essentially of finite type, a compactification exists.

Let $\pi:X \to S$ be a morphism of schemes.
In this section we compare the cohomology of the sheaf~$\cO_{\cX}^+$ on the discretely ringed adic space $\cX = \Spa(X,S)$ with the cohomology of the structure sheaf~$\cO_{\bar{X}}$ of a regular compactification~$\bar{X}$ of~$X$ over~$S$ (provided it exists).
More precisely, we assume that $\pi: X \to S$ is separated and essentially of finite type, $X$ is regular, and resolution of singularities holds over~$S$.
Then a regular compactification $\bar{\pi}:\bar{X} \to S$ of~$X$ over~$S$ exists and $\Spa(X,S) = \Spa(X,\bar{X})$.
In \cref{section_adic_spaces} we defined the center map, which is a morphism of ringed spaces
\[
 (c,c^+): (\cX=\Spa(X,S),\cO_{\cX}) \to (S,\cO_S)
\]
sending $(x,R,\phi) \in \Spa(X,S)$ to the image of the closed point of $\Spec R$ under~$\phi$.
In this section we show under the assumption of resolution of singularities that the center map induces an isomorphism
\[
 Rc_*\cO_{\cX}^+ \longrightarrow R\bar{\pi}_*\cO_{\bar{X}}.
\]
In particular, this implies
\[
 H^i(\cX,\cO_{\cX}^+) \cong H^i(\bar{X},\cO_{\bar{X}}).
\]
Since $\Spa(X,S)$ is naturally isomorphic to $\Spa(X,\bar{X})$ we may replace~$S$ by~$\bar{X}$ and work with a regular scheme~$S$ and a pro-open $X \subseteq S$.
First, we want to show that $c^+ : \cO_S \to c_*\cO_{\cX}$ is an isomorphism.

\begin{lemma} \label{restrictionisomorphism}
 Let~$X, Y \subseteq S$ be dense pro-opens in an integral normal scheme such that $X \subseteq Y$.
 Set~$\cS = \Spa(S,S)$.
 The restriction
 \[
  \rho:\cO_{\cS}^+(\Spa(Y,S)) \to \cO_{\cS}^+(\Spa(X,S))
 \]
 is an isomorphism.
\end{lemma}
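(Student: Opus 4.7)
The plan is to identify both $\cO_{Z'}^+(\Spa(Y,S))$ and $\cO_{Z'}^+(\Spa(X,S))$ with $\cO_S(S)$, viewed as a subring of the function field $K$ of $S$, so that $\rho$ becomes the identity of $\cO_S(S)$.

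First I would use that $S$ is integral and normal, whence every pro-open subscheme is again integral with function field $K$, and recall that the structure sheaf $\cO_{Z'}$ is the pullback of $\cO_S$ along the support map; this gives $\cO_{Z'}(\Spa(Y,S))=\cO_Y(Y)\hookrightarrow K$, and similarly for $X$. Density of $X$ in $S$ forces the generic point $\eta$ of $S$ to lie in $X\subseteq Y$. I would then describe $\cO_{Z'}^+(\Spa(Y,S))$ as the set of $a\in\cO_Y(Y)$ satisfying $|a(z)|\le 1$ at every $z\in\Spa(Y,S)$. Testing only at the points $(\eta,R,\phi)\in\Spa(Y,S)$—which, as $R$ varies, range over all valuation rings of $K$ admitting a center on $S$—yields the necessary condition $a\in\bigcap_{R} R$.

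For sufficiency, given such an $a$ and an arbitrary $z=(x,R',\phi')\in\Spa(Y,S)$ with center $s\in S$, the compatibility of $\phi'$ with $\Spec k(x)\to S$ produces a local homomorphism $\cO_{S,s}\to R'$ whose composition with $R'\hookrightarrow k(x)$ coincides with $\cO_{S,s}\to\cO_{S,x}\to k(x)$; hence $a\in\cO_{S,s}$ forces the image of $a$ in $k(x)$ to lie in $R'$, i.e.\ $|a(z)|\le 1$. Combined with the classical identity
\[
 \cO_S(S)=\bigcap_{s\in S}\cO_{S,s}=\bigcap_{R\text{ centered on }S} R,
\]
which holds because $S$ is integral and normal, this gives $\cO_{Z'}^+(\Spa(Y,S))=\cO_S(S)$; the same argument applies to $X$, and under these identifications inside $K$ the map $\rho$ is the identity.

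The only step with real content is the valuation-theoretic description $\cO_S(S)=\bigcap_R R$ supplied by normality of $S$; everything else is an unpacking of the compatibility conditions built into the definition of points of $\Spa(Y,S)$ together with the fact that $\cO_{Z'}^+$ is cut out by the valuations $|\cdot(z)|\le 1$.
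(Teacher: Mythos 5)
Your proof is correct, and it takes a genuinely different route from the paper's. The paper first reduces to $Y=S$ and $S$ affine, then handles the affine case $X=\Spec A$ by identifying the ring of integral elements $A^+$ with $\cO_S(S)$ (using that $\cO_S(S)$ is integrally closed in $K$), and finally handles the non-affine case by an affine covering of $X$ together with a \v{C}ech-type diagram chase for the sheaf axiom. You bypass the reduction to the affine case and the gluing argument entirely by working globally with the valuation-theoretic description $\cO_{Z'}^+(\Spa(Y,S))=\{a\in\cO_Y(Y): |a(z)|\le 1\ \forall z\}$ and invoking the classical identity $\cO_S(S)=\bigcap_{s\in S}\cO_{S,s}=\bigcap_R R$ for normal integral schemes. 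Both proofs ultimately use the same normality input, but yours is more direct and shows the cleaner statement $\cO_{Z'}^+(\Spa(Y,S))=\cO_S(S)$ for any dense pro-open $Y$, from which the lemma is immediate. Two minor points worth flagging: you implicitly use that a non-empty pro-open in an integral scheme contains the generic point (the lemma is vacuously false for $X=\varnothing$, so non-emptiness, hence density, is a standing tacit assumption); and one should record that the passage from the open to the pro-open case is legitimate because the sections over the limit space $\Spa(Y,S)\sim\lim_i\Spa(Y_i,S)$ are the filtered colimit of sections over the $\Spa(Y_i,S)$, and your computation shows each of these is $\cO_S(S)$, so the colimit is constant.
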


\begin{proof}
 It suffices to prove the lemma for~$Y=S$ and~$S$ affine.
 If $X = \Spec A$ is affine,
 \[
  \Spa(X,S) = \Spa(A,A^+),
 \]
 where~$A^+$ is the integral closure of the image of~$\cO_S(S)$ in~$A$.
 By our assumptions on~$S$ and~$X$, we obtain
 \[
  A^+ = \cO_S(S)
 \]
 and thus
 \[
  \Spa(S,S) = \Spa(A^+,A^+).
 \]
 The homomorphism~$\rho$ becomes the identity on~$A^+$.

 In the general case cover~$X$ by affine open subschemes~$X_i$.
 We obtain an affinoid covering
 \[
  \coprod_i \Spa(X_i,S) \to \Spa(X,S)
 \]
 and thus a diagram of exact sequences
 \[
  \begin{tikzcd}
   0	\ar[r]	& \cO_{\cS}^+(\Spa(S,S))	\ar[r]	\ar[d,"\rho"']	& \prod_i \cO_{\cS}^+(\Spa(S,S))	\ar[r]	\ar[d,"\sim"]	& \prod_{ij} \cO_{\cS}^+(\Spa(S,S))		\ar[d]	\\
   0	\ar[r]	& \cO_{\cS}^+(\Spa(X,S))	\ar[r]			& \prod_i \cO_{\cS}^+(\Spa(X_i,S))	\ar[r]		& \prod_{ij} \cO_{\cS}^+(\Spa(X_i \cap X_j,S)).
  \end{tikzcd}
 \]
 Note that the assumptions of the lemma also hold for~$X_i$ or $X_i \cap X_j$ instead of~$X$.
 Since the middle arrow is injective, $\rho$ is injective.
 Applying the same reasoning to $\Spa(X_i \cap X_j,S)$ instead of $\Spa(X,S)$, we see that the right arrow is injective.
 This implies that~$\rho$ is surjective.
\end{proof}

\begin{proposition} \label{structuresheaf}
 Let~$X$ be pro-open in an integral normal scheme $S$.
 With the above notation the homomorphism
 \[
  c^+:\cO_S \to c_*\cO_{\cX}^+
 \]
 is an isomorphism.
\end{proposition}

\begin{proof}
 We can check this on open affines of~$S$, i.e. we may assume that~$S$ is affine and have to show that
 \[
  c^+(S) : \cO_S(S) \to \cO_{\cX}^+(\cX)
 \]
 is an isomorphism.
 Denote by~$c': \cS = \Spa(S,S) \to S$ the center map.
 By functoriality we obtain a commutative diagram
 \[
  \begin{tikzcd}
					& \cO_S(S)	\ar[dl,"(c')^+(S)"']	\ar[dr,"c^+(S)"]	&			\\
   \cO_{\cS}^+(\cS)	\ar[rr,"\rho"]	&								& \cO_{\cX}^+(\cX).
  \end{tikzcd}
 \]
 Since~$\rho$ is an isomorphism by \cref{restrictionisomorphism}, it suffices to show that $(c')^+(S)$ is an isomorphism.
 But $(c')^+(S)$ is just the identity on~$\cO_S(S)$.
\end{proof}

For the rest of this section we assume that~$S$ is regular and connected and that~$X$ is dense pro-open in~$S$.
Denote by~$\mathcal{B}$ the full subcategory of the category of open subspaces of $\Spa(X,S)$ of the form $\Spa(Y,T)$ coming from a commutative diagram of regular schemes
 \begin{equation} \label{diagramneighborhoodbasis}
  \begin{tikzcd}
   Y	\ar[r,open]	\ar[d,open]	& X	\ar[d,open]	\\
   T	\ar[r]					& S,			\\
  \end{tikzcd}
 \end{equation}
 such that $Y \to X$ is an open immersion, $Y \to T$ is a pro-open immersion, and $T \to S$ is locally quasi-projective.
 Since $X \to S$ is a pro-open immersion as well, $T \to S$ is birational.

 Our assumption on resolution of singularities implies that the objects of~$\mathcal{B}$ form a basis of neighborhoods of the topological space $\Spa(X,S)$.
 Indeed, if we start with an affinoid open $\Spa(A,A^+)$, we can first choose a projective compactification of $\Spec A$ over~$A^+$ and then resolve its singularities to obtain a regular, locally projective compactification $Z$.
 Then
 \[
  \Spa(A,A^+) = \Spa(A,Z)
 \]
 is an object of~$\mathcal{B}$.
 In particular, all affinoid open subspaces are contained in~$\mathcal{B}$.
 By elimination of indeterminacies and resolution of singularities, we see that every morphism in $\mathcal{B}$ comes from a diagram
 \[
  \begin{tikzcd}
   Y'	\ar[r,open]	\ar[d,open]	& Y	\ar[r,open]	\ar[d,open]	& X	\ar[d,open]	\\
   T'	\ar[r]					& T	\ar[r]					& S,
  \end{tikzcd}
 \]
 such that $Y' \to Y$ and $Y \to X$ are open immersions, $Y \to T$ and $Y' \to T'$ are pro-open immersions, and $T' \to T$ and $T \to S$ are locally quasi-projective and birational.

\begin{lemma} \label{intersection_stable}
 The intersection of two objects in~$\mathcal{B}$ is again an object of~$\mathcal{B}$.
\end{lemma}

\begin{proof}
 Suppose we are given two objects $\Spa(Y_1,T_1)$ and $\Spa(Y_2,T_2)$ in~$\mathcal{B}$.
 The intersection of $\Spa(Y_1,T_1)$ with $\Spa(Y_2,T_2)$ is the same as the intersection of $\Spa(Y_1 \cap Y_2,T_1)$ with $\Spa(Y_1,\cap Y_2,T_2)$.
 Hence, we may assume that $Y_1 = Y_2 =: Y$.
 Choose locally projective compactifications~$\bar{T}_i$ of~$T_i$ over~$S$.
 By elimination of indeterminacies and resolution of singularities, we find a locally projective birational morphism $T' \to S$ from a regular scheme~$T'$ dominating~$\bar{T}_1$ and~$\bar{T}_2$ which is an isomorphism over~$Y$.
 We denote the preimages of~$T_1$ and~$T_2$ in~$T'$ by $T'_1$ and~$T'_2$.
 As $T'_i \to T_i$ is proper, we have
 \[
  \Spa(Y,T_i) = \Spa(Y,T'_i).
 \]
 by \cref{properisomorphismonSpa}.
 But then
 \[
  \Spa(Y,T_1) \cap \Spa(Y,T_2) = \Spa(Y,T'_1 \cap T'_2),
 \]
 which is in~$\mathcal{B}$.
\end{proof}

We equip~$\mathcal{B}$ with the structure of a site by defining coverings in~$\mathcal{B}$ to be surjective families.

\begin{lemma} \label{basisofneighborhoods}
 The topoi associated with~$\mathcal{B}$ and $\Spa(X,S)$ are equivalent.
\end{lemma}

\begin{proof}
 We have a natural morphism of sites $\varphi: \Spa(X,S)_{\top} \to \mathcal{B}$, where $\Spa(X,S)_{\top}$ denotes the site associated with the topological space $\Spa(X,S)$.
 The pullback $\varphi^{-1}$ is fully faithful and the topology on $\mathcal{B}$ is induced by the topology of $\Spa(X,S)$.
 In order to show that the corresponding morphism of topoi is an equivalence, it suffices to verify that the objects of~$\mathcal{B}$ form a basis of the topology of $\Spa(X,S)$ (see \cite{SGA4}, Expos\'e~III, Th\'eor\`eme~4.1).
This is the case as we have seen above.
\end{proof}

Before we prove the next proposition we want to explain what we mean by a flasque sheaf.
For a sheaf~$\mathcal{F}$ on a site~$\mathcal{B}$ the following are equivalent (see \cite{SGA4}, Expos\'e~V, Proposition~4.3):
\begin{enumerate}[(i)]
 \item $H^i(U,\mathcal{F}) = 0$ for all $i > 0$ and all $U \in \mathcal{B}$,
 \item $\check{H}^i(U,\mathcal{F}) = 0$ for all $i > 0$ and all $U \in \mathcal{B}$.
\end{enumerate}
If~$\mathcal{F}$ satisfies these equivalent conditions, it is called \emph{flasque}.

\begin{proposition} \label{compareZariskicohomology}
 Let~$X$ be dense and pro-open in an excellent, regular, connected scheme~$S$ and assume that resolution of singularities holds over~$S$.
 Then
 \[
  Rc_*\cO_{\cX}^+ \cong \cO_S,
 \]
 where 
 \[
  c : \cX = \Spa(X,S) \to S
 \]
 is the center map.
 In particular,
 \[
  H^i(S,\cO_S) \cong H^i(\cX,\cO_{\cX}^+)
 \]
 for all $i \geq 0$.
\end{proposition}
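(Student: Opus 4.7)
My plan is to combine the Leray spectral sequence for the center map $c \colon Z \to S$ with the acyclicity result of \cref{acyclicneighborhoods}. Consider
\[
 E_2^{p,q} = H^p(S, R^q c_* \cO_Z^+) \Rightarrow H^{p+q}(Z, \cO_Z^+).
\]
By \cref{structuresheaf}, $c_*\cO_Z^+ \cong \cO_S$, so the $q=0$ row yields $H^p(S, \cO_S)$. It therefore suffices to show that $R^q c_*\cO_Z^+ = 0$ for all $q \geq 1$, which I would verify on stalks: for $s \in S$,
\[
 (R^q c_*\cO_Z^+)_s = \colim_{s \in U} H^q\bigl(\Spa(X \cap U, U), \cO_Z^+\bigr),
\]
the colimit running over open affine neighborhoods $U$ of $s$.

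Given a class in such a cohomology group, I would represent it by a \v{C}ech cocycle and refine the underlying cover using \cref{basisofneighborhoods} to an affinoid cover $\{\Spa(Y_i, T_i)\}_i$ with $Y_i \subseteq T_i \subseteq \bar{T}_i$ and $\bar{T}_i \to U$ a chain of blowups in regular centers disjoint from $Y_i$. Using resolution of singularities over $S$, further refine so that all the $T_i$ and $Y_i$ embed as open subschemes of a single chain of blowups $\bar{T} \to U$ in regular centers. Then \cref{structuresheaf}, applied to each $Y_i$ and each intersection $Y_i \cap Y_j$ viewed as pro-open in the regular (hence normal) schemes $T_i$ and $T_i \cap T_j$, identifies the sections of $\cO_Z^+$ on $\Spa(Y_i, T_i)$ and on their intersections with the sections of $\cO_{\bar{T}}$ on $T_i$ and $T_i \cap T_j$. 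Hence the \v{C}ech complex for $\cO_Z^+$ on our cover coincides with the \v{C}ech complex of $\cO_{\bar{T}}$ for the affine open cover $\{T_i\}$ of $\bigcup_i T_i \subseteq \bar{T}$; since $\bar{T}$ is separated and each $T_i$ is affine, this is a Leray cover and it computes $H^q(\bigcup_i T_i, \cO_{\bar{T}})$.

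Writing $\bigcup_i T_i = \bar{T} - \tilde{D}$ for an effective divisor $\tilde{D}$ supported in the complement of the strict transform of $X \cap U$, I would apply \cref{acyclicneighborhoods} to deduce $H^q(\bar{T} - \tilde{D}, \cO) = 0$. Passing to the stalk limit $U = \Spec \cO_{S,s}$, which is a regular local ring and hence a unique factorization domain, makes $\pi_*\tilde{D}$ automatically principal, so the remaining hypothesis is that $\tilde{D}$ be $\pi$-nef. This is the main obstacle: one must refine further so that $\tilde{D}$ is dominated by, or can be taken to equal, the pullback $\pi^*D_0$ of an effective divisor $D_0$ on $U$ supported in $U - X$, the existence of such $D_0$ being guaranteed by resolution of singularities. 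Pullback divisors are $\pi$-nef by the projection formula, in the spirit of the proof of \cref{blowupnef}; together with principality, this triggers the vanishing in \cref{acyclicneighborhoods}, killing the \v{C}ech class in the colimit and completing the argument.
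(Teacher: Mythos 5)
Your overall strategy matches the paper's: Leray spectral sequence for $c$, identification $c_*\cO_Z^+ \cong \cO_S$ via \cref{structuresheaf}, and reduction to showing $R^q c_*\cO_Z^+ = 0$. You also correctly see that the Čech complex for a refined cover translates, via \cref{structuresheaf} and \cref{restrictionisomorphism}, into a Čech complex of $\cO_{\bar T}$ on an affine open cover of a blowup. Where the argument breaks down is in the passage from Čech cohomology to derived functor cohomology. Representing a class of $H^q(\Spa(X\cap U,U),\cO_Z^+)$ by a Čech cocycle and showing that its image in $H^q(\bar T,\cO_{\bar T})$ vanishes does not by itself kill the class: the map from Čech cohomology to sheaf cohomology is only known to be an isomorphism once you already know the sheaf is acyclic on all the finite intersections of the cover — which is the very thing you are trying to prove. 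The paper sidesteps this circularity with Cartan's criterion: it constructs an intersection-stable basis $\mathcal{B}$ of $\Spa(X,S)$ and proves $\cO_Z^+$ is flabby on $\mathcal{B}$, i.e. that Čech cohomology vanishes for coverings of arbitrary elements of $\mathcal{B}$ by elements of $\mathcal{B}$, not just for coverings of the whole space. Your proposal never sets up such a basis nor proves stability under intersections, so the Čech vanishing you aim for cannot be promoted to the sheaf-cohomological statement.

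This structural difference is also what makes your treatment of the nef condition misdirected. When you cover the entire $\Spa(X\cap U,U)$, the $T_i$ actually cover all of $\bar T$ (every point of $\bar T$ is the center of some valuation in $\Spa(X\cap U,U)$), so $\tilde D=\varnothing$ and \cref{acyclicneighborhoods} applies trivially — there is no nef obstruction at that stage. The $\pi$-nef condition becomes genuinely necessary exactly in the step you omit, namely for coverings of a proper basis element $\Spa(Y,T)$ whose boundary $\bar T - T$ is a nontrivial divisor. There, your proposed fix via pullback divisors $\pi^*D_0$ does not work: $\bar T - \supp(\pi^*D_0) = \pi^{-1}(U-D_0)$ is insensitive to the exceptional fibers, so shrinking $T$ to such an open would not give a refinement of the original cover of $\Spa(Y,T)$, and it fails to produce an intersection-stable basis that separates valuations with the same support but different centers. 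The paper instead bakes the nef condition into the definition of $\mathcal{B}$, shows $\mathcal{B}$ is a basis and is intersection-stable (the key reduction to a common $\bar T$ for all $\bar T_i$), and only then applies \cref{acyclicneighborhoods}. Finally, your stalk computation implicitly assumes $X$ open; the case $X$ pro-open is handled first in the paper via \cref{cohomologyoflimit}, which your proposal should also invoke before appealing to \cref{basisofneighborhoods}.
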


\begin{proof}
 By \cref{structuresheaf},
 \[
  c_* \cO_{\cX}^+ \cong \cO_S.
 \]
 In order to prove that $R^j c_* \cO_{\cX}^+ = 0$ for $j \ge 1$, it is enough to show that
 \[
  H^j(\Spa(X \times_S S',S'),\cO_{\cX}^+)
 \]
 vanishes for every open affine $S' \subseteq S$.
 Since~$S'$ and~$X \times_S S'$ satisfy the assumptions of the proposition if~$S$ and~$X$ do, we are reduced to proving that
 \[
  H^j(\cX,\cO_{\cX}^+) = 0
 \]
 in case~$S$ is affine.

 Consider the site~$\mathcal{B}$ defined before \cref{basisofneighborhoods}.
 By \cref{basisofneighborhoods} we can compute the cohomology group $H^j(\cX,\cO_{\cX}^+)$ in~$\mathcal{B}$.
 We claim that the restriction of~$\cO_{\cX}^+$ to~$\mathcal{B}$ is flasque.
 Take an open covering
 \[
  \Spa(Y,T) = \bigcup_{i\in I} \Spa(Y_i,T_i)
 \]
 in~$\mathcal{B}$ coming from commutative diagrams (\ref{diagramneighborhoodbasis}) as before
  and assume in addition that~$I$ is finite and that all~$T_i$ are affine.
 Every covering of~$\Spa(Y,T)$ in~$\mathcal{B}$ is dominated by one of this type.
 We want to examine the \v{C}ech complex
 \begin{equation}	\label{chechcomplex}
   0 \to \cO^+_{\cX}\big(\Spa(Y,T)\big) \to \prod_i \cO^+_{\cX}\big(\Spa(Y_i,T_i)\big) \to \prod_{ij} \cO^+_{\cX}\big(\Spa(Y_i,T_i) \cap \Spa(Y_j,T_j)\big) \to \ldots
 \end{equation}
 By \cref{restrictionisomorphism} this complex does not change if we replace~$Y$ and~$Y_i$ by $\bigcap_{i \in I} Y_i$.
 We may thus assume that $Y = Y_i$ for all~$i \in I$.
 By the same argument as before, we may find a locally projective birational morphism $\pi:T' \to T$ with~$T'$ regular and open subschemes~$T_i'$ of~$T'$ such that the morphisms $T'_i \to T$ factor through locally projective birational morphisms $\pi_i:T_i' \to T_i$.
 Since the adic spaces $\Spa(Y,T_i')$ cover $\Spa(Y,T')$, it follows that the schemes~$T_i'$ cover~$T'$.
 The following diagram summarizes the situation:
 \[
  \begin{tikzcd}
										& Y		\ar[dl,open]	\ar[d,open]	\ar[dr,open]		\\
   T'_i		\ar[r,"\pi_i"]	\ar[d,open]	& T_i	\ar[r]										& T	\\
   T'	\ar[urr,"\pi"],
  \end{tikzcd}
 \]
 where the morphisms~$\pi_i$ and $\pi$ are locally projective birational and all schemes in the diagram are regular.

 By \cref{structuresheaf}, the above \v{C}ech complex (\ref{chechcomplex}) equals
 \[
  0 \to \cO_{T'}(T') \to \prod_i \cO_{T'}(T'_i) \to \prod_{i,j} \cO_{T'}(T'_i \cap T'_j) \to \ldots
 \]
 This is the \v{C}ech complex for the covering $T' = \bigcup_i T'_i$ and the structure sheaf~$\cO_{T'}$.
 By \cite{ChRu15}, Theorem~1.1, we know that for each~$i$ the higher direct images $R^j\pi_{i*}\cO_{T'_i}$ vanish.
 Since~$T_i$ is affine, this implies
 \[
  H^q(T'_i,\cO_{T'}) = 0	\quad \forall i ,\ \forall q \ge 1.
 \]
 Our \v{C}ech complex thus computes the cohomology groups $H^q(T',\cO_{T'})$.
 Applying \cite{ChRu15}, Theorem~1.1 to $\pi: T' \to T$, we obtain that the corresponding higher direct images are trivial.
 Together with the fact that~$T$ is affine, this yields
 \[
  H^q(T',\cO_{T'}) \cong H^q(T,\cO_T) = 0  ,\ \forall q \ge 1.
 \]
 As a consequence the cohomology of the complex (\ref{chechcomplex}) is trivial.
 We conclude that~$\cO_{\cX}^+$ is flasque on~$\mathcal{B}$ and thus
 \[
  H^q(\cX,\cO_{\cX}^+) = 0  ,\ \forall q \ge 1. 
 \]
 The second assertion follows using the Leray spectral sequence
 \[
  H^i(S,R^j c_* \cO_{\cX}^+) \Rightarrow H^{i+j}(\cX,\cO_{\cX}^+). \qedhere
 \]
\end{proof}

\begin{corollary}
 Let~$S$ be an excellent scheme and assume that resolution of singularities holds over~$S$.
 Let $\pi: X \to S$ be separated and essentially of finite type and $\bar{\pi} : \bar{X} \to S$ a regular compactification of~$X$ over~$S$.
 Set $\cX = \Spa(X,S)$.
 Then there is a natural isomorphism
 \[
  Rc_*\cO_{\cX}^+ \to R\bar{\pi}_*\cO_{\bar{X}},
 \]
 where $c: \cX \to S$ denotes the center map.
 In particular,
 \[
  H^i(\cX,\cO_{\cX}^+) \cong H^i(\bar{X},\cO_{\bar{X}})
 \]
 for all $i \ge 0$.
\end{corollary}

\begin{proof}
 Consider the commutative diagram
 \[
  \begin{tikzcd}
   \Spa(X,\bar{X})	\ar[r,"c_{\bar{X}}"]	\ar[d,"="]	& \bar{X}	\ar[d,"\bar{\pi}"]	\\
   \Spa(X,S)		\ar[r,"c"]							& S.
  \end{tikzcd}
 \]
 We have an isomorphism in the derived category of~$S$
 \[
  (R\bar{\pi}_*) \circ (Rc_{\bar{X}*})\cO_{\cX}^+ \cong Rc_*\cO_{\cX}^+.
 \]
 By \cref{compareZariskicohomology}, $Rc_{\bar{X}^*}\cO_{\cX}^+ \cong \cO_{\bar{X}}$.
 This yields the first statement.
 The second one follows by comparing the two Leray spectral sequences
 \begin{IEEEeqnarray*}{lCl}
  H^i(S,R^j\bar{\pi}_*\cO_{\bar{X}}) &\Rightarrow& H^{i+j}(\bar{X},\cO_{\bar{X}}), \\
  H^i(S,R^jc_*\cO_{\cX}^+)				 &\Rightarrow& H^{i+j}(\cX,\cO_{\cX}^+).
 \end{IEEEeqnarray*} \qedhere
\end{proof}

\section{Pr\"ufer Huber pairs} \label{PrueferHuber}

For an affinoid adic space $\cX = \Spa(A,A^+)$ the cohomology of the structure sheaf~$\cO_{\cX}$ vanishes (see \cite{Hu94}, Theorem~2.2).
For the sheaf~$\cO_{\cX}^+$, however, we can not expect in general that $H^i(\cX,\cO_{\cX}^+) = 0$.
Of course, if $(A,A^+)$ is local, the cohomology of~$\cO_{\cX}^+$ vanishes.
But the class of local adic spaces turns out to be too small to compute cohomology groups as an \'etale covering of a local adic space does not necessarily admit a refinement by local adic spaces.
In the following we investigate a broader class of Huber pairs containing the local Huber pairs: the Pr\"ufer Huber pairs.

For a subring~$A^+$ of a ring~$A$ and a prime ideal~$\p^+$ of~$A^+$ we use the notation
\[
 A_{\p^+} := (A^+ \setminus \p^+)^{-1}A
\]

\begin{definition}
 A Huber pair $(A,A^+)$ is said to be Pr\"ufer if $A^+ \subseteq A$ is a Pr\"ufer extension, i.e. if $(A_{\m^+},A^+_{\m^+})$ is local for every maximal ideal~$\m^+$ of~$A^+$.
\end{definition}

Note that if $(A_{\m^+},A^+_{\m^+})$ is local, we a have a unique valuation of~$A_{\m^+}$ corresponding to the closed point of $\Spa(A_{\m^+},A^+_{\m^+})$.
Denote by~$v$ the valuation of~$A$ obtained by composing with $A \to A_{\m^+}$.
Then $v \in \Spa(A,A^+)$ and $(A_{\m^+},A^+_{\m^+}) = (A_v,A_v^+)$.

Examples of Pr\"ufer Huber pairs are $(\Q,\Z)$ and $(\bC[T,T^{-1}],\bC[T])$.
More generally, if~$D^+$ is a Dedekind domain and~$D$ is a localization of~$D$, then $(D,D^+)$ is a Pr\"ufer Huber pair.
A less trivial example is given by taking $A = \Q[T]$ and $A^+$ its subring
\begin{IEEEeqnarray*}{rCl}
 A^+	& =	& \{P(T) \in \Q[T] \mid P(0) \in \Z\}	\\
		& = & p^{-1}(\Z),
\end{IEEEeqnarray*}
where $p : \Q[T] \to \Q$ is the projection to the residue field of $(T)$.

\begin{remark}
 The definition of a Pr\"ufer extension in \cite{KneZha}, \S~5, differs from the one given here.
 First of all, if the topology of~$A$ is non-discrete there is the additional condition that~$\m_A^+$ be open and bounded.
 But apart from this topological issue the two definitions coincide.
 Let us first explain their notation and then show that the two definitions are equivalent in the case of discrete topology.
 It follows that in the general case our Pr\"ufer extensions are mapped to their Pr\"ufer extensions by the forgetful functor from the category of topological rings to the category of rings.
 
 A valuation~$v$ of a ring~$A$ is called \emph{Manis} if $A \to \Gamma_v \cup \{0\}$ is surjective.
 A \emph{Manis pair} in~$A$ is a pair $(A^+,\p^+)$ comprised of a subring~$A^+$ of~$A$ and a prime ideal $\p^+ \subseteq A^+$ such that there is a (unique) Manis valuation~$v$ of~$A$ such that
 \begin{IEEEeqnarray*}{rCl}
  A^+	& =	& \{a \in A \mid v(a) \le 1\},	\\
  \p^+	& = & \{a \in A \mid v(a) < 1\}.
 \end{IEEEeqnarray*}
 The pair $(A^+,\p^+)$ is called local if~$A^+$ is local with maximal ideal~$\p^+$.
 
 If $(A,A^+)$ is a local Huber pair, the corresponding valuation is clearly Manis, so $(A^+,\m_A^+)$ is a local Manis pair.
 Conversely, suppose that $(A^+,\m^+)$ is a local Manis pair in~$A$ associated with a Manis valuation~$v$.
 By \cite{KneZha}, Proposition~2.2, Proposition~2.5, the pair $(A,\supp v)$ is also local, hence $(A,A^+)$ is a local Huber pair.
 
 For a subring~$A^+$ of~$A$, a prime ideal $\p^+$ of~$A^+$ and an $A^+$-submodule $M$ of~$A$ we define the \emph{saturation} of~$M$ at~$\p^+$:
 \[
  M_{[\p^+]} = \{x \in A \mid \exists s \in A^+ \setminus \p^+~\text{with}~sx \in M\}.
 \]
 According to the definition in \cite{KneZha},~\S~5, $A^+ \subseteq A$ is a Pr\"ufer extension if $(A^+_{[\m^+]},\m^+_{[\m^+]})$ is a Manis pair in~$A$ for any maximal ideal~$\m^+$ of~$A^+$.
 By \cite{KneZha}, Proposition~2.10 this is equivalent to $(A_{\m^+},\m^+_{\m^+})$ being a Manis pair in $A_{\m^+}$, which in turn is equivalent to $(A_{\m^+},A^+_{\m^+})$ being local as we have seen above.
 Thus we recover our definition of Pr\"ufer extensions. 
\end{remark}

Recall from \cite{KneZha}, \S~3, that a ring homomorphism $A^+ \to A$ is called \emph{weakly surjective} if for any prime ideal~$\mathfrak{p}^+$ of~$A^+$ with $\mathfrak{p}^+A\ne A$ the homomorphism $A^+_{\mathfrak{p}^+} \to A_{\mathfrak{p}^+}$ is surjective.
Examples of weakly surjective ring homomorphisms are surjective ring homomorphisms and localizations.
By~\cite{KneZha}, Theorem~5.2, $(1) \Leftrightarrow (2)$, a ring extension $A^+ \to A$ is Pr\"ufer if and only if~$A^+$ is weakly surjective in any $A$-overring of~$A^+$.

It will turn out in \cref{Zariskiacyclic} that if $(A,A^+)$ is a complete Pr\"ufer Huber pair and~$A$ is either a strongly noetherian Tate ring or noetherian with the discrete topology, then the cohomology of~$\cO_{\cX}^+$ vanishes on $\cX = \Spa(A,A^+)$.

\begin{lemma} \label{completionPruefer}
 Let $(A,A^+)$ be a Pr\"ufer Huber pair.
 Then its completion $(\hat{A},\hat{A}^+)$ is Pr\"ufer.
\end{lemma}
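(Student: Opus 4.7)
The plan is to verify the Pr\"ufer condition for $(\hat A, \hat A^+)$ element-by-element, transferring it from $(A,A^+)$ along the dense inclusion $A \subseteq \hat A$ by a standard approximation-and-perturbation argument. Since Pr\"uferness of a ring extension is a purely algebraic condition, the real work is to exploit topological nilpotency to promote approximate identities in $\hat A$ to exact ones.

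First I would invoke the elementwise characterization of Pr\"uferness from \cite{KneZha}, Theorem~I.5.2: roughly, $A^+ \subseteq A$ is Pr\"ufer iff for every $a \in A$ there exist $b,c \in A^+$ with $b + c a = 1$ and $c a \in A^+$ (the condition that $\{1,a\}$ generate an invertible $A^+$-submodule of $A^+[a]$, expressed elementwise). Then, given $\hat a \in \hat A$, since $\hat A^+$ is open in $\hat A$ and carries the $I\hat A^+$-adic topology for an ideal of definition $I$ of $A^+$, density of $A$ in $\hat A$ allows me to pick $a \in A$ with $\hat a - a \in I\hat A^+$; in particular $\hat a - a$ is topologically nilpotent. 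Applying the Pr\"ufer hypothesis to $a$ yields $b,c \in A^+$ with $b + c a = 1$ and $c a \in A^+$.

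The perturbation step is then the heart of the matter. Set $\epsilon := c(\hat a - a) \in \hat A^+$; boundedness of $c$ together with topological nilpotency of $\hat a - a$ forces $\epsilon$ to be topologically nilpotent, so $u := 1 + \epsilon = b + c\hat a$ is a unit in $\hat A^+$. The elements $\hat b := b/u$ and $\hat c := c/u$ of $\hat A^+$ then satisfy
\[
 \hat b + \hat c \,\hat a = 1, \qquad \hat c \,\hat a = \frac{c a + \epsilon}{u} \in \hat A^+,
\]
which is precisely the Pr\"ufer relation for $\hat a$. Since $\hat a \in \hat A$ was arbitrary, this shows $(\hat A,\hat A^+)$ is Pr\"ufer.

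\textbf{Main obstacle.} The principal difficulty is pinning down the correct elementwise characterization from \cite{KneZha}: a single identity of the shape above may capture only weak surjectivity at $a$, and \cite{KneZha}, I.5.2 might require further auxiliary relations (for instance, a companion identity controlling $b a$ or asserting weak surjectivity in every intermediate ring). If so, each such relation should transfer by the same perturbation trick, but the bookkeeping becomes heavier. An alternative approach would be to characterize Pr\"uferness as "$A^+ \to A$ is an injective flat epimorphism of commutative rings" and to verify directly that $\hat A^+ \to \hat A$ inherits both flatness and the epimorphism property; the catch there is controlling $\hat A \otimes_{\hat A^+} \hat A$, which is a nontrivial point because completion does not in general commute with tensor products over non-noetherian bases.
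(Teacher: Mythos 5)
Your route is genuinely different from the paper's. The paper first reduces to the case that $A \to \hat{A}$ is injective (passing through $\bar A = A/\ker$ and citing \cite{Rho91}/\cite{KneZha} for that reduction), then invokes the characterization that an extension is Pr\"ufer iff every overring is integrally closed, and finishes by a separate lemma establishing that the completion map sets up a bijection between open integrally closed subrings of $A$ and of $\hat A$; since every $\hat A$-overring of $\hat A^+$ is automatically open, Pr\"uferness transfers. Your proposal instead works elementwise, approximating $\hat a$ by an element from the dense image of $A$ and perturbing an exact Pr\"ufer relation by a topologically nilpotent error. This avoids the reduction to the injective case and the structural lemma on open subrings, at the cost of needing the precise elementwise form of the Pr\"ufer condition.

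On that last point you should be careful: the characterization you wrote down --- ``there exist $b,c\in A^+$ with $b+ca=1$ and $ca\in A^+$'' --- is vacuous, since $b=1$, $c=0$ always satisfies it. The correct elementwise condition (encoding invertibility of the $A^+$-module $A^++A^+a$) also requires $ba\in A^+$: for every $a\in A$ there exist $b,c\in A^+$ with $b+ca=1$, $ba\in A^+$ and $ca\in A^+$. You anticipated exactly this (``a companion identity controlling $ba$''), and indeed the extra relation transfers by the same perturbation: $b\hat a = ba + b(\hat a - a)$, and $b(\hat a - a)\in I\hat A^+\subseteq\hat A^+$ because $b$ is bounded, so $\hat b\hat a = b\hat a/u\in\hat A^+$. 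With that correction your argument is sound: $\epsilon=c(\hat a - a)\in I\hat A^+$ is topologically nilpotent, $u=1+\epsilon=b+c\hat a$ is a unit in the $I$-adically complete ring $\hat A^+$, and the three relations for $\hat b=b/u$, $\hat c=c/u$, $\hat a$ all hold in $\hat A^+$. The paper's proof is shorter given its auxiliary lemma, while yours is more self-contained and sidesteps the reduction to injectivity entirely; both are valid.
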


\begin{proof}
 We factor $(A,A^+) \to (\hat{A},\hat{A}^+)$ as
 \[
  (A,A^+) \to (\bar{A},\bar{A}^+) \to (\hat{A},\hat{A}^+)
 \]
 such that $A \to \bar{A}$ is surjective and $\bar{A} \to \hat{A}$ is injective.
 Then $(\bar{A},\bar{A}^+)$ is Pr\"ufer by \cite{Rho91}, Proposition~3.1.1 (or \cite{KneZha}, Proposition~5.8) and $(\hat{A},\hat{A}^+)$ is the completion of $(\bar{A},\bar{A}^+)$.
 We may therefore assume that the morphism $\iota: A \to \hat{A}$ is injective.

 By \cite{KneZha}, Theorem~5.2, $(1) \Leftrightarrow (2)$, a ring extension $B \hookrightarrow R$ is Pr\"ufer if and only if every $R$-overring of~$B$ is integrally closed in~$R$.
 We have mutually inverse bijections
 \[
  \begin{tikzcd}[column sep=large]
   \{\text{open subrings of~$A$}\}	\ar[r, shift left=0.1cm,"B \mapsto \hat{B}"]	& \{\text{open subrings of~$\hat{A}$}\}.	\ar[l, shift left=0.1cm,"C \cap A \mapsfrom C"]
  \end{tikzcd}
 \]
 The subsequent lemma shows that this correspondence restricts to a bijection of the open, integrally closed subrings of~$A$ with the open, integrally closed subrings of~$\hat{A}$.
 Since~$A^+$ is open and integrally closed in~$A$, we obtain a bijection of the integrally closed $A$-overrings of~$A^+$ with the integrally closed $\hat{A}$-overrings of~$\hat{A}^+$.
 In particular, an $\hat{A}$-overring~$C$ of~$\hat{A}^+$ is integrally closed in~$\hat{A}$ if and only if $C \cap A$ is integrally closed in~$A$.
 This finishes the proof as all $A$-overrings of~$A^+$ are integrally closed in~$A$ by assumption.
\end{proof}

\begin{lemma}
 For any linearly topologized ring~$A$ with completion $\sigma : A \to \hat{A}$ the mutually inverse bijections
 \[
  \begin{tikzcd}[column sep=large]
   \{\text{open subrings of~$A$}\}	\ar[r, shift left=0.1cm,"B \mapsto \hat{B}"]	& \{\text{open subrings of~$\hat{A}$}\}.	\ar[l, shift left=0.1cm,"\sigma^{-1}(C) \mapsfrom C"]
  \end{tikzcd}
 \]
 establish a correspondence of the open, integrally closed subrings.
\end{lemma}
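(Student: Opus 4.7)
The plan is to prove the two directions separately, with one being trivial and the other requiring an approximation argument combined with a small trick on the constant term.

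The direction ``$\hat B$ integrally closed in $\hat A$ $\Rightarrow$ $\sigma^{-1}(\hat B)$ integrally closed in $A$'' is immediate: if $a\in A$ is integral over $B=\sigma^{-1}(\hat B)$, applying $\sigma$ to the monic relation exhibits $\sigma(a)$ as integral over $\sigma(B)\subseteq\hat B$, so $\sigma(a)\in\hat B$ and hence $a\in B$.

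For the converse, suppose $B$ is open and integrally closed in $A$, and let $\hat a\in\hat A$ satisfy $\hat a^n+c_{n-1}\hat a^{n-1}+\cdots+c_0=0$ with $c_i\in\hat B$. By density of $A$ in $\hat A$ pick $a_k\in A$ with $\sigma(a_k)\to\hat a$, and by density of $B$ in $\hat B$ pick $b_{i,k}\in B$ with $\sigma(b_{i,k})\to c_i$. The defect
\[
  \epsilon_k:=a_k^{\,n}+b_{n-1,k}a_k^{\,n-1}+\cdots+b_{0,k}\in A
\]
satisfies $\sigma(\epsilon_k)\to 0$ in $\hat A$ by continuity of polynomial evaluation, and since the preimage of every open subgroup of $\hat A$ is the corresponding open subgroup of $A$, this forces $\epsilon_k\to 0$ in $A$. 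Because $B$ is open, $\epsilon_k\in B$ for all sufficiently large~$k$.

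The key observation is that the defect can now be absorbed into the constant term: set $b'_{0,k}:=b_{0,k}-\epsilon_k$, which lies in $B$ for large $k$, and note that by construction $a_k^{\,n}+b_{n-1,k}a_k^{\,n-1}+\cdots+b_{1,k}a_k+b'_{0,k}=0$. Thus $a_k$ is integral over $B$ in $A$, and the integral closedness of $B$ in $A$ gives $a_k\in B$. Passing to the limit, $\hat a=\lim\sigma(a_k)$ lies in the closure of $B$ in $\hat A$, which is $\hat B$. The only subtlety I would have to verify carefully is the non-Hausdorff case: since $\ker\sigma=\bigcap U_i$ is contained in every open subgroup~$U$, we have $\sigma^{-1}(\bar U)=U$, which both justifies the transfer of convergence between $A$ and $\hat A$ used above and identifies $\sigma^{-1}(\hat B)$ with $B$. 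I do not expect a serious obstacle; the essential content is the constant-term trick, which converts an approximate integral relation over $B$ into an exact one.
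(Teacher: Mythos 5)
Your proof is correct and rests on the same idea as the paper's, namely absorbing the defect of the approximate integral relation into the constant term and then invoking the integral closedness of $B$. The only cosmetic difference is that the paper, rather than approximating $\hat a$, picks $a\in A$ with $\sigma(a)$ already in the (open) integral closure of $\hat B$ and so needs to approximate only the coefficients; your sequential version with both $\hat a$ and the $c_i$ approximated is an equivalent reformulation.
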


\begin{proof}
 The argument is taken from the proof of Lemma~2.4.3 in \cite{HuHabil}.
 The only nontrivial assertion we have to check is that the completion~$\hat{B}$ of any open, integrally closed subring~$B$ of~$A$ is integrally closed.
 Denote by~$C$ the integral closure of~$\hat{B}$ in~$\hat{A}$.
 This is an open subring of~$\hat{A}$.
 Take an element $c \in C$.
 In order to show that $c \in \hat{B}$, it suffices to check that for any open neighborhood~$U$ of~$c$ in~$C$ we have
 \[
  U \cap \sigma(B) \ne \varnothing.
 \]
 Since~$\sigma(A)$ is dense in~$\hat{A}$, we can find $a \in A$ with $\sigma(a) \in U$.
 Being contained in~$C$ the element~$\sigma(a)$ satisfies an integral equation
 \[
  \sigma(a)^n + \hat{b}_{n-1} \sigma(a)^{n-1} + \ldots + \hat{b}_0 = 0
 \]
 with $\hat{b}_i \in \hat{B}$.
 As~$\hat{B}$ is open, we can approximate the~$\hat{b}_i$ by elements of the form $\sigma(b_i)$ with $b_i \in B$ such that
 \[
  \sigma(a)^n + \sigma(b_{n-1}) \sigma(a)^{n-1} + \ldots \sigma(b_0) \in \hat{B}.
 \]
 Together with $B = \sigma^{-1}(\hat{B})$ this implies the existence of an element $b \in B$ such that
 \[
  a^n + b_{n-1} a^{n-1} + \ldots + (b_0-b) = 0
 \]
 We conclude that $a \in B$ and thus $\sigma(a) \in U \cap \sigma(B)$.
\end{proof}

\subsection{A flatness criterion}

For this subsection we fix a local Huber pair $(A,A^+)$.
We denote by~$\m$ the maximal ideal of~$A$.
It is contained in~$A^+$ and $A^+/\m$ is a valuation ring.
Hence, every proper ideal of~$A$ is contained in~$A^+$.
We write $|\cdot|$ for the valuation of~$A$ corresponding to $A^+/\m$.

We want to investigate whether an $A^+$-module~$M^+$ is flat if its base change to~$A$ is flat.
To this end we examine for an ideal $a^+ \subseteq A^+$ the vanishing of $\Tor^{A^+}_1(M^+,A^+/\mathfrak{a}^+)$.

\begin{lemma} \label{idealscomingfromA}
 Let~$\mathfrak{a}$ be a proper ideal of~$A$.
 Let~$M^+$ be an $A^+$-module such that $M := M^+ \otimes_{A^+} A$ is a flat $A$-module.
 Then
 \[
  \Tor^{A^+}_1(M^+,A^+/\mathfrak{a}) = 0.
 \]
\end{lemma}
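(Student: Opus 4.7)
The plan is to reduce the claim $\Tor^{A^+}_1(M^+,A^+/\mathfrak{a})=0$ to the injectivity of a single map, by tensoring the short exact sequence
\[
 0 \to \mathfrak{a} \to A^+ \to A^+/\mathfrak{a} \to 0
\]
with $M^+$ over $A^+$ and reading off the long exact Tor sequence. After this step, what must be shown is that the multiplication map $\mu: M^+ \otimes_{A^+} \mathfrak{a} \to M^+$ is injective.

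The essential observation is that, by hypothesis, $\mathfrak{a}$ is an ideal of $A$ (since every proper ideal of $A$ lies in $A^+$, both points of view make sense). In particular, $\mathfrak{a}$ already carries an $A$-module structure, so associativity of tensor products together with $A \otimes_A \mathfrak{a} = \mathfrak{a}$ gives a natural isomorphism
\[
 M^+ \otimes_{A^+} \mathfrak{a} \;\cong\; (M^+ \otimes_{A^+} A) \otimes_A \mathfrak{a} \;=\; M \otimes_A \mathfrak{a}.
\]
Under this isomorphism, I would check that the composite $M^+ \otimes_{A^+} \mathfrak{a} \xrightarrow{\mu} M^+ \to M$ agrees with the natural multiplication map $M \otimes_A \mathfrak{a} \to M$, \emph{i.e.} the commutative square whose right vertical is $M^+ \to M = M^+ \otimes_{A^+} A$ and whose left vertical is the displayed isomorphism commutes. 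By flatness of $M$ over $A$ applied to $\mathfrak{a} \hookrightarrow A$, the bottom map $M \otimes_A \mathfrak{a} \to M$ is injective. Since the left vertical is an isomorphism and the composite through it is injective, $\mu$ itself is injective, which yields the vanishing of $\Tor^{A^+}_1(M^+,A^+/\mathfrak{a})$.

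There is no serious obstacle; the whole content of the lemma is the interplay between the rings $A$ and $A^+$ encoded in the assumption that the ideal $\mathfrak{a}$ comes from $A$. That assumption is precisely what allows one to move the tensor product from $A^+$ to $A$ and then invoke flatness of $M$. The only point that requires a moment of care is the commutativity of the square relating $\mu$ with the $A$-linear multiplication $M \otimes_A \mathfrak{a} \to M$, but this is immediate on elementary tensors $m^+\otimes a$, both paths sending them to $am^+ \otimes 1 \in M$.
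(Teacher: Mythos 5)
Your proof is correct and follows the same overall plan as the paper: reduce $\Tor^{A^+}_1(M^+,A^+/\mathfrak{a})=0$ to the injectivity of $\mathfrak{a}\otimes_{A^+}M^+ \to M^+$, identify that map (up to isomorphism) with the multiplication $\mathfrak{a}\otimes_A M \to M$, and conclude by flatness of $M$ over $A$. The one step you handle differently, and in fact more cleanly, is the identification $\mathfrak{a}\otimes_{A^+}M^+ \cong \mathfrak{a}\otimes_A M$. The paper obtains it by first showing that $\mathfrak{a}\otimes_{A^+}A \to A$ is injective — appealing to the flatness of the localization $A^+\to A$ — and then identifying the image with $\mathfrak{a}$; tensoring the resulting isomorphism with $M^+$ produces the left vertical arrow in its diagram. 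You instead use the standard base-change isomorphism $M^+\otimes_{A^+}\mathfrak{a} \cong (M^+\otimes_{A^+}A)\otimes_A\mathfrak{a}$, which only needs the $A$-module structure on $\mathfrak{a}$ and no flatness of $A^+\to A$ whatsoever. So your argument isolates the actual hypothesis doing the work (that $\mathfrak{a}$ is an ideal of $A$ sitting inside $A^+$) and strips away an inessential appeal to the Prüfer/localization structure; the paper's version, while correct, invokes slightly more than is needed at this point.
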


\begin{proof}
 Consider the commutative diagram
 \begin{equation} \label{tensorwithM+}
  \begin{tikzcd}
   \mathfrak{a} \otimes_{A^+} M^+	\ar[r]			\ar[d]	& M^+	\ar[d]	\\
   \mathfrak{a} \otimes_A M		\ar[r,hookrightarrow]		& M.
  \end{tikzcd}
 \end{equation}
 The lower horizontal map is injective as~$M$ is a flat~$A$-module.
 As $(A,A^+)$ is local, $A^+ \to A$ is a localization ($A = A^+_{\m_A}$).
 Hence, it is flat and thus the homomorphism
 \[
  \mathfrak{a} \otimes_{A^+} A \to A
 \]
 is injective.
 Its image is $A \cdot \mathfrak{a} = \mathfrak{a}$.
 We obtain an isomorphism $\mathfrak{a} \otimes_{A^+} A \to \mathfrak{a}$ whose inverse~$\varphi$ is given by $a \mapsto a \otimes 1$.
 Tensoring~$\varphi$ with~$M^+$ yields the left vertical map in diagram~(\ref{tensorwithM+}), which is thus an isomorphism.
 We conclude that the upper horizontal map is injective.
 Hence,
 \[
  \Tor^{A^+}_1(M^+,A^+/\mathfrak{a}) = \ker(\mathfrak{a} \otimes_{A^+} M^+ \to M^+) = 0.
 \]
\end{proof}

\begin{lemma} \label{maximalidealpower}
 Let~$\mathfrak{a}^+$ be an ideal of~$A^+$.
 Let~$M^+$ be an $A^+$-module such that $M := M^+ \otimes_{A^+} A$ is a flat $A$-module and $M^+/\m M^+$ is torsion free over~$A^+/\m$.
 Then
 \[
  \Tor^{A^+}_1(M^+,A^+/(\m^n + \mathfrak{a}^+)) = 0.
 \]
 for all $n \geq 1$.
\end{lemma}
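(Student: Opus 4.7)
The plan is to reduce the statement to the assertion that $M^+/\mathfrak{m}^n M^+$ is flat over $A^+/\mathfrak{m}^n$ for every $n \geq 1$, and to prove this flatness by induction on $n$. First, consider the short exact sequence
\[
 0 \to (\mathfrak{m}^n + \mathfrak{a}^+)/\mathfrak{m}^n \to A^+/\mathfrak{m}^n \to A^+/(\mathfrak{m}^n + \mathfrak{a}^+) \to 0
\]
and apply the long exact sequence of $\Tor^{A^+}(M^+,-)$. Since $\mathfrak{m}^n$ is a proper ideal of $A$, \cref{idealscomingfromA} gives $\Tor_1^{A^+}(M^+, A^+/\mathfrak{m}^n) = 0$, so the desired group $\Tor_1^{A^+}(M^+, A^+/(\mathfrak{m}^n + \mathfrak{a}^+))$ equals the kernel of
\[
 \big((\mathfrak{m}^n + \mathfrak{a}^+)/\mathfrak{m}^n\big) \otimes_{A^+/\mathfrak{m}^n} M^+/\mathfrak{m}^n M^+ \longrightarrow M^+/\mathfrak{m}^n M^+,
\]
and this kernel vanishes as soon as $M^+/\mathfrak{m}^n M^+$ is $A^+/\mathfrak{m}^n$-flat.

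For the flatness, the base case $n=1$ is immediate: $A^+/\mathfrak{m}$ is a valuation ring, and $M^+/\mathfrak{m}M^+$ is torsion-free over it by hypothesis, hence flat. For the inductive step, assuming flatness for $n$, set $R = A^+/\mathfrak{m}^{n+1}$ and $J = \mathfrak{m}^n/\mathfrak{m}^{n+1}$. Since $J^2 = 0$ for $n \geq 1$, the ideal $J$ is square-zero and in particular nilpotent, so the local criterion for flatness along a nilpotent ideal reduces the claim for $n+1$ to two assertions: (i) that the quotient $(M^+/\mathfrak{m}^{n+1}M^+)/J(M^+/\mathfrak{m}^{n+1}M^+) = M^+/\mathfrak{m}^n M^+$ is flat over $R/J = A^+/\mathfrak{m}^n$, which is exactly the induction hypothesis; and (ii) that the canonical map $J \otimes_R M^+/\mathfrak{m}^{n+1}M^+ \to M^+/\mathfrak{m}^{n+1}M^+$ is injective.

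For (ii), since $J$ is annihilated by $\mathfrak{m}$ the source collapses to $(\mathfrak{m}^n/\mathfrak{m}^{n+1}) \otimes_{A^+/\mathfrak{m}} M^+/\mathfrak{m}M^+$, and the plan is to identify this with $\mathfrak{m}^n M^+/\mathfrak{m}^{n+1}M^+$. Applying \cref{idealscomingfromA} to $\mathfrak{m}^n$ and $\mathfrak{m}^{n+1}$ shows that $\mathfrak{m}^k \otimes_{A^+} M^+ \to M^+$ is injective with image $\mathfrak{m}^k M^+$ for $k \in \{n, n+1\}$, which forces the inclusion $\mathfrak{m}^{n+1} \otimes M^+ \hookrightarrow \mathfrak{m}^n \otimes M^+$; right-exactness applied to $0 \to \mathfrak{m}^{n+1} \to \mathfrak{m}^n \to \mathfrak{m}^n/\mathfrak{m}^{n+1} \to 0$ then identifies $(\mathfrak{m}^n/\mathfrak{m}^{n+1}) \otimes_{A^+} M^+$ with $\mathfrak{m}^n M^+/\mathfrak{m}^{n+1}M^+$, which visibly injects into $M^+/\mathfrak{m}^{n+1}M^+$. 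The main obstacle is precisely this chain of identifications: one must ensure that the map from the tensor product lands as an isomorphism onto $\mathfrak{m}^n M^+/\mathfrak{m}^{n+1}M^+$, rather than only surjecting onto it, since the injectivity required by the local flatness criterion follows only from the former.
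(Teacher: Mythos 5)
Your proof is correct and follows the same structure as the paper's: reduce to showing $M^+/\m^n M^+$ is flat over $A^+/\m^n$, establish the base case $n=1$ from $A^+/\m$ being a valuation ring, and then propagate to larger $n$ via the local flatness criterion along a nilpotent ideal with the Tor-vanishing input from \cref{idealscomingfromA}. The only differences are cosmetic: the paper tensors $0 \to \m^n \to \m^n+\a^+ \to (\m^n+\a^+)/\m^n \to 0$ and runs a snake-lemma chase, whereas you apply the long exact Tor-sequence to $0 \to (\m^n+\a^+)/\m^n \to A^+/\m^n \to A^+/(\m^n+\a^+) \to 0$ (equivalent bookkeeping); and the paper delegates the inductive step to the Stacks Project lemma it cites, whereas you unwind that step explicitly via the square-zero ideal $\m^n/\m^{n+1} \subset A^+/\m^{n+1}$. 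Your identification of $J\otimes_R N$ with $\m^nM^+/\m^{n+1}M^+$, using \cref{idealscomingfromA} for both $\m^n$ and $\m^{n+1}$ together with right-exactness, is exactly what the cited lemma needs, so the argument is sound; the closing caveat you raise is indeed the crux, and your right-exactness computation resolves it.
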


\begin{proof}
 Consider the commutative diagram
 \begin{equation}
  \begin{tikzcd} \label{diagramflatness}
   \m^n \otimes_{A^+} M^+				\ar[r,"\sim"]		\ar[d,hookrightarrow]		& \m^n M^+	\ar[d,hookrightarrow]		\\
   (\m^n + \mathfrak{a}^+) \otimes_{A^+} M^+		\ar[r]			\ar[d,twoheadrightarrow]	& M^+		\ar[d,twoheadrightarrow]	\\
  (\m^n + \mathfrak{a}^+)/\m^n \otimes_{A^+} M^+	\ar[r,hookrightarrow]					& M^+/\m^n M^+.
  \end{tikzcd}
 \end{equation}
 The upper horizontal map is an isomorphism by \cref{idealscomingfromA}.
 This implies that the upper left vertical map is injective.
 Let us show that the lower horizontal map is injective.
 Since
 \[
  (\m^n + \mathfrak{a}^+)/\m^n \otimes_{A^+} M^+ \to (\m^n + \mathfrak{a}^+)/\m^n \otimes_{A^+/\m^n} M^+/\m^nM^+
 \]
 is an isomorphism, this comes down to showing that $M^+/\m^n M^+$ is a flat $A^+/\m^n$-module.
 If~$n=1$, this is true as~$A^+/\m$ is a valuation ring and $M^+/\m M^+$ is torsion free, hence flat.
 The case $n > 1$ follows from the case $n = 1$ by \cite[Tag 051C]{stacks-project}.
 Note that the assumption
 \[
  \Tor_1^{A^+}(M^+,A^+/\m) = 0
 \]
 in \cite[Tag 051C]{stacks-project} is satisfied by \cref{idealscomingfromA}.
 We conclude that the lower horizontal map in diagram~(\ref{diagramflatness}) is injective.
 A diagram chase now shows the injectivity of the middle horizontal map, which concludes the proof.
\end{proof}

The following lemma is a variant of the Artin-Rees lemma for local Huber pairs.

\begin{lemma} \label{ArtinRees}
 Assume that~$A$ is noetherian.
 Let~$\mathfrak{a}$ be an ideal of~$A$ and $N^+ \subseteq M^+$ finite $A^+$-modules.
 Set $M := M^+ \otimes_{A^+} A$ and $N := N^+ \otimes_{A^+} A$ and assume that $M^+ \to M$ is injective.
 Then there is $K \in \N$ such that for all $n > K$
 \[
  \mathfrak{a}^n M^+ \cap N^+ = \mathfrak{a}^{n-K} (\mathfrak{a}^K M^+ \cap N^+) = \mathfrak{a}^n M \cap N = \mathfrak{a}^{n-K} (\mathfrak{a}^K M \cap N).
 \]
\end{lemma}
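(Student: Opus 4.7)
The plan is to combine the classical Artin--Rees lemma over the noetherian ring~$A$ with the observation that, by locality of $(A, A^+)$, the ring~$A$ is the localization of~$A^+$ at $S := A^+ \setminus \m$, and in particular every element of~$S$ is a unit of~$A$.

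Since $A^+ \to A$ is a (flat) localization, the hypothesis that $M^+ \hookrightarrow M$ together with $N^+ \hookrightarrow M^+$ implies $N \hookrightarrow M$, and both $M$ and $N$ are finitely generated over the noetherian ring~$A$. I would first apply the classical Artin--Rees lemma to $N \subseteq M$ over~$A$ to obtain an integer $K \ge 0$ such that
\[
 \mathfrak{a}^n M \cap N = \mathfrak{a}^{n-K}(\mathfrak{a}^K M \cap N)
\]
for all $n > K$. This settles the last equality of the chain.

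The heart of the argument will be the identity $R = A \cdot R^+$ where $R := \mathfrak{a}^K M \cap N$ and $R^+ := \mathfrak{a}^K M^+ \cap N^+$. I would choose finitely many $A$-generators $y_1, \ldots, y_r$ of the finite $A$-module~$R$. Using $M = M^+[S^{-1}]$ and $N = N^+[S^{-1}]$ to clear denominators, for each~$i$ there exist $t_i, s_i \in S$ with $t_i y_i \in \mathfrak{a}^K M^+$ and $s_i y_i \in N^+$. Setting $u := \prod_i s_i t_i \in S$, a short check gives $u y_i \in \mathfrak{a}^K M^+ \cap N^+ = R^+$ for every~$i$. By locality of $(A, A^+)$, the element~$u$ is a unit of~$A$, so $y_i = u^{-1}(u y_i) \in A R^+$, whence $R \subseteq A R^+$; the reverse inclusion is immediate.

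With this in hand, for $z \in \mathfrak{a}^n M \cap N$ (with $n > K$), Artin--Rees combined with $R = A R^+$ lets me write $z = \sum_m d_m \rho_m$ in~$M$ with $d_m \in \mathfrak{a}^{n-K}$ and $\rho_m \in R^+$. Since $\mathfrak{a} \subseteq A^+$, each $d_m$ already lies in~$A^+$, so the right-hand side is the image under $M^+ \to M$ of an element of $\mathfrak{a}^{n-K} R^+ \subseteq M^+$; injectivity of $M^+ \to M$ then identifies~$z$ with that element, placing it in $\mathfrak{a}^{n-K}(\mathfrak{a}^K M^+ \cap N^+)$. Combined with the trivial inclusions $\mathfrak{a}^{n-K}(\mathfrak{a}^K M^+ \cap N^+) \subseteq \mathfrak{a}^n M^+ \cap N^+ \subseteq \mathfrak{a}^n M \cap N$, this closes the loop and forces all four sets to coincide. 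I expect the main technical step to be the identity $R = A R^+$: the subtle point is producing a single $u \in S$ that simultaneously clears denominators for $\mathfrak{a}^K M^+$- and $N^+$-membership, and it is precisely locality of $(A, A^+)$ that makes~$u$ invertible in~$A$.
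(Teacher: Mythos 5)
Your proof is correct and follows essentially the same route as the paper: the classical Artin--Rees lemma over~$A$ supplies the last equality, and the remaining identifications come from the localization $A = A^+[S^{-1}]$ together with the fact that $\mathfrak{a}$ is an ideal of~$A$ (not just of~$A^+$). The paper compresses your denominator-clearing argument for $R = A R^+$ into the single remark that \emph{$A^+ \to A$ is a localization and $\mathfrak{a}$ is an ideal of~$A$}, so the left vertical map in its comparison diagram is the identity; you are simply making that step explicit.
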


\begin{proof}
 As $A^+ \to A$ is flat, the natural map $N \to M$ is injective and we view $N$, $M^+$ and~$N^+$ as submodules of~$M$.
 For positive integers $n > K$ consider the diagram
 \[
  \begin{tikzcd}
   \mathfrak{a}^{n-K} (\mathfrak{a}^K M^+ \cap N^+)	\ar[r,hookrightarrow]	\ar[d,hookrightarrow]	& \mathfrak{a}^n M^+ \cap N^+	\ar[d,hookrightarrow]	\\
   \mathfrak{a}^{n-K} (\mathfrak{a}^K M \cap N)		\ar[r,hookrightarrow]				& \mathfrak{a}^n M \cap N.
  \end{tikzcd}
 \]
 For $K$ big enough the lower horizontal inclusion is the identity by the Artin-Rees lemma.
 Moreover, since $A^+ \to A$ is a localization and $\mathfrak{a}$ is an ideal not only of~$A^+$ but of~$A$, the left vertical map is the identity.
 This implies that the upper horizontal map and the right vertical map are the identity.
\end{proof}

\begin{proposition} \label{flatness}
 Let $(B,B^+)$ be a Pr\"ufer Huber pair such that~$B$ is noetherian.
 Let~$M^+$ be a torsion free $B^+$-module such that $M := M^+ \otimes_{B^+} B$ is flat over~$B$.
 Then~$M^+$ is flat.
\end{proposition}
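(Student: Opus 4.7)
The plan is to localize at a maximal ideal $\mathfrak{m}^+$ of $B^+$ and reduce to the case where $(A,A^+):=(B_{\mathfrak{m}^+},B^+_{\mathfrak{m}^+})$ is a local Huber pair in the sense of the previous section, with $A$ noetherian. This is legitimate because flatness over $B^+$ is Zariski-local on $B^+$, because the Pr\"ufer assumption ensures each such localization is local, and because torsion-freeness of $M^+$ and flatness of $M$ over $A$ both pass to the localization. Let $\mathfrak{m}$ denote the maximal ideal of $A$ (the support of the valuation) and $S:=A^+\cap A^\times=A^+\setminus\mathfrak{m}$, so $A=S^{-1}A^+$ and every element of $S$ is a non-zero-divisor of $A^+$. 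In particular, torsion-freeness of $M^+$ yields an injection $M^+\hookrightarrow M$.

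By the (non-noetherian) local criterion for flatness, it suffices to show that for every finitely generated ideal $I\subseteq A^+$ the multiplication map $I\otimes_{A^+}M^+\to M^+$ is injective. Flatness of $A^+\to A$, combined with the argument of \cref{idealscomingfromA} identifying $I\otimes_{A^+}A$ with $IA$, and flatness of $M$ over $A$, shows that this map factors through the injection $IA\otimes_A M\hookrightarrow M$. Its kernel therefore sits inside the kernel of $I\otimes_{A^+}M^+\to I\otimes_{A^+}M=S^{-1}(I\otimes_{A^+}M^+)$, that is, inside the $S$-torsion of $I\otimes_{A^+}M^+$, and we are reduced to showing that this $S$-torsion vanishes.

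The key tool is \cref{maximalidealpower} applied with $\mathfrak{a}^+=I$, which gives $\Tor_1^{A^+}(M^+,A^+/(I+\mathfrak{m}^n))=0$ for every $n\geq 1$, provided its hypothesis---that $M^+/\mathfrak{m}M^+$ is torsion-free over the valuation ring $A^+/\mathfrak{m}$---is verified. To verify this hypothesis I would combine the Artin-Rees-type \cref{ArtinRees} with Krull's intersection theorem in the noetherian ring $A$: for $m\in M^+$ whose image in $M^+/\mathfrak{m}M^+$ is killed by some nonzero $\bar s\in A^+/\mathfrak{m}$, the equality $m=s^{-1}(sm)\in\mathfrak{m}M$ forces $m\in\mathfrak{m}M\cap M^+$, and applying \cref{ArtinRees} to the finitely generated submodule $A^+m\subseteq M^+$, in concert with Krull's theorem applied to the cyclic $A$-module $Am$, allows one to conclude $m\in\mathfrak{m}M^+$. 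The vanishing of the $S$-torsion for general finitely generated $I$ then follows from $\bigcap_n(I+\mathfrak{m}^n)=I$ (another consequence of Krull in $A$) via an approximation argument using the long exact $\Tor$-sequences attached to $0\to(I+\mathfrak{m}^n)/I\to A^+/I\to A^+/(I+\mathfrak{m}^n)\to 0$.

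The main obstacle I anticipate is the verification of the torsion-freeness of $M^+/\mathfrak{m}M^+$: \cref{ArtinRees} only supplies the compatibility $\mathfrak{m}^nM^+\cap N^+=\mathfrak{m}^nM\cap N$ for $n$ exceeding some threshold $K$, so bridging to the $n=1$ statement $\mathfrak{m}M\cap M^+=\mathfrak{m}M^+$ that is needed here demands a careful bootstrapping that exploits the Artin-Rees equalities for all large $n$ together with Krull's intersection theorem on $A$ to squeeze the obstruction down to zero.
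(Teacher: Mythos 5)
Your plan tracks the paper's proof (localize to a local Huber pair, feed the local flatness criterion with \cref{idealscomingfromA}, \cref{maximalidealpower}, \cref{ArtinRees}), but the two steps you yourself flag as delicate are genuine gaps, and the fixes you sketch do not close them. For the torsion-freeness of $M^+/\m M^+$ over $A^+/\m$: \cref{ArtinRees} requires $M^+$ to be a \emph{finite} $A^+$-module, which is not among the hypotheses, and even then it only yields $\m^n M^+\cap N^+ = \m^n M\cap N$ for $n$ above an unspecified threshold, with no mechanism you spell out to descend to the $n=1$ statement $\m M\cap M^+=\m M^+$. In fact no such machinery is needed. If $s\in A^+\setminus\m$ and $sm\in\m M^+$, write $sm=\sum a_i m_i$ with $a_i\in\m$, $m_i\in M^+$; since $s$ is a unit of $A$ and $\m$ is an ideal of $A$ contained in $A^+$, one has $s^{-1}a_i\in\m\subseteq A^+$, so $\sum(s^{-1}a_i)m_i\in\m M^+$; in $M$ this element equals $s^{-1}(sm)=m$, and torsion-freeness gives $M^+\hookrightarrow M$, so $m\in\m M^+$. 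That one-line computation is exactly what the paper's remark ``$B^+/\m$ is a valuation ring and $M^+$ is torsion free'' is pointing at.

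The concluding approximation is also in trouble. From $0\to(I+\m^n)/I\to A^+/I\to A^+/(I+\m^n)\to 0$ and $\Tor_1^{A^+}(M^+,A^+/(I+\m^n))=0$ you only get a surjection $\Tor_1^{A^+}(M^+,(I+\m^n)/I)\twoheadrightarrow\Tor_1^{A^+}(M^+,A^+/I)$, which does not force the target to vanish, and the identity $\bigcap_n(I+\m^n)=I$ is not a consequence of Krull since $A^+$ is in general not noetherian. The paper's route is different: it tensors the Mayer--Vietoris sequence $0\to\b^+\cap\m^n\to\b^+\oplus\m^n\to\b^++\m^n\to 0$ with $M^+$, uses \cref{maximalidealpower} to kill the kernels over the middle and right terms, and extracts from the snake lemma the surjection $\ker\big((\b^+\cap\m^n)\otimes_{A^+}M^+\to M^+\big)\twoheadrightarrow\ker\big(\b^+\otimes_{A^+}M^+\to M^+\big)$. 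Then \cref{ArtinRees}, applied to the finite pair $\b^+\subseteq A^+$ where it is legitimate, shows $\b^+\cap\m^n$ is an ideal of $A$ for $n\gg 0$, so \cref{idealscomingfromA} kills the left kernel. Your $I+\m^n$ variant has no comparable handle.
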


\begin{proof}
 It suffices to show that~$M^+_{\m^+}$ is a flat $B^+_{\m^+}$-module for every maximal ideal~$\m^+$ of~$B^+$.
 By hypothesis, the pair $(B_{\m^+},B^+_{\m^+})$ is a local Huber pair.
 In particular, $B_{\m^+} = B_{\m}$ for some prime ideal~$\m$ of~$B$.
 As the assumptions are stable under localization, we may assume that $(B,B^+)$ is local right away.

 We observe that $M^+/\m M^+$ is torsion free over $B^+/\m$.
 Indeed, take $b^+ \in B^+ \setminus \m$ and $m^+ \in M^+$ satisfying $b^+m^+ \in \m$.
 As~$B$ is local with maximal ideal~$\m$, $b^+$ is a unit in~$B$.
 Since~$\m$ is a $B$-module, we obtain
 \[
  m^+ = (b^+)^{-1}(b^+m^+) \in \m.
 \]
 Let~$\mathfrak{b}^+ \subseteq B^+$ be a finitely generated ideal.
 We have to show that
 \[
  \mathfrak{b}^+ \otimes_{B^+} M^+ \to M^+
 \]
 is injective.
 For $n \geq 1$ consider the following diagram of short exact sequences:
 \[
  \begin{tikzcd}
   0	\ar[r]	& \mathfrak{b}^+ \cap \m^n	\ar[r]	\ar[d,hookrightarrow]	& \mathfrak{b}^+ \oplus \m^n	\ar[r]	\ar[d,hookrightarrow]	& \mathfrak{b}^+ + \m^n	\ar[r]	\ar[d,hookrightarrow]	& 0	\\
   0	\ar[r]	& B^+				\ar[r]				& B^+ \oplus B^+		\ar[r]				& B^+			\ar[r]				& 0.
  \end{tikzcd}
 \]
 Tensoring with~$M^+$ we obtain
 \[
  \begin{tikzcd}[column sep=tiny]
		& (\mathfrak{b}^+ \cap \m^n) \otimes_{B^+} M^+	\ar[r]	\ar[d]	& \mathfrak{b}^+ \otimes_{B^+} M^+ \oplus \m^n \otimes_{B^+} M^+	\ar[r]	\ar[d]	& (\mathfrak{b}^+ + \m) \otimes_{B^+} M^+	\ar[r]	\ar[d]	& 0	\\
   0	\ar[r]	& M^+						\ar[r]		& M^+ \oplus M^+						\ar[r]		& M^+						\ar[r]		& 0.
  \end{tikzcd}
 \]
 Since $\m^n \otimes_{B^+} M^+ \to M^+$ and $(\mathfrak{b}^+ + \m^n) \otimes_{B^+} M^+ \to M^+$ are injective by \cref{maximalidealpower}, the snake lemma implies that
 \[
  \ker\big((\mathfrak{b}^+ \cap \m^n) \otimes_{B^+} M^+ \to M^+\big) \to \ker\big(\mathfrak{b}^+ \otimes_{B^+} M^+ \to M^+\big)
 \]
 is surjective.
 We now apply \cref{ArtinRees} to the finite $B^+$-modules $\mathfrak{b}^+ \subseteq B^+$.
 Setting $\mathfrak{b} = \mathfrak{b}^+ \otimes_{B^+} B$ there is $N \in \N$ such that for all $n > N$
 \[
  \m^n \cap \mathfrak{b}^+ = \m^{n-N} (\m^N \cap \mathfrak{b}^+) = \m^n \cap \mathfrak{b} = \m^{n-N} (\m^N \cap \mathfrak{b}).
 \]
 The ideal $\m^n \cap \mathfrak{b}^+$ of~$B^+$ is thus also an ideal of~$B$
  and by \cref{idealscomingfromA} we obtain
 \[
  \ker\big((\mathfrak{b}^+ \cap \m^n) \otimes_{B^+} M^+ \to M^+\big) = 0,
 \]
 which implies that
 \[
  \ker\big(\mathfrak{b}^+ \otimes_{B^+} M^+ \to M^+\big) = 0.
 \]
\end{proof}

\begin{remark}
 \begin{enumerate}
  \item In case $(A,A^+)$ is microbial the results of \cref{flatness} have been shown in \cite{FuKa18}, Chapter~0, Proposition~8.7.12.
  \item	The flatness criterion \cref{flatness} in case~$M^+$ is a $B^+$-algebra resembles the one given in \cite{Tem11}, Lemma~2.3.1 (iii).
		However, in our application $M^+$ is not of finite type, in general.
		This impedes the application of Raynaud-Gruson flattening (\cite{RG71}) in contrast to the situation in \cite{Tem11}.
 \end{enumerate}
\end{remark}

\subsection{Cartesian coverings of Huber pairs} \label{section_cartesian}

\begin{definition}
 A homomorphism
 \[
  (A,A^+) \to (B,B^+)
 \]
 of Huber pairs is called \emph{Cartesian} if the natural homomorphism
 \[
  B^+ \otimes_{A^+} A \to B
 \]
 is bijective.
 We say that an \'etale covering $(\cU_i \to \Spa(A,A^+))_{i \in I}$ is Cartesian if for every $i \in I$ there is an \'etale Cartesian homomorphism $(A,A^+) \to (B,B^+)$ of Huber pairs such that $\cU_i = \Spa(B,B^+)$.
\end{definition}

It is straightforward to see that the composition of two Cartesian homomorphisms is again Cartesian.

\begin{proposition} \label{stronglyetale+etale}
 Let $(A,A^+)$ be a Pr\"ufer Huber pair.
 Let $(A,A^+) \to (B,B^+)$ be a Cartesian, strongly \'etale homomorphism.
 Then $A^+ \to B^+$ is \'etale.
\end{proposition}

\begin{proof}
 Let $\m^+$ be a maximal ideal of~$A^+$.
 In order to show that $A^+ \to B^+$ is \'etale at~$\m^+$ we can base change to $A^+_{\m^+}$.
 As $(A,A^+)$ is Pr\"ufer, there is a unique point $x \in \cX := \Spa(A,A^+)$ such that $\cO_{\cX,x} = A_{\m^+}$ and $\cO_{\cX,x}^+ = A^+_{\m^+}$.
 Therefore, base changing $\cY \to \cX$ to $\cX_x$ induces the base change of $A^+ \to B^+$ to~$A^+_{\m+}$.
 We may thus assume that $(A,A^+)$ is local such that~$\m^+$ is the maximal ideal of~$A^+$.
 The assertion then follows from \cref{local_henselian}.
\end{proof}

\begin{lemma} \label{integralcartesian}
 Let $(A,A^+)$ be a complete Pr\"ufer Huber pair.
 Then, every integral homomorphism $(A,A^+) \to (B,B^+)$ is Cartesian and $(B,B^+)$ is Pr\"ufer.
\end{lemma}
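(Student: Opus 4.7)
The plan is to prove both claims at once by showing that the natural map
\[
 \psi : B^+ \otimes_{A^+} A \longrightarrow B
\]
is already an isomorphism of rings (so \emph{a fortiori} an isomorphism after completion), and then deducing the Pr\"ufer property of $(B,B^+)$ from this identification by a local argument at each maximal ideal of $B^+$.

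First I would verify that $A^+ \hookrightarrow A$ is a flat epimorphism of rings. Both properties can be checked after localizing at each maximal ideal $\mathfrak{m}^+$ of $A^+$; there $(A_{\mathfrak{m}^+}, A^+_{\mathfrak{m}^+})$ is local by \cref{localaffinoid}, and $A_{\mathfrak{m}^+}$ arises from the valuation subring $A^+_{\mathfrak{m}^+}$ by inverting those elements that do not lie in the support of the valuation, which is both flat and an epimorphism. Consequently $B \otimes_{A^+} A = B$, and the flatness together with the inclusion $B^+ \hookrightarrow B$ implies that $\psi$ is injective.

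The heart of the argument is surjectivity of $\psi$. Given $b \in B$, pick an integral equation $b^n + a_{n-1} b^{n-1} + \cdots + a_0 = 0$ with $a_i \in A$, which exists because $A \to B$ is integral. The finitely generated $A^+$-submodule $M = A^+ + \sum_{i=0}^{n-1} A^+ a_i$ of $A$ contains $1$ and is therefore regular; the Pr\"ufer hypothesis forces it to be invertible. Choosing $m_j \in M$ and $s_j \in M^{-1} = \{s \in A \mid sM \subseteq A^+\}$ with $\sum_j m_j s_j = 1$, one finds that $s_j \in A^+$ and $s_j a_i \in A^+$ for all $i,j$. Multiplying the integral equation by $s_j^n$ and regrouping presents $s_j b$ as integral over $A^+$, so $s_j b \in B^+$, and consequently $b = \sum_j m_j \,(s_j b)$ lies in the image of $\psi$. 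I expect this invertibility argument---producing elements of $A$ that multiply a given $b \in B$ into $B^+$---to be the main obstacle; everything else is essentially formal manipulation.

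For the Pr\"ufer property of $(B,B^+)$, I would localize at a maximal ideal $\mathfrak{n}^+$ of $B^+$ and set $\mathfrak{m}^+ = \mathfrak{n}^+ \cap A^+$, which is maximal by integrality of $A^+ \to B^+$. The isomorphism $\psi$ localizes to an identification $B^+_{\mathfrak{n}^+} \otimes_{A^+_{\mathfrak{m}^+}} A_{\mathfrak{m}^+} \overset{\sim}{\to} B_{\mathfrak{n}^+}$, with $A^+_{\mathfrak{m}^+}$ a valuation subring of $A_{\mathfrak{m}^+}$ whose support is the maximal ideal. The ring $B^+_{\mathfrak{n}^+}$ is local, integral over $A^+_{\mathfrak{m}^+}$, and integrally closed in $B_{\mathfrak{n}^+}$; the Krull-type fact (as formulated for Pr\"ufer extensions in Knebusch--Zhang) that a local integrally closed extension of a valuation subring inside an integral extension is itself a valuation subring then supplies the required local structure on $(B_{\mathfrak{n}^+}, B^+_{\mathfrak{n}^+})$, completing the verification.
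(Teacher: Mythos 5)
Your proof is correct in substance but takes a noticeably different route from the paper's. The paper's argument is a three-citation package from Knebusch--Zhang: Theorem~I.5.9 gives in one stroke that $B = B^+ \cdot A$ (surjectivity of $\psi$) \emph{and} that $B^+ \subseteq B$ is Pr\"ufer, and then injectivity of $\psi$ follows from the general theory of weakly surjective maps (Proposition~I.3.10 and Corollary~I.3.16) applied to the two injective, weakly surjective maps $B^+ \to B^+ \otimes_{A^+} A$ and $B^+ \to B$. You instead get injectivity from flatness of $A^+ \to A$ (since $A^+ \subseteq A$ weakly surjective $=$ flat epimorphism), and you reprove surjectivity from scratch with an explicit invertible-module manipulation — clearing denominators on the integral equation via a partition of unity $\sum m_j s_j = 1$ drawn from $M$ and $M^{-1}$, which is a nice hands-on argument. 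What your approach buys is transparency in the surjectivity step (the reader sees exactly where Pr\"uferness is used), at the cost of needing the nonstandard invertibility characterization of Pr\"ufer extensions and of making the final Pr\"ufer-property step into a separate local verification. That last step is where your write-up is thinnest: after localizing at $\mathfrak{n}^+$ you still need to confirm that $B_{\mathfrak{n}^+}$ is itself local with $B^+_{\mathfrak{n}^+}$ the valuation subring supported at its maximal ideal and with open maximal ideal — the "Krull-type fact" you appeal to is correct (it is essentially the content of the same Theorem~I.5.9 the paper cites), but as written it remains an outline rather than a proof, whereas the paper dispatches it by citation. In short: same result, more elementary machinery on the surjectivity side, but the Pr\"ufer conclusion would benefit from either fleshing out the localization argument or simply invoking Theorem~I.5.9 as the paper does.
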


\begin{proof}
 By definition $A \to B$ is integral and~$B^+$ is the integral closure of~$A^+$ in~$B$.
 Hence, $B$ is generated by~$B^+$ and the image of~$A$ (\cite{KneZha}, Theorem~5.9).
 By \cite{KneZha}, Proposition~3.10,  $B^+ \to B$ and $B^+ \to B^+ \otimes_{A^+} A$ are weakly surjective.
 Moreover, both are injective (the injectivity of $B^+ \to B^+ \otimes_{A^+} A$ follows from the injectivity of $B^+ \to B$).
 Therefore, by \cite{KneZha}, Corollary~3.16  the surjective homomorphism $B^+ \otimes_{A^+} A \to B$ is injective.
\end{proof}

\begin{lemma} \label{quasifinitelocalization}
 Let $(A,A^+)$ be a Pr\"ufer Huber pair with~$A$ noetherian and
 \[
  (A,A^+) \to (B,B^+)
 \]
 a Cartesian homomorphism such that $\Spec B$ is quasi-finite and essentially of finite type over $\Spec A$.
 Then $(B,B^+)$ is Pr\"ufer, too.
\end{lemma}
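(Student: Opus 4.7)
The strategy is to reduce, via Zariski's main theorem, to the integral case already handled by \cref{integralcartesian}, and then take care of a localization step separately. Since $A$ is noetherian and $A\to B$ is essentially of finite type and quasi-finite, Zariski's main theorem provides a factorization $A\to C\to B$ with $A\to C$ finite and $\Spec B\to \Spec C$ an (essentially) open immersion, i.e.\ $B$ is a localization $T^{-1}C$ for a multiplicative subset $T\subseteq C$. Endow $C$ with the natural Huber ring topology induced from $A$, and let $C^+$ denote the integral closure of $A^+$ in $C$; then $(A,A^+)\to (C,C^+)$ is an integral morphism of Huber pairs.

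By \cref{integralcartesian}, this morphism is Cartesian and $(C,C^+)$ is a Pr\"ufer Huber pair. In particular $C^+\subseteq C$ is a Pr\"ufer extension, and the maximal ideals of $C^+$ give valuation pairs $(C_{\mathfrak{m}^+},C^+_{\mathfrak{m}^+})$. Next, I would introduce a natural candidate $B'^+\subseteq B$ for the ring of integral elements on the localized side, defined as the localization of $C^+$ at the multiplicative set $T\cap C^+$ (equivalently, by inverting those elements of $C^+$ whose image in $C$ lies in $T$). Since localization preserves Pr\"ufer extensions (by Knebusch--Zhang, or directly because maximal ideals of $B'^+$ correspond to maximal ideals of $C^+$ disjoint from $T\cap C^+$, and the local pairs $(C_{\mathfrak{m}^+},C^+_{\mathfrak{m}^+})$ are unaffected by the localization), the pair $(B,B'^+)$ would be Pr\"ufer.

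The remaining step is to identify the given $B^+$ with $B'^+$, up to completion. Using the Cartesian hypothesis, the completion of $B^+\otimes_{A^+}A$ equals $\hat B$; combining this with the Cartesian identification from step 2 ($C^+\otimes_{A^+}A\cong C$ on completions) and the factorization $A^+\to C^+\to B^+$, I would argue that both $B^+$ and $B'^+$ are integrally closed subrings of $B$ containing the image of $A^+$, with the same base change to $A$ on completion. Invoking the weakly-surjective characterization of Pr\"ufer extensions (\cite{KneZha}, Thm.~I.5.2) together with \cref{flatness} (to control base changes through $A^+\to A$) should then force $B^+=B'^+$, and hence $(B,B^+)$ is Pr\"ufer.

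The main obstacle will be this last identification: the prescribed $B^+$ only enters through the Cartesian isomorphism on completions, whereas the candidate $B'^+$ is constructed concretely as a localization of $C^+$. Reconciling the two requires a careful control of how rings of integral elements behave under the localization $C\to T^{-1}C$ in the Pr\"ufer setting, and may need the noetherian hypothesis on $A$ to invoke the Artin--Rees-type \cref{ArtinRees} and the flatness criterion \cref{flatness}. All other steps are essentially formal once the Zariski main theorem factorization is in place.
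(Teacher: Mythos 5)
Your opening is the same as the paper's: apply Zariski's main theorem to factor $A\to B$ through a finite extension $A\to C$ followed by a localization, let $C^+$ be the integral closure of $A^+$ in $C$, and use \cref{integralcartesian} to conclude $(C,C^+)$ is Pr\"ufer and $(A,A^+)\to (C,C^+)$ is Cartesian. That part is fine and matches the paper (which calls the intermediate pair $(B_0,B_0^+)$). But the rest of your plan introduces a detour that does not work: you try to build an explicit candidate $B'^+$ as a localization of $C^+$ and then prove $B^+=B'^+$. You yourself flag this identification as the main obstacle, and it is a genuine one. The Cartesian hypothesis only constrains $B^+\otimes_{A^+}A\to B$ on completions; it does not determine $B^+$ uniquely among integrally closed open subrings of $B$, so there is no reason that the given $B^+$ should coincide with your constructed $B'^+$. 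Nothing in your sketch closes that gap, and it is not clear it can be closed along these lines.

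The paper's proof avoids the identification entirely. After establishing that $(C,C^+)$ (its $(B_0,B_0^+)$) is Pr\"ufer, it observes that $(B_0,B_0^+)\to (B,B^+)$ is itself Cartesian, and that since $A$ noetherian implies $B_0$ noetherian, the flatness criterion \cref{flatness} applies \emph{directly} to $B_0^+\to B^+$: it shows $B^+$ is flat over $B_0^+$. Flat plus the ambient Pr\"ufer structure gives that $B_0^+\to B^+$ is weakly surjective (\cite{KneZha}, Prop.\,I.4.5), and then \cite{KneZha}, Theorem I.5.10 lets one conclude that $B^+\to B$ is a Pr\"ufer extension, without ever needing a concrete description of $B^+$. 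In short: rather than pinning down what $B^+$ \emph{is}, prove what you actually need about it (flatness and weak surjectivity over $B_0^+$) and invoke the stability theory of Pr\"ufer extensions. Reworking your argument to apply \cref{flatness} to $B_0^+\to B^+$ instead of trying to show $B^+=B'^+$ is the fix.
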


\begin{proof}
 By Zariski's main theorem $A \to B$ factors as $A \to B_0 \to B$ with $B_0/A$ finite and $B/B_0$ a localization.
 Denote by~$B_0^+$ the integral closure of~$A^+$ in~$B_0$.
%  Denote by $B_0$ and~$B_0^+$ the integral closure of~$A$ and~$A^+$ in~$B$, respectively.
%  By Zariski's main theorem $B_0 \hookrightarrow B$ is a localization.
 Since~$B^+$ is integrally closed in~$B$, we obtain a diagram
 \[
  \begin{tikzcd}
   B				& B_0	\ar[l,hookrightarrow,"\text{loc.}"]				& A	\ar[l,"\varphi","\text{finite}"']				\\
   B^+	\ar[u,hookrightarrow]	& B_0^+ \ar[l,hookrightarrow]			\ar[u,hookrightarrow]	& A^+.	\ar[l,"\varphi^+","\text{int.}"']	\ar[u,hookrightarrow]	\\
  \end{tikzcd}
 \]
 By~\cref{integralcartesian} the Huber pair $(B_0,B_0^+)$ is Pr\"ufer and $A \otimes_{A^+} B_0^+ \to B_0$ is bijective.
 This implies that~$(B_0,B_0^+) \to (B,B^+)$ is Cartesian.

 If~$A$ is noetherian, so is~$B_0$.
 Hence, \cref{flatness} implies that $B_0^+ \to B^+$ is flat and thus weakly surjective by \cite{KneZha}, Proposition~4.5.
%  If $A^+ \to B^+$ is of finite presentation, the same holds for $B_0^+ \to B^+$ (??) and thus $B_0^+ \to B^+$ is flat by~\cite{Tem11}, Lemma~2.3.1 (iii).
%  Therefore, in both cases $A^+ \to B^+_0$ is weakly surjective by \cite{KneZha}, Chapter~I, Prop.~4.5.
 The result now follows from \cite{KneZha}, Theorem~5.10.
\end{proof}

\subsection{Laurent coverings and Zariski cohomology} \label{section_Laurent}

\begin{definition}
 Let $(A,A^+)$ be a Huber pair.
 A Laurent covering of $\Spa(A,A^+)$ is a covering by rational open subsets of the form
 \[
  \Spa(A,A^+) = \bigcup_{\alpha_i \in \{\pm 1\}} R (f_1^{\alpha_1} ,\ldots, f_n^{\alpha_n})
 \]
 with $f_1,\ldots,f_n \in A$.
\end{definition}

\begin{lemma} \label{cartesianLaurentcover}
 Let $(A,A^+)$ be a Huber pair such that $A^+ \to A$ is weakly surjective.
 Then for any $f \in A$ the Laurent covering
 \[
  R(\frac{f}{1}) \cup R(\frac{1}{f}) = \Spa(A,A^+)
 \]
 is Cartesian.

 Denote by $A^+[\frac{1}{f}]$ the subring of~$A_f$ generated by the image of~$A^+$ and~$1/f$.
 If in addition $(A,A^+)$ is Pr\"ufer and~$A$ is noetherian, $A^+[f]$ and $A^+[\frac{1}{f}]$ are integrally closed in~$A$ and~$A_f$, respectively, i.e. $(A,A^+[f])$ and $(A_f,A^+[\frac{1}{f}])$ are Huber pairs and
 \[
  R(\frac{f}{1}) = \Spa(A,A^+[f]),	\qquad	R(\frac{1}{f}) = \Spa(A_f,A^+[\frac{1}{f}]).
 \]
\end{lemma}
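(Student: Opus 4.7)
The lemma has two independent parts. The first claims that the Laurent covering is Cartesian; the second, requiring the additional Pr\"ufer and noetherian hypotheses, identifies the integral elements explicitly.

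For the Cartesian part, the argument runs parallel to the proof of \cref{integralcartesian}. In each of the two cases, we have a chain $A^+ \hookrightarrow C^+ \hookrightarrow C$, where $(C, C^+) = (A, A(f/1)^+)$ for $R(f/1)$ and $(C, C^+) = (A_f, A(1/f)^+)$ for $R(1/f)$; note that in the second case the map $A^+ \to A_f$ remains weakly surjective, being the composition of the weakly surjective $A^+ \to A$ with the localization $A \to A_f$. By \cite{KneZha}, Proposition~I.3.10, the base change $C^+ \to C^+ \otimes_{A^+} A$ is therefore weakly surjective as well; its injectivity follows from the injectivity of the composite $C^+ \to C^+ \otimes_{A^+} A \to C$. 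Applying \cite{KneZha}, Corollary~I.3.16, the natural surjection $C^+ \otimes_{A^+} A \to C$ is thus an isomorphism (in fact already before completion), so both $R(f/1)$ and $R(1/f)$ are Cartesian over $\Spa(A,A^+)$.

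For the second part, the claim about $A^+[f]$ is immediate from the Pr\"ufer characterization \cite{KneZha}, Theorem~I.5.2: every $A$-overring of $A^+$ is integrally closed in $A$, and $A^+[f]$ is such an overring. Thus $A(f/1)^+ = A^+[f]$ and $R(f/1) = \Spa(A, A^+[f])$.

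The claim about $A^+[1/f] \subseteq A_f$ is more delicate, and is where the noetherian hypothesis is needed. My plan is to combine the Cartesianness from the first part with \cref{quasifinitelocalization}: since $\Spec A_f \to \Spec A$ is an open immersion, hence quasi-finite and essentially of finite type, the lemma yields that $(A_f, A(1/f)^+)$ is itself a Pr\"ufer Huber pair. The remaining task is to verify $A^+[1/f] = A(1/f)^+$, for which I would work locally at the maximal ideals $\mathfrak{m}^+$ of $A^+$: whenever $\mathfrak{m}^+ A \neq A$, weak surjectivity together with the injectivity of $A^+ \hookrightarrow A$ forces $A^+_{\mathfrak{m}^+} \cong A_{\mathfrak{m}^+}$, so the question reduces to the local Pr\"ufer Huber pair $(A_{\mathfrak{m}^+}, A^+_{\mathfrak{m}^+})$, where the valuation structure makes $A^+_{\mathfrak{m}^+}[1/f]$ already integrally closed in $A_{f,\mathfrak{m}^+}$. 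The main obstacle is the careful valuation-theoretic analysis in the local case when $f$ lies near the support of the valuation, together with the local-to-global passage, in which the noetherian assumption on $A$ is used to ensure that being integrally closed can be checked locally at primes.
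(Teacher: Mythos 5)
The first part of your argument (Cartesianness of the two pieces of the Laurent covering) matches the paper's proof in all essentials: both reduce to the injectivity of $C^+ \otimes_{A^+} A \to C$ via \cite{KneZha}, Proposition~I.3.10 and Corollary~I.3.16, after observing that weak surjectivity of $A^+ \to A$ propagates to $A^+ \to A_f$. Your treatment of $A^+[f]$ via the Pr\"ufer overring characterization is also fine; the paper dismisses that case as "similar and even easier."

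The problem is the $A^+[\frac{1}{f}]$ case, which you acknowledge is "more delicate" and then leave as a plan rather than a proof. Two specific gaps: First, your appeal to \cref{quasifinitelocalization} only tells you that $(A_f, A(\frac{1}{f})^+)$ is Pr\"ufer, where $A(\frac{1}{f})^+$ is the \emph{integral closure} of $A^+[\frac{1}{f}]$. That gives you information about $A_f$-overrings of $A(\frac{1}{f})^+$, but $A^+[\frac{1}{f}]$ is a \emph{subring} of $A(\frac{1}{f})^+$, so this says nothing about whether $A^+[\frac{1}{f}]$ itself is integrally closed. Second, your proposed local analysis at maximal ideals $\m^+$ of $A^+$ and the "careful valuation-theoretic analysis when $f$ lies near the support" is exactly the part that needs to be done and isn't. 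You also misattribute the role of the noetherian hypothesis: it is not there to allow a local-to-global passage for integral closedness (which is elementary and holds without noetherianity); it is there to make the flatness criterion \cref{flatness} available.

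The paper's actual argument avoids the case analysis entirely: after reducing to the case that $A \to A_f$ is injective and establishing $A \otimes_{A^+} A^+[\frac{1}{f}] \cong A_f$, it invokes \cref{flatness} (this is where noetherianity enters) to see that $A^+ \to A^+[\frac{1}{f}]$ is flat, then \cite{KneZha}, Proposition~I.4.5 to upgrade flatness plus weak surjectivity of $A^+ \to A_f$ to weak surjectivity of $A^+ \to A^+[\frac{1}{f}]$, and finally \cite{KneZha}, Theorem~I.5.10 (using that $A_f$ is generated by $A$ and $A^+[\frac{1}{f}]$) to conclude that $A^+[\frac{1}{f}]$ is Pr\"ufer, hence integrally closed, in $A_f$. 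I would recommend you replace your sketch of the local argument with this chain of citations; the local approach may be salvageable but would require the valuation-theoretic work you have deferred.
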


\begin{proof}
 We only treat $R(\frac{1}{f})$.
 The examination of $R(\frac{f}{1})$ is similar (and even easier).
 We have
 \[
  R(\frac{1}{f}) = \Spa(A_f,A^+_f),
 \]
 where~$A^+_f$ denotes the integral closure of $A^+[\frac{1}{f}]$ in~$A_f$.
 In order to show that $R(\frac{1}{f}) \to \Spa(A,A^+)$ is Cartesian we need to check that the natural homomorphism
 \[
  \varphi: A \otimes_{A^+} A^+_f \to A_f
 \]
 is an isomorphism.
 The surjectivity of~$\varphi$ is obvious.
 Consider the diagram
 \[
  \begin{tikzcd}
   A_f	\\
	& A^+_f \otimes_{A^+} A	\ar[ul,"\varphi"']							& A					\ar[l]	\ar[ull,bend right=20]	\\
	& A^+_f			\ar[uul,hookrightarrow,bend left=20,"\beta"]	\ar[u,"\alpha'"]	& A^+.	\ar[u,hookrightarrow,"\alpha"']	\ar[l]
  \end{tikzcd}
 \]
 As $\alpha$ is weakly surjective, so are~$\alpha'$ and~$\beta$ (see \cite{KneZha}, Proposition~3.10).
 Moreover,~$\alpha'$ is injective because~$\beta$ is injective.
 We conclude by \cite{KneZha}, Corollary~3.16 that~$\varphi$ is injective.

 Assume now that $(A,A^+)$ is Pr\"ufer and~$A$ is noetherian.
 As the image of~$A^+$ in~$A_f$ is Pr\"ufer in the image of~$A$ in~$A_f$ by \cite{KneZha}, Proposition~5.7, we may replace~$A^+$ and~$A$ by their images in~$A_f$ and assume henceforth that $A \to A_f$ is injective.
 The same argument as above shows that
 \[
  A \otimes_{A^+} A^+[\frac{1}{f}] \cong A_f.
 \]
 By \cref{flatness}, $A^+ \to A^+[\frac{1}{f}]$ is flat.
 Moreover, $A^+ \to A \to A_f$ is weakly surjective.
 Hence, $A^+ \to A^+[\frac{1}{f}]$ is weakly surjective by \cite{KneZha}, Proposition~4.5.
 Since $A_f$ is generated by~$A$ and $A^+[\frac{1}{f}]$, \cite{KneZha}, Theorem~5.10 implies that $A^+[\frac{1}{f}]$ is Pr\"ufer in~$A_f$.
 In particular, $A^+[\frac{1}{f}]$ is integrally closed in~$A_f$.
\end{proof}

\begin{corollary} \label{cartesianneighborhoodbasis}
 Let $(A,A^+)$ be a Pr\"ufer Huber pair.
 Then every closed point of $\Spa(A,A^+)$ has a basis of Cartesian affinoid neighborhoods.
\end{corollary}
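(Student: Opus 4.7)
The plan is to exhibit a basis of Cartesian affinoid neighborhoods of the form $L = R(f_1^{\alpha_1},\ldots,f_n^{\alpha_n})$ with $f_i \in A$ and $\alpha_i \in \{\pm 1\}$, i.e.\ to show that Laurent subsets of $\Spa(A,A^+)$ are Cartesian and form a basis of the topology.

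First I would verify that Laurent subsets form a basis of the topology near any point $x$. Given $x \in U$ with $U$ open, pick a rational open $W \ni x$ with $W \subseteq U$. Cover $\Spa(A,A^+)$ by $W$ together with rational opens chosen to cover the complement of a sufficiently small constructible neighborhood of $x$ (so that none of them contain $x$). By \cref{Laurentrefinement} this covering admits a Laurent refinement, and any element of that refinement containing $x$ must, by the choice of the additional opens, be contained in $W$.

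Second, I would prove by induction on $n$ that every Laurent subset $L = R(f_1^{\alpha_1},\ldots,f_n^{\alpha_n})$ is Cartesian over $\Spa(A,A^+)$. The base case $n=1$ is exactly \cref{cartesianLaurentcover}: since $(A,A^+)$ is Pr\"ufer, the extension $A^+ \to A$ is weakly surjective (by the Knebusch--Zhang characterization of Pr\"ufer extensions), so both $R(f_1^{1})$ and $R(f_1^{-1})$ are Cartesian. For the inductive step, write $L = L' \cap R(f_n^{\alpha_n})$ where $L' = R(f_1^{\alpha_1},\ldots,f_{n-1}^{\alpha_{n-1}})$, and let $(B,B^+)$ be the Huber pair corresponding to $L'$. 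By induction $\Spa(B,B^+) \to \Spa(A,A^+)$ is Cartesian. The crucial point is that $(B,B^+)$ is itself a complete Pr\"ufer Huber pair: before completion, the intermediate extensions $(A_{g},A^+_{g})$ or $(A,A^+[f])$ that occur in building $L'$ are Pr\"ufer, because Pr\"ufer extensions are stable under localization and under passage to intermediate rings; completion then preserves Pr\"ufer-ness by \cref{completionPruefer}. Hence \cref{cartesianLaurentcover} applies anew in $\Spa(B,B^+)$ to the single Laurent condition $f_n^{\alpha_n}$, giving that $L \to L'$ is Cartesian. Finally, composition of Cartesian morphisms is Cartesian (for rational localizations this is a formal consequence of the transitivity $\widehat{C^+ \otimes_{A^+} A} \cong \widehat{C^+ \otimes_{B^+}(B^+ \otimes_{A^+} A)}\cong \hat{C}$), so $L \to \Spa(A,A^+)$ is Cartesian.

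The main obstacle I expect is the inductive preservation of hypotheses under iterated rational localization: at each stage one must check that the new Huber pair $(B,B^+)$ remains complete Pr\"ufer so that \cref{cartesianLaurentcover} applies again. As sketched above, this comes down to combining the ring-theoretic stability of Pr\"ufer extensions with \cref{completionPruefer}; neither the noetherian clause in \cref{cartesianLaurentcover} nor the second, more precise description of $A^+[f]$ and $A^+[\frac{1}{f}]$ is needed for the Cartesian-ness part of the statement, so the argument goes through with no noetherian hypothesis on $A$. A small bookkeeping point is the very first step, where the auxiliary cover must be arranged so that the Laurent refinement actually produces a neighborhood of $x$ contained in the given open $U$.
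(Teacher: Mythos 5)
Your route is the paper's: exhibit the Laurent domains $R(f_1^{\alpha_1},\ldots,f_n^{\alpha_n})$ as the basis and show they are Cartesian. Your induction for the Cartesian part spells out what the paper leaves implicit in citing \cref{cartesianLaurentcover}, and you are right that only the weakly-surjective half of that lemma is needed, so no noetherian hypothesis enters. One refinement: the invariant that passes cleanly through the induction is weak surjectivity of $B^+\to B$, which is inherited by overrings and survives localizing $A$ at $f$ for elementary reasons; the phrase ``stability of Pr\"ufer extensions under localization'' is looser than what you use, since $A_f$ need not coincide with the image of $A$, and showing $A^+[\frac{1}{f}]$ is actually Pr\"ufer in $A_f$ is precisely what the noetherian part of \cref{cartesianLaurentcover} is for. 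Tracking weak surjectivity sidesteps this entirely.

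The genuine gap is in your argument that Laurent domains form a neighborhood basis. You propose to choose a constructible neighborhood $C\subseteq W$ of $x$ and cover $\Spa(A,A^+)\setminus C$ by rational opens that all avoid $x$, so that a Laurent refinement of the resulting cover yields $L\ni x$ with $L\subseteq W$. This cannot be done whenever $x$ has a specialization $y\notin W$: since $y\in\overline{\{x\}}$, \emph{every} open containing $y$ also contains $x$, so no rational open through $y$ avoids $x$, and nothing forces $\overline{\{x\}}\subseteq W$ for an arbitrary rational open $W\ni x$. So the auxiliary cover you need cannot in general be built. The underlying fact is standard (it is part of the rational/Laurent covering theory of \cite{BGR} \S 8.2.2, which is what \cref{Laurentrefinement} ultimately rests on), but it is not a formal consequence of the covering-refinement statement in the way you deduce it, and that step needs a different argument.
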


\begin{proof}
 Let us first convince ourselves that it suffices to show that every covering of $\cX = \Spa(A,A^+)$ has a Cartesian refinement.
 Assume the latter is the case.
 Let $x \in \cX$ be a closed point and $\cU \subseteq \cX$ an open neighborhood of~$x$.
 Then
 \[
  \cX = \cU \cup (\cX \setminus \{x\})
 \]
 is a covering of~$\cX$.
 By assumption, it has a refinement by a Cartesian covering $(\cV_i \subseteq \cX)_{i \in I}$.
 So there is $i \in I$ such that $x \in \cV_i \subseteq \cU$.
 
 By \cite{Hu94}, Lemma~2.6, every open covering of $\cX := \Spa(A,A^+)$ is dominated by a rational covering, i.e., a covering of the form
 \[
  \cX = \Spa(A,A^+) = \bigcup_{j=1}^m R\big(\frac{g_1,\ldots,g_m}{g_j}\big)
 \]
 with $g_1,\ldots,g_m \in A$ such that $g_1 A + \ldots + g_m A = A$.
 Arguing as in \cite{BGR}, \S~8.2.2 we obtain the following two assertions.
 \begin{enumerate}[(i)]
  \item Let $(\cU_i \subseteq \cX)_{i \in I}$ be a rational covering.
		Then there is a Laurent covering $(\cV_j \to \cX)_{j \in J}$ such that for every $j \in J$ the covering $(\cU_i \cap \cV_j \to \cV_j)_{i \in I}$ is a rational covering generated by units in $\cO_{\cX}(\cV_j)$.
  \item Every Cartesian covering of~$\cX$ that is generated by units has a refinement which is a Laurent covering.
 \end{enumerate}
 In total we get that every covering of~$\cX$ has a refinement of the form
 \[
  (\cU_{ij} \subseteq \cU_i \subseteq \cX)_{(i,j) \in I \times J}
 \]
 such that $(\cU_i \subseteq \cX)_{i \in I}$ is a Laurent covering and for each $i \in I$ also $(\cU_{ij} \subseteq \cU_i)_{j \in J}$ is a Laurent covering.
 In particular, for every $(i,j) \in I \times J$ the inclusions $\cU_i \subseteq \cX$ and $\cU_{ij} \subseteq \cU_i$ are Cartesian by \cref{cartesianLaurentcover}.
 Hence, $\cU_{ij} \subseteq \cX$ is Cartesian and thus the covering $(\cU_{ij} \subseteq \cX)_{(i,j)}$ is Cartesian.
\end{proof}

\begin{lemma} \label{acyclicLaurentcover}
 Let $(A,A^+)$ be a complete Pr\"ufer Huber pair.
 Assume that either~$A$ is a strongly noetherian Tate ring or the topology of~$A$ is discrete and~$A$ is noetherian.
 Let~$\mathfrak{U}$ be a Laurent covering of $\cX = \Spa(A,A^+)$.
 Then the \v{C}ech cohomology groups
 \[
  \check{\mathrm{H}}^i(\mathfrak{U},\cO_{\cX}^+)
 \]
 vanish for $i \geq 1$.
\end{lemma}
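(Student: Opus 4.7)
My plan is to proceed by induction on the number $n$ of Laurent parameters $f_1,\ldots,f_n$ defining~$\mathcal{U}$. For the inductive step, set $V_+ := R(\tfrac{f_1}{1})$ and $V_- := R(\tfrac{1}{f_1})$ and view~$\mathcal{U}$ as a refinement of $\{V_+,V_-\}$: the subfamily $\{U_\alpha : \alpha_1 = +\}$ is a Laurent covering of~$V_+$ with respect to the images of $f_2,\ldots,f_n$, and analogously for~$V_-$. By \cref{cartesianLaurentcover}, \cref{integralcartesian} and \cref{completionPruefer} (together with the preservation of the strongly noetherian Tate hypothesis under rational localization), the Huber pairs $(\cO_X(V_\pm),\cO_X^+(V_\pm))$ are again complete Pr\"ufer satisfying one of the standing noetherian hypotheses, so the inductive assumption applies to each of them. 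A standard double-complex / \u{C}ech-to-\u{C}ech spectral sequence argument then reduces the vanishing of $\check{\mathrm{H}}^i(\mathcal{U},\cO_X^+)$ for $i \geq 1$ to the vanishing of $\check{\mathrm{H}}^1$ for the two-element cover $\{V_+,V_-\}$, i.e.\ to the case $n=1$.

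For $n=1$, I need to prove exactness of
\begin{equation*}
0 \to A^+ \to A\langle f\rangle^+ \oplus A\langle \tfrac{1}{f}\rangle^+ \xrightarrow{(b,c)\mapsto b-c} A\langle f,\tfrac{1}{f}\rangle^+ \to 0.
\end{equation*}
Injectivity on the left is immediate. For middle exactness, I would embed this into the analogous sequence with $\cO_X$ in place of $\cO_X^+$, which is exact by Tate's acyclicity theorem (\cite{Hu94}, Thm.~2.2 in the strongly noetherian Tate case; immediate in the discrete noetherian case since the completions are inert). A kernel element $(b,c)$ then has $b=c=a$ for some $a \in A$, and since $b \in A\langle f\rangle^+$ forces $|a(x)| \leq 1$ on $R(\tfrac{f}{1})$ while $c \in A\langle \tfrac{1}{f}\rangle^+$ forces $|a(x)| \leq 1$ on $R(\tfrac{1}{f})$, and these rational subsets cover~$X$, we get $a \in A^+$.

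The crux of the argument, and what I expect to be the main obstacle, is surjectivity of the last map. In the discrete noetherian case a given $d \in A^+[f,\tfrac{1}{f}]$ is a Laurent polynomial $\sum_{i=-N}^{M} a_i f^i$, and splitting it at degree zero yields $b := \sum_{i\geq 0} a_i f^i \in A^+[f]$ and $c := -\sum_{i<0} a_i f^i \in A^+[\tfrac{1}{f}]$; the integrality of each summand is forced by the pointwise valuation description of the plus rings, combined with the Pr\"ufer property (via \cref{cartesianLaurentcover}) which already gives integral closedness of $A^+[f]$ in~$A$ and of $A^+[\tfrac{1}{f}]$ in~$A_f$. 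In the strongly noetherian Tate case the same splitting must be carried out for a convergent Laurent series in $A\langle f,\tfrac{1}{f}\rangle^+$: convergence of each piece follows from convergence of the total series, but the delicate point is to check that the positive- and negative-degree parts separately satisfy the integrality condition defining $A\langle f\rangle^+$ and $A\langle \tfrac{1}{f}\rangle^+$. I expect this to follow by a limit argument: approximate $d$ by Laurent polynomials, apply the discrete-case splitting, and use completeness together with the Cartesian description of the Laurent subsets from \cref{cartesianLaurentcover} and the pointwise valuation criterion to pass to the limit.
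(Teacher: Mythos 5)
Your overall strategy matches the paper's: reduce by induction (the paper cites \cite{BGR}, 8.1.4 Corollary~4, which encapsulates exactly the \u{C}ech-to-\u{C}ech reduction you sketch) to the single-parameter sequence
\[
0 \to A^+ \to A\langle f\rangle^+ \oplus A\langle\tfrac1f\rangle^+ \to A\langle f,\tfrac1f\rangle^+ \to 0,
\]
then prove left and middle exactness from the sheaf property / Tate acyclicity, and establish surjectivity by splitting a Laurent series at degree zero. You also correctly identify \cref{cartesianLaurentcover} as the key input, since it is what guarantees $A^+[f]$, $A^+[\tfrac1f]$, $A^+[f,\tfrac1f]$ are already integrally closed so that the plus-rings are just their completions.

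Where you diverge, and where your write-up is incomplete, is the Tate-case surjectivity. You openly say \say{I expect this to follow by a limit argument} without carrying it out: to make it rigorous you would need to check (i) that $A^+[f,\tfrac1f]$ is dense in $A\langle f,\tfrac1f\rangle^+$ (this does follow from \cref{cartesianLaurentcover} as above) and (ii) that the degree-zero truncation operator is continuous on the Laurent series ring, so that the split pieces of a Cauchy sequence converge and their limits land in the respective plus-rings. The paper sidesteps the approximation altogether by writing down explicit descriptions, via the identifications $A\langle f\rangle \cong A\langle X\rangle/(f-X)$, $A\langle\tfrac1f\rangle \cong A\langle Y\rangle/(1-fY)$, $A\langle f,\tfrac1f\rangle \cong A\langle X,X^{-1}\rangle/(f-X)$, of each plus-ring as convergent (Laurent) series with coefficients in $A^+$, after which the splitting is a one-line coefficientwise observation. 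That description is what your proposal is missing. One minor simplification in your discrete case: an element of $A^+[f,\tfrac1f]$ is by definition a Laurent polynomial with coefficients in $A^+$, so the splitting is immediate and the detour through the pointwise valuation criterion is unnecessary.
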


\begin{proof}
 Using \cite{BGR}, 8.1.4 Corollary~4 and induction, this comes down to showing that
 \[
  0 \to A^+ \to \cO_{\cX}^+(R ( \frac{f}{1} )) \oplus \cO_{\cX}^+(R ( \frac{1}{f} )) \overset{\alpha}{\to} \cO_{\cX}^+(R ( \frac{f}{1},\frac{1}{f} )) \to 0
 \]
 is exact for every $f \in A$.
 We know already that~$\cO_{\cX}^+$ is a sheaf.
 Hence, we are left with showing the surjectivity of~$\alpha$.
 By \cref{cartesianLaurentcover} we have
 \[
  R ( \frac{f}{1} ) = \Spa(A,A^+[f]),	\quad R ( \frac{1}{f} ) = \Spa(A_f,A^+[\frac{1}{f}]),	\quad R ( \frac{f}{1},\frac{1}{f} ) = \Spa(A_f,A^+[f,\frac{1}{f}]).
 \]
 In case the topology of~$A$ is discrete, the surjectivity of~$\alpha$ is now obvious.
 In case~$A$ is a strongly noetherian Tate algebra we use the following identifications (see II.1 in the proof of Theorem~2.5 in \cite{Hu94}):
 \[
  A \langle \frac{f}{1} \rangle = A \langle X \rangle /(f-X),	\quad	A \langle \frac{1}{f} \rangle = A \langle Y \rangle /(1-fY),	\quad	A \langle \frac{f}{1},\frac{1}{f} \rangle = A \langle X,X^{-1} \rangle /(f-X).
 \]
 Then $\cO_{\cX}^+(R ( \frac{f}{1} ))$ is the closure of $A^+[f]$ in $A \langle X \rangle /(f-X)$, i.e. equal to
 \[
  \{ \sum_i b_i X^i \in A \langle X \rangle \mid b_i \in A^+ \} /(f-X).
 \]
 Similarly
 \begin{align*}
  \cO_{\cX}^+(R ( \frac{1}{f} ))				&= \{ \sum_i b_i Y^i \in A \langle Y \rangle \mid b_i \in A^+ \} /(1-fY)	\\
  \cO_{\cX}^+(R ( \frac{f}{1},\frac{1}{f} ))	&= \{ \sum_i b_i X^i \in A \langle X,X^{-1} \rangle \mid b_i \in A^+ \} /(f-X).
 \end{align*}
 Now also in this case the surjectivity of~$\alpha$ can be checked explicitly.
\end{proof}

\begin{proposition} \label{Zariskiacyclic}
 Let $(A,A^+)$ be a complete Pr\"ufer Huber pair.
 Assume that either~$A$ is a strongly noetherian Tate ring or the topology of~$A$ is discrete and~$A$ is noetherian.
 Then, setting $\cX = \Spa(A,A^+)$,
 \[
  H^i(\cX,\cO_{\cX}^+) = 0.
 \]
 for all $i > 0$.
\end{proposition}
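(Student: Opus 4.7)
The natural strategy is Cartan's criterion for vanishing of sheaf cohomology: exhibit a basis $\mathcal{B}$ of open subsets of $X$, closed under finite intersections, such that $\check{H}^i(\mathcal{U},\cO_X^+)=0$ for $i\geq 1$ and every finite covering $\mathcal{U}$ of every $V\in\mathcal{B}$ by elements of $\mathcal{B}$. This forces $H^i(V,\cO_X^+)=0$ for all $V\in\mathcal{B}$ and $i\geq 1$, in particular for $V=X$.

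We take $\mathcal{B}$ to be the set of Laurent rational subspaces of $X$. This is a basis of the topology of $X$ by \cref{Laurentrefinement}, and it is closed under finite intersection via
\[
 R(f_1^{\alpha_1},\ldots,f_n^{\alpha_n}) \cap R(g_1^{\beta_1},\ldots,g_m^{\beta_m}) = R(f_1^{\alpha_1},\ldots,f_n^{\alpha_n},g_1^{\beta_1},\ldots,g_m^{\beta_m}).
\]
Every $V\in\mathcal{B}$ has the form $\Spa(B,B^+)$ for an appropriate Huber pair, and any covering of $V$ by other elements of $\mathcal{B}$ becomes a Laurent covering of $V$ (with respect to the images of the defining elements in $B$). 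Hence, once we know that $(B,B^+)$ itself satisfies the hypotheses of \cref{acyclicLaurentcover}, that lemma supplies the required Čech vanishing.

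The substantive step is thus to verify that a Laurent rational subspace of $X$ is again the adic spectrum of a complete Pr\"ufer Huber pair of the same type (strongly noetherian Tate, or noetherian with discrete topology). By induction on the number of defining parameters it suffices to treat $V=R(f^{\pm 1})$ for a single $f\in A$. In the discrete noetherian case this is immediate from the second half of \cref{cartesianLaurentcover}: $R(f/1)=\Spa(A,A^+[f])$ and $R(1/f)=\Spa(A_f,A^+[1/f])$ are Pr\"ufer, and $A$, $A_f$ are still noetherian and discrete. In the strongly noetherian Tate case, $A\langle f/1\rangle=A\langle X\rangle/(f-X)$ and $A\langle 1/f\rangle=A\langle Y\rangle/(1-fY)$ are strongly noetherian Tate as quotients of the strongly noetherian Tate algebras $A\langle X\rangle$ and $A\langle Y\rangle$; the Pr\"ufer property of the completed pair is then obtained from that of the uncompleted pair via \cref{completionPruefer}.

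The main obstacle is precisely the Tate case, since \cref{cartesianLaurentcover} identifies the Pr\"ufer structure of $(A(f^{\pm 1}),A(f^{\pm 1})^+)$ only under a noetherian hypothesis on $A$ that a Tate ring need not satisfy. A natural route around this is to observe that $A^+\to A^+[1/f]$ is a composite of a localization and an integral extension, hence weakly surjective; combined with the flatness statement \cref{flatness} applied in the strongly noetherian Tate setting and the characterization of Pr\"ufer extensions via integrally closed overrings (\cite{KneZha}, Theorem~I.5.10), this should yield the Pr\"uferness of the uncompleted pairs, after which \cref{completionPruefer} completes the argument.
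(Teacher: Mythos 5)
Your strategy is the paper's own: refine every covering to a Laurent covering via \cref{Laurentrefinement} and then invoke the \v{C}ech acyclicity of \cref{acyclicLaurentcover}. The paper phrases this as flabbiness of $\cO_X^+$ on the site $\mathcal{B}$ of Cartesian open immersions equipped with Laurent coverings, which is just a site-theoretic repackaging of your Cartan-criterion formulation on the basis of Laurent rational subspaces, so the two proofs are essentially identical. Two small points are worth flagging. First, an arbitrary covering of a Laurent rational subspace $V$ by Laurent rational subspaces is not itself a Laurent covering of $V$; you must pass to a common Laurent refinement (available by \cref{Laurentrefinement}), which is fine for the cofinal form of Cartan's criterion but should be stated. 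Second, your entire last paragraph addresses a false alarm: a strongly noetherian Tate ring is noetherian by definition (take $n=0$ in the defining condition, noting $A$ is complete here), so the noetherian hypothesis in the second half of \cref{cartesianLaurentcover} is already satisfied; that lemma gives the Pr\"ufer structure of $(A(f^{\pm 1}),A(f^{\pm 1})^{+})$ and \cref{completionPruefer} transfers it to the completed pair, with no need for the detour through weak surjectivity and \cref{flatness} (and, incidentally, $A^{+}\to A^{+}[1/f]$ is not in general a composite of a localization and an integral extension, so that step as written would not go through).
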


\begin{proof}
 It suffices to prove that for any affinoid open covering of an affinoid open subspace of $\cX$, the \v{C}ech cohomology of~$\cO_{\cX}^+$ is trivial.
 As in \cite{BGR}, Chapter~8.2, Proposition~5, we further restrict to Laurent coverings of open affinoid subspaces of~$\cX$.
 But all Laurent coverings are $\cO_{\cX}^+$-acyclic by \cref{acyclicLaurentcover}.
\end{proof}

\section{Strongly \'etale cohomology} \label{stronglyetalecohomology}

If~$\cX$ is an analytic adic space, the additive group~$\GG_a$ is a sheaf for the \'etale site of~$\cX$ by \cite{Hu96}, (2.2.5).
In case~$\cX$ is a discretely ringed adic space this follows from the corresponding statement for schemes.
In particular, in both cases, $\GG_a$ is a sheaf for the strongly \'etale and the tame site.
Then, also the subpresheaf~$\GG_a^+$ of~$\GG_a$ defined by
\[
 (\cY \to \cX) \mapsto \cO_{\cY}^+(\cY)
\]
is a sheaf.

In the following we say that an adic space~$\cX$ is \emph{locally noetherian} if it is locally of the form $\Spa(A,A^+)$ such that the completion of~$A$ is noetherian.
We say that~$\cX$ is noetherian if in addition~$\cX$ is quasi-compact.

\begin{lemma}
 Let
 \[
  \varphi: \cX \to \Spa(A,A^+)
 \]
 be an \'etale covering of the complete, noetherian, Pr\"ufer affinoid adic space $\Spa(A,A^+)$.
 Then there is a morphism
 \[
  \psi:\cY \to \cX,
 \]
 which is a finite coproduct of open immersions such that $\varphi \circ \psi$ is an (affinoid) Cartesian \'etale covering.
\end{lemma}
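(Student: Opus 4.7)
My plan is to construct a Laurent-type rational cover of $\Spa(B,B^+)$ each of whose pieces is Cartesian over $\Spa(A,A^+)$, so that their disjoint union supplies the desired $\psi$.

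First I would reduce the base. By \cref{cartesianneighborhoodbasis} the Cartesian affinoid opens form a basis of $\Spa(A,A^+)$, and by \cref{quasifinitelocalization} each such open is itself a noetherian Pr\"ufer affinoid. Since the Cartesian property is closed under composition and $\Spa(A,A^+)$ is quasi-compact, it suffices to produce Cartesian refinements of $\varphi^{-1}(U) \to U$ for each Cartesian affinoid open $U = \Spa(A',A'^+)$ in a finite cover of $\Spa(A,A^+)$, and then take the disjoint union. Hence I may assume in what follows that $\Spa(A,A^+)$ itself plays the role of the base.

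Next, fix a point $x \in \Spa(A,A^+)$ with preimage $y$ under $\varphi$. By Corollary~1.7.3~iii) of \cite{Hu96}, after shrinking $\Spa(B,B^+)$ to an affinoid open neighborhood of $y$ I may assume $A \to B$ is \'etale and $B^+$ is the integral closure in $B$ of an $A^+$-subalgebra $A^+[c_1,\ldots,c_n]$ of finite type with $c_i \in B^+$. I would enlarge the list $\{c_i\}$ by adjoining further elements of $B^+$ (which leaves $B^+$ unchanged) so that it also generates $B$ as an $A$-algebra: starting from a scheme-theoretic standard \'etale presentation $B \cong (A[T]/(f))_g$ with $f$ monic and $g$ a unit in $B$, the monic equation yields $|T| \le \max_i |f_i|^{1/(n-i)}$, so a preliminary Laurent-type refinement of $\Spa(A,A^+)$ along the coefficients of $f$ (with rescaling by dominant coefficients) combined with a rational restriction on $\Spa(B,B^+)$ imposing $|g| \ge 1$ places both $T$ and $1/g$ into the integral part, and I append them to the list. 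The Laurent-type cover
\[
  \Spa(B,B^+) \;=\; \bigcup_{\alpha \in \{\pm 1\}^n} R\bigl(c_1^{\alpha_1},\ldots,c_n^{\alpha_n}\bigr)
\]
then yields a finite coproduct of open immersions $\psi \colon \Spa(C,C^+) \to \Spa(B,B^+)$. On each piece $U_\alpha = \Spa(B_\alpha,B_\alpha^+)$, the ring $B_\alpha$ is generated over $A$ by the $c_i$ (lying in $B^+ \subseteq B_\alpha^+$) and by the inverses $c_j^{-1}$ for $\alpha_j = -1$ (lying in $B_\alpha^+$ by the defining integral closure of the rational subspace), hence $B_\alpha = A \cdot B_\alpha^+$, which gives the Cartesian condition on completions.

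The main obstacle lies in the enlargement step: constructing a single list $\{c_i\} \subseteq B^+$ that both realizes $B^+$ as an integral closure over $A^+[c_1,\ldots,c_n]$ and generates $B$ as an $A$-algebra. Here the Pr\"ufer hypothesis is essential, via \cref{cartesianLaurentcover}, for the auxiliary Laurent-type refinements of the base (needed to bound $T$) to themselves be Cartesian, so that the transitivity argument in the reduction step genuinely preserves the goal of Cartesian-ness all the way back up to $\Spa(A,A^+)$.
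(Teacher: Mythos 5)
Your approach is structurally different from the paper's, and it has a genuine gap at the key step. The paper factors $\varphi$, via Zariski's main theorem and \cite{Hu96}, Corollary~1.7.3~ii), as an open immersion $\iota$ followed by a finite morphism $\pi$; then $\pi$ is Cartesian (and $(D,D^+)$ is Pr\"ufer) by \cref{integralcartesian}, and the open immersion is handled by \cref{cartesianneighborhoodbasis}. That argument never needs to exhibit a single generating set with your simultaneous properties. You instead try to directly produce a Laurent-type cover of $\Spa(B,B^+)$ indexed by signs on a generating set $\{c_i\}$ of $B^+$ over $A^+$, after arranging that the same list also generates $B$ over $A$.

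The gap is in your final sentence: from $B_\alpha = A\cdot B_\alpha^+$ you only get that $B_\alpha^+ \otimes_{A^+} A \to B_\alpha$ is \emph{surjective}, not that it is injective, and injectivity is the nontrivial half of the Cartesian condition. The paper's own argument in \cref{integralcartesian} obtains injectivity only after a genuine Pr\"ufer-theoretic detour (\cite{KneZha}, Proposition~I.3.10 and Corollary~I.3.16: weak surjectivity of $B^+ \to B$ plus injectivity of $B^+ \to B^+ \otimes_{A^+} A$), and \cref{cartesianLaurentcover} carries out a similar argument for Laurent pieces $R(f^{\pm 1})$ with $f \in A$. Your pieces are cut out by elements $c_i \in B$, not $A$, so you cannot cite \cref{cartesianLaurentcover} directly, and you would first need to know that $(B,B^+)$ (or $(B_\alpha,B_\alpha^+)$) is Pr\"ufer before you can run the weak-surjectivity argument. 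The paper gets Pr\"uferness of the intermediate $(D,D^+)$ from \cref{integralcartesian}, which is precisely what your shortcut bypasses.

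A secondary issue is the ``enlargement'' step. Writing $B \cong (A[T]/(f))_g$ and trying to force $T$ and $g^{-1}$ into $B^+$ by ``a preliminary Laurent-type refinement of $\Spa(A,A^+)$ along the coefficients of $f$ (with rescaling by dominant coefficients)'' is not a valid move in the discretely-ringed case, where $A$ has no topologically nilpotent units to scale by; and even in the Tate case the monic equation only bounds $|T|$ by $\max_i |f_i|^{1/(n-i)}$ at points where $|T| > 1$, which varies over the base, so making $T \in B^+$ requires restricting to the locus $|f_i| \le 1$ for all $i$ rather than a single rescaling. The phrase ``a rational restriction imposing $|g| \ge 1$'' likewise discards the locus $|g| < 1$ without explanation. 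None of this is fatal in principle --- one can refine further --- but as written these steps don't hold up, and they are exactly the complications the paper sidesteps by using Zariski's main theorem to reduce to the finite (integral) case where \cref{integralcartesian} applies cleanly.
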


\begin{proof}
 We may assume that $\cX = \Spa(B,B^+)$ for an \'etale homomorphism $(A,A^+) \to (B,B^+)$.
 Using Zariski's main theorem and \cite{Hu96}, Corollary~1.7.3~ii), we factor $\varphi$ as
 \[
  \Spa(B,B^+) \overset{\iota}{\longrightarrow} \Spa(C,C^+) \overset{\pi}{\longrightarrow} \Spa(A,A^+)
 \]
 with an open immersion~$\iota$ and a finite morphism~$\pi$.
 \cref{integralcartesian} implies that~$\pi$ is Cartesian and $(C,C^+)$ is Pr\"ufer.
 Now it suffices to show that every closed point~$x$ of~$\Spa(C,C^+)$ lying in the image of~$\iota$ has an open affinoid neighborhood $\Spa(C,C^+)$ coming from a Cartesian homomorphism $(A,A^+) \to (C,C^+)$.
 This follows from \cref{cartesianneighborhoodbasis}.
\end{proof}

\begin{corollary} \label{refinement}
 Every tame covering and every strongly \'etale covering of a noetherian Pr\"ufer affinoid adic space $\Spa(A,A^+)$ has a Cartesian refinement.
\end{corollary}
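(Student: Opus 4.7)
The strategy is to bootstrap from the preceding lemma, which already produces a Cartesian refinement in the \'etale case, and then to observe that passing to open subspaces preserves tameness and strong \'etaleness.

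First I would reduce to a single affinoid morphism. Given a tame (respectively strongly \'etale) covering $\{f_i : Y_i \to X\}_{i \in I}$ of $X = \Spa(A,A^+)$, I cover each $Y_i$ by affinoid open subspaces. Since open immersions are trivially strongly \'etale, the resulting refinement is again a tame (respectively strongly \'etale) covering. Using that $X$ is quasi-compact, I extract a finite subcover $\{\Spa(B_k,B_k^+) \to X\}_{k=1}^n$, and then assemble them into the single affinoid morphism $\varphi : \Spa(B,B^+) \to X$ with $B = \prod_k B_k$ and $B^+ = \prod_k B_k^+$. This $\varphi$ is a tame (respectively strongly \'etale) covering of the noetherian Pr\"ufer affinoid adic space $X$.

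Next I apply the preceding lemma to $\varphi$, viewed merely as an \'etale covering. This yields a morphism $\psi : \Spa(C,C^+) \to \Spa(B,B^+)$ which is a finite disjoint union of open immersions and such that $\varphi \circ \psi$ is a Cartesian \'etale covering of $X$.

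Finally, I observe that $\varphi \circ \psi$ inherits tameness (respectively strong \'etaleness) from $\varphi$: each component of $\psi$ is an open immersion, hence strongly \'etale, and by the stability results cited after the definition of the tame site (combining \cite{Hu96}, Proposition~1.6.7 with \cite{EP2005}, \S~5) the classes of tame and strongly \'etale morphisms are stable under composition. Therefore $\varphi \circ \psi$ is a Cartesian tame (respectively strongly \'etale) refinement of the original covering. There is no real obstacle here beyond correctly invoking the preceding lemma; the substantive content — that \'etale coverings of noetherian Pr\"ufer affinoids admit Cartesian refinements via Zariski's main theorem and \cref{cartesianneighborhoodbasis} — was already carried out there.
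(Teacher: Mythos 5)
Your proof is correct and matches the intended derivation from the preceding lemma (the paper states this corollary without a separate proof). The key points are exactly as you describe: reduce to a single affinoid covering by taking affinoid opens, extracting a finite subcover (using quasi-compactness of the noetherian affinoid), and forming the finite product; then apply the lemma to get $\psi$ a finite product of open immersions with $\varphi \circ \psi$ Cartesian étale; finally note that $\varphi \circ \psi$ remains tame (resp.\ strongly étale) because open immersions are strongly étale and both classes are closed under composition, so the Cartesian étale covering is in fact a Cartesian tame (resp.\ strongly étale) covering refining the original one.
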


\begin{proposition} \label{stronglyetaleacyclic-discrete}
 Let $(A,A^+)$ be a Pr\"ufer Huber pair such that~$A$ is noetherian and equipped with the discrete topology.
 Then for all $i > 0$,
 \[
  H^i(\Spa(A,A^+)_{\set},\GG_a^+) = 0.
 \]
\end{proposition}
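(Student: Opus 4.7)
The plan is to reduce the vanishing of $H^i_{\set}(\Spa(A,A^+),\GG_a^+)$ to the classical vanishing of the higher étale cohomology of the structure sheaf on the affine scheme $\Spec A^+$, by exploiting the Cartesian refinements provided by \cref{refinement} and the ring-theoretic description of Cartesian strongly étale covers from \cref{stronglyetale+etale}.

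First I would argue that it suffices to prove vanishing of \v{C}ech cohomology. Since the \v{C}ech-to-derived spectral sequence runs over all objects of the site, I need the same statement not just for $\Spa(A,A^+)$ but for every affinoid object $\Spa(B,B^+)$ appearing in $\Spa(A,A^+)_{\set}$. By \cref{quasifinitelocalization} (applied along a Cartesian refinement of the structure morphism, after using \cref{cartesianneighborhoodbasis}), we may restrict to $(B,B^+)$ that are themselves Pr\"ufer with $B$ noetherian and discrete. Hence it is enough to prove that $\check{\mathrm{H}}^i(\mathcal{V},\GG_a^+)=0$ for $i\ge 1$ for every strongly \'etale covering $\mathcal{V}$ of a discretely ringed noetherian Pr\"ufer affinoid.

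Next, given such a $\mathcal{V}$, I would invoke \cref{refinement} to refine it to a Cartesian strongly \'etale covering $\{\Spa(B_i,B_i^+)\to\Spa(A,A^+)\}_{i\in I}$. By \cref{stronglyetale+etale} this corresponds to an \'etale covering $\{\Spec B_i^+\to\Spec A^+\}$ of schemes, with $B_i = B_i^+\otimes_{A^+} A$. The key identification to establish is that the \v{C}ech complex computing $\check{\mathrm{H}}^\bullet(\mathcal{V},\GG_a^+)$ coincides with the \v{C}ech complex computing $\check{\mathrm{H}}^\bullet_{\et}(\Spec A^+,\cO)$ for the induced cover of $\Spec A^+$. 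This requires checking that the iterated fiber products in the adic site are also Cartesian strongly \'etale, so that
\[
 \Spa(B_{i_0},B_{i_0}^+)\times_X\cdots\times_X\Spa(B_{i_n},B_{i_n}^+) = \Spa\bigl(B_{i_0}\otimes_A\cdots\otimes_A B_{i_n},\,B_{i_0}^+\otimes_{A^+}\cdots\otimes_{A^+} B_{i_n}^+\bigr),
\]
and that the right-hand $^+$-part is already integrally closed in the $\otimes_A$-part. The former follows since the property of being Cartesian strongly \'etale is stable under fiber product (this is where Pr\"ufer-ness is used, via \cref{quasifinitelocalization} applied to the \'etale $A^+$-algebras $B_{i_0}^+\otimes_{A^+}\cdots\otimes_{A^+} B_{i_n}^+$); the latter then follows because for a Pr\"ufer Huber pair the $^+$ really is the integral closure.

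Once this identification is in place, the \v{C}ech complex in question is simply
\[
 \prod_i B_i^+ \longrightarrow \prod_{i,j} B_i^+\otimes_{A^+} B_j^+ \longrightarrow \prod_{i,j,k} B_i^+\otimes_{A^+} B_j^+\otimes_{A^+} B_k^+ \longrightarrow \cdots,
\]
which is the \'etale \v{C}ech complex of the quasi-coherent sheaf $\cO$ on the affine scheme $\Spec A^+$ with respect to the \'etale cover $\{\Spec B_i^+\to\Spec A^+\}$. By the comparison of \'etale and Zariski cohomology for quasi-coherent sheaves together with Serre's vanishing on affine schemes, this complex is exact in positive degrees, yielding the desired vanishing. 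The main obstacle is the bookkeeping in the preceding paragraph, namely showing that Cartesian-ness propagates through iterated fiber products and that the $^+$-rings remain integrally closed; everything else is formal consequence of the results established earlier in the paper.
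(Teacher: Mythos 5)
Your proposal follows essentially the same route as the paper's proof: restrict to a subsite of Cartesian strongly \'etale affinoids via \cref{refinement} and \cref{cartesianneighborhoodbasis}, identify the \v{C}ech complex (using \cref{stronglyetale+etale} and the fact that integral closure commutes with \'etale base change) with the Amitsur complex of $\cO$ for the \'etale cover $\Spec C^+ \to \Spec B^+$, and conclude it is exact. The only cosmetic difference is in the last step, where the paper cites faithfully flat descent for the Amitsur complex directly rather than routing through Serre vanishing and the \'etale--Zariski comparison.
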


\begin{proof}
 Let~$\mathcal{B}$ be the category of Cartesian strongly \'etale morphisms of affinoid adic spaces
 \[
  \Spa (B,B^+) \to \Spa(A,A^+).
 \]
 It has fiber products and becomes a site by defining coverings of $\Spa(B,B^+)$ to be the Cartesian strongly \'etale coverings of $\Spa (B,B^+)$.
 By \cref{refinement} we can compute the cohomology groups $H^q(\Spa(A,A^+)_{\set},\GG_a^+)$ in~$\mathcal{B}$.

 We show that $\GG_a^+$ is flasque on~$\mathcal{B}$.
 In order to do so we prove that for every covering
 \[
  \Spa(C,C^+) \to \Spa(B,B^+)
 \]
 in~$\mathcal{B}$ the associated \v{C}ech complex for the sheaf~$\GG_a^+$ is exact.
%  in~$\mathcal{B}$ the \v{C}ech complex
%  \[
%   \begin{tikzcd}
%    0	\ar[r]	& \GG_a^+(B,B^+)	\ar[r]	& \GG_a^+(C,C^+)	\ar[r]	& \GG_a^+((C,C^+) \otimes_{(B,B^+)} (C,C^+))	\ar[r]	& \ldots
%   \end{tikzcd}
%  \]
%  is exact.
 The fact that $\Spa(C,C^+) \to \Spa(B,B^+)$ is Cartesian implies that the diagram
 \[
  \begin{tikzcd}
   C \otimes_B \ldots \otimes_B C			& B	\ar[l]				\\
   C^+ \otimes_{B^+} \ldots \otimes_{B^+} C^+	\ar[u]	& B^+	\ar[l]	\ar[u,hookrightarrow]
  \end{tikzcd}
 \]
 is Cartesian.
 Since $\Spec C^+ \to \Spec B^+$ is an \'etale covering by \cref{stronglyetale+etale}, so is $\Spec C^+ \otimes_{B^+} \ldots \otimes_{B^+} C^+ \to \Spec B^+$.
 In particular, it is flat and thus the left vertical arrow is injective.
 Moreover, taking integral closures commutes with \'etale base change.
 Therefore, $C^+ \otimes_{B^+} \ldots \otimes_{B^+} C^+$ is integrally closed in $C \otimes_B \ldots \otimes_B C$.
 By construction of the fiber product for adic spaces, this is equivalent to saying that
 \[
  \Spa(C,C^+) \times_{\Spa(B,B^+)} \ldots \times_{\Spa(B,B^+)} \Spa(C,C^+) = \Spa(C \otimes_B \ldots \otimes_B C,C^+ \otimes_{B^+} \ldots \otimes_{B^+} C^+).
 \]
 The \v{C}ech complex for~$\GG_a^+$ thus equals the Amitsur complex
 \[
  \begin{tikzcd}
   0	\ar[r]	& B^+	\ar[r]	& C^+	\ar[r]	& C^+ \otimes_{B^+} C^+	\ar[r]	& C^+ \otimes_{B^+} C^+ \otimes_{B^+} C^+	\ar[r]	& \ldots
  \end{tikzcd}
 \]
 This complex is exact as $B^+ \to C^+$ is faithfully flat.
 Hence, $\GG_a^+$ is flasque on~$\mathcal{B}$.
 In particular,
 \[
  H^i(\Spa(A,A^+)_{\set},\GG_a^+) = 0.
 \]
 \end{proof}

%
% In the case of Tate algebras it is enough to consider affinoid fields.
% What makes things easier when dealing with analytic adic spaces is the fact that every \'etale morphism locally factors into an open immersion followed by a finite \'etale morphism
%  (see~\cite{deJvanP}, Prop.~3.1.4 or \cite{Hu96}, Lemma~2.2.8 for a formulation in terms of adic spaces).

\begin{proposition} \label{non-archimedeanfieldacyclic}
 Let $(A,A^+)$ be a complete Pr\"ufer Huber pair such that $A$ is a non-Archimedean field.
 Then
 \[
  H^i(\Spa(A,A^+)_{\set},\GG_a^+) = 0.
 \]
 for all $i \ge 1$.
\end{proposition}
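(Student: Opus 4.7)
The plan is to mimic the proof of \cref{stronglyetaleacyclic-discrete}, replacing the discrete-topology hypothesis by the non-Archimedean-field hypothesis. I would let $\mathcal{B}$ be the category of Cartesian open immersions $\Spa(B, B^+) \hookrightarrow \Spa(A, A^+)$, endowed with the topology of Cartesian strongly \'etale coverings. Since $A$ is noetherian (being a field) and $(A, A^+)$ is Pr\"ufer, \cref{refinement} applies and every strongly \'etale covering admits a Cartesian refinement; hence the cohomology $H^i_{\set}(\Spa(A, A^+), \GG_a^+)$ can be computed in $\mathcal{B}$. It therefore suffices to show that $\GG_a^+$ is flabby on $\mathcal{B}$, that is, that the \v{C}ech complex of any Cartesian strongly \'etale covering $\Spa(C, C^+) \to \Spa(B, B^+)$ in $\mathcal{B}$ is exact in positive degrees.

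Given such a covering, \cref{stronglyetale+etale} provides that $B^+ \to C^+$ is \'etale, while surjectivity of the cover translates via the support map to surjectivity of $\Spec C^+ \to \Spec B^+$; combined, these yield faithful flatness of $B^+ \to C^+$. The Cartesian hypothesis, together with the fact that integral closures commute with \'etale base change, should allow me to identify the $n$-fold fiber product
\[
 \Spa(C, C^+) \times_{\Spa(B, B^+)} \cdots \times_{\Spa(B, B^+)} \Spa(C, C^+)
\]
with the adic spectrum of the Huber pair whose underlying ring is $C \otimes_B \cdots \otimes_B C$ and whose ring of integral elements is $C^+ \otimes_{B^+} \cdots \otimes_{B^+} C^+$, exactly in parallel with \cref{stronglyetaleacyclic-discrete}. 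The \v{C}ech complex for $\GG_a^+$ then becomes the Amitsur complex of $B^+ \to C^+$, which is exact by faithful flatness.

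The one genuinely new point, and what I expect to be the main technical obstacle, is to verify that the completed tensor products implicit in the fiber-product construction for Tate Huber pairs agree here with the ordinary tensor products used in the Amitsur complex. Since $A$ is a field, any Cartesian open $\Spa(B, B^+) \subseteq \Spa(A, A^+)$ has the same underlying ring $B = A$ (rational localizations of a field being ring-theoretically trivial), and $C$ is a finite \'etale $A$-algebra, so all iterated tensor products of $C$ over $B$ are finite over the complete field $A$ and therefore automatically complete. For the analogous statement on the $(-)^+$-level one should reduce, by localizing at each maximal ideal of $B^+$ and invoking the Pr\"ufer property, to the case where $B^+$ is a valuation ring; there the classical structure theorem for finite \'etale algebras over a valuation base ensures that ordinary and completed tensor products coincide. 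Once these comparisons are settled, the argument concludes exactly as in the discrete case.
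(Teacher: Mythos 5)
Your plan follows the same overall strategy as the paper's proof — pick a convenient affinoid subsite $\mathcal{B}$, show $\GG_a^+$ is flabby there by identifying the \v{C}ech complex of a covering $\Spa(C,C^+) \to \Spa(B,B^+)$ with the Amitsur complex $B^+ \to C^+ \to C^+\otimes_{B^+}C^+ \to \cdots$ — but the two proofs differ in the choice of $\mathcal{B}$, and this affects how cleanly the completion issue (which you correctly single out as the crux) is handled. You import $\mathcal{B}$ verbatim from the discrete case (Cartesian strongly \'etale morphisms plus \cref{refinement} and \cref{stronglyetale+etale}). The paper instead takes $\mathcal{B}$ to be \emph{all} affinoid objects of $X_{\set}$, using a structural observation with no discrete-case analogue: $\Spa(A,A^\circ)$ is a single point, and since generalizations of an analytic point are totally ordered, every affinoid strongly \'etale $Y \to X$ decomposes into a disjoint union of $\Spa(B,B^+)$ with $B$ a finite separable (hence complete non-Archimedean) field extension of $A$ and $B^+$ Pr\"ufer. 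With this in hand the completion problem evaporates: $C\otimes_B\cdots\otimes_B C$ is a finite module over the complete field $B$, hence complete, and $C^+\otimes_{B^+}\cdots\otimes_{B^+}C^+$ is an open integrally closed subring (integral closure commuting with \'etale base change), hence complete and equal to the $(-)^+$-part of the fiber product. Your proposed resolution — localize at maximal ideals of $B^+$ and invoke an unnamed "classical structure theorem" over a valuation base — is much vaguer and does not obviously address completeness, which is a global rather than local question. Two smaller corrections: $\mathcal{B}$ should consist of Cartesian strongly \'etale morphisms, not "Cartesian open immersions", and correspondingly $B$ is in general a finite \'etale $A$-algebra, not equal to $A$; and the step "surjectivity of the cover $\Rightarrow$ surjectivity of $\Spec C^+ \to \Spec B^+$" is not immediate in the analytic setting (not every prime of $B^+$ is the center of a \emph{continuous} valuation) — one should argue that every maximal ideal is such a center, using that $B^{\circ\circ}$ lies in the Jacobson radical of $B^+$, and then conclude from openness of the \'etale image.
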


\begin{proof}
 Set $\cX = \Spa(A,A^+)$.
 Note first that $(A,A^{\circ})$ (where $A^{\circ}$ denotes the power bounded elements) is henselian by Hensel's lemma for non-Archimedean fields and that $\Spa(A,A^{\circ})$ consists of a single point.
 Consider an \'etale morphism $\cY \to \cX$ with~$\cY$ affinoid.
 The base change of~$\cY$ to $\Spa(A,A^{\circ})$ is a disjoint union of affinoid adic spaces of the form $(B,B^{\circ})$ such that~$B$ is a finite separable extension of~$A$.
 Since the set of generalizations of an analytic point of an adic space is totally ordered by specialization, every connected component of~$\cY$ is of the form $(B,B^+)$ with~$B$ as above.
 In particular, $B$ is a complete, non-Archimedean field.
 Furthermore, $B^+$ is a $B$-overring of the integral closure of~$A^+$ in~$B$, hence Pr\"ufer.

 Let~$\mathcal{B}$ be the full subcategory of~$\cX_{\set}$ whose objects are the strongly \'etale morphisms $\cY \to \cX$ such that~$\cY$ is affinoid.
 We can compute the cohomology of~$\cX$ in~$\mathcal{B}$.
 We show that~$\GG_a^+$ is flasque on~$\mathcal{B}$.

 Let~$\cY \to \cX$ be in~$\mathcal{B}$ and $\cZ \to \cY$ a covering of~$\cY$.
 We may assume that~$\cY$ is the adic spectrum of a complete Pr\"ufer Huber pair $(B,B^+)$ such that $B$ is a non-Archimedean field.
 Then $\cZ = \Spa(C,C^+)$ with~$C$ finite \'etale over~$B$ and $C^+$ flat over~$B^+$ (as any torsion free module over a Pr\"ufer domain is flat).
 Since $(B,B^+) \to (C,C^+)$ is strongly \'etale, $B^+ \to C^+$ is even \'etale by \cref{comparenotionsofetale}.
 Consider the diagram
 \[
  \begin{tikzcd}
   0	\ar[r]	& B^+	\ar[r]	\ar[d,hook]	& C^+	\ar[r]	\ar[d,hook]	& C^+ \otimes_{B^+} C^+	\ar[r]	\ar[d,hook]	& C^+ \otimes_{B^+} C^+ \otimes_{B^+} C^+	\ar[r]	\ar[d,hook]	& \ldots  \\
   0	\ar[r]	& B	\ar[r]			& C	\ar[r]			& C \otimes_B C		\ar[r]			& C \otimes_B C \otimes_B C			\ar[r]			& \ldots
  \end{tikzcd}
 \]
 of exact Amitsur complexes.
 As integral closure commutes with \'etale base change, $C^+ \otimes_{B^+} \ldots \otimes_{B^+} C^+$ is integrally closed in $C \otimes_B \ldots \otimes_B C$.
 Moreover, being a finite $B$-module, $C \otimes_B \ldots \otimes_B C$ is complete and $C^+ \otimes_{B^+} \ldots \otimes_{B^+} C^+$ is an open subring.
 Therefore,
 \[
  \GG_a^+(\cZ \times_{\cY} \ldots \times_{\cY} \cZ) = C^+ \otimes_{B^+} \ldots \otimes_{B^+} C^+
 \]
 and the upper row of the above diagram is the \v{C}ech complex of $\GG_a^+$ associated with the covering $\cZ \to \cY$.
\end{proof}

\begin{corollary} \label{comparestronglyetaleZariski}
 Let~$\cZ$ be a locally noetherian adic space.
 Assume that~$\cZ$ is either discretely ringed or analytic.
 The canonical homomorphism
 \[
  H^i(\cZ,\GG_a^+) \overset{\sim}{\longrightarrow} H^i(\cZ_{\set},\GG_a^+)
  \]
 is an isomorphism for all $i \geq 0$.
\end{corollary}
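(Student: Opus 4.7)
My plan is to deduce the isomorphism from a Leray spectral sequence argument combined with the acyclicity results established in the previous sections. Let $\varepsilon: Z_{\set} \to Z_{\Top}$ denote the natural morphism of sites (every topological open cover is in particular a strongly \'etale cover). The Leray spectral sequence reads
\[
 E_2^{p,q} = H^p(Z, R^q\varepsilon_*\GG_a^+) \Rightarrow H^{p+q}_{\set}(Z, \GG_a^+),
\]
so it suffices to prove that $\varepsilon_*\GG_a^+ = \GG_a^+$ on $Z_{\Top}$ and that $R^q\varepsilon_*\GG_a^+ = 0$ for all $q \geq 1$. The first identification is immediate because $\GG_a^+$ is a sheaf on $Z_{\set}$ whose value on an open subspace $U \hookrightarrow Z$ is the same $\cO_U^+(U)$ as on the topological site.

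For the vanishing of the higher direct images I would compute stalks. The sheaf $R^q\varepsilon_*\GG_a^+$ is the sheafification of the presheaf $U \mapsto H^q_{\set}(U, \GG_a^+)$, and we may restrict the colimit defining the stalk at a point $z \in Z$ to quasi-compact, quasi-separated affinoid neighborhoods. Writing $Z_z$ for the projective limit of these neighborhoods in the sense of \cref{limits} and invoking \cref{cohomologyoflimit}, we obtain
\[
 (R^q\varepsilon_*\GG_a^+)_z \;\cong\; \colim_{z \in U} H^q_{\set}(U, \GG_a^+) \;\cong\; H^q_{\set}(Z_z, \GG_a^+),
\]
so the entire problem reduces to showing that $H^q_{\set}(Z_z, \GG_a^+) = 0$ for $q \geq 1$. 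By construction $Z_z = \Spa(A_z, A_z^+)$ for a complete local Huber pair $(A_z, A_z^+)$, where $A_z^+$ is the local ring of $\cO_Z^+$ at $z$ (suitably completed) and $A_z$ its localization at the support of $z$. Every local Huber pair is Pr\"ufer, so $(A_z, A_z^+)$ falls in the class treated in \cref{PrueferHuber}.

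In the discretely ringed case, $A_z$ is noetherian and carries the discrete topology, and \cref{stronglyetaleacyclic-discrete} applies verbatim to give the desired vanishing. In the analytic case, every point is analytic, and one checks that the completion of $A_z$ is the complete non-Archimedean residue field $k(z)$ with $A_z^+$ a valuation subring of $k(z)^+$; then $Z_z = \Spa(k(z), A_z^+)$ and \cref{non-archimedeanfieldacyclic} yields the vanishing. The main technical point I expect to have to argue carefully is the identification of $Z_z$ in the analytic case: one has to verify that the projective limit of affinoid neighborhoods of an analytic point is the adic spectrum of a complete Pr\"ufer Huber pair whose underlying ring is a non-Archimedean field, so that \cref{non-archimedeanfieldacyclic} indeed applies. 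Once this is in place, combining the two cases gives $R^q\varepsilon_*\GG_a^+ = 0$ for $q \geq 1$ and the Leray spectral sequence collapses to the claimed isomorphism.
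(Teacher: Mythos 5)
Your proof takes essentially the same route as the paper: set up the Leray spectral sequence for $Z_{\set} \to Z_{\Top}$, reduce the vanishing of the higher direct images to the affinoid local Huber pair $(A_z,A_z^+)$ obtained as the projective limit of affinoid neighborhoods of each point, observe that a local Huber pair is automatically Pr\"ufer, and then invoke \cref{stronglyetaleacyclic-discrete} or \cref{non-archimedeanfieldacyclic} according to whether $Z$ is discretely ringed or analytic. The extra detail you supply about computing stalks via \cref{cohomologyoflimit}, and the technical point you flag about identifying $Z_z$ with $\Spa(k(z),A_z^+)$ in the analytic case, are exactly what the paper compresses into the sentence ``Put differently, for every local Huber pair $(A,A^+)$ such that either $A$ is discrete and noetherian or a non-Archimedean field we have to show\ldots''
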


\begin{proof}
  Consider the Leray spectral sequence associated with the morphism of sites
 \[
  \varphi: \cZ_{\set} \to \cZ
 \]
 We have to show that
 \[
  R^q \varphi_* \GG_a^+ = 0.
 \]
 Put differently, for every local Huber pair $(A,A^+)$ such that either~$A$ is discrete and noetherian or a non-Archimedean field we have to show that
 \[
  H^q(\Spa(A,A^+)_{\set},\GG_a^+) = 0.
 \]
 But every local Huber pair is Pr\"ufer and thus the result follows from \cref{stronglyetaleacyclic-discrete} and \cref{non-archimedeanfieldacyclic}.
\end{proof}

\section{Tame cohomology} \label{tamecohomology}

In this section we compute the tame cohomology of~$\GG_a^+$.
The main problem we face is that for a Cartesian tame morphism $\Spa(B,B^+) \to \Spa(A,A^+)$ the image of $B^+ \otimes_{A^+} B^+$ in $B \otimes_A B$ is not necessarily integrally closed.
But it turns out that the tameness condition makes the integral closure tractable.

\subsection{Computation of integral closures} \label{computationintegralclosure}

We fix a local, Cartesian, tame homomorphism $(A,A^+) \to (B,B^+)$ of strongly henselian, local, complete, Huber pairs.
Assume moreover that~$A$ is noetherian.
Since~$A$ and~$B$ are henselian, the extension $B/A$ is finite \'etale.
Let $\mid \cdot \mid$ be the valuation of~$B$ corresponding to the closed point of $\Spa(B,B^+)$.
We denote by~$\Gamma_B$ the value group of~$|\cdot|$ and by~$\Gamma_A$ the value group of the restriction of~$|\cdot|$ to~$A$.
As $A^+$ and~$B^+$ are strictly henselian and $(A,A^+) \to (B,B^+)$ is a tame morphism of complete, local Huber pairs, we can choose a presentation
\[
 B = A[T_1,\ldots,T_r]/(T_1^{m_1}-\alpha_1,\ldots,T_r^{m_r}-\alpha_r)
\]
with~$\alpha_i \in A^{\times}$ and~$m_i > 1$ prime to the residue characteristic of~$A^+$.
It induces an isomorphism
\[
 \Z/m_1\Z \times \ldots \Z/m_r\Z \to \Gamma_B/\Gamma_A, \quad (i_1,\ldots,i_r) \mapsto |T_1^{i_1}\cdot\ldots\cdot T_r^{i_r}|.
\]
For~$\gamma \in \Gamma_B/\Gamma_A$ we set
\[
 e_{\gamma} = T_1^{i_1}\cdot\ldots\cdot T_r^{i_r}
\]
with $0 \le i_k \le m_k-1$ and $|T^{i_1}\cdot\ldots\cdot T^{i_r}| \equiv \gamma \mod \Gamma_A$.
We denote the Galois group of $B/A$ by~$G$.

We write~$B_n$ for the $n$-fold tensor product of~$B$ over~$A$:
\[
 B_n = B \otimes_A \ldots \otimes_A B.
\]
Then $\{e_{\gamma_1} \otimes \ldots \otimes e_{\gamma_n}\}_{\gamma_1,\ldots,\gamma_n \in \Gamma_B/\Gamma_A}$ is a basis of~$B_n$ over~$A$.
As $(A,A^+) \to (B,B^+)$ is Cartesian and $B^+$ is flat over~$A^+$ by \cref{flatness}, the natural homomorphism $B^+ \otimes_{A^+} \ldots \otimes_{A^+} B^+ \to B_n$ is injective.
We view $B^+ \otimes_{A^+} \ldots \otimes_{A^+} B^+$ as a subring of~$B_n$ and denote its integral closure by~$B_n^+$.
Then $(B_n,B_n^+)$ is complete and $\Spa(B_n,B_n^+)$ is the $n$-fold fiber product of $\Spa(B,B^+)$ over $\Spa(A,A^+)$.
This subsection is concerned with describing~$B_n^+$ more explicitly.

\begin{proposition} \label{basistameextension}
 For an element $b = \sum_{\gamma_1,\ldots,\gamma_n} a_{\gamma_1,\ldots,\gamma_n} e_{\gamma_1} \otimes \ldots \otimes e_{\gamma_n}$ of~$B_n$ and $\delta \in \Gamma_B$
  the following are equivalent:
 \begin{enumerate}[\rm (i)]
  \item $|b(x)| \le \delta$ for all $x \in \Spa(B_n,B_n^+)$.
  \item $|a_{\gamma_1,\ldots,\gamma_n}| \le \delta |e_{\gamma_1}\cdot\ldots\cdot e_{\gamma_n}|^{-1}$ for all $\gamma_1,\ldots,\gamma_n \in \Gamma_B/\Gamma_A$.
 \end{enumerate}
\end{proposition}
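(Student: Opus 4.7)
The plan is to exploit the Galois-theoretic structure of the tame extension $B/A$. Because $A^+$ is strictly henselian and all $m_k$ are prime to its residue characteristic, $A$ contains all $m_k$-th roots of unity, and the Galois group $G := \Gal(B/A) \cong \prod_k \Z/m_k\Z$ acts on $B$ over $A$ with the basis $\{e_\gamma\}$ as simultaneous eigenvectors: $g(e_\gamma) = \chi_\gamma(g)\,e_\gamma$ for a character $\chi_\gamma$ of $G$. A preliminary step will be to establish that the factor-wise action of $G^n$ on $B_n$ preserves $B_n^+$. In the strongly henselian setting the valuation $|\cdot|$ is the unique extension from $A$ to $B$, hence $G$-invariant, so $g(B^+) = B^+$ for every $g \in G$; consequently $G^n$ preserves $B^+ \otimes_{A^+} \cdots \otimes_{A^+} B^+$ and thus also its integral closure $B_n^+$, giving a continuous $G^n$-action on $\Spa(B_n, B_n^+)$.

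For the direction $(ii) \Rightarrow (i)$ I would analyse an arbitrary $x \in \Spa(B_n, B_n^+)$ through its support. Using $B \otimes_A B \cong B^{|G|}$ and iterating, the minimal primes of $B_n$ are parametrised by tuples $(g_2,\ldots,g_n) \in G^{n-1}$, the associated quotient map being $\pi_g \colon b_1 \otimes \cdots \otimes b_n \mapsto b_1\, g_2(b_2) \cdots g_n(b_n)$, and the residual valuation $w$ on $B$ is a coarsening of $|\cdot|$. Since $\pi_g(e_{\gamma_1} \otimes \cdots \otimes e_{\gamma_n})$ is a root-of-unity multiple of $e_{\gamma_1} \cdots e_{\gamma_n} \in B$, the valuation $|e_{\gamma_1} \otimes \cdots \otimes e_{\gamma_n}(x)|$ equals the image of $|e_{\gamma_1} \cdots e_{\gamma_n}|$ in $\Gamma_x$. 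The ultrametric inequality combined with (ii) then yields $|b(x)| \le \delta$ in $\Gamma_x$.

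The harder direction $(i) \Rightarrow (ii)$ uses character orthogonality. Since $|G|^n$ is a unit in $A^+$ by tameness, I obtain the extraction formula
\begin{equation*}
 a_\lambda\, e_\lambda \;=\; \frac{1}{|G|^n}\sum_{g \in G^n} \chi_\lambda(g)^{-1}\, g \cdot b
\end{equation*}
in $B_n$, where $\chi_\lambda = \chi_{\gamma_1} \otimes \cdots \otimes \chi_{\gamma_n}$ is the character of $G^n$ attached to $\lambda$. For each $g \in G^n$ the preliminary step ensures $g^{-1} \cdot x \in \Spa(B_n, B_n^+)$, so $|g \cdot b(x)| = |b(g^{-1} \cdot x)| \le \delta$ by (i); combined with $|\chi_\lambda(g)^{-1}| = 1$ and the ultrametric inequality this forces $|a_\lambda e_\lambda(x)| \le \delta$ for every $x$. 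To upgrade the resulting inequality in $\Gamma_x$ to one in $\Gamma_B$, I would specialise to the point $x_0 \in \Spa(B_n, B_n^+)$ arising from the multiplication map $\mu \colon B_n \to B$, $b_1 \otimes \cdots \otimes b_n \mapsto b_1 \cdots b_n$, followed by $|\cdot|$ on $B$; this is a valid point because $\mu$ carries $B^+ \otimes_{A^+} \cdots \otimes_{A^+} B^+$ into the integrally closed subring $B^+$. At $x_0$ one has $\Gamma_{x_0} = \Gamma_B$, $|a_\lambda(x_0)| = |a_\lambda|$ and $|e_{\gamma_1} \otimes \cdots \otimes e_{\gamma_n}(x_0)| = |e_{\gamma_1} \cdots e_{\gamma_n}|$, so the inequality becomes precisely (ii).

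The main obstacle will be the preliminary stability statement $g(B_n^+) = B_n^+$, which concentrates the essential content of the argument; once established, the rest is elementary linear algebra over $A$ together with the standard ultrametric manipulations. Its proof rests on the uniqueness of the extension of $|\cdot|$ from $A$ to $B$ afforded by the strongly henselian hypothesis, which guarantees that every $g \in G$ is an isometry of $(B, |\cdot|)$.
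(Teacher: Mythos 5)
Your proof is correct and is, at heart, the same idea as the paper's, but packaged differently. Both arguments rest on the same decomposition $B_n \cong \prod_{\underline{\sigma}\in G^{n-1}} B$ and the observation that $\Spa(B_n,B_n^+)$ is accordingly a disjoint union of copies of $\Spa(B,B^+)$, so the direction $(ii)\Rightarrow(i)$ is handled in essentially identical fashion (multiplicativity of $|\cdot|$ on each factor, which holds because $B$ is henselian, plus the ultrametric inequality). The divergence is in $(i)\Rightarrow(ii)$. The paper fixes a subextension $C\subset B$ with $B=C[T_r]$, reduces the system of estimates $|m_{\underline{\sigma}}(b)|\le\delta$ to an explicit linear system in the coefficients, and inverts the Kronecker power of a Vandermonde matrix of roots of unity over $A^+$ (Lemma \ref{vandermonde}), proceeding by induction on $r$. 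You instead observe that $G^n$ acts isometrically on $B_n$ and hence preserves $B_n^+$ (a point the paper never needs to make explicit, since it works with the target-side projections rather than the source-side group action), and you apply character orthogonality to extract $a_\lambda e_\lambda$ in a single formula, after which specializing to the closed point coming from the multiplication map $\mu\colon B_n\to B$ yields the bound in $\Gamma_B$. These are two presentations of the same finite Fourier inversion: the invertibility of the Vandermonde matrix over $A^+$ and the validity of your extraction formula with the factor $|G|^{-n}$ both rest on the same tameness hypothesis that each $m_k$ is a unit in $A^+$. Your version avoids the induction on $r$ and arguably isolates the conceptual content more cleanly, at the price of the extra lemma that the $G^n$-action stabilizes $B_n^+$; the paper's version is more computational but entirely self-contained once the basis $\{e_\gamma\}$ is chosen.
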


\begin{proof}
 For an $(n-1)$-tuple $\underline{\sigma} = (\sigma_1,\ldots,\sigma_{n-1})$ of elements of~$G$ we define a homomorphism $m_{\underline{\sigma}} : B_n \to B$ by setting
 \[
  m_{\underline{\sigma}}(b_1 \otimes \ldots \otimes b_n) = \sigma_1 b_1 \cdot \ldots \cdot \sigma_{n-1} b_{n-1} \cdot b_n.
 \]
 Consider the isomorphism
 \begin{align*}
  \varphi:B_n	& \longrightarrow \prod_{\underline{\sigma} \in G^{n-1}} B \\
  b		& \mapsto (m_{\underline{\sigma}}(b))_{\underline{\sigma}}.
 \end{align*}
 Via~$\varphi$ the elements of $\Spa(B_n,B_n^+)$ correspond to the valuations of $\prod_{\underline{\sigma} \in G^{n-1}} B$ of the form
 \[
  |(b_{\underline{\sigma}})_{\underline{\sigma}}|' = |b_{\underline{\sigma}^0}(y)|
 \]
 for fixed $\underline{\sigma}^0$ and a valuation $y \in \Spa(B,B^+)$.
 As $\Spa(B,B^+)$ is local with closed point corresponding to~$|\cdot|$, it suffices to test condition~(i) for valuations as above with $|\cdot(y)| = |\cdot|$.
 For an element of $B_n$ of the form $b_1 \otimes \ldots \otimes b_n$ and any $\underline{\sigma} \in G^{n-1}$ we have
 \[
  |m_{\underline{\sigma}}(b_1 \otimes \ldots \otimes b_n)| = |b_1|\cdot \ldots \cdot |b_n|
 \]
 because $B$ is henselian.
 Together with the triangle inequality this proves that~(ii) implies~(i).

 Set
 \[
  C = A[T_1,\ldots,T_{r-1}]/(T_1^{m_1}-\alpha_1,\ldots,T_{r-1}^{m_{r-1}}-\alpha_{r-1}).
 \]
 This is an intermediate extension of $B/A$ and $B = C[T_r]/(T_r^{m_r}-\alpha_r)$.
 By flatness we can view $C_n = C \otimes_A \ldots \otimes_A C$ as a subalgebra of~$B_n$.
 Denote by~$\Gamma_C$ the value group of the restriction of~$|\cdot|$ to~$C$.
 Then $e_{\gamma}$ for $\gamma \in \Gamma_C/\Gamma_A \subset \Gamma_B/\Gamma_A$ form a basis of $C_n/A$.
 Moreover,
 \[
  \{T_r^{i_1} \otimes \ldots \otimes T_r^{i_n} \mid 0 \le i_1,\ldots,i_n \le m-1\}
 \]
 constitutes a basis of $B_n$ over $C_n$.
 Taking all combinations of products
 \[
  e_{\gamma} \cdot (T_r^{i_1} \otimes \ldots \otimes T_r^{i_n})
 \]
 with $i_j \in \{0,\ldots,m_r-1\}$ and $\gamma \in \Gamma_C/\Gamma_A$ yields the basis $\{e_{\gamma}\}_{\gamma \in \Gamma_B/\Gamma_A}$.
 Fix a primitive $m_r$-th root of unity~$\zeta \in A^+$ and denote by~$\sigma$ the element of~$G$ which maps~$T_r$ to $\zeta T_r$ and leaves~$C$ invariant.
 Every element of~$G$ can be written in the form $\tau \sigma^j$ for $0 \le j \le m_r-1$ and $\tau \in G$ with $\tau \zeta = \zeta$.
 For an $(n-1)$-tuple $\underline{\sigma} = (\tau_1 \sigma^{j_1},\ldots,\tau_{n-1} \sigma^{j_{n-1}})$ in $G^{n-1}$
  and an element $b = \sum_{i_1,\ldots,i_n=0}^{m_r-1} a_{i_1,\ldots,i_n} T_r^{i_1} \otimes \ldots \otimes T_r^{i_n}$ of~$B_n$ we have
 \[
  m_{\underline{\sigma}}(b) = \sum_{i_1,\ldots,i_n = 0}^{m_r-1} m_{\underline{\tau}}(a_{i_1,\ldots,i_n}) \zeta^{i_1 j_1 + \ldots i_{n-1} j_{n-1}} T_r^{i_1+\ldots +i_n}.
 \]
 As $|T_r|^k$ for $k = 0,\ldots,{m_r-1}$ represent the $m_r$ distinct elements of $\Gamma_B/\Gamma_C$, we obtain
 \[
  |m_{\underline{\sigma}}(b)| = \max_{0 \le k \le m_r-1} \big|\sum_{i_1+\ldots+i_n \equiv k \mod m_r} m_{\underline{\tau}}(a_{i_1,\ldots,i_n}\alpha_r^{(i_1+\ldots + i_n-k)/m_r}) \zeta^{i_1 j_1 + \ldots + i_{n-1} j_{n-1}}\big| \cdot |T_r|^k.
 \]
 Suppose $|b(x)| \le \delta$ for all $x \in \Spa(B_n,B_n^+)$.
 Then in particular,
 \[
  |m_{\underline{\sigma}}(b)| \le \delta
 \]
 for all $\underline{\sigma} \in G^{n-1}$.
 By the above this is equivalent to
 \begin{equation} \label{valuationinequality}
  |\sum_{i_1+\ldots+i_n \equiv k \mod m_r} m_{\underline{\tau}}(a_{i_1,\ldots,i_n}\alpha_r^{(i_1+\ldots + i_n-k)/m_r}) \zeta^{i_1 j_1 + \ldots + i_{n-1} j_{n-1}}| \le \delta |T_r|^{-k}
 \end{equation}
 for all $\underline{\sigma}$ and all $k=0,\ldots,m_r-1$.
 The following \cref{vandermonde} shows that the matrix $\left(\zeta^{i_1 j_1 + \ldots + i_{n-1} j_{n-1}}\right)$ is invertible in~$A^+$ (note that $A^+$ is a $\ZZ[1/m_r,\zeta]$-module).
 Therefore, inequality~(\ref{valuationinequality}) holds for all $j_1,\ldots,j_{n-1} = 0,\ldots,m_r-1$ if and only if
 \[
  |m_{\underline{\tau}}(a_{i_1,\ldots,i_n}\alpha_r^{(i_1+\ldots + i_n-k)/m_r})| \le \delta |T_r|^{-k}
 \]
 for all $i_1,\ldots,i_{n-1} = 0,\ldots,m_r-1$.
 The result now follows by induction on~$r$.
\end{proof}

\begin{lemma} \label{vandermonde}
 Let~$m$ and~$n$ be positive integers and~$\zeta$ a primitive $m$-th root of unity.
 Over the ring $R = \ZZ[1/m,\zeta]$ we consider the $m^{n-1} \times m^{n-1}$-matrix~$V_n$ whose rows are indexed by the $(n-1)$-tuples $(i_1,\ldots,i_{n-1}) \in \{0,\ldots,m-1\}^{n-1}$
  and whose columns by $(j_1,\ldots,j_{n-1}) \in \{0,\ldots,m-1\}^{n-1}$ (both provided with the lexicographical ordering)
  and whose entry at $(i_1,\ldots,i_{n-1},j_1,\ldots,j_{n-1})$ is $\zeta^{i_1 j_1 + \ldots + i_{n-1} j_{n-1}}$.
 Then~$V_n$ is invertible.
\end{lemma}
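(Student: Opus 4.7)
The plan is to recognize $V_n$ as a Kronecker (tensor) power of the discrete Fourier transform matrix. Writing $V$ for the $m_r \times m_r$ matrix with entries $V_{ij} = \zeta^{ij}$ for $0 \le i,j \le m_r - 1$, one sees directly from the formula for $V_n$ that
\[
 V_n = \underbrace{V \otimes V \otimes \ldots \otimes V}_{n-1~\text{factors}},
\]
where the tensor product of matrices is ordered compatibly with the lexicographical indexing of the rows and columns of $V_n$. Since the Kronecker product of invertible matrices is invertible (with inverse equal to the Kronecker product of the inverses), it suffices to prove that $V$ itself is invertible in $A^+$.

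For this I would exhibit an explicit inverse. Let $W$ be the $m_r \times m_r$ matrix with entries $W_{ij} = \zeta^{-ij}$. The $(i,k)$-entry of $V \cdot W$ equals $\sum_{j=0}^{m_r-1} \zeta^{(i-k)j}$. Since $\zeta$ is a primitive $m_r$-th root of unity, this geometric sum equals $m_r$ if $i = k$ and $0$ otherwise (because $1 - \zeta^{i-k}$ is a unit in $A^+$ for $i \ne k \bmod m_r$, as $\zeta^{i-k}$ is itself a nontrivial $m_r$-th root of unity whose image in the residue field is nontrivial). Thus $V \cdot W = m_r \cdot I$.

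Finally, $m_r$ is invertible in $A^+$: by construction $m_r$ is prime to the residue characteristic of~$A^+$, and since $A^+$ is a strictly henselian local ring (in particular a valuation ring), any integer prime to the residue characteristic is a unit. Hence $W/m_r$ is a two-sided inverse of~$V$ in $\mathrm{Mat}_{m_r}(A^+)$, and consequently $V_n$ is invertible over~$A^+$ as claimed.

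The only step requiring any care is the verification that $1 - \zeta^{i-k}$ is a unit in $A^+$ for $i \ne k \bmod m_r$; this is immediate because the norm $\prod_{\ell=1}^{m_r-1}(1 - \zeta^\ell)$ equals $m_r$ (evaluate $\Phi_{m_r}$-type factorization of $X^{m_r}-1$ at $X=1$ and divide), which is a unit, so each individual factor is a unit. No real obstacle arises; the statement is essentially the assertion that the DFT matrix over the coefficient ring is invertible provided that the size of the transform is invertible in the ring.
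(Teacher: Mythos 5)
Your proof is correct, and it is in substance the same as the paper's: the paper's block decomposition
\[
 V_n \;=\; (V \otimes \mathbb{1})\cdot(\mathbb{1} \otimes V_{n-1})
\]
is exactly the Kronecker factorization you make explicit, and both arguments reduce to the invertibility of the single $m_r\times m_r$ DFT matrix $V$. The only genuine difference is in how that base case is settled: the paper computes the Vandermonde determinant $\prod_{0\le i<j\le m_r-1}(\zeta^j-\zeta^i)$ of $V\otimes\mathbb{1}$ and notes each factor divides $m_r$, whereas you produce the explicit inverse $\tfrac{1}{m_r}W$ with $W_{ij}=\zeta^{-ij}$ and verify $VW=m_r\cdot I$ using the identity $\prod_{\ell=1}^{m_r-1}(1-\zeta^\ell)=m_r$. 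Your route is a bit more self-contained (no determinant of a matrix over a noncommutative ring, no induction), and it makes transparent that the only thing being used is that $m_r$ is a unit in $A^+$, which is precisely the tameness hypothesis. One small remark: the cleanest justification that each $1-\zeta^\ell$ ($\ell\not\equiv 0 \bmod m_r$) is a unit is simply that its image in the residue field of the local ring $A^+$ is nonzero, since $\zeta$ remains a primitive $m_r$-th root of unity there ($m_r$ being prime to the residue characteristic); your norm argument via $\prod_{\ell=1}^{m_r-1}(1-\zeta^\ell)=m_r$ also works and has the advantage of producing the constant $m_r$ needed for $VW=m_r I$ in the same breath.
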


\begin{proof}
 Let~$M$ be a free $R$-module with basis $e_0,\ldots,e_{m-1}$ and consider the $R$-linear map~$V$ sending $e_i$ to $\sum_{j=0}^{m-1}\zeta^{ij}e_j$.
 Its transformation matrix is the Vandermonde matrix
 \[
  \begin{pmatrix}
   1		& 1				& 1					& \ldots	& 1				\\
   1		& \zeta			& \zeta^2			& \ldots	& \zeta^{m-1}	\\
   1		& \zeta^2		& \zeta^4			& \ldots	& \zeta^{2(m-1)}	\\
   \vdots	& \vdots		& \vdots			& \ddots	& \vdots		\\
   1		& \zeta^{m-1}	& \zeta^{2(m-1)}	& \ldots	& \zeta^{(m-1)^2}
  \end{pmatrix},
 \]
 whose determinant is
 \[
  \prod_{0\le i<j \le m-1} (\zeta^j - \zeta^i).
 \]
 Since $(\zeta^j - \zeta^i)$ divides~$m$, it is a unit in~$R$.
 Hence, $V$ is invertible.
 Moreover, $V_n$ is the matrix of
 \[
  V^{\otimes (n-1)} : M^{\otimes (n-1)} \to M^{\otimes (n-1)}.
 \]
 It is thus invertible as well.
\end{proof}

\begin{corollary} \label{criterionintegrality}
 The integral closure~$B_n^+$ of $B^+ \otimes_{A^+} \ldots \otimes_{A^+} B^+$ in $B_n$ is the subring generated by
 \[
  \{b_1 \otimes \ldots \otimes b_n \in B_n \mid \prod_{i = 1}^n |b_i| \le 1 \}.
 \]
 An element $\sum_{\gamma_1,\ldots,\gamma_n} a_{\gamma_1,\ldots,\gamma_n} e_{\gamma_1} \otimes \ldots \otimes e_{\gamma_n}$ is integral over $B^+ \otimes_{A^+} \ldots \otimes_{A^+} B^+$ if and only if
 \[
  |a_{\gamma_1,\ldots,\gamma_n}| \le |e_{\gamma_1} \cdot \ldots \cdot e_{\gamma_n}|^{-1}
 \]
 for all $\gamma_1,\ldots,\gamma_n \in \Gamma_B/\Gamma_A$.
\end{corollary}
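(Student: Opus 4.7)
The plan is to deduce both statements of the corollary from \cref{basistameextension} specialized to $\delta = 1$. Since $B_n^+$ was defined as the integral closure of $B^+\otimes_{A^+}\cdots\otimes_{A^+} B^+$ in $B_n$, so that $(B_n,B_n^+)$ is a (complete) Huber pair whose adic spectrum is the $n$-fold fibre product of $\Spa(B,B^+)$ over $\Spa(A,A^+)$, the standard identification of the integrally closed ring of integral elements gives
\[
 B_n^+ = \{b \in B_n \mid |b(x)| \le 1 \text{ for all } x \in \Spa(B_n,B_n^+)\}.
\]
Feeding this into \cref{basistameextension} with $\delta = 1$ immediately yields the coefficient criterion: an element $b = \sum_{\underline{\gamma}} a_{\gamma_1,\ldots,\gamma_n}\, e_{\gamma_1}\otimes\cdots\otimes e_{\gamma_n}$ lies in $B_n^+$ if and only if $|a_{\gamma_1,\ldots,\gamma_n}| \le |e_{\gamma_1}\cdots e_{\gamma_n}|^{-1}$ for every tuple $(\gamma_1,\ldots,\gamma_n)$. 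This takes care of the second assertion without further work.

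For the first assertion, let $S = \{b_1\otimes\cdots\otimes b_n \mid \prod_i |b_i| \le 1\}$. I would show both inclusions using the coefficient criterion. For $S \subseteq B_n^+$, expand $b_i = \sum_\gamma a^{(i)}_\gamma e_\gamma$ and note that, because the distinct $e_\gamma$ represent distinct cosets of $\Gamma_A$ in $\Gamma_B$, the values $|a^{(i)}_\gamma e_\gamma|$ lie in distinct cosets, so
\[
 |b_i| = \max_\gamma |a^{(i)}_\gamma e_\gamma| \ge |a^{(i)}_{\gamma_i} e_{\gamma_i}|
\]
for every chosen $\gamma_i$. Multiplying over $i$ and using $\prod_i|b_i|\le 1$ gives $\prod_i|a^{(i)}_{\gamma_i}| \le |e_{\gamma_1}\cdots e_{\gamma_n}|^{-1}$, which is exactly the coefficient bound applied to the coefficient $\prod_i a^{(i)}_{\gamma_i}$ of $e_{\gamma_1}\otimes\cdots\otimes e_{\gamma_n}$ in $b_1\otimes\cdots\otimes b_n$.

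Conversely, any element $b \in B_n^+$ is a finite sum of monomials of the form $a_{\underline{\gamma}}\, e_{\gamma_1}\otimes\cdots\otimes e_{\gamma_n}$, and each such monomial already belongs to $S$: take $b_1 := a_{\underline{\gamma}} e_{\gamma_1}$ and $b_i := e_{\gamma_i}$ for $i \ge 2$, so that $b_1\otimes\cdots\otimes b_n$ is the monomial in question and $\prod_i |b_i| = |a_{\underline{\gamma}}|\cdot|e_{\gamma_1}\cdots e_{\gamma_n}| \le 1$ by the coefficient criterion. Hence $B_n^+$ is additively generated by $S$, and a fortiori generated as a subring. There is no serious obstacle here; the substance lies entirely in \cref{basistameextension}, and the only point requiring attention is the observation that $|e_\gamma|$ represent distinct classes modulo $\Gamma_A$, which makes the nonarchimedean max principle identify $|b_i|$ with the coefficientwise maximum.
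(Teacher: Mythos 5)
Your proof is correct and takes essentially the same route as the paper: both rest on the identification $B_n^+ = \{b \in B_n \mid |b(x)| \le 1 \ \forall x \in \Spa(B_n,B_n^+)\}$ followed by an application of \cref{basistameextension} with $\delta = 1$. The paper leaves the deduction of the subring description from the coefficient criterion unstated; your two-inclusion argument supplies exactly those routine details correctly.
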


\begin{proof}
 By \cite{Hu93}, Lemma~3.3, an element~$b$ of $B_n$ is contained in~$B_n^+$ if and only if $|b(x)| \le 1$ for all $x \in \Spa(B_n,B_n^+)$.
 The result thus follows by \cref{basistameextension} with $\delta = 1$.
\end{proof}

Since~$B$ is faithfully flat over~$A$ and~$B^+$ is faithfully flat over~$A^+$ by \cref{flatness}, we obtain a diagram of exact Amitsur complexes
\[
 \begin{tikzcd}
  0	\ar[r]	& A^+	\ar[r]	\ar[d,hook]	& B^+	\ar[r]	\ar[d,hook]	& B^+ \otimes_{A^+} B^+	\ar[r]	\ar[d,hook]	& B^+ \otimes_{A^+} B^+ \otimes_{A^+} B^+	\ar[r]	\ar[d,hook]	& \ldots  \\
  0	\ar[r]	& A	\ar[r]			& B	\ar[r]			& B \otimes_A B		\ar[r]			& B \otimes_A B \otimes_A B			\ar[r]			& \ldots
 \end{tikzcd}
\]
As the image of an integral element is integral, the diagram factors as
\[
 \begin{tikzcd}
  0	\ar[r]	& A^+	\ar[r]	\ar[d,dash,shift left=.5]	\ar[d,dash,shift right=.5]	& B^+	\ar[r]	\ar[d,dash,shift left=.5]	\ar[d,dash,shift right=.5]	& B^+ \otimes_{A^+} B^+	\ar[r]	\ar[d,hook]	& B^+ \otimes_{A^+} B^+ \otimes_{A^+} B^+	\ar[r]	\ar[d,hook]	& \ldots  \\
  0	\ar[r]	& A^+	\ar[r]	\ar[d,hook]							& B^+	\ar[r]	\ar[d,hook]							& (B \otimes_A B)^+	\ar[r]	\ar[d,hook]	& (B \otimes_A B \otimes_A B)^+			\ar[r]	\ar[d,hook]	& \ldots  \\
  0	\ar[r]	& A	\ar[r]									& B	\ar[r]									& B \otimes_A B		\ar[r]			& B \otimes_A B \otimes_A B			\ar[r]			& \ldots
 \end{tikzcd}
\]

\begin{proposition} \label{tameAmitsurcomplex}
 Let $(A,A^+) \to (B,B^+)$ be a local, Cartesian, tame homomorphism of strongly henselian, local, complete, Huber pairs.
 Assume moreover that~$A$ is noetherian.
 Denote by $(B \otimes_A B)^+$ the integral closure of $(B^+ \otimes_{A^+} B^+)$ in $(B \otimes_A B)$ and similarly for $(B \otimes_A B \otimes_A B)^+$ etc.
 Then the complex
 \[
  \begin{tikzcd}
   0	\ar[r]	& A^+	\ar[r]	& B^+	\ar[r]	& (B \otimes_A B)^+	\ar[r]	& (B \otimes_A B \otimes_A B)^+	\ar[r]	& \ldots
  \end{tikzcd}
 \]
 is exact.
\end{proposition}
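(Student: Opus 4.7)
The plan is to construct an explicit $A^+$-linear contracting homotopy of the complex, showing that it is chain-contractible and hence exact.

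Concretely, let $\pi_0 : B \to A$ denote the $A$-linear retraction of the inclusion $A \hookrightarrow B$ obtained by projecting onto the $e_0 = 1$ component in the basis $\{e_\gamma\}_{\gamma \in \Gamma_B/\Gamma_A}$ of $B$ over $A$. For $n \ge 1$, define the $A$-linear map $s_n : B_n \to B_{n-1}$ by $s_n = \pi_0 \otimes \mathrm{id}_B^{\otimes (n-1)}$ (with the convention $B_0 = A$). On a basis element this reads
\[
 s_n(e_{\gamma_1} \otimes \cdots \otimes e_{\gamma_n}) = \begin{cases} e_{\gamma_2} \otimes \cdots \otimes e_{\gamma_n} & \text{if } \gamma_1 = 0, \\ 0 & \text{otherwise.} \end{cases}
\]
I claim $s_n$ restricts to a map $B_n^+ \to B_{n-1}^+$. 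By \cref{criterionintegrality}, an element $\sum_{\gamma_1,\ldots,\gamma_n} a_{\gamma_1,\ldots,\gamma_n}\, e_{\gamma_1}\otimes\cdots\otimes e_{\gamma_n}$ lies in $B_n^+$ precisely when $|a_{\gamma_1,\ldots,\gamma_n}| \le |e_{\gamma_1}\cdots e_{\gamma_n}|^{-1}$ for all indices; since $|e_0| = 1$, the constraint in the case $\gamma_1 = 0$ reads $|a_{0,\gamma_2,\ldots,\gamma_n}| \le |e_{\gamma_2}\cdots e_{\gamma_n}|^{-1}$, which is exactly the criterion for the image of $s_n$ to lie in $B_{n-1}^+$.

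The next step is to verify the contracting-homotopy identity $d_{n-1} s_n + s_{n+1} d_n = \mathrm{id}_{B_n^+}$ for $n \ge 1$ (and $s_1 d_0 = \mathrm{id}_{A^+}$ for the augmentation). By $A$-linearity this reduces to a check on basis elements. Writing $d_n = \sum_{i=0}^{n} (-1)^i \delta_i$, where $\delta_i$ inserts a $1$ in the $(i+1)$-th tensor slot, the $i = 0$ summand of $s_{n+1} d_n(e_{\gamma_1}\otimes\cdots\otimes e_{\gamma_n})$ always equals $e_{\gamma_1}\otimes\cdots\otimes e_{\gamma_n}$, while the terms with $i \ge 1$ vanish when $\gamma_1 \ne 0$ and telescope to $-d_{n-1} s_n(e_{\gamma_1}\otimes\cdots\otimes e_{\gamma_n})$ when $\gamma_1 = 0$. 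In either case the identity is established.

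The existence of this contracting homotopy implies the complex is null-homotopic, hence exact. The only substantive input beyond direct manipulation of the alternating signs is \cref{criterionintegrality}: this is the reason the argument only works under the tameness hypothesis, for it is this criterion that pins down the integral closure $B_n^+$ in explicit coordinates and allows us to transport the standard Amitsur-style contracting homotopy built from the basis retraction $\pi_0$ across to the sub-complex of integral elements. No more delicate step is needed.
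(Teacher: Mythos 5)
Your proof is correct and is essentially the paper's argument: both build the homotopy out of the $A$-linear retraction onto the $e_1 = 1$ basis component and invoke \cref{criterionintegrality} to see that the homotopy preserves the integral closures $B_n^+$. Where the paper phrases this as a homotopy equivalence with the degenerate complex $A^+ \to A^+ \to \cdots$ via the maps $\Phi$ and $D_i$, you write out the equivalent contracting homotopy $s_n = \pi_0 \otimes \mathrm{id}^{\otimes (n-1)}$ directly and verify the identity $d_{n-1}s_n + s_{n+1}d_n = \mathrm{id}$; this is the same idea, merely packaged more explicitly.
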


\begin{proof}
 Consider the section $s$ of the inclusion $A \hookrightarrow B$ sending an element $\sum_{\gamma} a_{\gamma} e_{\gamma}$ of~$B$ to the coefficient~$a_1$ of~$e_1 = 1$.
 Mapping $b_1 \otimes \ldots \otimes b_n$ to $s(b_1) \cdot \ldots \cdot s(b_n)$,~$s$ induces a morphism~$\Phi$ of complexes
 \[
 \begin{tikzcd}
  0	\ar[r]	& A	\ar[r]		\ar[d]	& B	\ar[r]		\ar[d]	& B \otimes_A B	\ar[r]		\ar[d]	& B \otimes_A B \otimes_A B	\ar[r]		\ar[d]	& \ldots	\\
  0	\ar[r]	& A	\ar[r,"\id"]		& A	\ar[r,"0"]		& A		\ar[r,"\id"]		& A				\ar[r,"0"]		& \ldots
 \end{tikzcd}
\]
It is well known that~$\Phi$ is a homotopy equivalence whose inverse is the natural inclusion~$I$ of the lower complex in the upper one.
Namely, $\Phi \circ I = \id$ and $I \circ \Phi$ is homotopic to the identity by the homotopy given by
\[
 \begin{array}{rcl}
  D_i : B_n	& \longrightarrow	& B_n	\\
  (b_1 \otimes \ldots \otimes b_n)	& \mapsto		& s(b_1) \otimes \ldots \otimes s(b_{i-1}) \otimes b_i \otimes \ldots \otimes b_n.
 \end{array}
\]
In order to show that the complex in the statement of the proposition is exact, it suffices to show that~$\Phi$ restricts to homomorphisms $B_n^+ \to A^+$
 and~$D_i$ to a homomorphism $B_n^+ \to B_n^+$.

Writing~$D_i$ in terms of the basis $\{e_{\gamma}\}_{\gamma}$ we obtain:
\[
 D_i(\sum_{\gamma_1,\ldots,\gamma_n} a_{\gamma_1,\ldots,\gamma_n} e_{\gamma_1} \otimes \ldots \otimes e_{\gamma_n}) = \sum_{\gamma_i,\ldots,\gamma_n} a_{1,\ldots,1,\gamma_i,\ldots \gamma_n} 1 \otimes \ldots \otimes 1 \otimes e_{\gamma_i} \otimes \ldots \otimes e_{\gamma_n}.
\]
Therefore, \cref{criterionintegrality} assures that~$D_i$ maps~$B_n^+$ to~$B_n^+$.
The argument for~$\Phi$ is the same.
\end{proof}

\subsection{Computation of tame cohomology}

\begin{proposition} \label{strictlyetaleacyclic}
 Let $(A,A^+)$ be a strongly henselian Huber pair where~$A$ is either a strongly noetherian Tate ring or noetherian and discrete.
 Then
 \[
  H^i(\Spa(A,A^+)_t,\GG_a^+) = 0
 \]
 for all $i \geq 1$.
\end{proposition}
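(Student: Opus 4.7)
The plan is to establish vanishing by computing tame cohomology via \v{C}ech cohomology on a cofinal family of \emph{strongly henselian Kummer coverings}, and to apply the exactness of the integrally-closed Amitsur complex provided by \cref{tameAmitsurcomplex}.

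By \cref{refinement}, every tame covering of $X := \Spa(A,A^+)$ admits a Cartesian refinement, so I may restrict to the Cartesian tame coverings. Following the Abhyankar-type argument used in the proof of \cref{tamelocusopen}, any Cartesian tame covering $Y \to X$ is dominated by a single Kummer extension
\[
 \Spa(B,B^+) \to \Spa(A,A^+), \qquad B = A[T_1,\ldots,T_r]/(T_1^{m_1}-\alpha_1,\ldots,T_r^{m_r}-\alpha_r),
\]
with $\alpha_i \in A^\times$ and $m_i$ prime to the residue characteristic of $A^+$: after base change to $\Spa(B,B^+)$ the cover becomes strongly \'etale, and since $(B,B^+)$ is again strongly henselian (a tame extension of a strictly henselian valuation ring remains strictly henselian, as its residue field extension is trivial), that strongly \'etale cover splits by \cref{characterizationlocal}. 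Choosing the $\alpha_i$ so that their valuations genuinely enlarge the value group, I can arrange $B$ to be a single connected domain, whence $(A,A^+) \to (B,B^+)$ is a local, Cartesian, tame morphism of strongly henselian, local, complete, noetherian Huber pairs.

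For such a single-object refinement, the \v{C}ech complex of $\GG_a^+$ associated with $\Spa(B,B^+) \to X$ coincides with
\[
 0 \to A^+ \to B^+ \to (B \otimes_A B)^+ \to (B \otimes_A B \otimes_A B)^+ \to \cdots,
\]
where, by the construction of fibre products for adic spaces and by \cref{criterionintegrality}, the $n$-th term is precisely the integral closure of $B^+\otimes_{A^+}\cdots\otimes_{A^+}B^+$ in $B\otimes_A\cdots\otimes_A B$. By \cref{tameAmitsurcomplex} this complex is exact. Since the Kummer refinements constructed above are cofinal in the tame coverings of $X$, the sheaf $\GG_a^+$ is \v{C}ech-acyclic on this basis, and therefore $H^i_t(X,\GG_a^+) = 0$ for $i \geq 1$.

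The main obstacle is ensuring the refinement step really produces a \emph{single} strongly henselian, local, complete, noetherian Huber pair $(B,B^+)$ satisfying every hypothesis of \cref{tameAmitsurcomplex}. Concretely, one must choose the $\alpha_i$ so that the resulting $B$ stays a connected domain (so that $(B,B^+)$ is genuinely local rather than a product of several pieces), verify that strict henselianity of $A^+$ propagates to $B^+$ under the Kummer extension, and identify the $\GG_a^+$-sections on iterated fibre products with the integral closure of the tensor power in both the discrete and strongly noetherian Tate settings, where completion interacts nontrivially with taking integral closures.
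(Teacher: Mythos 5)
Your route matches the paper's: refine to Cartesian tame coverings, identify the \v{C}ech complex of $\GG_a^+$ with the integrally closed Amitsur complex $0 \to A^+ \to B^+ \to (B\otimes_A B)^+ \to \cdots$, and invoke \cref{tameAmitsurcomplex}. The place where the argument as written does not close is the final step. You establish that $\check{\mathrm{H}}^i(\mathcal{U},\GG_a^+)=0$ for a cofinal system of Kummer coverings $\mathcal{U}$ of $X=\Spa(A,A^+)$, hence $\check{\mathrm{H}}^i(X,\GG_a^+)=0$ for $i\ge 1$. But \v{C}ech acyclicity at the terminal object alone only yields $H^1_t(X,\GG_a^+)=0$: for $i\ge 2$ the \v{C}ech-to-derived spectral sequence $\check{\mathrm{H}}^p(X,\underline{H}^q(\GG_a^+))\Rightarrow H^{p+q}_t(X,\GG_a^+)$ has possible contributions with $q\ge 1$ that $\check{\mathrm{H}}^*(X,\GG_a^+)$ does not control. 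What must be shown --- and what the paper's proof does --- is that $\GG_a^+$ is \emph{flabby on the whole site $\mathcal{B}$} of Cartesian tame affinoid morphisms over $X$, i.e.\ that the \v{C}ech complex is exact for coverings $\Spa(C,C^+)\to\Spa(B,B^+)$ of \emph{every} object $\Spa(B,B^+)$ of $\mathcal{B}$, not merely for coverings of $X$ itself; acyclicity then follows by Cartan's criterion.

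This gap is easy to fill along your own lines: each connected component of any object of $\mathcal{B}$ is again a strongly henselian, local, complete, noetherian Huber pair (a tame extension of a strictly henselian valuation ring has trivial residue extension, so strict henselianity propagates, as you already note), and your Kummer/Amitsur argument applies verbatim to every Cartesian tame covering of it. Once the proof is phrased as establishing flabbyness of $\GG_a^+$ on $\mathcal{B}$ --- applying \cref{tameAmitsurcomplex} componentwise --- it coincides with the paper's. Incidentally, the explicit refinement to a single Kummer extension is not strictly necessary: \cref{tameAmitsurcomplex} already applies to any local Cartesian tame covering of a strongly henselian pair, the Kummer presentation being internal to its proof; but making the Abhyankar reduction explicit does clarify why the integral closure computation of \cref{criterionintegrality} is available.
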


\begin{proof}
 Let~$\mathcal{B}$ be the category of Cartesian tame morphisms of affinoid adic spaces
 \[
  \Spa (B,B^+) \to \Spa(A,A^+).
 \]
 It has fiber products and becomes a site by defining coverings of~$\Spa(B,B^+)$ to be the Cartesian tame coverings of $\Spa (B,B^+)$.
 By \cref{refinement} we can compute the cohomology groups $H^q(\Spa(A,A^+)_t,\GG_a^+)$ in~$\mathcal{B}$.

 We show that $\GG_a^+$ is flasque on~$\mathcal{B}$.
 Let
 \[
  \Spa(C,C^+) \to \Spa(B,B^+)
 \]
 be a covering in~$\mathcal{B}$.
 We need to show that the \v{C}ech complex for~$\GG_a^+$ associated with this covering is exact.
 Using the notation of \cref{computationintegralclosure} we have
 \[
  \Spa(C,C^+) \times_{\Spa(B,B^+)} \ldots \times_{\Spa(B,B^+)} \Spa(C,C^+) = \Spa(C_n,C_n^+).
 \]
 Note that since~$B$ is henselian, $C_n$ is finite over~$B$, hence complete.
 Therefore,
 \[
  \GG_a^+(\Spa(C_n,C_n^+)) = C_n^+
 \]
 and the \v{C}ech complex for the covering $\Spa(C,C^+) \to \Spa(B,B^+)$ equals
 \[
  0 \to B^+ \to C^+ \to C_2^+ \to C_3^+ \to \ldots
 \]
 This complex is exact by \cref{tameAmitsurcomplex}.
\end{proof}

\begin{corollary} \label{comparestronglyetaletame}
 Let~$\cX$ be a locally noetherian adic space.
 Assume that~$\cX$ is either discretely ringed or analytic.
 The canonical homomorphism
 \[
  H^i(\cX_{\set},\GG_a^+) \to H^i(\cX_t,\GG_a^+)
 \]
 is an isomorphism for all $i \geq 0$.
\end{corollary}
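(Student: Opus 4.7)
The plan is to mimic the proof strategy of \cref{comparestronglyetaleZariski}, only one level higher: instead of comparing the Zariski-type cohomology with the strongly \'etale cohomology, we will compare the strongly \'etale cohomology with the tame cohomology, using the vanishing result provided by \cref{strictlyetaleacyclic} in place of \cref{stronglyetaleacyclic-discrete} and \cref{non-archimedeanfieldacyclic}.

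First I would note that every strongly \'etale morphism is tame, so the identity on the underlying category (combined with the fact that a strongly \'etale covering is in particular a tame covering) induces a morphism of sites
\[
 \varphi: Z_t \longrightarrow Z_{\set}
\]
and the map in the statement is the edge map of the associated Leray spectral sequence
\[
 H^p_{\set}(Z, R^q\varphi_*\GG_a^+) \Longrightarrow H^{p+q}_t(Z,\GG_a^+).
\]
To obtain the corollary it therefore suffices to prove that $R^q\varphi_*\GG_a^+ = 0$ for all $q \geq 1$, since $\varphi_*\GG_a^+ = \GG_a^+$ is immediate from the definitions.

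The sheaf $R^q\varphi_*\GG_a^+$ is the sheaf associated with the presheaf $U \mapsto H^q_t(U,\GG_a^+)$, so its vanishing can be checked on stalks at strongly \'etale points. For a strongly \'etale point $\xi \to Z$ of $Z$, the stalk $(R^q\varphi_*\GG_a^+)_\xi$ is computed as a filtered colimit over the cofiltered system $D_\xi$ of affinoid strongly \'etale neighborhoods. By \cref{cohomologyoflimit} applied to the tame topology this colimit equals $H^q_t(Z_\xi,\GG_a^+)$, where $Z_\xi$ is the strong henselization of $Z$ at $\xi$. The strong henselization of $Z$ at $\xi$ is (locally) the adic spectrum of a strongly henselian Huber pair $(A,A^+)$; and when $Z$ is either discretely ringed or analytic and locally noetherian, the ring $A$ is either noetherian with the discrete topology or a strongly noetherian Tate ring. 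We are therefore exactly in the situation of \cref{strictlyetaleacyclic}, which gives $H^q_t(\Spa(A,A^+),\GG_a^+)=0$ for $q\geq 1$.

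The main subtlety I expect is to make sure that the Huber-theoretic hypothesis (\emph{strongly noetherian Tate} or \emph{noetherian discrete}) is preserved when passing to the strong henselization. In the discretely ringed case this is immediate, since the construction of $(A,A^+)$ from a noetherian discrete pair yields again a noetherian ring carrying the discrete topology. In the analytic case one has to observe that the strong henselization of a strongly noetherian Tate pair, equipped with the appropriate $I$-adic topology as in the definition of $X_\xi$, is again strongly noetherian Tate; this is exactly the content of the description of $X_\xi$ in the proposition characterizing strong localizations at analytic points, and it is the only place where the local noetherian/analyticity hypothesis on $Z$ genuinely enters. Once this point is dispatched, the Leray argument together with \cref{strictlyetaleacyclic} finishes the proof.
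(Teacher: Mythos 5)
Your proposal is essentially the same as the paper's proof: both use the Leray spectral sequence for the morphism of sites $\varphi\colon Z_t \to Z_{\set}$ and reduce the vanishing of $R^q\varphi_*\GG_a^+$ for $q\geq 1$ to the vanishing of $H^q_t(\Spa(A,A^+),\GG_a^+)$ for strongly henselian Huber pairs $(A,A^+)$ with $A$ strongly noetherian Tate or noetherian discrete, which is \cref{strictlyetaleacyclic}. You spell out the stalk computation at strongly \'etale points and the appeal to \cref{cohomologyoflimit} more explicitly than the paper does, and you correctly flag the (unstated) point that the strong henselization of a pair satisfying the noetherian/Tate hypothesis again satisfies it; the paper takes this for granted.
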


\begin{proof}
 Consider the Leray spectral sequence associated with the morphism of sites
 \[
  \varphi: \cX_t \to \cX_{\set}.
 \]
 We have to show that
 \[
  R^q \varphi_* \GG_a^+ = 0.
 \]
 Put differently, for every strongly henselian Huber pair $(A,A^+)$ where~$A$ is either a strongly noetherian Tate ring or noetherian and discrete we have to show that
 \[
  H^q(\Spa(A,A^+)_t,\GG_a^+) = 0.
 \]
 This is true by \cref{strictlyetaleacyclic}.
\end{proof}

Combining \cref{comparestronglyetaleZariski}, \cref{comparestronglyetaletame} and \cref{compareZariskicohomology} we obtain:

\begin{theorem} \label{comparetamecohomology}
 Let~$X$ be pro-open in a regular scheme~$S$ over~$k$ such that~$X$ is dense in~$S$.
 Assume that resolution of singularities holds over~$k$.
 There is a natural isomorphism
 \[
  H^i(S,\cO_S) \cong H^i(\Spa(X,S)_t,\GG_a^+)
 \]
 for all $i \geq 0$.
\end{theorem}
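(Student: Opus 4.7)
The plan is to deduce the theorem by composing the three cohomological comparison results just established. Writing $Z = \Spa(X,S)$, I would form the chain of natural isomorphisms
\[
 H^i(S,\cO_S) \;\overset{(1)}{\cong}\; H^i(Z,\cO_Z^+) \;\overset{(2)}{\cong}\; H^i_{\set}(Z,\GG_a^+) \;\overset{(3)}{\cong}\; H^i_t(Z,\GG_a^+),
\]
where (1) is \cref{compareZariskicohomology}, (2) is \cref{comparestronglyetaleZariski}, and (3) is \cref{comparestronglyetaletame}. Composing them yields the desired natural isomorphism $H^i(S,\cO_S) \cong H^i_t(\Spa(X,S),\GG_a^+)$.

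Before invoking (1)--(3) I would verify that the hypotheses are met. For (1) I need $X$ dense and pro-open in a regular, quasi-excellent scheme $S$ over which resolution of singularities holds; since $S$ is essentially smooth over $k$ it is regular, quasi-excellent, and noetherian, and any reduced scheme of finite type over $S$ is in particular of finite type over $k$, so resolution of singularities over $k$ propagates to resolution over $S$. For (2) and (3) I need $Z$ to be locally noetherian and discretely ringed. Discrete ringedness is automatic from the construction of $\Spa(X,S)$, and local noetherianity I would obtain by first reducing to the case that $X$ is a genuine open subscheme of $S$ via \cref{cohomologyoflimit}: writing $X = \lim_\alpha X_\alpha$ with each $X_\alpha \subseteq S$ open (and hence noetherian, so that $\Spa(X_\alpha,S)$ is a noetherian discretely ringed adic space), one has $\Spa(X,S) \sim \lim_\alpha \Spa(X_\alpha,S)$, and the tame, strongly \'etale and topological cohomologies of $\GG_a^+$ all commute with this projective limit.

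The substantive work in the theorem is entirely deferred to the three constituent comparisons; by far the hardest is \cref{compareZariskicohomology}, where resolution of singularities, the analysis of blow-ups in regular centers, and the acyclicity result \cref{acyclicneighborhoods} all enter. Granted those inputs, the only point to monitor when composing (1)--(3) is compatibility with the transition maps in the pro-system $\{\Spa(X_\alpha,S)\}_\alpha$; but each of the three isomorphisms is natural in the adic space, and the left hand side $H^i(S,\cO_S)$ is independent of $\alpha$, so passing to the colimit on the right hand side poses no difficulty. I do not anticipate any further obstacle.
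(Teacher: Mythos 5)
Your proof coincides with the paper's, which simply combines \cref{compareZariskicohomology}, \cref{comparestronglyetaleZariski}, and \cref{comparestronglyetaletame} into exactly the chain of isomorphisms (1)--(3) that you wrote. One minor imprecision in your hypothesis-checking: a scheme of finite type over $S$ need not be of finite type over $k$ when $S$ is only \emph{essentially} smooth (a cofiltered limit of smooth $k$-schemes), so to pass resolution of singularities from $k$ to $S$ you should first descend to a finite-type model over $k$, resolve there, and then base change along the flat limit morphism $S\to S_\beta$; this is routine and does not affect the argument.
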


\section{The Artin-Schreier sequence}

Let~$\cX$ be an adic space with $\ch(\cX) = \{p\}$.
There is an Artin-Schreier sequence
\[
 0 \to \Z/p\Z \longrightarrow \GG_a^+ \overset{F-1}{\longrightarrow} \GG_a^+ \to 0
\]
on~$\cX_t$ and on~$\cX_{\set}$, where~$F-1$ is the homomorphism $x \mapsto x^p-x$.
We can check exactness on stalks.
Let $(A,A^+)$ be strongly henselian.
Then
\[
 F-1 : A^+ \to A^+
\]
is surjective as~$A^+$ is strictly henselian.

\begin{proposition} \label{cohomologypPruefer}
 Let $(A,A^+)$ be a complete Pr\"ufer Huber pair such that~$A$ is of characteristic $p > 0$ and is either noetherian with the discrete topology or a strongly noetherian Tate ring.
 If $\Spa(A,A^+)$ is connected,
 \[
  H^i(\Spa(A,A^+)_t,\Z/p\Z) \cong H^i(\Spa(A,A^+)_{\set},\Z/p\Z) \cong \begin{cases}
                                                                      \Z/p\Z		& i = 0,	\\
                                                                      A^+/(F-1)A^+	& i = 1,	\\
                                                                      0			& i \ge 2.
                                                                     \end{cases}
 \]
\end{proposition}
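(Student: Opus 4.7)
The plan is to apply the long exact cohomology sequence associated to the Artin--Schreier sequence
\[
 0 \to \Z/p\Z \to \GG_a^+ \overset{F-1}{\longrightarrow} \GG_a^+ \to 0,
\]
whose exactness on both $Z_t$ and $Z_{\set}$ (for $Z := \Spa(A,A^+)$) was verified at the start of this section. Since $(A,A^+)$ is complete, $\GG_a^+(Z) = A^+$, so two of the four nonzero terms are already in the desired form.

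The key input is the vanishing of the higher cohomology of $\GG_a^+$. By \cref{Zariskiacyclic} our hypotheses on $(A,A^+)$ yield $H^i(Z,\cO_Z^+) = 0$ for all $i \ge 1$, and \cref{comparestronglyetaleZariski} together with \cref{comparestronglyetaletame} promote this vanishing to
\[
 H^i_\tau(Z,\GG_a^+) = 0 \quad \text{for all } i \ge 1 \text{ and } \tau \in \{t,\set\}.
\]
Consequently, the long exact sequence degenerates to
\[
 0 \to H^0_\tau(Z,\Z/p\Z) \to A^+ \overset{F-1}{\longrightarrow} A^+ \to H^1_\tau(Z,\Z/p\Z) \to 0,
\]
with $H^i_\tau(Z,\Z/p\Z) = 0$ for $i \ge 2$. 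This immediately yields the claimed description of $H^1$ and identifies $H^0_\tau(Z,\Z/p\Z)$ with $\ker(F-1) \subseteq A^+$.

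It remains to compute $\ker(F-1)$. Since $T^p - T$ is separable, this kernel is a finite \'etale $\F_p$-subalgebra of $A^+$, hence a finite product of copies of $\F_p$ indexed by its primitive idempotents. Any nontrivial idempotent in $\ker(F-1)$ is in particular an idempotent of $A$, and hence produces a clopen decomposition $Z = Z_0 \sqcup Z_1$ of $Z$ into two nonempty subspaces. Connectedness of $Z$ rules this out, so $\ker(F-1) = \F_p$, as required.

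There is no serious obstacle here: once the cohomological machinery developed in the preceding sections is in place, the proposition is a formal consequence of the Artin--Schreier long exact sequence. The only step requiring any thought is the identification of $H^0$ via the idempotent argument in the last paragraph.
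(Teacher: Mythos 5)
Your argument follows the paper's strategy — the Artin--Schreier long exact sequence together with vanishing of the higher cohomology of $\GG_a^+$ — and it is correct. The paper's own proof cites \cref{stronglyetaleacyclic-discrete} and \cref{non-archimedeanfieldacyclic} for the vanishing, which directly cover only the discrete and non-Archimedean-field cases; your route through \cref{Zariskiacyclic} and \cref{comparestronglyetaleZariski} supplies the vanishing in the general strongly noetherian Tate case as well, so your citation chain is arguably the more complete one. One small inaccuracy in the $H^0$ step: $\ker(F-1) = \{a \in A^+ : a^p = a\}$ is not \emph{a priori} a finite $\F_p$-algebra, so "finite product of copies of $\F_p$" is not justified as stated. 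The conclusion still holds, though, by a slightly different wording: since $a^{p-1}$ is idempotent for every $a$ in this subring and connectedness of $\Spa(A,A^+)$ forbids nontrivial idempotents of $A$, every nonzero element is a unit, hence $\ker(F-1)$ is a field satisfying $a^p = a$ and therefore equals $\F_p$. (Alternatively, $H^0_\tau(Z,\Z/p\Z) = \Z/p\Z$ follows directly from $Z$ being connected, without going through the kernel.)
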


\begin{proof}
 This follows from \cref{stronglyetaleacyclic-discrete}, \cref{non-archimedeanfieldacyclic} and \cref{comparestronglyetaletame} via the Artin-Schreier sequence.
\end{proof}

\begin{corollary}
 Let~$\cX$ be a locally noetherian adic space with $\ch(\cX) = \{p\}$ which is either analytic or discretely ringed.
 Then the Leray spectral sequence associated with $\cX_t \to \cX_{\set}$ induces isomorphisms
 \[
  H^i(\cX_t,\Z/p\Z) \cong H^i(\cX_{\set},\Z/p\Z)
 \]
 for all $i \ge 0$.
\end{corollary}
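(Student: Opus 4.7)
The plan is to show that the higher direct images $R^j\varphi_*(\Z/p\Z)$ vanish for $j\geq 1$, where $\varphi\colon Z_t\to Z_{\set}$ is the morphism of sites; this collapses the Leray spectral sequence $H^i_{\set}(Z,R^j\varphi_*\Z/p\Z)\Rightarrow H^{i+j}_t(Z,\Z/p\Z)$ and yields the desired isomorphism at every degree. The sheaf $R^j\varphi_*(\Z/p\Z)$ is the sheafification of the presheaf sending a strongly \'etale $U\to Z$ to $H^j_t(U,\Z/p\Z)$, and its stalk at a strongly \'etale point can be identified with $H^j_t$ of the corresponding strong henselization (using \cref{cohomologyoflimit} for the tame site, since the strong henselization is a filtered limit of strongly \'etale neighborhoods). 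So the problem reduces to showing that $H^j_t(\Spa(A,A^+),\Z/p\Z)=0$ for $j\geq 1$ whenever $(A,A^+)$ is strongly henselian and $A$ is either a strongly noetherian Tate ring or noetherian with the discrete topology.

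For this reduction I would use the Artin--Schreier sequence
\[
 0\to \Z/p\Z\lang \GG_a^+\overset{F-1}{\lang}\GG_a^+\to 0
\]
on $\Spa(A,A^+)_t$. By \cref{strictlyetaleacyclic} the groups $H^i_t(\Spa(A,A^+),\GG_a^+)$ vanish for $i\geq 1$, so the long exact cohomology sequence immediately gives $H^i_t(\Spa(A,A^+),\Z/p\Z)=0$ for $i\geq 2$, and presents $H^1_t$ as the cokernel of $F-1\colon A^+\to A^+$. It therefore remains only to prove that this cokernel vanishes, i.e.\ that $F-1$ is surjective on $A^+$ when $(A,A^+)$ is strongly henselian.

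The surjectivity follows from the fact that $A^+$ is strictly henselian with separably closed residue field $\kappa$. Given $a\in A^+$, the polynomial $X^p-X-a$ is separable (its derivative is $-1$), its reduction splits over $\kappa$, and each root is simple, so Hensel's lemma lifts a root to $A^+$. This shows $F-1$ is surjective and completes the vanishing of $H^1_t$. The analogous statement on $Z_{\set}$ is proved identically (using \cref{stronglyetaleacyclic-discrete} and \cref{non-archimedeanfieldacyclic} in place of \cref{strictlyetaleacyclic}), but we do not actually need it: once $R^j\varphi_*\Z/p\Z=0$ for $j\geq 1$, the Leray spectral sequence degenerates.

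The only place that requires care is verifying that the strong henselization at a point of $Z$ falls into one of the two hypothesis classes of \cref{strictlyetaleacyclic}. For a discretely ringed $Z$ every strong henselization is again discretely ringed with noetherian underlying ring, while for an analytic locally noetherian $Z$ the strong henselization at an analytic point is the adic spectrum of a complete non-Archimedean field together with a Pr\"ufer ring of integral elements, which is strongly noetherian Tate. This is the one bookkeeping step, but it is already built into the setup and needs no new input beyond the discussion preceding \cref{equivalencetopoihenselian}.
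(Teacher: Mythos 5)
Your argument is correct and matches the paper's intended route: the corollary follows from the Leray spectral sequence for $\varphi\colon Z_t\to Z_{\set}$ once one shows $R^j\varphi_*\Z/p\Z=0$ for $j\ge 1$, which is exactly the vanishing furnished by the Artin--Schreier sequence together with \cref{strictlyetaleacyclic} on the strong henselizations (or equivalently by \cref{cohomologypPruefer}, since $F-1$ is surjective on a strictly henselian $A^+$ — the Hensel-lemma calculation you give is the one the paper states in the paragraph preceding that proposition). The only minor inaccuracy is the claim that the strong henselization at an analytic point is \emph{complete}; it generally is not, but this is harmless since the cohomology of $\Spa(A,A^+)$ agrees with that of $\Spa(\hat A,\hat A^+)$, or alternatively one argues as you indicate via \cref{cohomologyoflimit} on the noetherian affinoid neighborhoods.
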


\begin{proposition} \label{cohomologyp}
 Let~$S$ be an affine, regular, and integral scheme of characteristic $p > 0$ and $X$ dense and pro-open in~$S$.
 Assume that resolution of singularities holds over~$S$.
 Then we have
 \[
  H^i(\Spa(X,S)_t,\Z/p\Z) \cong H^i(\Spa(X,S)_{\set},\Z/p\Z) \cong \begin{cases}
							            \Z/p\Z			& i = 0,	\\
                                                                    \cO_S(S)/(F-1)\cO_S(S)	& i = 1,	\\
                                                                    0				& i \geq 2.
                                                                   \end{cases}
 \]
\end{proposition}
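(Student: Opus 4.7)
The plan is to feed the Artin--Schreier sequence
\[
 0 \to \Z/p\Z \longrightarrow \GG_a^+ \overset{F-1}{\longrightarrow} \GG_a^+ \to 0
\]
on $Z_t$ and $Z_{\set}$ (where $Z = \Spa(X,S)$) into the long exact cohomology sequence, and then use the chain of comparison isomorphisms already established to reduce everything to Zariski cohomology of $\cO_S$ on the scheme $S$.

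First I would invoke \cref{comparetamecohomology} (or, more directly, the combination of \cref{compareZariskicohomology}, \cref{comparestronglyetaleZariski} and \cref{comparestronglyetaletame}) to identify
\[
 H^i_t(Z,\GG_a^+) \;\cong\; H^i_{\set}(Z,\GG_a^+) \;\cong\; H^i(Z,\GG_a^+) \;\cong\; H^i(S,\cO_S)
\]
for all $i \ge 0$. The hypotheses of the proposition---$S$ regular and quasi-excellent with resolution of singularities, and $X \subseteq S$ dense and pro-open---are exactly what these statements require. Since $S$ is affine, Serre's vanishing theorem gives $H^i(S,\cO_S) = 0$ for $i \ge 1$ and $H^0(S,\cO_S) = \cO_S(S)$. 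Combined with the above, this immediately yields
\[
 H^i_{\tau}(Z,\GG_a^+) = 0 \quad (i \ge 1, \; \tau \in \{\set,t\}), \qquad H^0_{\tau}(Z,\GG_a^+) = \cO_S(S).
\]

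Next I would write down the long exact sequence arising from Artin--Schreier. For $i \ge 2$ the sequence becomes $0 \to H^i_{\tau}(Z,\Z/p\Z) \to 0$, proving the claimed vanishing. For the low degrees the sequence reduces to
\[
 0 \to H^0_{\tau}(Z,\Z/p\Z) \to \cO_S(S) \overset{F-1}{\longrightarrow} \cO_S(S) \to H^1_{\tau}(Z,\Z/p\Z) \to 0.
\]
The cokernel of $F-1$ is by definition $\cO_S(S)/(F-1)\cO_S(S)$, which gives the claim for $i=1$. For the kernel, I would argue that $\Spa(X,S)$ is connected: since $S$ is integral it is connected, and since $X$ is dense pro-open in $S$ the generic point of $S$ defines a point of $\Spa(X,S)$ whose closure meets every nonempty open, so $Z$ is connected. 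Hence the constant sheaf $\Z/p\Z$ has global sections $\Z/p\Z$; equivalently, $\ker(F-1 : \cO_S(S) \to \cO_S(S)) = \F_p$ because $S$ is integral (so $\cO_S(S)$ is a domain) and $T^p - T - a$ has at most $p$ roots, all lying in $\F_p$ when they exist.

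The only mildly delicate point is checking connectedness of $Z$ and that $F-1$ has precisely $\F_p$ as kernel on $\cO_S(S)$; everything else is a mechanical consequence of the already-proved comparison theorems and affineness of $S$, so there is no real obstacle here---the theorem is essentially an instance of the general principle that for $p$-torsion coefficients, tame cohomology of $\Spa(X,S)$ behaves like coherent cohomology of $\cO_S$.
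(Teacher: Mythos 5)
Your proof is correct and follows essentially the same route as the paper: the Artin--Schreier sequence combined with the comparison theorems identifying $H^i_\tau(\Spa(X,S),\GG_a^+)$ with $H^i(S,\cO_S)$, which vanishes in positive degrees since $S$ is affine. One small point in your favor: you note you could instead invoke \cref{compareZariskicohomology}, \cref{comparestronglyetaleZariski} and \cref{comparestronglyetaletame} directly, and this is actually the cleaner route, since those three statements are hypothesized for $S$ regular with resolution of singularities over $S$ (matching the proposition), whereas \cref{comparetamecohomology} as stated assumes $S$ essentially smooth over a field $k$; the identification of $\ker(F-1)=\F_p$ via $\cO_S(S)$ being a domain is also the more robust way to get the $i=0$ case than the connectedness sketch.
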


\begin{proof}
 This follows from \cref{comparetamecohomology} via the Artin-Schreier sequence.
\end{proof}

\begin{corollary} \label{purity}
 Let~$S$ be a regular integral  scheme of characteristic $p > 0$ and $X$ dense and pro-open in~$S$.
 Assume that resolution of singularities holds over~$S$.
 The Leray spectral sequences associated with the morphisms of sites $\Spa(X,S)_t \to \Spa(X,S)_{\set}$ and $\Spa(X,S)_{\set} \to S_{\et}$ induce natural isomorphisms
 \[
  H^i(S_{\et},\Z/p\Z) \cong H^i(\Spa(X,S)_{\set},\Z/p\Z) \cong H^i(\Spa(X,S)_t,\Z/p\Z)
 \]
 for all $i \ge 0$.
\end{corollary}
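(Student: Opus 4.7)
The first isomorphism follows immediately from the preceding corollary, which establishes $H^i_t(Z,\Z/p\Z) \cong H^i_{\set}(Z,\Z/p\Z)$ for every locally noetherian, discretely ringed (or analytic) adic space $Z$ with $\ch(Z) = \{p\}$; this hypothesis is met by $Z = \Spa(X,S)$.

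For the second isomorphism, my plan is to analyze the Leray spectral sequence
\[
 E_2^{i,j} = H^i_{\et}(S, R^j\pi_*\Z/p\Z) \Longrightarrow H^{i+j}_{\set}(\Spa(X,S),\Z/p\Z)
\]
attached to the natural morphism of sites $\pi: \Spa(X,S)_{\set} \to S_{\et}$, which sends an étale $U \to S$ to the strongly étale morphism $\Spa(X \times_S U, U) \to \Spa(X,S)$. The goal will be to show that $\pi_*\Z/p\Z \cong \Z/p\Z$ and that $R^j\pi_*\Z/p\Z = 0$ for all $j \geq 1$, so that the spectral sequence collapses to edge isomorphisms. Both assertions I will verify at the level of stalks.

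The stalk computation proceeds as follows. Fix a geometric point $\bar{s} \to S$ and a cofinal system of connected affine étale neighborhoods $U_i \to S$ of $\bar{s}$, so that $\Spec \cO_{S,\bar{s}}^{sh} = \varprojlim_i U_i$. Each $U_i$ is affine, regular, integral, quasi-excellent and of characteristic $p$, contains $X \times_S U_i$ as a dense pro-open subscheme, and inherits resolution of singularities from $S$ since any scheme of finite type over $U_i$ is of finite type over $S$. The discretely ringed adic spaces $\Spa(X \times_S U_i, U_i)$ form a projective system in the sense of \cref{limits} with limit $\Spa(X \times_S \Spec\cO_{S,\bar{s}}^{sh}, \Spec\cO_{S,\bar{s}}^{sh})$, so \cref{cohomologyoflimit} yields
\[
 (R^j\pi_*\Z/p\Z)_{\bar{s}} \cong \colim_i H^j_{\set}(\Spa(X \times_S U_i, U_i), \Z/p\Z).
\]
Applying \cref{cohomologyp} indexwise, the contributions in degrees $j = 0$ and $j \geq 2$ are $\Z/p\Z$ and $0$ respectively, while in degree $j = 1$ the colimit becomes $\cO_{S,\bar{s}}^{sh}/(F-1)\cO_{S,\bar{s}}^{sh}$. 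This last group vanishes: for any $a \in \cO_{S,\bar{s}}^{sh}$ the polynomial $T^p - T - a$ has derivative $-1$ and its reduction mod the maximal ideal has a root in the separably closed residue field, so Hensel's lemma produces a root in $\cO_{S,\bar{s}}^{sh}$.

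The main obstacle will be the bookkeeping around the limit formula: confirming that the system $\{\Spa(X \times_S U_i, U_i)\}$ satisfies Huber's axioms from \cref{limits} with the asserted limit (so that \cref{cohomologyoflimit} is applicable), together with the verification that each $U_i$ meets all the hypotheses (density of $X \times_S U_i$, pro-openness in $U_i$, and the inheritance of resolution of singularities) required to invoke \cref{cohomologyp}.
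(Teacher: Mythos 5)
Your proof is correct and follows essentially the same route as the paper: reduce to the stalk/strictly henselian case via the Leray spectral sequence, invoke \cref{cohomologyp} there, and observe that $F-1$ is surjective on a strictly henselian ring so the degree-one term vanishes. You spell out the limit bookkeeping and the Hensel's lemma step that the paper leaves implicit, but the underlying argument is identical.
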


\begin{proof}
 It suffices to show that
 \[
  H^i(\Spa(X,S)_t,\Z/p\Z) = H^i(\Spa(X,S)_{\set},\Z/p\Z) = 0
 \]
 for $i>0$ in case~$S$ is strictly henselian.
 This follows directly from the description given in \cref{cohomologyp}.
\end{proof}

\begin{corollary}[Purity] \label{purity2}
 Let~$S$ be a noetherian scheme of characteristic $p > 0$ and $X$ a regular scheme which is separated and essentially of finite type over~$S$.
 Assume that resolution of singularities holds over~$S$ and let~$\bar{X}$ be a regular compactification of~$X$ over~$S$.
 Then
 \[
  H^i(\Spa(X,S)_t,\Z/p\Z) \cong H^i(\bar{X}_{\et},\Z/p\Z).
 \]
 In particular, for any pro-open dense subscheme $U \subseteq X$ we have
 \[
  H^i(\Spa(U,S)_t,\Z/p\Z) \cong H^i(\Spa(X,S)_t,\Z/p\Z).
 \]
\end{corollary}

\begin{proof}
 This follows by applying \cref{purity} to $X \to \bar{X}$.
\end{proof}

Note that the above theorem implies in particular that $H^i(\bar{X}_{\et},\Z/p\Z)$ is independent of a choice of compactification.

\begin{corollary}[Homotopy invariance] \label{homotopyinvariance}
 Let~$S$ be a noetherian scheme of characteristic $p > 0$ and $X$ a regular scheme which is essentially of finite type over~$S$.
 Assume that resolution of singularities holds over~$S$.
 Then
 \[
  H^i(\Spa(X,S)_t,\Z/p\Z) \cong H^i(\Spa(\A^1_X,S)_t,\Z/p\Z).
 \]
\end{corollary}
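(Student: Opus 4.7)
The plan is to match each of the two tame cohomology groups to the étale cohomology of a regular compactification, and then invoke the standard homotopy invariance of $\Z/p\Z$-cohomology along the $\P^1$-bundle in characteristic $p$.

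First I would reduce to the case where $X$ is connected, treating connected components separately, and use the resolution of singularities assumption to choose a regular compactification $\bar X$ of $X$ over $S$. Then $\P^1_{\bar X}$ is a regular proper $S$-scheme containing $\A^1_X$ as an open dense subscheme. Applying \cref{properisomorphismonSpa} to the proper morphisms $\bar X \to S$ and $\P^1_{\bar X} \to S$ yields
\[
 \Spa(X,S) \cong \Spa(X,\bar X), \qquad \Spa(\A^1_X,S) \cong \Spa(\A^1_X,\P^1_{\bar X}).
\]
Since $\bar X$ and $\P^1_{\bar X}$ are regular, integral, and quasi-excellent (these properties being inherited from $S$), and since resolution of singularities over $S$ transfers to any scheme of finite type over $S$, I can apply \cref{purity} to get
\[
 H^i_t(\Spa(X,S),\Z/p\Z) \cong H^i_{\et}(\bar X,\Z/p\Z), \qquad H^i_t(\Spa(\A^1_X,S),\Z/p\Z) \cong H^i_{\et}(\P^1_{\bar X},\Z/p\Z).
\]

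It then remains to show $H^i_{\et}(\bar X,\Z/p\Z) \cong H^i_{\et}(\P^1_{\bar X},\Z/p\Z)$. Writing $\pi:\P^1_{\bar X} \to \bar X$, I would apply $R\pi_*$ to the Artin--Schreier sequence on $\P^1_{\bar X}$ and use the standard identifications $\pi_*\mathcal{O}_{\P^1_{\bar X}} \cong \mathcal{O}_{\bar X}$ and $R^j\pi_*\mathcal{O}_{\P^1_{\bar X}} = 0$ for $j \ge 1$ (which hold étale-locally by reducing to $\P^1$ over a strictly local ring). Comparing the resulting distinguished triangle $R\pi_*\Z/p\Z \to \mathcal{O}_{\bar X} \xrightarrow{F-1} \mathcal{O}_{\bar X}$ with the Artin--Schreier sequence on $\bar X$ itself identifies $R\pi_*\Z/p\Z$ with $\Z/p\Z$, and the Leray spectral sequence for $\pi$ then delivers the required isomorphism.

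The main obstacle I anticipate is essentially bookkeeping: producing a regular compactification $\bar X$ of $X$ over $S$ when $X$ is only essentially of finite type over $S$ (rather than of finite type), and carefully verifying that all hypotheses of \cref{purity} (integrality, regularity, quasi-excellence, and resolution of singularities over the base) do transfer from $S$ to $\bar X$ and $\P^1_{\bar X}$. Once these setup issues are dispatched, the $\P^1$-bundle input is routine.
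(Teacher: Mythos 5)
Your argument and the paper's share the same two essential ingredients — the purity statement \cref{purity} identifying tame cohomology of $\Spa(U,T)$ with $H^*_{\et}(T,\Z/p\Z)$ for $U$ pro-open dense in a regular $T$, and $\P^1$-invariance for étale $\Z/p\Z$-cohomology via Artin--Schreier and $R\pi_*\cO_{\P^1} \cong \cO$ — but they package them differently. You globalize: you choose a regular compactification $\bar X$ of $X$ over $S$, use \cref{properisomorphismonSpa} and \cref{purity} once on each side, and finish with a single Leray/Artin--Schreier comparison of $H^*_{\et}(\bar X)$ with $H^*_{\et}(\P^1_{\bar X})$. The paper instead localizes: it writes down the Leray spectral sequence for the projection of tame sites $\Spa(\A^1_X,S)_t \to \Spa(X,S)_t$ and kills the higher direct images by working on a basis of affinoid opens $\Spa(U,T)$ with $T$ regular and $U$ pro-open dense in $T$; on each such piece it applies \cref{purity} (to $\A^1_U \subset \P^1_T$), then the Leray sequence for $\P^1_T \to T$, and observes that a class in $H^n_{\et}(T,\Z/p\Z)$ dies after an étale cover $\{T_i \to T\}$, giving the required vanishing.

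The main thing the paper's local route buys, and the point you flag but slightly underestimate as ``bookkeeping'', is that it never needs a single proper regular $S$-model $\bar X$ of all of $X$. The statement only assumes $S$ quasi-excellent (not quasi-compact or quasi-separated) and $X$ essentially of finite type; Nagata compactification, and hence the existence of $\bar X$ proper over $S$, is not available without some quasi-compactness of $S$. One cannot simply shrink $S$ to a qcqs open $S'$ receiving $X$, because a point $(x,R,\phi) \in \Spa(X,S)$ may have the closed point of $\Spec R$ landing outside $S'$, so $\Spa(X,S) \ne \Spa(X,S')$ in general. The paper's basis elements $\Spa(U,T)$ only require $T$ to be a regular scheme of finite type over $S$ with $U$ pro-open dense in it — $T$ need not be proper over $S$ — and the needed properness is supplied locally by $\P^1_T \to T$. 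If you want your global argument to go through as written, you should either add qcqs hypotheses or explain how to produce $\bar X$ without Nagata; alternatively, localize your compactification step over an affinoid basis, at which point you recover essentially the paper's proof.
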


\begin{proof}
 Let~$\bar{X} \to S$ be a regular compactification of~$X$.
 Then~$\P^1_{\bar{X}}$ is a regular compactification of~$\A^1_X$.
 By \cref{purity2}, we have natural isomorphisms
 \begin{IEEEeqnarray*}{rCl}
  H^i(\Spa(X,S)_t,\Z/p\Z)		& \cong	& H^i(\bar{X}_{\et},\Z/p\Z),		\\
  H^i(\Spa(\A^1_X,S)_t,\Z/p\Z)	& \cong & H^i(\P^1_{\bar{X},\et},\Z/p\Z).
 \end{IEEEeqnarray*}
 It remains to show that the two \'etale cohomology groups on the right are isomorphic.
 We consider the projection $\pi: \P^1_{\bar{X}} \to \bar{X}$ and the associated higher direct images $R^j\pi_*\Z/p\Z$.
 For $j = 0$ we directly see $\pi_*\Z/p\Z = \Z/p\Z$.
 Assume now $j > 0$.
 By proper base change, the stalk at a geometric point $\bar{x} \to \bar{X}$ is isomorphic to
 \[
  H^j(\P^1_{k(\bar{x}),\et},\Z/p\Z) = 0.
 \]
 We conclude that $R\pi_*\Z/p\Z \cong \Z/p\Z$ and hence
 \[
  H^i(\P^1_{\bar{X},\et},\Z/p\Z) \cong H^i(\bar{X}_{\et},\Z/p\Z).
 \]
 \end{proof}

\bibliographystyle{../meinStil}
\bibliography{../citations}

\end{document}